\documentclass[a4paper]{amsart}
\usepackage[utf8]{inputenc}
\usepackage{amssymb}
\usepackage{amscd}
\usepackage[all]{xy}
\usepackage{nicefrac,mathtools,enumitem}
\usepackage{microtype}
\usepackage[pdftitle={Generalized fixed point algebras for coactions of locally compact quantum groups},
  pdfauthor={Alcides Buss},
  pdfsubject={Mathematics}
]{hyperref}
\usepackage[lite]{amsrefs}
\newcommand*{\MRref}[2]{ \href{http://www.ams.org/mathscinet-getitem?mr=#1}{MR \textbf{#1}}}
\newcommand*{\arxiv}[1]{\href{http://www.arxiv.org/abs/#1}{arXiv: #1}}

\renewcommand{\PrintDOI}[1]{\href{http://dx.doi.org/#1}{DOI #1}%
  \IfEmptyBibField{pages}{, (to appear in print)}{}}

\usepackage{xcolor}

\numberwithin{equation}{section}
\newtheorem{theorem}[equation]{Theorem}
\newtheorem{lemma}[equation]{Lemma}
\newtheorem{proposition}[equation]{Proposition}
\newtheorem{corollary}[equation]{Corollary}
\theoremstyle{definition}
\newtheorem{definition}[equation]{Definition}
\theoremstyle{remark}
\newtheorem{example}[equation]{Example}
\newtheorem{remark}[equation]{Remark}
\newtheorem{question}[equation]{Question}


\DeclareMathOperator{\spn}{span}
\DeclareMathOperator{\cspn}{\overline{span}}
\DeclareMathOperator{\ran}{ran}
\DeclareMathOperator{\Fix}{Fix}
\DeclareMathOperator{\Prim}{Prim}
\DeclareMathOperator*{\dom}{dom}

\newcommand*{\into}{\hookrightarrow}
\newcommand*{\cont}{\textup C}
\newcommand*{\contc}{\cont_c}

\newcommand*{\slc}{s}
\newcommand*{\<}{\langle}
\renewcommand{\>}{\rangle}
\newcommand*{\Real}{\mathbb{R}}
\newcommand*{\Nat}{\mathbb{N}}
\newcommand*{\C}{\mathbb{C}}
\newcommand*{\R}{\mathcal{R}}
\newcommand*{\M}{\mathcal{M}}
\newcommand*{\E}{\mathcal{E}}
\newcommand*{\F}{\mathcal{F}}
\newcommand*{\G}{\mathcal{G}}
\newcommand*{\T}{\mathcal{T}}
\newcommand*{\Hc}{\mathcal{H}}
\newcommand*{\B}{\mathcal{B}}

\newcommand*{\I}{\mathcal{I}}
\newcommand*{\Ls}{\mathcal L}
\newcommand*{\K}{\mathcal{K}}
\newcommand*{\La}{\Lambda}
\newcommand*{\la}{\lambda}
\newcommand*{\dualla}{\hat\lambda}
\newcommand*{\laop}[1]{\la_{#1}^{\!^\op}}
\newcommand*{\f}{\varphi}
\newcommand*{\rc}{\sim^{\!\!\!\!\!\!^{rc}}}
\newcommand*{\Dt}{\Delta}
\newcommand*{\co}{\gamma}
\newcommand*{\dtg}{\delta}
\newcommand*{\mo}{\delta}
\newcommand*{\sbe}{\subseteq}
\newcommand*{\dd}{\,\mathrm{d}}
\newcommand*{\dualG}{\widehat{\G}} 
\newcommand*{\dualg}{\widehat{G}} 
\newcommand*{\opG}{\G^{^\op}} 

\newcommand*{\cdualG}{\dualG^{\,^{\comm}}} 
\newcommand*{\dualJ}{\hat{J}} 

\newcommand*{\ket}[1]{\lvert#1\rangle}
\newcommand*{\bra}[1]{\langle#1\rvert}
\newcommand*{\braket}[2]{\langle#1\!\mid\!#2\rangle}
\newcommand*{\kket}[1]{{\lvert#1\mathclose{\rangle\!\rangle}}}
\newcommand*{\bbra}[1]{{\mathopen{\langle\!\langle}#1\rvert}}
\newcommand*{\bbraket}[2]{\mathopen{\langle\!\langle}#1\!\mid\!#2\mathclose{\rangle\!\rangle}}
\newcommand*{\KKET}[1]{{\bigl\lvert#1\mathclose{\bigl\rangle\!\bigr\rangle}}}

\newcommand*{\BBRAKET}[2]{\mathopen{\bigl\langle\!\bigl\langle}#1\bigl|#2\mathclose{\bigr\rangle\!\bigr\rangle}}

\newcommand*{\defeq}{\mathrel{\vcentcolon=}}

\newcommand*{\cstar}{\texorpdfstring{$C^*$\nobreakdash-\hspace{0pt}}{*-}}
\newcommand*{\Star}{\texorpdfstring{$^*$\nobreakdash-\hspace{0pt}}{*-}}

\newcommand*{\id}{{\mathrm{id}}}
\newcommand*{\su}{{\mathrm{su}}}
\newcommand*{\com}{{\mathrm{c}}}
\newcommand*{\sco}{{\mathrm{sc}}}
\newcommand*{\si}{{\mathrm{si}}}
\newcommand*{\ii}{{\mathrm{i}}}
\newcommand*{\op}{{\mathrm{op}}}
\newcommand*{\comm}{{\mathrm{c}}} 
\newcommand*{\red}{\mathrm{r}} 

\newcommand*{\rot}[1]{\underset{#1}{\otimes}}


\hyphenation{
rep-re-sen-ta-tion
im-prim-i-tiv-i-ty
ho-mo-mor-phism
ho-mo-mor-phisms
sub-al-ge-bra
bi-mod-ule}

\begin{document}
\title[Generalized fixed point algebras
]{Generalized fixed point algebras for coactions of locally compact quantum groups
}

\author{Alcides Buss}
\email{alcides@mtm.ufsc.br}
\address{Departamento de Matem\'atica\\
 Universidade Federal de Santa Catarina\\
 88.040-900 Florian\'opolis-SC\\
 Brazil}

\begin{abstract}
We extend the construction of generalized fixed point algebras to the setting of locally compact quantum groups -- in the sense of Kustermans and Vaes -- following the treatment of Marc Rieffel, Ruy Exel and Ralf Meyer in the group case. We mainly follow Meyer's approach analyzing the constructions in the realm of equivariant Hilbert modules.

We generalize the notion of continuous square-integrability, which is exactly what one needs in order to define generalized fixed point algebras. As in the group case, we prove that there is a correspondence between continuously square-integrable Hilbert modules over an equivariant \cstar{}algebra $B$ and Hilbert modules over the reduced crossed product of $B$ by the underlying quantum group. The generalized fixed point algebra always appears as the algebra of compact operators of the associated Hilbert module over the reduced crossed product.
\end{abstract}

\subjclass[2010]{46L55 (46L08, 81R50)}
\keywords{Quantum groups, coactions, generalized fixed point algebras, square-integrability, crossed products, Hilbert modules.}
\thanks{This article is based on the author doctoral dissertation under supervision of Siegfried Echterhoff and Ralf Meyer. It has been supported by CNPq and CAPES}
\maketitle

\section{Introduction}

Let $G$ be a locally compact group and let $X$ be a $G$-space, that is, a locally compact Hausdorff space with a continuous action of $G$. The action of $G$ on $X$ is called \emph{proper} if the map $G\times X\to X\times X$, $(t,x)\mapsto (t\cdot x,x)$ is proper in the sense that inverse images of compact subsets are again compact.

Properness is a concept that enables properties of actions of non-compact groups to resemble those of compact groups. Proper actions have many nice properties. One of the most important ones is the fact that the orbit space $G\backslash X$ is again a locally compact Hausdorff space.

A program to extend this notion to the setting of noncommutative dynamical systems, that is, groups acting on \cstar{}algebras, was initiated by Marc Rieffel in \cite{Rieffel:Proper}. His idea relies on one basic result, namely, the fact that for a proper $G$-space $X$, the commutative \cstar{}algebra $\cont_0(G\backslash X)$ associated to the orbit space is Morita equivalent to an ideal in the reduced crossed product $\cont_0(X)\rtimes_\red G$, where we let $G$ act on $\cont_0(X)$ in the usual way. This ideal is the entire crossed product if and only if the action is free.

The imprimitivity bimodule implementing the Morita equivalence between the algebra $\cont_0(G\backslash X)$ and the ideal in the crossed product turns out to be a suitable completion of the space $\cont_c(X)$ of compactly supported continuous functions on $X$. Based on this fact, Rieffel called a (not necessarily commutative) $G$-\cstar{}algebra, that is, a \cstar{}algebra $A$ with a (strongly) continuous action of $G$, \emph{proper} if there exists a dense $*$-subalgebra $A_0$ of $A$ with some suitable properties (modeled on $\cont_c(X)$ in the commutative case) such that $A_0$ can be completed to an imprimitivity bimodule between the \emph{generalized fixed point algebra} $\Fix(A_0)$ -- obtained from averaging elements of $A_0$ along the given action -- and a suitable ideal $\I(A_0)$ in the reduced crossed product algebra $A\rtimes_\red G$. Needless to say, in the commutative case one takes $A_0=\contc(X)$ to obtain $\Fix(A_0)=\cont_0(G\backslash X)$. Thus $\Fix(A_0)$ is a noncommutative version of the orbit space.
If, in addition, $\I(A_0)$ is the whole reduced crossed product, the action is called \emph{saturated}. Saturation is therefore a noncommutative manifestation of freeness. For other noncommutative notions of freeness (for actions of finite/compact groups), we refer to \cite{Phillips:Freeness_actions_finite_groups}.

Rieffel has further investigated his first definition of proper actions in a second work \cite{Rieffel:Integrable_proper}.
He came out with another possible notion for noncommutative proper actions, the so-called \emph{integrable actions}. These include the proper actions previously defined. In order to explain better this second definition, let us say that $G$ acts on $A$ via an action $\alpha$.
A positive element $a\in A$ is called \emph{integrable} if there exists $b$ in the multiplier algebra $\M(A)$ of $A$ such
that for any positive linear functional $\theta$ on $A$, the function $t\mapsto \theta(\alpha_t(a))$ is integrable in the ordinary sense,
and $\int_G\theta(\alpha_t(a))\dd{t}=\theta(b)$. In this case, it is natural to write $b=\int\alpha_t(a)\dd{t}$. However, we should point out that this integral does not converge in Bochner's sense, unless $G$ is compact or $a=0$, because the integrand has constant norm. The $G$-\cstar{}algebra $A$ is called \emph{integrable} if the space of integrable elements (that is, elements of $A$ that can be written as a linear combination of positive integrable elements) is dense in $A$.

Integrability is closely related to the notion of properness discussed previously. Indeed, Rieffel proved in \cite{Rieffel:Integrable_proper} that
if $A$ is proper, then it is also integrable. Furthermore, he also proved that in the commutative case $A=\cont_0(X)$, where $X$ is some locally compact $G$-space, $A$ is integrable if and only if $X$ is a proper $G$-space. Moreover, in this case $\cont_c(X)$ consists of integrable elements and the generalized fixed point algebra $\cont_0(G\backslash X)$ is generated by the averages $\int\alpha_t(a)\dd{t}$ with $a\in \cont_c(X)$.

However, it was not clear to Rieffel in \cite{Rieffel:Integrable_proper} whether, given an integrable $G$-\cstar{}algebra $A$, there is a dense subspace $A_0\sbe A$ yielding the properness of $A$ (as defined in \cite{Rieffel:Proper}) and hence the desired generalized fixed point algebra. He defined a ``big generalized fixed point algebra"  generated by averages that worked in the commutative case, but, in general, it was really too big to be Morita equivalent to an ideal in the reduced crossed product. As explained by Ruy Exel in \cite{Exel:Unconditional,Exel:SpectralTheory}, the problem appears already in the case of Abelian groups.	

Exel was more interested in another point, namely, to characterize the $G$-\cstar{}algebras appearing as dual
actions on cross-sectional \cstar{}algebras of \emph{Fell bundles} (also called \cstar{}algebraic bundles; see \cite{Doran-Fell:Representations_2}).

Given an abelian group $G$ with Pontrjagin dual $\dualg$, the main result of \cite{Exel:Unconditional} states that the cross-sectional \cstar{}algebra $C^*(\B)$ of a Fell bundle $\B=\{\B_x\}_{x\in \dualg}$ over $\dualg$ is proper in Rieffel's sense if we equip it with the dual action of $G$. Conversely, Exel proved in \cite{Exel:SpectralTheory} that a proper $G$-\cstar{}algebra can be realized as the cross-sectional \cstar{}algebra of some Fell bundle over $\dualg$. And the associated generalized fixed point algebra can be identified with the unit fiber of the Fell bundle.
To prove this result Exel defined in \cite{Exel:SpectralTheory} an interesting relation between integrable elements called \emph{relative continuity}.
Moreover, this relation turns out to be equivalent to the requirement that some natural operators belong to the crossed product algebra \cite[Theorem~7.5]{Exel:SpectralTheory}. Due to this fact, if relative continuity is present, then it is possible to construct a generalized fixed point algebra which is Morita equivalent to an ideal in the crossed product \cite[Section 9]{Exel:SpectralTheory}.

Thus relative continuity is closely related to the notion of proper action defined by Rieffel in \cite{Rieffel:Proper} and, in particular, this is a sufficient condition to find the generalized fixed point algebra that Rieffel was looking for in \cite{Rieffel:Integrable_proper}.

However, some things were not clear in \cite{Exel:SpectralTheory} (see Questions~9.4, 9.5 and 11.16) and essentially these were the same problems that Rieffel met in \cite{Rieffel:Integrable_proper}:
\begin{question}\label{q:RelativelyContinuousSubspaces}
{\bf (1)} Suppose that $A$ is an integrable $G$-\cstar{}algebra. Is there a dense, relatively continuous subspace of $A$?

{\bf (2)} Are the generalized fixed point algebras associated to two different (say, maximal) relatively continuous subspaces always the same?
\end{question}
The answers to these questions were given by Ralf Meyer in \cite{Meyer:Generalized_Fixed} where he also generalized the notion of relative continuity to non-Abelian groups.

Meyer introduced in \cite{Meyer:Equivariant} the notion of square-integrability in the setting
of group actions on Hilbert modules and proved that the square-integrable actions on (countably generated) Hilbert modules $B$-modules are exactly those satisfying an equivariant version of the Kasparov Stabilization Theorem. Roughly speaking, this means that all such Hilbert modules are direct summands of countably many copies of $L^2(G,B)$, where $B$ is some fixed $G$-\cstar{}algebra.

The main ingredient towards Meyer's results is the construction of the so-called bra-ket operators. Suppose that $\E$ is a Hilbert $B,G$-module, that is, a Hilbert $B$-modules with a continuous action $\gamma$ of $G$ compatible with the action $\beta$ of $G$ on $B$. Given an element $\xi\in \E$, Meyer defined the following maps in \cite{Meyer:Equivariant,Meyer:Generalized_Fixed}:
\begin{alignat*}{2}
\bbra{\xi} &\colon \E\to\cont_b(G,B),&\qquad
(\bbra{\xi}\eta)(t) &\defeq \braket{\gamma_t(\xi)}{\eta},\\
\kket{\xi} &\colon \cont_c(G,B)\to\E,&\qquad
\kket{\xi} f &\defeq \int_G \gamma_t(\xi)\cdot f(t)\dd{t}.
\end{alignat*}
An element $\xi\in\E$ is said \emph{square-integrable} if $\bbra{\xi}\eta\in L^2(G,B)$ for all $\eta\in\E$.  In this case, $\bbra{\xi}$ becomes an
adjointable operator $\E\to L^2(G,B)$, whose adjoint extends $\kket{\xi}$ to an adjointable operator $L^2(G,B)\to\E$; we denote
these extensions by $\bbra{\xi}$ and $\kket{\xi}$ as well. Conversely, if $\kket{\xi}$ extends to an \emph{adjointable} operator $L^2(G,B)\to\E$, then~$\xi$ is square-integrable. We say that $\E$ is square-integrable if the space $\E_\si$ of square-integrable elements is dense in $\E$. This notion is equivalent to Rieffel's integrability as previously defined: an action on a \cstar{}algebra $A$ is integrable if and only if $A$ is square-integrable as a $A,G$-Hilbert module.

The basic example of a square-integrable Hilbert $B,G$-module is $L^2(G,B)$ endowed with the diagonal action $\beta\otimes\la$, where we identify $L^2(G,B)\cong B\otimes L^2(G)$ and write $\la$ for the left regular representation of $G$. Moreover, one can prove that direct sums or $G$-invariant direct summands of square-integrable Hilbert $B,G$-modules are again square-integrable. In particular, $\Hc_B:=\bigoplus_{n\in \Nat} L^2(G,B)$ is square-integrable, and the stabilization theorem in  \cite{Meyer:Equivariant}  says that it is the ``universal example" in the sense that it contains (as direct summands) all the other countably generated square-integrable Hilbert $B,G$-modules.

Now we turn our attention to the second work of Meyer \cite{Meyer:Generalized_Fixed}. Given square-integrable elements $\xi,\eta\in \E$, we write $\bbraket{\xi}{\eta}\defeq \bbra{\xi}\circ\kket{\eta}$ and $\kket{\xi}\bbra{\eta}:=\kket{\xi}\circ\bbra{\eta}$. A short computation shows that the operators $\bbra{\xi}:\E\to L^2(G,B)$ and $\kket{\eta}:L^2(G,B)\to\E$ are $G$-equivariant. In particular, so are the operators $\bbraket{\xi}{\eta}\in \Ls\bigl(L^2(G,B)\bigr)$ and $\kket{\xi}\bbra{\eta}\in \Ls(\E)$, where for any two Hilbert $B$-modules $\E_1$ and $\E_2$, we denote by $\Ls(\E_1,\E_2)$ the space of all adjointable operators $\E_1\to\E_2$. We also write $\Ls^G(\E_1,\E_2)$ for the subspace of $G$-equivariant operators. Note that the space of $G$-equivariant operators $\Ls^G(\E)$ is (canonically isomorphic to) the multiplier fixed point algebra $\M_1\bigl(\K(\E)\bigr)$ and this should possibly contain a generalized fixed point algebra. This indicates that the operators $\kket{\xi}\bbra{\eta}$ may generate a candidate for the generalized fixed point algebra. On the other hand, the reduced crossed product algebra $B\rtimes_\red G$ has a canonical realization as a \cstar{}subalgebra of $\Ls^G\bigl(L^2(G,B)\bigr)$. Our basic principle is that a generalized fixed point algebra should be Morita equivalent
to some ideal in the reduced crossed product. This naturally leads us to the following definition (\cite[Definition~6.1]{Meyer:Generalized_Fixed}):

\begin{definition}\label{544}
A subset $\R\sbe\E$ consisting of square-integrable elements is called \emph{relatively continuous} if
$\bbraket{\R}{\R}:=\{\bbraket{\xi}{\eta}:\xi,\eta\in \R\}\sbe B\rtimes_\red G.$
\end{definition}
Given a relatively continuous subset $\R\sbe\E$, we define
$$\F(\E,\R):=\cspn\bigl(\kket{\R}\circ B\rtimes_\red G\bigr)\sbe\Ls^G\bigl(L^2(G,B),\E\bigr).$$
By definition of relative continuity, $\F(\E,\R)$ is a \emph{concrete} Hilbert $B\rtimes_\red G$-module in the
sense that it is a closed subspace of $\Ls^G\bigl(L^2(G,B),\E\bigr)$ satisfying
$$\F(\E,\R)\circ B\rtimes_\red G\sbe\F(\E,\R)\quad\mbox{and}\quad\F(\E,\R)^*\circ\F(\E,\R)\sbe B\rtimes_\red G.$$
A concrete Hilbert $B\rtimes_\red G$-module can be regarded as an abstract Hilbert $B\rtimes_\red G$-module in the obvious way. Conversely,
any abstract Hilbert $B\rtimes_\red G$-module $\F$ can be represented in an essentially unique way in $\Ls^G(L^2(G,B),\E_\F)$, where
$\E_\F$ is the balanced tensor product $\F\otimes_{B\rtimes_\red G} L^2(G,B)$ (\cite[Theorem~5.3]{Meyer:Generalized_Fixed}).

The algebra of compact operators on $\F(\E,\R)$ is canonically isomorphic to the closed linear span of $\F(\E,\R)\circ\F(\E,\R)^*\sbe\Ls^G(\E)$
which we denote by $\Fix(\E,\R)$ and call the generalized fixed point algebra associated to the pair $(\E,\R)$. It is therefore Morita equivalent
to the ideal $\I(\E,\R):=\cspn\bigl(\F(\E,\R)^*\circ\F(\E,\R)\bigr)\sbe B\rtimes_\red G$ and $\F(\E,\R)$ may be viewed as an imprimitivity Hilbert
bimodule implementing this Morita equivalence.

In general, there are many relatively continuous subspaces $\R\sbe\E$ yielding the same Hilbert $B\rtimes_\red G$-module $\F=\F(\E,\R)$ (and hence the same generalized fixed point algebra). However, we can control this by imposing some more natural conditions on $\R$. We say that $\R$ is
\emph{complete} if it is a $G$-invariant $B$-submodule of $\E$ (that is, $\gamma_t(\R)\sbe\R$ and $\R\cdot B\sbe\R$) which is closed with respect the $\si$-norm: $\|\xi\|_\si:=\|\xi\|+\|\kket{\xi}\|$. The \emph{completion} of $\R$ is the smallest complete subspace $\R_\com$ containing $\R$. If $\R$ is complete, then the Hilbert module $\F(\E,\R)$ is just the closure of $\kket{\R}$ and, as a consequence, the generalized fixed point algebra
$\Fix(\E,\R)$ and the ideal $\I(\E,\R)$ are just the closed linear spans of $\kket{\R}\bbra{\R}$ and $\bbraket{\R}{\R}$, respectively.
Moreover, we always have $\F(\E,\R)=\F(\E,\R_\com)$ for any relatively continuous subset $\R$ and hence we can replace $\R$ by its completion
to get the same results. If we restrict attention to complete subspaces, then $\R$ is uniquely determined by the Hilbert module $\F(\E,\R)$ by the following result (\cite[Theorem~6.1]{Meyer:Generalized_Fixed}):

\begin{theorem}\label{548} Let $\E$ be a Hilbert $B,G$-module. Then the map $\R\mapsto \F(\E,\R)$ is a bijection between complete,
relatively continuous subspaces $\R\sbe\E$ and concrete Hilbert $B\rtimes_\red G$-modules $\F\sbe\Ls^G\bigl(L^2(G,B),\E\bigr)$.
The inverse map is given by the assignment $\F\mapsto \R_\F:=\{\xi\in \E_\si:\kket{\xi}\in \F\}$. Moreover, $\R$ is dense in $\E$ if and
only if $\F(\E,\R)$ is \emph{essential}, meaning that $\cspn\F(\E,\R)(L^2(G,B))=\E$.
\end{theorem}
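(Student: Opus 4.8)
The plan is to check that both assignments land in the stated classes and are mutually inverse, reading off the essentiality clause at the end. That $\F(\E,\R)$ is a concrete Hilbert $B\rtimes_\red G$-module -- equal to $\cspn\kket{\R}$ when $\R$ is complete -- has already been recorded, so $\R\mapsto\F(\E,\R)$ is well defined on complete relatively continuous subspaces. Conversely, for a concrete Hilbert $B\rtimes_\red G$-module $\F$, relative continuity of $\R_\F$ is immediate, since $\bbraket{\xi}{\eta}=\kket{\xi}^*\circ\kket{\eta}\in\F^*\F\sbe B\rtimes_\red G$ for $\xi,\eta\in\R_\F$; that $\R_\F$ is $\si$-closed follows because $\F$ is norm-closed and $\xi\mapsto\kket{\xi}$ is $\si$-to-norm contractive, and that it is a $B$-submodule follows from $\kket{\xi b}=\kket{\xi}\circ b$ (with $b$ acting on $L^2(G,B)$ through $B\to\M(B\rtimes_\red G)$) together with $\F\cdot\M(B\rtimes_\red G)\sbe\F$. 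Completeness of $\R_\F$ -- i.e.\ its invariance under the action -- I prefer to postpone: it will drop out once the bijection is established for complete subspaces.

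The key tool is a single \emph{slicing identity}: if $T\in\Ls^G\bigl(L^2(G,B),\E\bigr)$ and $\xi\in L^2(G,B)$ is square-integrable for the diagonal action, then $T\xi\in\E$ is again square-integrable and $\kket{T\xi}=T\circ\kket{\xi}_{L^2(G,B)}$, where $\kket{\xi}_{L^2(G,B)}\colon L^2(G,B)\to L^2(G,B)$ is the ket-operator formed inside $L^2(G,B)$; this is simply the equivariance of $T$, rewritten as $\bbra{T\xi}=\bbra{\xi}_{L^2(G,B)}\circ T^*$. I also use that there are square-integrable ``$\delta$-like'' vectors $\xi\in L^2(G,B)$ for which $\kket{\xi}_{L^2(G,B)}$ is a bounded net in $B\rtimes_\red G$ converging strictly to $1$ in $\M\bigl(B\rtimes_\red G\bigr)$. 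Granting this, $\F(\E,\R_\F)=\F$ follows at once. The inclusion $\F(\E,\R_\F)\sbe\F$ holds because $\kket{\R_\F}\sbe\F$ and $\F$ is a closed $B\rtimes_\red G$-submodule. For the reverse, take $T\in\F$: for each $\delta$-like $\xi$ the slicing identity gives $T\xi\in\E_\si$ with $\kket{T\xi}=T\circ\kket{\xi}_{L^2(G,B)}\in\F\cdot B\rtimes_\red G\sbe\F$, so $T\xi\in\R_\F$; and since $T^*T\in\F^*\F\sbe B\rtimes_\red G$ and $\kket{\xi}_{L^2(G,B)}\to1$ strictly along these vectors, $T=\lim T\circ\kket{\xi}_{L^2(G,B)}=\lim\kket{T\xi}\in\overline{\kket{\R_\F}}\sbe\F(\E,\R_\F)$ (the last inclusion being the same approximate-unit argument, using that $\R_\F$ is relatively continuous).

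The identity $\R_{\F(\E,\R)}=\R$ for complete $\R$ carries the weight of the theorem, and I expect its non-trivial inclusion to be the main obstacle. The inclusion $\R\sbe\R_{\F(\E,\R)}$ is clear from $\F(\E,\R)=\cspn\kket{\R}$. Conversely, let $\xi\in\E_\si$ with $\kket{\xi}\in\F(\E,\R)$. The decisive point is that $\F(\E,\R)^*\F(\E,\R)\sbe B\rtimes_\red G$ already forces $\bbraket{\xi}{\xi}=\kket{\xi}^*\circ\kket{\xi}\in B\rtimes_\red G$; hence for any bounded net $v_\lambda\to1$ strictly in $\M\bigl(B\rtimes_\red G\bigr)$ one has $\kket{\xi}\circ v_\lambda\to\kket{\xi}$ \emph{in norm}, because $\|\kket{\xi}(v_\lambda-1)\|^2\le\|v_\lambda-1\|\,\|\bbraket{\xi}{\xi}(v_\lambda-1)\|\to0$. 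This is exactly where membership of $\kket{\xi}$ in $\F(\E,\R)$ is used -- to upgrade strict to norm convergence against approximate units. Now write $\kket{\xi}=\lim\kket{\zeta_n}$ in norm with $\zeta_n\in\R$. For a $\delta$-like $f$, each $\kket{\zeta_n}f=\int_G\gamma_t(\zeta_n)f(t)\dd t$ lies in $\R$: it is the $\si$-norm limit of its Riemann sums $\sum_j\gamma_{t_j}(\zeta_n)f(t_j)\lvert\Delta_j\rvert$, which lie in the $\gamma$-invariant $B$-submodule $\R$, using the $\si$-norm continuity of the orbit map $t\mapsto\gamma_t(\zeta_n)$ of the relatively continuous vector $\zeta_n$, and $\R$ is $\si$-closed. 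Moreover $\kket{\zeta_n}f\to\kket{\xi}f$ in the $\si$-norm, since $\kket{\zeta_n}f\to\kket{\xi}f$ in $\E$ and $\kket{\kket{\zeta_n}f}=\kket{\zeta_n}\circ\kket{f}_{L^2(G,B)}\to\kket{\xi}\circ\kket{f}_{L^2(G,B)}=\kket{\kket{\xi}f}$ in operator norm; hence $\kket{\xi}f\in\R$. Finally, choosing $f$ $\delta$-like so that $\kket{\xi}f\to\xi$ in $\E$ and $\kket{f}_{L^2(G,B)}\to1$ strictly, the norm-convergence above yields $\kket{\kket{\xi}f}\to\kket{\xi}$, so $\kket{\xi}f\to\xi$ in the $\si$-norm; as $\R$ is $\si$-closed, $\xi\in\R$. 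Completeness of $\R_\F$ now follows for free: $\F(\E,(\R_\F)_\com)=\F(\E,\R_\F)=\F$, so the case just settled, applied to the complete subspace $(\R_\F)_\com$, gives $(\R_\F)_\com=\R_{\F(\E,(\R_\F)_\com)}=\R_\F$.

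For the essentiality clause: if $\R$ is dense, then every $\xi\in\R$ is an $\E$-norm limit of vectors $\kket{\xi}f$ with $f$ $\delta$-like, so $\E=\overline{\R}\sbe\cspn\kket{\R}\bigl(L^2(G,B)\bigr)\sbe\cspn\F(\E,\R)\bigl(L^2(G,B)\bigr)$; conversely, if $\cspn\F(\E,\R)\bigl(L^2(G,B)\bigr)=\E$, then, using $\F(\E,\R)=\cspn\kket{\R}$ for complete $\R$ and that each $\kket{\xi}f$ with $f\in\contc(G,B)$ is an $\E$-norm limit of finite sums $\sum_j\gamma_{t_j}(\xi)f(t_j)\lvert\Delta_j\rvert\in\R$, one gets $\E\sbe\overline{\R}$. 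The one ingredient outside this formal skeleton that truly needs attention -- invoked above, and implicitly in the completeness of $\R_\F$ -- is the $\si$-norm continuity of orbit maps on relatively continuous vectors, which ultimately reflects the ``compactness'' of elements of $B\rtimes_\red G$ in the regular representation and is what makes the Riemann-sum arguments go through in the $\si$-norm.
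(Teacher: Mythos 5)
Your proof is correct and follows essentially the same route as the paper's own argument for this result's quantum generalization (Theorem~\ref{289}, assembled from Propositions~\ref{284} and~\ref{288}, Lemma~\ref{287}, and the approximate-unit mechanism of Remark~\ref{283} and Proposition~\ref{293}): well-definedness together with $\F(\E,\R_\F)=\F$ via the canonical dense relatively continuous subspace of $L^2(G,B)$ whose kets generate $B\rtimes_\red G$, then $\R_{\F(\E,\R)}=\R$ by approximating $\kket{\xi}$ in operator norm by $\kket{\zeta_n}$ with $\zeta_n\in\R$, pushing the resulting averages into $\R$, and upgrading strict to norm convergence against approximate units using $\bbraket{\xi}{\xi}\in B\rtimes_\red G$. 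The only divergence is organizational: where the paper builds $L^1_0(\G)$-invariance and $\slc$-completeness into its definitions (the latter being automatic for groups by co-amenability), you derive the needed absorption $\kket{\zeta}f\in\R$ from $G$- and $B$-invariance via Riemann sums and the $\si$-norm continuity of orbit maps of relatively continuous vectors, which is the correct group-case substitute.
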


A \emph{continuously square-integrable Hilbert $B,G$-module} is a pair $(\E,\R)$ consisting of a Hilbert $B,G$-module $\E$ and a dense, complete, relatively continuous subspace $\R\sbe\E$. This class forms a category if we take \emph{$\R$-continuous} $G$-equivariant operators as morphisms, that is, $G$-equivariant operators that are compatible with the relatively continuous subspaces in the obvious way (\cite{Meyer:Generalized_Fixed}).

The construction $(\E,\R)\mapsto \F(\E,\R)$ is a functor from the category of continuously square-integrable Hilbert $B,G$-modules to
the category of Hilbert $B\rtimes_\red G$-modules with morphisms as usual. Theorem~\ref{548} and the fact that any abstract Hilbert module
can be realized as a concrete one imply that $(\E,\R)\mapsto \F(\E,\R)$ induces a bijection between the isomorphism classes. Moreover, this construction is natural and yields an equivalence between the respective categories (\cite[Theorem~6.2]{Meyer:Generalized_Fixed}). Using this correspondence, Meyer could give a negative answer to Question~\ref{q:RelativelyContinuousSubspaces} analyzing the subtle difference between square-integrable and continuously square-integrable representations on Hilbert spaces.

\subsection{The quantum case: our main results}
The main goal of this paper is to generalize the concepts and results above to the setting of locally compact quantum groups
in the sense of Kustermans and Vaes \cite{Kustermans-Vaes:LCQG}.

In \cite{Buss-Meyer:Square-integrable} we defined the notion of square-integrable coactions of a locally compact quantum group $\G$
on \cstar{}algebras and Hilbert modules generalizing the notion of integrable (or proper) actions of groups mentioned above. The basic ingredient here is the existence of a Haar weight on $\G$ which naturally leads us to the setting of locally compact quantum groups. One basic example is the comultiplication of $\G$ itself which is always integrable, for any locally compact quantum group. Moreover, given coactions $\co_A$ and $\co_B$ of $\G$ on \cstar{}algebras $A$ and $B$, respectively, and given a nondegenerate $\G$-equivariant $*$-homomorphism $\pi:A\to\M(B)$, if $\co_A$ is integrable, then so is $\co_B$. As a consequence, we get that any dual coaction is integrable. In particular, if $\G$ is regular, the dual coaction of $\G$ on the algebra of compact operators $\K:=\K\bigl(L^2(\G)\bigr)$ is integrable, where $L^2(\G)$ denotes the $L^2$-Hilbert space associated to
the Haar weight of $\G$. Moreover, even if $\G$ is not regular, $\K$ always has a canonical coaction of $\G$, and it is always integrable. More generally, we can always furnish the tensor product $A\otimes\K$ with a coaction of $\G$ (whenever $A$ has a coaction of $\G$) and this coaction is also always integrable. In particular, any coaction is Morita equivalent to an integrable coaction.

As in the group case, the basic example of a square-integrable Hilbert $B,\G$-module is $B\otimes L^2(\G)$ endowed with a canonical coaction of $\G$. The main result in \cite{Buss-Meyer:Square-integrable} is the quantum version of the equivariant Kasparov Stabilization Theorem (see \cite[Theorem~6.1]{Buss-Meyer:Square-integrable}).

Our main goal in this paper is to give the definition of relative continuity and generalized fixed point algebras in the setting of coactions of locally compact quantum groups on Hilbert modules. In fact, once we have the bra-ket operators, the definitions are exactly the same as in the group case. Given a relatively continuous subset $\R$ in a Hilbert $B,\G$-module $\E$, we define, as in the group case, a concrete Hilbert module $\F(\E,\R)$ over the reduced crossed product $B\rtimes_\red\cdualG$ (the reason for this notation will be clear later). Again, the algebra of compact operators on $\F(\E,\R)$ is (canonically isomorphic to) the generalized fixed point algebra $\Fix(\E,\R)$ and therefore, it is Morita equivalent to the ideal $\I(\E,\R):=\cspn\bigl(\F(\E,\R)^*\circ\F(\E,\R)\bigr)$ in $B\rtimes_\red\cdualG$. If $\I(\E,\R)$ is equal to $B\rtimes_\red\cdualG$, then we say that $\R$ is \emph{saturated}.

One of the first examples that we analyze is the coaction of $\G$ on itself via the comultiplication. We already mentioned that this coaction is always integrable, but here is where the first difference appears: we prove that there is a non-zero relatively continuous subset of $\G$ if and only if $\G$ is semi-regular. Moreover, there is a saturated relatively continuous subset of $\G$ if and only if $\G$ is regular.

If $\G$ is compact, then any subset $\R\sbe\E$ is relatively continuous and the generalized fixed point algebra $\Fix(\E)=\Fix(\E,\E)$ is the usual fixed point algebra which is therefore Morita equivalent to an ideal in the reduced crossed product.

The most important example is the Hilbert $B,\G$-module $B\otimes L^2(\G)$. We prove that we always can find a dense, relatively continuous subspace $\R_0\sbe B\otimes L^2(\G)$ such that $\F\bigl(B\otimes L^2(\G),\R_0\bigr)=B\rtimes_\red\cdualG$. In particular, this shows that reduced crossed products appear as generalized fixed point algebras. This is a basic observation in the group case.

We also analyze some completeness conditions of relatively continuous subsets. The possible non-co-amenability of the quantum group brings about some technical problems at this point. As in the group case, we can define complete subspaces, but it turns out that completeness alone is not
enough in general. We need an extra condition that we call \emph{\slc-completeness}. This is a sort of ``slice map property" and this
is where the script ``\slc{}" comes from. If $\G$ is co-amenable, then this condition reduces to completeness. Having these completeness conditions
we can then define a \emph{continuously square-integrable Hilbert $B,\G$-module} to be a pair $(\E,\R)$, where $\E$ is Hilbert $B,\G$-modules, and $\R$ is a dense, complete, relatively continuous subspace. If, in addition, $\R$ is \slc-complete then we say that
$(\E,\R)$ is an \slc-continuously square-integrable Hilbert $B,\G$-module.

One of our main results is a quantum version of Meyer's Theorem~\ref{548} above. If we replace completeness by \slc-completeness, then the result remains almost unchanged in the quantum setting (see Theorem~\ref{289} below). As in the group case, this implies that the construction $(\E,\R)\mapsto\F(\E,\R)$ is an equivalence between the categories of \slc-continuously square-integrable Hilbert $B,\G$-modules and Hilbert modules over the reduced crossed product $B\rtimes_\red\cdualG$. The inverse construction is given by the assignment $\F\mapsto (\E_\F,\R_\F)$, where $\E_\F:=\F\otimes_{B\rtimes_\red\cdualG}\bigl(B\otimes L^2(\G)\bigr)$ and $\R_\F$ is the \slc-completion of the algebraic tensor product $\F\odot_{B\rtimes_\red\cdualG}\R_0$.

Several applications and further developments of the theory of generalized fixed point algebras have been already explored in the group case since Rieffel's pioneering work on proper actions \cite{Rieffel:Proper}. To mention just a few, we refer to \cite{Huef-Raeburn-Williams:FunctorialityGFPA,Huef-Kaliszewski-Raeburn-Williams:Naturality_Rieffel,Huef-Kaliszewski-Raeburn-Williams:Naturality_Symmetric,Kaliszewski-Muhly-Quigg-Williams:Fell_bundles_and_imprimitivity_theoremsII,Echterhoff.Williams:StructureCrossedProducts,Echterhoff-Emerson:Structure_proper}. We expect that in the future some of these applications will also be available in the quantum world.

\section{Preliminaries and notational conventions}

Most of our notations will be as in \cite{Buss-Meyer:Square-integrable}. For reader's convenience, we review some of these here.
A (reduced) locally compact quantum group -- in the sense of Kustermans and Vaes \cite{Kustermans-Vaes:LCQG} -- will be generally denoted by $\G$.
This is, therefore, a \cstar{}algebra endowed with a comultiplication $\Delta\colon \G\to \M(\G\otimes\G)$ and left and right invariant faithful Haar weights $\f$ and $\psi$, respectively, and all these data is required to satisfy several technical conditions (see \cite{Kustermans-Vaes:LCQG} for details). The symbol $\otimes$ will always mean the minimal tensor product between \cstar{}algebras in this paper, and sometimes also denote (internal or external) tensor products between Hilbert modules. The Haar weights on $\G$ are supposed to be lower semi-continuous and they can be uniquely extended to strictly lower semi-continuous weights on the multiplier \cstar{}algebra $\M(\G)$. We use the same letters $\f$ and $\psi$ to denote these extensions. The domain of $\f$ will be denoted $\dom(\f)$. This is a strictly dense \Star{}subalgebra of $\M(\G)$ which is defined as the linear space of all positive elements $x\in \M(\G)_+$ with $\f(x)<\infty$; of course, we use the same kind of notation for $\psi$ or any other unbounded linear map. We shall fix a GNS-construction associated to $\f$: this is a triple $(H,\iota,\La)$, where $H$ is a Hilbert space, $\iota\colon\G\to \Ls(H)$ is a nondegenerate \Star{}homomorphism (with extension to $\M(\G)$ also denoted $\iota$) and an unbounded linear map $\La\colon\dom(\La)\defeq\{x\in \M(\G):\f(x^*x)<\infty\}\sbe \M(\G)\to H$ with dense range satisfying $\braket{\La(x)}{\La(y)}=\f(x^*y)$ for all $x,y\in \dom(\La)$ and $\La(xy)=\iota(x)\La(y)$ for all $x\in \M(\G)$ and $y\in \dom(\La)$. The map $\La$ is closed with respect to the strict topology on $\M(\G)$ and the norm on $H$. Moreover, since $\f$ is faithful, $\iota$ is a faithful representation of $\G$ and we use it to identify $\G$ with its image in $\Ls(H)$ via $\iota$. In other words, we view $\iota$ as an inclusion map $\G\into \Ls(H)$ and omit it from all formulas (so, for instance, we have $\La(xy)=x\La(y)$). The Hilbert space $H$ is also sometimes denoted $L^2(\G)$. Other objects associated to a quantum group, like the left and right regular corepresentations $W$ and $V$, the modular element $\delta$, scaling constant $\nu$, and so on, will be reviewed throughout the text as need.

Of most importance for us will be the slice maps associated to $\f$ and $\La$. Given an arbitrary \cstar{}algebra $A$, there is an unbounded linear $\id_A\otimes\f$ from a suitable strictly dense (hereditary) \Star{}subalgebra $\dom(\id_A\otimes\f)\sbe\M(A\otimes\G)$ to $\M(A)$; a positive element $x\in \M(A\otimes\G)_+$ belongs to $\dom(\id_A\otimes\f)$ iff there is $a\in \M(A)$ such that for all positive linear functionals $\theta\in A^*_+$, $(\theta\otimes\id_\G)(x)\in \dom(\f)$ and in this case $(\id_A\otimes\f)(x)=a$. There is also an unbounded linear map $\id_A\otimes\La$ from $\dom(\id_A\otimes\La)\defeq \{x\in \M(A\otimes\G):x^*x\in \dom(\id_A\otimes\f)\}$ to $\M(A\otimes H)\defeq \Ls(A,A\otimes H)$, where here $A$ is viewed as (right) Hilbert $A$-module and $A\otimes H$ denotes the usual Hilbert $A$-module defined as the (external) tensor product of $A$ with $H$.
The space $\M(A\otimes H)$ is also sometimes called the multiplier module of $A\otimes H$; this is a Hilbert $\M(A)$-module in the canonical way.
The space $\dom(\id_A\otimes\La)$ is a strictly dense left ideal in $\M(A\otimes\G)$ whose associated hereditary \Star{}subalgebra is $\dom(\id_A\otimes\f)$, that is, every element of $\dom(\id_A\otimes\f)$ is a linear combination of products $x^*y$ with $x,y\in \dom(\id_A\otimes\La)$. The map $\id_A\otimes \La$ is closed for the strict topology on $\M(A\otimes\G)$ is the strong topology on $\M(A\otimes H)$ and satisfies
$$(\id_A\otimes\La)(x)^*(\id_A\otimes\La)(y)=(\id_A\otimes\f)(x^*y)\quad\mbox{for all }x,y\in \dom(\id_A\otimes\La);$$
$$(\id_A\otimes\La)(xy)=x\cdot (\id_A\otimes\La)(y)\quad\mbox{for all }x\in \M(A\otimes\G),\, y\in \dom(\id_A\otimes\La).$$
The full construction and further properties of the maps $\id_A\otimes\f$ and $\id_A\otimes\La$ are given in \cite{Kustermans-Vaes:Weight}. Recall that $\La$ is the GNS-map for $\f$; the map $\id_A\otimes\La$ may be viewed as a sort of generalized KSGNS-map for $\id_A\otimes\f$. Using linking algebras, it is possible to extend these constructions to Hilbert modules; we will have more to say about that in the next section.

A coaction of $\G$ on a \cstar{}algebra $B$ is a nondegenerate \Star{}homomorphism $\co_B\colon B\to \M(B\otimes\G)$ satisfying $(\co_B\otimes \id)\circ\co_B=(\id\otimes\Delta)\circ\co_B$. We are very flexible with this definition in general in the sense that we do not assume, for instance, that $\co_B$ is injective or even that its range is contained in $\tilde\M(B\otimes\G)\defeq \{x\in \M(B\otimes\G): x(1\otimes \G), (1\otimes \G)x\sbe B\otimes\G\}$. We say that $\co_B$ is continuous if the closed linear span of $\co_B(B)(1\otimes\G)$ equals $B\otimes\G$, and in this case we say $B$ is a $\G$-\cstar{}algebra. Given a continuous coaction, we define the \emph{reduced crossed product}:
$$B\rtimes_\red\cdualG\defeq\cspn\big(\co_B(B)(1\otimes\cdualG)\big)\sbe \Ls(B\otimes H),$$
where we have simply identified $\co_B(B)$ as a subalgebra of $\Ls(B\otimes H)$ using the representation $\G\into \Ls(H)$. Here $\cdualG=\dualJ\dualG \dualJ$ denotes the \cstar{}commutant of $\G$, where $\dualJ$ is the modular conjugation of $\dualG$ (see \cite{Kustermans-Vaes:LCQG} for further details). Continuity of the coaction $\co_B$ ensures that $B\rtimes_\red\cdualG$ is a \cstar{}subalgebra of $\Ls(B\otimes H)$.

Let $B$ be a \cstar{}algebra with a $G$-coaction $\co_B$, and let $\E$ be a (right) Hilbert $B$-module. A coaction on $\E$ is a linear map $\co_\E\colon \E\to \M(\E\otimes\G)\defeq \Ls(B\otimes\G,\E\otimes\G)$ satisfying:
\begin{enumerate}
\item $\co_\E(\xi\cdot b)=\co_\E(\xi)\co_B(b)$ for all $\xi\in \E$, $b\in B$;
\item $\co_\E(\xi)^*\co_\E(\eta)=\co_B(\braket{\xi}{\eta}_B)$ for all $\xi,\eta\in \E$;
\item $\co_\E$ is nondegenerate, meaning that $\cspn\co_\E(\E)(B\otimes\G)=\E\otimes\G$; and
\item $(\co_\E\otimes\id)\circ\co_\E=(\id\otimes\Delta)\circ\co_\E$ (this equation makes sense by nondegeneracy);
\end{enumerate}
If the underlying coaction $\co_B$ of $\G$ on $B$ is continuous, we also say that $\E$ is a Hilbert $B,\G$-module. Notice that in this case we have $\cspn\bigl(\co_\E(\E)(1\otimes\G)\bigr)=\E\otimes\G$. If, in addition, $\cspn\bigl((1_\E\otimes\G)\co_\E(\E)\bigr)=\E\otimes\G$, we say that a coaction $\co_\E$ is \emph{continuous} (this is not automatic, even if $\co_B$ is continuous). The theory of coactions on Hilbert modules has been developed in \cite{Baaj-Skandalis:Hopf_KK} and this is our main reference on the subject. For a coaction on $\E$, there is a canonical induced coaction $\co_{\K(\E)}$ on the \cstar{}algebra $\K(\E)$ of compact operators on $\E$ satisfying $\co_{\K(\E)}(\ket{\xi}\bra{\eta})=\co_\E(\xi)\co_\E(\eta)^*$, where $\ket{\xi}\bra{\eta}\in\K(\E)$ denotes the compact operator defined by $\ket{\xi}\bra{\eta}(\zeta)\defeq \xi\braket{\eta}{\zeta}_B$. If $\co_\E$ is continuous, so is $\co_{\K(E)}$. For a Hilbert $B,\G$-module $(\E,\co_\E)$, we define:
$$\E\rtimes_\red\cdualG:=\cspn\bigl(\co_\E(\E)(1_B\otimes\cdualG)\bigr)\sbe \Ls(B\otimes H,\E\otimes H).$$
Here we are using the embedding $\G\into \Ls(H)$ to view $\M(\E\otimes\G)$ as a subspace of $\M(\E\otimes \K(H))\cong\Ls(B\otimes H,\E\otimes H)$.
Observe that $\E\rtimes_\red\cdualG\sbe\Ls^\G(B\otimes H,\E\otimes H)$ is a \emph{concrete} Hilbert $B\rtimes_\red\cdualG$-module (as defined in \cite[Section~5]{Meyer:Generalized_Fixed}). Moreover, the map $\xi\otimes x\mapsto \co_\E(\xi)x$ yields a canonical isomorphism $\E\rot{\co_B}(B\rtimes_\red\cdualG)\cong\E\rtimes_\red\cdualG$. If $\co_\E$ is continuous, then $\E\rtimes_\red\cdualG=\cspn\bigl((1_{\K(\E)}\otimes\cdualG)\co_\E(\E)\bigr)$ and we have a canonical isomorphism $\K(\E\rtimes_\red\cdualG)\cong\K(\E)\rtimes_\red\cdualG$.

Let $\E$ be a Hilbert $B$-module with a coaction $\co_\E$ of $\G$. Given $\omega\in \G^*$ and $\xi\in \M(\E)$ we define
\begin{equation}\label{483}
\omega*\xi:=(\id_\E\otimes\omega)\bigl(\co_\E(\xi)\bigr).
\end{equation}
This gives $\M(\E)$ the structure of
a Banach left $\G^*$-module. Here we use the canonical Banach algebra structure on $\G^*$: $\omega\cdot\theta\defeq (\omega\otimes\theta)\circ\Delta$. In particular, $\M(\E)$ is also a Banach left $L^1(\G)$-module (this is a suitable Banach subalgebra of $\G^*$ isomorphic to the predual of von Neumann algebra $\G''\sbe\Ls(H)$; see Section~\ref{409} below and \cite{Kustermans-Vaes:LCQG} for the precise definition). But even if $\xi\in \E$ and $\omega\in L^1(\G)$, it is not true, in general, that $\omega*\xi\in \E$. However, if $\E$ is a Hilbert $B,\G$-module, that is, if $\co_B$ is continuous, this is true and in this case the left action~\textup{\eqref{483}} turns $\E$ into a nondegenerate Banach left $L^1(\G)$-module, that is, $\cspn(L^1(\G)*\E)=\E$. This is related to the notion of weak continuous actions defined in \cite{Baaj-Skandalis-Vaes:Non-semi-regular}.

\section{Review of square-integrable coactions}

In this section we review the main results concerning square-integrability for coactions of locally compact quantum group as studied in \cite{Buss-Meyer:Square-integrable}. Throughout we fix a locally compact quantum group $\G$ and denote its left Haar weight by $\f$.
As in the previous section, we fix a GNS-construction for $\f$ of the form $(H,\iota,\La)$, where $\iota$ denotes
the inclusion map $\G\hookrightarrow\Ls(H)$.

\begin{definition}[Definition~5.7 in \cite{Buss-Meyer:Square-integrable}] Let $\E$ be a Hilbert $B$-module with a coaction $\co_{\E}$ of $\G$. We say that $\xi\in\M(\E)$ is \emph{square-integrable}\index{square-integrable!element} if $\co_{\E}(\xi)^*(\eta\otimes 1)\in\M(B\otimes\G)$ belongs to the domain of  $\id_B\otimes\Lambda$ for all $\eta\in\E$. We write $\M(\E)_\si$ for the set of all square-integrable elements of $\M(\E)$, and $\E_\si$ for the set of square-integrable elements of $\E$. We say that $\E$ (or the coaction $\co_\E$)
is \emph{square-integrable}\index{square-integrable!coaction} if $\E_\si$ is dense in $\E$.
\end{definition}

If $A$ is a \cstar{}algebra with a coaction $\co_A$ of $\G$, we may view $A$ as a Hilbert $A$-module in the usual way, and therefore
speak of square-integrable elements and coactions in this case. It turns out that in the case of \cstar{}algebras one can give a slightly different description of square-integrable coactions in terms of integrable elements: an element $a\in A^+$ is called integrable if $\co_A(a)\in \dom(\id_A\otimes\f)$. An arbitrary element (not necessarily positive) $a\in A$ is said to be integrable if it is a linear combination of positive integrable elements. We write $A_\ii^+$ for the set of positive integrable elements and $A_\ii=\spn A_\ii^+$ for the space of all integrable elements. This is a hereditary \Star{}subalgebra of $A$. Moreover, $a\in A$ is square-integrable if and only if $aa^*\in A^+_\ii$. The coaction is called integrable if $A_\ii$ is dense in $A$ (or equivalent, if $A_\ii^+$ is dense in $A^+$). This is equivalent to square-integrability of $\co_A$ if $A$ is considered as a Hilbert $A$-module as above. More generally, if $\co_{\E}$ is a coaction of $\G$ on a Hilbert $B$-module $\E$, and
$\co_{\K(\E)}$ denotes the induced coaction of $\G$ on  $\K(\E)$, then an element $\xi\in \M(\E)$ is square-integrable if and only if $\ket{\xi}\bra{\xi}\in \K(\E)$ is integrable (see Proposition~5.20 in \cite{Buss-Meyer:Square-integrable}).

Square-integrable Hilbert $B,\G$-modules are characterized by the existence of sufficiently many adjointable $\G$-equivariant operators $\E\to B\otimes H$. We are going to explain how we can construct such operators in what follows.

Let $\E$ be a Hilbert $B$-module with a coaction $\co_\E$ of $\G$ and suppose that $\xi\in \M(\E)_\si$. Then the equation
\begin{equation}\label{eq:DefBraOperator}
\bbra{\xi}\eta:=(\id_B\otimes\La)\bigl(\co_\E(\xi)^*(\eta\otimes 1)\bigr)
\end{equation}
defines an adjointable operator $\bbra{\xi}:\E\to B\otimes H$ (see \cite[Lemma~5.17]{Buss-Meyer:Square-integrable}). For all $b\in B$ and $x\in \dom(\f)$, we have $\co_\E(\xi)(b\otimes s)\in \dom(\id_\E\otimes\f)$ and the adjoint operator $\kket{\xi}:=\bbra{\xi}^*$ is given by the formula
$$\kket{\xi}\bigl(b\otimes\La(x)\bigr)=(\id_\E\otimes\f)\bigl(\co_\E(\xi)(b\otimes x)\bigr)$$ for all $b\in B$ and $x\in \dom(\f)$. Here we are using the (unbounded) slice map $\id_\E\otimes\f\colon \dom(\id_\E\otimes\f)\sbe \M(\E\otimes \G)\to \M(\E)$ induced by $\f$. One way to define this is pass to the linking algebra $L(\E)=\left(\begin{array}{cc}\K(\E) & \E \\ \E^* & B\end{array}\right)\cong \K(\E\oplus B)$ of $\E$, consider the (already defined) slice $\id_{L(\E)}\otimes\f\colon \dom(\id_{L(\E)}\otimes\f)\sbe \M(L(\E)\otimes\G)\to \M(\E)$ and taking the ``upper right corner" to get the slice $\id_\E\otimes\f$ in such way that $\dom(\id_{L(\E)}\otimes\f)=\left(\begin{array}{cc}\dom(\id_{\K(\E)}\otimes\f)&\dom(\id_\E\otimes\f)\\ \dom(\id_{\E^*}\otimes\f) & \dom(\id_B\otimes\f)\end{array}\right)$ and $\id_{L(\E)}\otimes \f=\left(\begin{array}{cc}\id_{\K(\E)}\otimes\f & \id_\E\otimes\f\\ \id_{\E^*}\otimes\f & \id_B\otimes\f\end{array}\right)$. The slice $\id_{\E^*}\otimes\f$ defined in this way is an unbounded linear map from a suitable domain $\dom(\id_{\E^*}\otimes\f)\sbe \Ls(\E\otimes\G,B\otimes \G)$ to $\Ls(\E,B)$.

Similarly, one can construct the slices $\id_\E\otimes\La\colon \dom(\id_\E\otimes\La)\sbe \M(\E\otimes\G)\to\M(\E\otimes H)$ and $\id_{\E^*}\otimes\La\colon \dom(\id_{\E^*}\otimes\La)\sbe\Ls(\E\otimes\G,B\otimes \G)\to \Ls(\E,B\otimes H)$ of $\La$ in such way that
$\id_{L(\E)}\otimes \La=\left(\begin{array}{cc}\id_{\K(\E)}\otimes\La & \id_\E\otimes\La\\ \id_{\E^*}\otimes\La & \id_B\otimes\La\end{array}\right)$.

Defined in this way, an element $X\in \Ls(\E\otimes \G,B\otimes\G)$ belongs to the domain of $\id_{\E^*}\otimes\La$ if and only if $X(\eta\otimes 1)$ belongs to the domain of $\id_B\otimes \La$ for all $\eta\in \E$, and in this case
\begin{equation}
(\id_{\E^*}\otimes\La)(X)\eta=(\id_B\otimes\La)(X(\eta\otimes 1)).
\end{equation}
In particular, $\xi\in \M(\E)_\si$ if and only if $\co_\E(\xi)^*\in \dom(\id_{\E^*}\otimes\Lambda)$, and in this case
\begin{equation}\label{eq:BraOperatorAsSlice}
\bbra{\xi}=(\id_{\E^*}\otimes\La)\bigl(\co_\E(\xi)^*\bigr).
\end{equation}

\begin{example}[See also Example~5.16 in \cite{Buss-Meyer:Square-integrable}]\label{477} If $\G$ is a compact quantum group, that is, if the Haar weight $\f$ is bounded, then every Hilbert $B$-module $\E$ with a coaction of $\G$ is square-integrable. Given any $\xi\in \M(\E)$ the adjointable operator $\bbra{\xi}\in \Ls(\E,B\otimes H)$ can be described as follows: observe that $\La(x)=\La(x\cdot 1)=x\La(1)=x(\dtg_1)$, where $1$ is the unit of $\G$ and $\dtg_1:=\La(1)$. More generally, the map $\id_B\otimes\La$ is given by
$$(\id_B\otimes\La)(x)=x(\id_{B}\otimes\La)(1_B\otimes 1)=x(1_B\otimes\dtg_1)$$ for all $x\in \M(B\otimes\G)$, where we have identified $\M(B\otimes\G)\sbe\Ls(B\otimes H)$.
Even more generally, the map $\id_{\E^*}\otimes\La$ can also be written in the form
$$(\id_{\E^*}\otimes\La)(x)=x(1_\E\otimes\dtg_1)$$ for all $x\in \dom(\id_{\E^*}\otimes\Lambda)=\Ls(\E\otimes\G,B\otimes\G)$, where $1_\E$ denotes the identity operator on $\E$.
Thus $1_\E\otimes\dtg_1$ is an element of $\Ls(\E)\otimes H \sbe\Ls(\E,\E\otimes H)$.
Here we are identifying
$\Ls(B\otimes\G,\E\otimes H)=\M(\E\otimes\G)\sbe \M(\E\otimes \K(H))\cong\Ls(B\otimes H,\E\otimes H)$ and therefore $x$ is considered as an element of
$\Ls(\E\otimes\G,B\otimes\G)\sbe\Ls(\E\otimes H,B\otimes H)$.
In particular, we get
$$\bbra{\xi}=(\id_{\E^*}\otimes\La)\bigl(\co_\E(\xi)^*\bigr)=\co_\E(\xi)^*(1_\E\otimes\dtg_1)$$ for all $\xi\in\M(\E)$. The adjoint operator $\kket{\xi}\in \Ls(B\otimes H,\E)$ is therefore given by
$$\kket{\xi}=(1_{\E}\otimes\dtg_1^*)\co_\E(\xi),$$
where $\dtg_1^*$ denotes the element of $\Ls(H,\C)$ given by $\dtg_1^*(v)=\braket{\dtg_1}{v}$ for all $v\in H$.
\end{example}

It is useful to keep the group case in mind. As explained in \cite[Example~5.14]{Buss-Meyer:Square-integrable}, for a locally compact group $G$, if we consider the corresponding commutative quantum group $\G=\cont_0(G)$, then the theory of square-integrability specializes to the one developed in \cite{Meyer:Generalized_Fixed}. In this case, if $\E$ is a Hilbert $B,G$-module with $G$ action $\gamma$, and $\xi\in \E_\si$, the bra-ket operators are adjointable operators $\bbra{\xi}\colon \E\to L^2(G,B)\cong B\otimes L^2(G)$ and $\kket{\xi}\colon L^2(G,B)\to \E$ determined by the formulas:
\begin{equation}
\bbra{\xi}(\eta)|_t=\braket{\gamma_t(\xi)}{\eta}_B\quad \mbox{for all }\eta\in \E,\, t\in G,
\end{equation}
and
\begin{equation}
\kket{\xi}(f)=\int_G\gamma_t(\xi)f(t)\dd{t}\quad\mbox{for all }f\in \contc(G,B)\sbe L^2(G,B).
\end{equation}

The next result gives some basic properties of the bra-ket operators $\bbra{\xi}$ and $\kket{\xi}$.
Given a \cstar{}algebra $A$ with a coaction $\co_A$ of $\G$ and an element $a\in \M(A)_\ii$,
we define $E_1(a)\defeq (\id_A\otimes\f)\bigl(\co_A(a)\bigr)\in \M(A)$. By \cite[Lemma~4.10]{Buss-Meyer:Square-integrable}, $E_1(a)$ belongs to the multiplier fixed point algebra $\M_1(A)=\{a\in \M(A):\co_A(a)=a\otimes 1\}$. If $\alpha$ is an action of a locally compact group $G$ on $A$, then $E_1(a)$ can be interpreted as the strict unconditional integral $E_1(a)=\int_G^\su\alpha_t(a)\dd{t}$ (see \cite{Buss-Meyer:Continuous,Exel:Unconditional,Exel:SpectralTheory} for further details).

\begin{proposition}\label{003} Let $\E$ be a Hilbert $B$-module with a coaction $\co_\E$ of $\G$.
\begin{enumerate}
\item[\textup{(i)}] If $\xi,\eta\in \M(\E)_\si$, then $\xi\circ\eta^*\in \M\bigl(\K(\E)\bigr)_\ii$ and
$\kket{\xi}\bbra{\eta}= E_1(\xi\circ\eta^*).$

In particular, if $\xi,\eta\in \E_\si$, then $\ket{\xi}\bra{\eta}\in \K(\E)_\ii$ and
$\kket{\xi}\bbra{\eta}= E_1(\ket{\xi}\bra{\eta}).$
\item[\textup{(ii)}] If $\xi\in\M(\E)_\si$ and $b\in \M(B)$, then $\xi\cdot b\in\M(\E)_\si$ and
$\kket{\xi\cdot b}=\kket{\xi}\circ\co_B(b),$
where we have identified $\co_B(b)\in \M(B\otimes \G)\sbe\Ls(B\otimes H)$.

In particular, if $\xi\in \E_\si$ \textup(or even in $\M(\E)_\si$\textup) and $b\in B$, then $\xi\cdot b\in \E_\si$ and $\kket{\xi\cdot b}=\kket{\xi}\circ\co_B(b).$

\item[\textup{(iii)}] Let $\F$ be another Hilbert $B$-module with a coaction of $\G$.
If $\xi\in\M(\E)_\si$ and $T\in\Ls^\G(\E,\F)$, then $T\circ\xi\in\M(\F)_\si$ and
$\kket{T\circ\xi}=T\circ\kket{\xi}.$

In particular, if $\xi\in \E_\si$ and $T\in \Ls^\G(\E,\F)$, then $T(\xi)\in \F_\si$ and $$\kket{T(\xi)}=T\circ\kket{\xi}.$$

\item[\textup{(iv)}] If $T\in \M\bigl(\K(\E)\bigr)_\si$ and $\xi\in\M(\E)$, then $T\circ\xi\in \M(\E)_\si$ and
$$\kket{T\circ\xi}=\kket{T}\circ\co_\E(\xi).$$

In particular, if $T\in \M\bigl(\K(\E)\bigr)_\si$ and $\xi\in \E$, then $T(\xi)\in \E_\si$ and $$\kket{T(\xi)}=\kket{T}\circ\co_\E(\xi).$$

More generally, if $\pi:A\to \Ls(\E)$ is a $\G$-equivariant
nondegenerate $*$-homo\-morphism, where $A$ is a \cstar{}algebra with a coaction of $\G$,
then for all $a\in \M(A)_\si$ and $\xi\in \M(\E)$ we have $\pi(a)\circ\xi\in\M(\E)_\si$ and
$$\kket{\pi(a)\circ\xi}=\kket{\pi(a)}\co_\E(\xi)=(\pi\otimes\id_{H^*}) (\kket{a})\circ\co_\E(\xi).$$
\item[\textup{(v)}] If $\xi\in \M(\E)_\si$ and $\eta\in \M(\E)$, then $\xi\circ\eta^*\in \M\bigl(\K(\E)\bigr)_\si$ and
$$\kket{\xi\circ\eta^*}=\kket{\xi}\circ\co_\E(\eta)^*.$$

In particular, if $\xi\in \E_\si$ and $\eta\in\E$, then $\ket{\xi}\bra{\eta}\in \K(\E)_\si$ and
$$\kket{\ket{\xi}\bra{\eta}}=\kket{\xi}\circ\co_\E(\eta)^*.$$
\end{enumerate}
In \textup{(iv)} and \textup{(v)} we are viewing
$\M(\E\otimes\G)=\Ls(B\otimes\G,\E\otimes\G)$ as a subspace of $\Ls(B\otimes
H,\E\otimes H)$ and \textup(hence\textup) also $\Ls(\E\otimes\G,B\otimes\G)$ as a subspace $\Ls(\E\otimes H,B\otimes H)$ using the representation $\G\into \Ls(H)$. In \textup{(iv)} we also use the canonical isomorphism  $\Ls(\E,\E\otimes H)\cong\Ls(\K(\E),\K(\E)\otimes H)$.
\end{proposition}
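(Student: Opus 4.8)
The plan is to reduce all five items to the slice calculus for the unbounded maps $\id\otimes\La$ and $\id\otimes\f$: the \emph{isometry identity} $(\id\otimes\La)(x)^*(\id\otimes\La)(y)=(\id\otimes\f)(x^*y)$, the \emph{module identity} $(\id\otimes\La)(xy)=x\,(\id\otimes\La)(y)$, the domain criteria recalled just before Example~\ref{477}, and the naturality of both slices under $\G$-equivariant nondegenerate $*$-homo\-morphisms (from \cite{Kustermans-Vaes:Weight}). Whenever an operator between two different modules occurs I would pass to a linking algebra large enough to contain all objects in play --- $L(\E)=\K(\E\oplus B)$ for parts (i), (iv), (v), and $\K(\E\oplus\F)$ for part (iii) --- and use the compatibility of these slices and their domains with the matrix corners. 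In every case the conceptual content is light; the real work is keeping track of the three ``levels'' in which one computes ($\otimes\G$ versus $\otimes H$; $\E$ versus $\K(\E)$ versus the ambient linking algebra) and of which corner an element sits in. I expect exactly this bookkeeping, together with checking that $\id_{L(\E)}\otimes\f$, $\id_{L(\E)}\otimes\La$ and their domains restrict correctly to the $\K(\E)$-, $\E$-, $\E^*$- and $B$-corners, to be the main --- essentially the only --- obstacle.

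For (i) I would work inside $L(\E)$ with its induced coaction. Since $\xi\in\M(\E)_\si$, the element $\co_\E(\xi)^*$ --- viewed via the matrix description of $\id_{L(\E)}\otimes\La$ as an element $u\in\M(L(\E)\otimes\G)$ supported in the $\E^*$-corner --- lies in $\dom(\id_{L(\E)}\otimes\La)$ with $(\id_{L(\E)}\otimes\La)(u)=\bbra{\xi}$ by the slice description \eqref{eq:BraOperatorAsSlice} of $\bbra{\xi}$; similarly $\eta$ gives $v$. A matrix computation shows that $u^*v$ is supported in the $\K(\E)$-corner and equals $\co_{\K(\E)}(\xi\circ\eta^*)$. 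The isometry identity then yields at once that $u^*v\in\dom(\id_{L(\E)}\otimes\f)$, so that $\xi\circ\eta^*\in\M(\K(\E))_\ii$, and that $(\id_{L(\E)}\otimes\f)(u^*v)=(\id_{L(\E)}\otimes\La)(u)^*(\id_{L(\E)}\otimes\La)(v)$; reading off the $\K(\E)$-corner this says $E_1(\xi\circ\eta^*)=\bbra{\xi}^*\bbra{\eta}=\kket{\xi}\bbra{\eta}$. The ``in particular'' is the special case $\xi,\eta\in\E_\si$, in which moreover $\ket{\xi}\bra{\eta}\in\K(\E)$.

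Parts (ii) and (iii) I would prove directly from the slice formula $\bbra{\xi}=(\id_{\E^*}\otimes\La)(\co_\E(\xi)^*)$. For (ii), $\co_\E(\xi\cdot b)=\co_\E(\xi)\co_B(b)$ gives $\co_\E(\xi\cdot b)^*=\co_B(b)^*\,\co_\E(\xi)^*$, so left multiplication by the multiplier $\co_B(b)^*$ and the module identity show $\xi\cdot b\in\M(\E)_\si$ and $\bbra{\xi\cdot b}=\co_B(b)^*\bbra{\xi}$; taking adjoints gives the formula. For (iii), $\G$-equivariance of $T$ gives $\co_\F(T\circ\xi)^*=\co_\E(\xi)^*(T^*\otimes\id_\G)$, and since $T^*\zeta\in\E$ for every $\zeta\in\F$ the domain criterion shows $T\circ\xi\in\M(\F)_\si$ with $\bbra{T\circ\xi}\zeta=\bbra{\xi}(T^*\zeta)$, i.e.\ $\bbra{T\circ\xi}=\bbra{\xi}\circ T^*$; taking adjoints gives $\kket{T\circ\xi}=T\circ\kket{\xi}$.

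For (iv), (v) and the general statement with $\pi$ the argument is uniform and, as in (i), delivers the square-integrability assertion and the formula simultaneously. The extra inputs are the compatibility of $\co_\E$ with the module action of $\Ls(\E)=\M(\K(\E))$, namely $\co_\E(T\circ\xi)=\co_{\K(\E)}(T)\,\co_\E(\xi)$ (a consequence of the Hilbert-module axioms and $\co_{\K(\E)}(\ket{\xi}\bra{\eta})=\co_\E(\xi)\co_\E(\eta)^*$), together with $\co_{\K(\E)}(\pi(a))=(\pi\otimes\id_\G)(\co_A(a))$ in the $\pi$-version, and the naturality of $\id\otimes\La$ under $\pi\otimes\id_\G$, which gives $\bbra{\pi(a)}=(\pi\otimes\id_H)(\bbra{a})$ and hence $\kket{\pi(a)}=(\pi\otimes\id_{H^*})(\kket{a})$. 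In detail: from $\co_\E(T\circ\xi)^*=\co_\E(\xi)^*\co_{\K(\E)}(T)^*$, the factor $\co_{\K(\E)}(T)^*$ lies in $\dom(\id_{\K(\E)}\otimes\La)$ because $T$ is square-integrable (using naturality to put it there when $T=\pi(a)$), so left multiplication by the multiplier $\co_\E(\xi)^*$ and the module identity give $T\circ\xi\in\M(\E)_\si$ and $\bbra{T\circ\xi}=\co_\E(\xi)^*\circ\bbra{T}$, where $\bbra{T}$ is read as an operator on $\E$ via the canonical isomorphism $\Ls(\E,\E\otimes H)\cong\Ls(\K(\E),\K(\E)\otimes H)$; adjoining gives $\kket{T\circ\xi}=\kket{T}\circ\co_\E(\xi)$. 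For (v) one argues the same way starting from $\co_{\K(\E)}(\xi\circ\eta^*)^*=\co_\E(\eta)\co_\E(\xi)^*$: the domain criterion puts $\co_\E(\xi)^*(S\otimes 1)$ into $\dom(\id_{\E^*}\otimes\La)$ for $S\in\K(\E)$ (since $\xi$ is square-integrable and $S\eta'\in\E$), the module identity then pulls $\co_\E(\eta)$ out of the slice, and one concludes $\bbra{\xi\circ\eta^*}=\co_\E(\eta)\circ\bbra{\xi}$, hence $\kket{\xi\circ\eta^*}=\kket{\xi}\circ\co_\E(\eta)^*$. (Alternatively, the square-integrability in (iii)--(v) follows from heredity of the integrable elements via the order estimates $\ket{T\circ\xi}\bra{T\circ\xi}\le\|\xi\|^2\,TT^*$ and $\ket{\xi}\braket{\eta}{\eta}\bra{\xi}\le\|\eta\|^2\,\ket{\xi}\bra{\xi}$, whose right-hand sides are integrable by Proposition~5.20 of \cite{Buss-Meyer:Square-integrable}.)
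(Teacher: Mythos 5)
Your proposal is correct and follows essentially the same route as the paper: both reduce every item to the isometry and module identities for $\id\otimes\La$ and $\id\otimes\f$ from \cite{Kustermans-Vaes:Weight} (Propositions~3.18, 3.27, 3.38), transported to the various corners $\K(\E)$, $\E$, $\E^*$, $B$ via the linking algebra, with (i) from the isometry identity, (ii), (iv), (v) from the module identity, (iii) from equivariance of $T$ together with the identity for right multiplication by $T\otimes 1$, and the $\pi$-version of (iv) from naturality of the slices under $\pi\otimes\id$. The only cosmetic difference is your optional alternative derivation of the square-integrability claims in (iii)--(v) from heredity of integrable elements, which the paper does not use.
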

\begin{proof} Using the definition~\eqref{eq:DefBraOperator} of the bra-operators $\bbra{\xi}$, or alternatively its description in Equation~\eqref{eq:BraOperatorAsSlice}, essentially all assertions will follow from properties of the slice map $\id_B\otimes\La$ described in \cite[Propositions~3.18,3.27,3.38]{Kustermans-Vaes:Weight}, or alternatively the corresponding properties for the slice $\id_{\E^*}\otimes\La$ (which can be derived from the properties of $\id_B\otimes\La$ using the linking algebras). So, for instance, the property:
$$(\id_B\otimes\La)(x)^*(\id_B\otimes\La)(y)=(\id_B\otimes\f)(x^*y)\quad\mbox{for all }x,y\in \dom(\id_B\otimes\La)$$
(which is proved in \cite[Proposition~3.18]{Kustermans-Vaes:Weight}) has a corresponding analogue for $\id_{\E^*}\otimes\La$:
$$(\id_{\E^*}\otimes\La)(x)^*(\id_{\E^*}\otimes\La)(y)=(\id_B\otimes\f)(x^*y)\quad\mbox{for all }x,y\in \dom(\id_{\E^*}\otimes\La).$$
This together with Equation~\eqref{eq:BraOperatorAsSlice} yields item (i). Similarly, (ii) follows from:
\begin{equation*}
(\id_{\E^*}\otimes\La)(xy)=x\cdot (\id_{\E^*}\otimes\La)(y)
\end{equation*}
for all $x\in \M(B\otimes\G)\sbe \Ls(B\otimes H)$ and $y\in \dom(\id_{\E^*}\otimes\La)$, which is also an extension of the corresponding property of $\id_B\otimes\La$ proved in \cite[Proposition~3.18]{Kustermans-Vaes:Weight}. This same result (applied to the linking algebra of $\E$) also yields:
\begin{equation*}
(\id_{\E^*}\otimes\La)(xy)=x\cdot (\id_{\K(\E)}\otimes\La)(y)
\end{equation*}
for all $x\in \Ls(\E\otimes\G,B\otimes G)\sbe \Ls(\E\otimes H,B\otimes H)$ and $y\in \dom(\id_{\K(\E)}\otimes\La)\sbe \M(\K(\E)\otimes\G)\cong\Ls(\E\otimes\G)$; and also
\begin{equation*}
(\id_{\E^*}\otimes\La)(xy)=x\cdot (\id_{\E^*}\otimes\La)(y)
\end{equation*}
for all $x\in \Ls(\E\otimes\G,B\otimes G)$ and $y\in \dom(\id_{\E^*}\otimes\La)\sbe \Ls(\E\otimes \G,B\otimes \G)$.
These properties then imply the first part of (iv) and (v). The second part in (iv) also uses:
\begin{equation*}
\kket{\pi(a)}=(\pi\otimes\id_{H^*})(\kket{a})\quad\mbox{for all }a\in \M(A)_\si.
\end{equation*}
This holds whenever $\pi\colon A\to \M(B)$ is a nondegenerate \Star{}homomorphism which is equivariant: $\co_{B}(\pi(a))=(\pi\otimes\id)(\co_A(a))$. In fact, by Proposition~3.38 in \cite{Kustermans-Vaes:Weight}, which implies that for all $X\in \dom(\id_A\otimes\La)$, one has $(\pi\otimes\id)(X)\in \dom(\id_B\otimes\La)$ and $(\id_B\otimes\La)((\pi\otimes\id)(X))=(\pi\otimes\id_H)((\id_A\otimes\La)(X))$. Hence,
\begin{align*}
\bbra{\pi(\xi)}&=(\id_B\otimes\La)\bigl(\co_B(\pi(\xi))^*\bigr)
	\\&=(\id_B\otimes\La)\bigl((\pi\otimes\id_\G)\co_A(\xi)^*\bigr)
	\\&=(\pi\otimes\id_H)\Bigl((\id_A\otimes\La)\bigl(\co_A(\xi)^*\bigr)\Bigr)
	\\&=(\pi\otimes\id_H)(\bbra{\xi}).
\end{align*}
Since $(\pi\otimes\id_H)(x)^*=(\pi\otimes\id_{H^*})(x^*)$, it also follows from this equation that
\begin{equation*}
\kket{\pi(\xi)}=(\pi\otimes\id_{H^*})(\kket{\xi}).
\end{equation*}
Finally, item (iii) follows from the
$\G$-equivariance of $T$: $\co_\F(T\xi)=(T\otimes 1)\co_\E(\xi)$, and the equality:
$$(\id_{\F^*}\otimes\La)(x(T\otimes 1))=(\id_{\E^*}\otimes \La)(x)T$$
which holds for all $x\in \dom(\id_{\E^*}\otimes\La)$ and $T\in \Ls(\F,\E)$ -- this implies $x(T\otimes 1)\in \dom(\id_{\F^*}\otimes \La)$. In fact, using linking algebras, this last property follows from the corresponding property for slices only involving \cstar{}algebras as proved in \cite[Proposition~3.27]{Kustermans-Vaes:Weight}.
\end{proof}

Let $(\E,\co_\E)_{(B,\co_B)}$ be a Hilbert $B$-module $\G$-coaction. By \cite[Lemma~5.28]{Buss-Meyer:Square-integrable}, if we equip $\M(\E)_\si$ with the so-called $\si$-norm:
$$\|\xi\|_\si:=\|\xi\|+\|\kket{\xi}\|=\|\xi\|+\|\bbra{\xi}\|=\|\<\xi|\xi\>\|^{\frac{1}{2}}+\|\bbraket{\xi}{\xi}\|^{\frac{1}{2}},$$
then $\M(\E)_\si$ is a Banach $\Ls^\G(\E),\M(B)$-bimodule, that is, $\M(\E)_\si$ is complete with respect to $\|\cdot\|_\si$ and for
all $\xi\in \E_\si$, $T\in \Ls^\G(\E)$ and $b\in B$, we have
$$\|T\xi\|_\si\leq\|T\|\|\xi\|_\si\quad\mbox{and}\quad \|\xi b\|_\si\leq\|\xi\|_\si\|b\|.$$
Moreover, $\E_\si$ is a closed submodule of $\M(\E)_\si$ and hence also complete in its own.

\begin{remark}\label{100} Suppose that $\G$ is a compact quantum group. We already know (see Example~\ref{477}) that in this case
every Hilbert $B$-module $\E$ with a coaction of $\G$ is square-integrable.
By Proposition~\ref{003}(i), we have
$$\|\kket{\xi}\|^2=\|\kket{\xi}\bbra{\xi}\|=\|(\id\otimes\f)(\co_{\K(\E)}(|\xi\>\<\xi|))\|\leq\|\f\|\|\xi\|$$ for all $\xi\in \E_\si=\E$. Thus $\|\xi\|\leq\|\xi\|_\si\leq (1+\|\f\|)\|\xi\|$.
Therefore the $\si$-norm and the norm on $\E$ are
equivalent.
\end{remark}

Consider a Hilbert $B$-module $\E$ with a coaction $\co_\E$ of $\G$.
The bra-ket operators
$\bbra{\xi}\in \Ls(\E,B\otimes H)$ and $\kket{\xi}\in \Ls(B\otimes H,\E)$
are $\G$-equivariant, for any square-integrable element $\xi$ in $\E$. In order to turn this into a precise statement, we have to define a $\G$-coaction on $B\otimes H$. The coaction that works is a kind of balanced tensor product of the coactions $\co_B$ on $B$ and a coaction $\co_{H}$ on $H$ which comes from the left regular corepresentation $W$ of $\G$ (this is a unitary multiplier in $\M(\G\otimes\dualG)$, where $\dualG\sbe \Ls(H)$ denotes the dual of $\G$, and hence may be viewed as a unitary in $\Ls(\G\otimes H)$; see \cite{Kustermans-Vaes:LCQG} for the precise definition of $W$). More precisely, this is the coaction $\co_{B\otimes H}$ of $\G$ on $B\otimes H$ defined by the formula:
\begin{equation}\label{136}
\co_{B\otimes H}(\zeta):=(1\otimes\Sigma W)(\co_B\otimes\id)(\zeta)=\Sigma_{23} W_{23}(\co_B\otimes\id)(\zeta),\quad\zeta\in B\otimes H,
\end{equation}
where $\Sigma:\G\otimes H\to H\otimes \G$ is the flip
operator. Recall that $\hat{W}\Sigma=\Sigma W^*$,
where $\hat{W}$ is the left regular corepresentation of the dual of $\G$. If we consider $B=\C$ with the trivial coaction of $\G$, then we get a coaction $\co_{H}$ of $\G$ on $H$ given by
$$\co_{H}(\eta)=\Sigma W(1\otimes\eta)=\Sigma W\Sigma^*(\eta\otimes 1)=\hat{W}^*(\eta\otimes 1),\quad \eta\in H.$$

The above coaction on $B\otimes H$ is one of the basic examples of a square-integrable coaction.
In a similar way, there is a canonical $\G$-coaction on the Hilbert $B$-module $\E\otimes H$ which is always square-integrable (for any coaction on $\E$).

The Kasparov Stabilization Theorem relates square-integrability of a given coaction with $B\otimes H$:

\begin{theorem}[{\bf Kasparov's Stabilization Theorem}, Theorem~6.1 in \cite{Buss-Meyer:Square-integrable}]\label{121}
Let $B$ be a \cstar{}algebra with a coaction $\co_B$ of $\G$ and let $\E$ be a countably generated Hilbert $B$-module with a $\co_B$-compatible coaction of $\G$. The following statements are equivalent\textup:
\begin{enumerate}
\item[\textup{(i)}] $\E$ is square-integrable;
\item[\textup{(ii)}] $\K(\E)$ is integrable;
\item[\textup{(iii)}] $\E\oplus \Hc_B\cong \Hc_B$ as Hilbert $B,\G$-modules;
\item[\textup{(iv)}] $\E$ is a $\G$-invariant direct summand of $\Hc_B$.
\end{enumerate}
\end{theorem}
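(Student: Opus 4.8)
The plan is to prove the equivalences by running the cycle $(\textup{iii})\Rightarrow(\textup{iv})\Rightarrow(\textup{i})\Leftrightarrow(\textup{ii})\Rightarrow(\textup{iii})$, in which only the last implication carries real content. The implication $(\textup{iii})\Rightarrow(\textup{iv})$ is trivial. For $(\textup{iv})\Rightarrow(\textup{i})$ I would use that $B\otimes H$ carries the square-integrable coaction $\co_{B\otimes H}$ of \eqref{136}, and that square-integrability passes to countable direct sums and to $\G$-invariant direct summands (basic facts from \cite{Buss-Meyer:Square-integrable}); hence $\Hc_B=\bigoplus_\Nat(B\otimes H)$ is square-integrable and so is any $\G$-invariant direct summand, in particular $\E$. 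For $(\textup{i})\Leftrightarrow(\textup{ii})$ I would use the elementwise description recalled above, namely that $\xi\in\E$ is square-integrable iff $\ket{\xi}\bra{\xi}\in\K(\E)_\ii$: if $\E_\si$ is dense then, polarizing inside the linear subspace $\E_\si$, the span of $\{\ket{\xi}\bra{\eta}:\xi,\eta\in\E_\si\}$ is contained in and dense in $\K(\E)$, so $\K(\E)$ is integrable; conversely, if $\K(\E)_\ii$ is dense it contains a countable approximate unit $(u_n)$ of $\K(\E)$, and then Proposition~\ref{003}(iv) (with the observation that any $u_n\in\K(\E)_\ii$ is square-integrable in $\K(\E)$) gives $u_n\zeta\in\E_\si$ and $u_n\zeta\to\zeta$ for all $\zeta\in\E$.

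The heart of the matter is $(\textup{i})\Rightarrow(\textup{iv})$, and here I would follow Meyer's group-case strategy and use the $\G$-equivariant bra-operators to embed $\E$ into $\Hc_B$. Since $\E$ is countably generated and $\E_\si$ is dense, I would pick a sequence $(\xi_n)$ in $\E_\si$ whose $B$-linear span is total in $\E$; after rescaling each $\xi_n$ by a small positive scalar (which changes neither totality nor square-integrability and only shrinks $\|\kket{\xi_n}\|$) the series $\sum_n\kket{\xi_n}\bbra{\xi_n}$ converges in norm, so $\Phi:=(\bbra{\xi_n})_n$ is a well-defined adjointable operator $\E\to\Hc_B$ with adjoint $\Phi^*=\sum_n\kket{\xi_n}\circ P_n$, where $P_n\colon\Hc_B\to B\otimes H$ is the $n$-th coordinate projection, and $\Phi^*\Phi=\sum_n\kket{\xi_n}\bbra{\xi_n}=\sum_n E_1(\ket{\xi_n}\bra{\xi_n})\in\Ls^\G(\E)_+$. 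The $\G$-equivariance of each $\bbra{\xi_n}$ and $\kket{\xi_n}$ for the coaction $\co_{B\otimes H}$ is exactly what Proposition~\ref{003} and the construction of $\co_{B\otimes H}$ supply, so $\Phi$ is $\G$-equivariant. This $\Phi$ need not have orthogonally complemented range on the nose, and the remedy is the classical Kasparov stabilization trick, which I would run equivariantly: combining $\Phi$ with the identity of $\Hc_B$ and rescaling generators in the Kasparov fashion produces a $\G$-equivariant adjointable operator $\E\oplus\Hc_B\to\Hc_B\oplus\Hc_B\cong\Hc_B$ that is bounded below, hence with complemented range; restricting to the $\E$-summand exhibits $\E$ as a $\G$-invariant direct summand of $\Hc_B$, which is $(\textup{iv})$.

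Finally, $(\textup{iv})\Rightarrow(\textup{iii})$ is a formal Eilenberg swindle: writing $\Hc_B\cong\E\oplus\F$ $\G$-equivariantly and using $\Hc_B\cong\bigoplus_\Nat\Hc_B\cong\bigoplus_{n\ge1}(\E\oplus\F)$, one rewrites $\E\oplus\Hc_B\cong\E\oplus\bigoplus_{n\ge1}(\E\oplus\F)$ and regroups, pairing the extra copy of $\E$ with $\F_1$, the first $\E$ with $\F_2$, and so on, to recover $\bigoplus_{n\ge1}(\E\oplus\F)\cong\Hc_B$, all isomorphisms being $\G$-equivariant.

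I expect the main obstacle to be precisely the construction of $\Phi$ — equivalently, showing that square-integrability provides \emph{enough} $\G$-equivariant adjointable operators $\E\to B\otimes H$ to assemble a complemented equivariant embedding into $\Hc_B$ — together with checking that the Kasparov-type manipulations survive in the quantum setting. The point is that the slice maps $\id_B\otimes\La$, $\id_{\E^*}\otimes\La$ and $\id_\E\otimes\f$ are unbounded and only closed for the strict topology, so the norm-convergence of the series $\sum_n\kket{\xi_n}\bbra{\xi_n}$, the identities $\kket{\xi}\bbra{\eta}=E_1(\xi\circ\eta^*)$ from Proposition~\ref{003}, and the equivariance statements must be verified with some care, making systematic use of the $\si$-norm completeness of $\E_\si$ recorded above. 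Once these analytic points are in hand, the Hilbert-module bookkeeping (complementation of ranges, the concluding swindle) is formal and goes exactly as in the group case treated in \cite{Meyer:Equivariant,Meyer:Generalized_Fixed}.
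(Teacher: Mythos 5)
First, a point of orientation: this theorem is not proved in the present paper at all --- it is quoted from Theorem~6.1 of \cite{Buss-Meyer:Square-integrable} as part of the review section, so there is no internal proof to compare yours against. That said, your outline follows the route that the cited source takes (adapting Meyer's group-case argument from \cite{Meyer:Equivariant}): the trivial implication and the swindle between (iii) and (iv), the passage between (i) and (ii) via the characterization $\xi\in\E_\si\Leftrightarrow\ket{\xi}\bra{\xi}\in\K(\E)_\ii$ together with heredity of $\K(\E)_\ii$, and, as the engine of (i)$\Rightarrow$(iii)/(iv), the assembly of the equivariant operators $\bbra{\xi_n}$ attached to a rescaled total sequence in $\E_\si$.

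One step is stated in a way that would fail as written: the Kasparov-type operator obtained by combining the $\kket{\xi_n}$ with a small multiple of the identity is \emph{not} bounded below, and cannot be made so in general --- if it were, its range would be closed for free, which is exactly the difficulty the stabilization trick is designed to circumvent. The actual mechanism is the Mingo--Phillips/Lance polar-decomposition lemma: one builds an equivariant $T\colon\Hc_B\to\E\oplus\Hc_B$ whose first component is $\sum_n\epsilon_n\kket{\xi_n}\circ P_n$ (with $P_n$ the $n$-th coordinate projection and each $\xi_n$ repeated infinitely often) and whose second component is a strictly positive diagonal operator with entries tending to $0$; one then checks that $T$ and $\lvert T\rvert$ both have dense range --- here the nontrivial input is $\cspn\kket{\E_\si}(B\otimes H)=\E$, i.e.\ Lemmas~5.17 and~6.2 of \cite{Buss-Meyer:Square-integrable} --- and the unitary in the polar decomposition of $T$ is automatically $\G$-equivariant, giving (iii) directly (after which your Eilenberg swindle is not needed). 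With that correction the argument is the intended one; the remaining analytic points you flag (norm convergence of $\sum_n\kket{\xi_n}\bbra{\xi_n}$ after rescaling, equivariance of the bra-kets, $\si$-norm completeness) are indeed exactly what Proposition~\ref{003} and \cite[Lemma~5.28]{Buss-Meyer:Square-integrable} supply.
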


\section{The $L^1$-action on square-integrable elements}\label{409}
\noindent
Let $\E$ be a Hilbert $B,\G$-module. If $\xi\in \E_\si$ and $\omega\in L^1(\G)$, then it is natural to ask whether $\omega*\xi\in \E_\si$.
However, if $\G$ is not unimodular, that is, if the modular element is not trivial, then some problems appear.
Let us analyze the group case $\G=\cont_0(G)$, where $G$ is some locally compact group.
Suppose that $\co_\E$ corresponds to an action $\gamma$ of $G$ on $\E$.
Then for a function $\omega\in L^1(G)$, the element $\omega*\xi\in \E$ is given by
$$\omega*\xi=\int_G\gamma_t(\xi)\omega(t)\dd{t}.$$
Thus, for all $f\in \cont_c(G,B)$, we have
\begin{align*}
\kket{\omega*\xi}f&=\int_G\int_G\gamma_{st}(\xi)f(s)\omega(t)\dd{t}\dd{s}
					\\&=\int_G\int_G\gamma_t(\xi)f(s)\omega(s^{-1}t)\dd{t}\dd{s}
					\\&=\kket{\xi}(f*\omega),
\end{align*}
where $(f*\omega)(t):=\int_G f(s)\omega(s^{-1}t)\dd{s}=\int_Gf(ts^{-1})\mo_G(s)^{-1}\omega(s)\dd{s}$, where $\dtg_G$ denotes the modular function of $G$.
If $\omega$ satisfies $\int_G\mo_G(t)^{-\frac{1}{2}}|\omega(t)|\dd{t}<\infty$, then the map
$\rho_\omega:=[g\mapsto g*\omega]$ defines a bounded operator on
$L^2(G)$ with $\|\rho_\omega\|\leq\int_G\mo_G(t)^{-\frac{1}{2}}|\omega(t)|\dd{t}$ (\cite[Theorem~20.13]{Hewitt-Ross:Abstract_harmonic_analysisI}).
Note that $f*\omega=(1_B\otimes\rho_\omega)(f)$. Thus, if
$\xi\in \E_\si$ and $\omega\in L^1(G)$ satisfies $\mo_G^{-\frac{1}{2}}\omega\in L^1(G)$, then $\omega*\xi\in\E_\si$ and
$$\kket{\omega*\xi}=\kket{\xi}(1_B\otimes\rho_\omega).$$
The hypothesis $\mo_G^{-\frac{1}{2}}\omega\in L^1(G)$ is
essential here in order to define the operator $\rho_\omega$. In
fact, if $G$ is not unimodular, then there are functions $\omega\in L^1(G)$ and $g\in L^2(G)$ such that $g*\omega\notin L^2(G)$ (see \cite[20.34]{Hewitt-Ross:Abstract_harmonic_analysisI}).

In order to generalize the results above for a general locally compact quantum group $\G$, we shall need the modular element. As usual the proof in the quantum setting is much more technical. Let us recall that the modular element of $\G$, denoted by $\mo$, is a strictly positive operator affiliated with $\G$ (see \cite{Lance:Hilbert_modules,Woronowicz:Unbounded_affiliated} for a precise definition) such that $\sigma_t(\mo)=\nu^t\mo$ for all $t\in \Real$ and $\psi=\f_\mo$ (see \cite{Kustermans-Vaes:LCQG}), where $\psi$ is the right invariant Haar weight, $\{\sigma_s\}_{s\in\Real}$ is the modular automorphism group of $\f$ and $\nu$ is the scaling constant of $\G$. We also recall that $\Dt(\mo)=\mo\otimes\mo$ (\cite[Proposition~7.9]{Kustermans-Vaes:LCQG}). Roughly speaking, the relation $\psi=\phi_\mo$ means that $\psi(\,\cdot\,)=\f(\mo^{\frac{1}{2}}\,\cdot\,\mo^{\frac{1}{2}})$ and one can define a GNS-construction for $\psi$ of the form $(H,\iota,\Gamma)$ from the GNS-construction $(H,\iota,\La)$ for $\f$ satisfying $\Gamma(\,\cdot\,)=\La(\,\cdot\,\mo^{\frac{1}{2}})$ (see \cite{Kustermans:KMS} for more details).

For each $n\in \Nat$, we define
\begin{equation}\label{307}
e_n:=\frac{n}{\sqrt\pi}\int_\Real\exp(-n^2t^2)\mo^{\ii t}\dd{t}.
\end{equation}
These elements behave very well with respect to the modular element. For instance, they commute with any power of $\mo$ and $\sigma_y(e_n)\mo^z=\mo^z\sigma_y(e_n)$ for all $n\in \Nat$ and $y,z\in \C$ (see \cite[Proposition~8.2]{Kustermans:KMS} for further details).

We shall need a generalization of \cite[Proposition~1.9.13]{Vaes:Thesis}.
This result says that for all $a\in \dom(\La)$, $u\in \dom(\mo^{\frac{1}{2}})$ and $v\in H$,
we have $(\id\otimes\omega_{u,v})\Dt(a)\in \dom(\La)$ (where $\omega_{u,v}$ is the vector functional defined by $\omega_{u,v}(x)\defeq \braket{u}{xv}$) and

\begin{equation}\label{410}
\La\bigl((\id\otimes\omega_{u,v})\Dt(a)\bigr)=(\id\otimes\omega_{\mo^{\frac{1}{2}}u,v})(V)\La(a),
\end{equation}
where $V$ is the right regular corepresentation of $\G$, which is determined by:
\begin{equation}\label{086}
V\bigl(\Gamma(a)\otimes 1\bigr)=(\Gamma\otimes\id)\bigl(\Dt(a)\bigr)\quad\mbox{for all }a\in \dom(\Gamma).
\end{equation}
or, equivalently,
\begin{equation}\label{085}
(\id\otimes\omega)(V)\Gamma(a)=\Gamma\bigl((\id\otimes\omega)\Dt(a)\bigr)\quad\mbox{for all }a\in \dom(\Gamma)\mbox{ and }\omega\in \Ls(H)_*.
\end{equation}

The proof in \cite[Proposition~1.9.13]{Vaes:Thesis} can be easily generalized to slices with $\f$ and yields: for $B$ a \cstar{}algebra, $x\in \dom(\id_B\otimes\Lambda)$, $u\in \dom(\mo^{\frac{1}{2}})$ and $v\in H$,
\begin{equation}\label{263}
(\id_{B}\otimes\id_\G\otimes\omega_{u,v})\bigl((\id_{B}\otimes\Dt)(x)\bigr)\in \dom(\id_{B}\otimes\Lambda)\quad \mbox{and}\quad
\end{equation}
\begin{multline}\label{263.1}
(\id_{B}\otimes\La)\Bigl((\id_{B}\otimes\id_\G\otimes\omega_{u,v}) \bigl((\id_{B}\otimes\Dt)(x)\bigr)\Bigr)
\\=\bigl(1_B\otimes(\id_{\K(H)}\otimes\omega_{\mo^{\frac{1}{2}}u,v})(V)\bigr)(\id_{B}\otimes\La)(x).
\end{multline}

We are now ready to prove the main result of this section. Define
$$L^1_{00}(\G):=\spn\{\omega_{u,v}:u\in H,v\in \dom(\mo^{\frac{1}{2}})\}$$
Note that $L^1_{00}(\G)$ is a dense subspace of $L^1(\G)=\cspn\{\omega_{u,v}:u,v\in H\}$.
Moreover, $L^1(\G)$ is the predual of the von Neumann algebra $\G''\sbe \Ls(H)$ generated by $\G$, and since this is in standard form (see (\cite[10.15]{Stratila-Zsido:von_Neumann})), we have $L^1(\G)=\{\omega_{u,v}:u,v\in H\}$ (see \cite[V.3.15]{Takesaki:Theory_1}).
Thus the essential difference between $L^1_{00}(\G)$ and $L^1(\G)$ lies in the difference between $\dom(\mo^{\frac{1}{2}})$ and $H$.
In particular, if $\G$ is unimodular, then $L^1_{00}(\G)$ is equal to $L^1(\G)$.
We also define a map
$$\rho:L^1_{00}(\G)\to \Ls(H),\quad\rho_{\omega_{u,v}}:=(\id\otimes\omega_{u,\mo^{\frac{1}{2}}v})(V^*)$$
for all $u,\in H$ and $v\in \dom(\mo^{\frac{1}{2}})$, and extend it linearly to $L^1_{00}(\G)$.
Note that if $\G$ is unimodular, then
$\rho_\omega=(\id\otimes\omega)(V^*)$ for all $\omega\in L^1(\G)$.

\begin{proposition}\label{264} Let $\E$ be a Hilbert $B,\G$-module. Then, for all $\xi\in \E_\si$ and $\omega\in L^1_{00}(\G)$, we have
$\omega*\xi\in \E_\si$ and
$$\kket{\omega*\xi}=\kket{\xi}(1_B\otimes\rho_\omega).$$
In particular,
$\|\omega*\xi\|_\si\leq\|\omega\|_{\rho}\|\xi\|_\si$, where
$\|\omega\|_{\rho}:=\max\{\|\omega\|,\|\rho_\omega\|\}$. Here $\|\omega\|$ denotes the norm of $\omega$ in $L^1(\G)$ and
$\|\rho_\omega\|$ denotes the norm of the operator $\rho_\omega\in \Ls(H)$.
\end{proposition}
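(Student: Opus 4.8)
The strategy is to reduce the statement to the already-established formula \eqref{263.1} for slices of $\id_B\otimes\Dt$ applied to elements of $\dom(\id_B\otimes\La)$, exactly as one would expect from the group-case computation displayed just before Proposition~\ref{264}. First I would fix $\xi\in \E_\si$ and $\omega=\omega_{u,v}$ with $u\in H$ and $v\in \dom(\mo^{\frac12})$. The key object is $X\defeq \co_\E(\xi)^*\in\dom(\id_{\E^*}\otimes\La)$, which holds precisely because $\xi$ is square-integrable; passing to the linking algebra $L(\E)$ one may treat this as an honest slice situation and apply \eqref{263}--\eqref{263.1} with $B$ replaced by $L(\E)$ (the relevant upper-right corner is what we want). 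The first step is therefore a bookkeeping computation: using the coaction identity $(\co_\E\otimes\id)\co_\E=(\id\otimes\Dt)\co_\E$ one shows that
\begin{equation*}
\co_\E(\omega*\xi)^*=\co_\E\bigl((\id_\E\otimes\omega_{u,v})\co_\E(\xi)\bigr)^*
=(\id_{\E^*}\otimes\id_\G\otimes\omega_{u,v})\bigl((\id_{\E^*}\otimes\Dt)(\co_\E(\xi)^*)\bigr),
\end{equation*}
so that \eqref{263} (in the linking-algebra form) gives $\co_\E(\omega*\xi)^*\in\dom(\id_{\E^*}\otimes\La)$, i.e.\ $\omega*\xi\in\E_\si$, and \eqref{263.1} computes $\bbra{\omega*\xi}$ in terms of $\bbra{\xi}$.

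The second step identifies the resulting operator with $\kket{\xi}(1_B\otimes\rho_\omega)$. From \eqref{263.1} and the expression \eqref{eq:BraOperatorAsSlice} one gets
\begin{equation*}
\bbra{\omega*\xi}=\bigl(1_B\otimes(\id_{\K(H)}\otimes\omega_{\mo^{\frac12}u,v})(V)\bigr)\bbra{\xi}.
\end{equation*}
Taking adjoints, and using that $(\id_{\K(H)}\otimes\omega_{\mo^{\frac12}u,v})(V)^*=(\id\otimes\omega_{u,\mo^{\frac12}v}\text{-type})(V^*)$ — here is where the asymmetry between $u$ and $v$ and the appearance of $\mo^{\frac12}$ on the \emph{second} slot in the definition of $\rho_{\omega_{u,v}}$ comes in — I would match this with $\rho_\omega$ as defined just before the proposition, obtaining $\kket{\omega*\xi}=\kket{\xi}(1_B\otimes\rho_\omega)$. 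Then linearity in $\omega$ extends everything from elementary functionals $\omega_{u,v}$ to all of $L^1_{00}(\G)$. The norm estimate is then immediate: $\|\omega*\xi\|\le\|\omega\|\,\|\xi\|$ because $\M(\E)$ is a contractive Banach $L^1(\G)$-module, while $\|\kket{\omega*\xi}\|\le\|\kket{\xi}\|\,\|1_B\otimes\rho_\omega\|=\|\kket{\xi}\|\,\|\rho_\omega\|$, so $\|\omega*\xi\|_\si\le\|\omega\|_\rho\|\xi\|_\si$.

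The main obstacle I anticipate is purely technical rather than conceptual: making sure that all the domain conditions survive the passage through the linking algebra and that \eqref{263}--\eqref{263.1}, which are stated for slices $\id_B\otimes\La$ with $B$ a \cstar{}algebra, apply verbatim with $B=L(\E)$ and then restrict correctly to the $\E^*$-corner to yield a statement about $\dom(\id_{\E^*}\otimes\La)$ and the operator $\bbra{\omega*\xi}\in\Ls(\E,B\otimes H)$. A secondary subtlety is the bookkeeping with $\mo^{\frac12}$: the cited result \eqref{410} requires $u\in\dom(\mo^{\frac12})$ on the \emph{left} slot, whereas $\omega*\xi$ naturally produces $\omega_{u,v}$ acting so that the $\mo^{\frac12}$ lands on $v$ after taking adjoints — one must be careful to apply \eqref{263.1} to $\co_\E(\xi)^*$ (not $\co_\E(\xi)$) so that the roles of $u$ and $v$ are the ones for which the hypothesis $v\in\dom(\mo^{\frac12})$ is available, and then check that the adjoint of $(\id_{\K(H)}\otimes\omega_{\mo^{\frac12}u,v})(V)$ is exactly $(\id\otimes\omega_{u,\mo^{\frac12}v})(V^*)=\rho_{\omega_{u,v}}$. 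Once the correct variable placement is pinned down, the computation is a routine unwinding of the definitions.
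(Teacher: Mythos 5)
Your plan is essentially the paper's proof: reduce to $\omega=\omega_{u,v}$, rewrite $\co_\E(\omega*\xi)^*$ as a slice of $(\id_{\E^*}\otimes\Dt)\bigl(\co_\E(\xi)^*\bigr)$ via the coaction identity, apply \eqref{263}--\eqref{263.1} (in linking-algebra form), and take adjoints to recover $\rho_\omega$. The only point to fix is the one you yourself flag: taking the adjoint turns $\omega_{u,v}$ into $\omega_{v,u}$, so the slice formula yields $\bigl(1_B\otimes(\id_{\K(H)}\otimes\omega_{\mo^{\frac{1}{2}}v,u})(V)\bigr)\bbra{\xi}$ rather than the $\omega_{\mo^{\frac{1}{2}}u,v}$ you wrote -- which is exactly why the hypothesis $v\in\dom(\mo^{\frac{1}{2}})$ suffices -- and its adjoint is $(\id\otimes\omega_{u,\mo^{\frac{1}{2}}v})(V^*)=\rho_{\omega_{u,v}}$; with those indices corrected your argument matches the paper's verbatim.
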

\begin{proof} We may assume that $\omega=\omega_{u,v}$, for $u\in H$ and
$v\in \dom(\mo^{\frac{1}{2}})$. We have
\begin{align*}
\co_\E(\omega*\xi)&=\co_\E\bigl((\id_\E\otimes\omega)\co_\E(\xi)\bigr)
                \\&=(\id_\E\otimes\id_\G\otimes\omega)\bigl((\co_\E\otimes\id_\G)\co_\E(\xi)\bigr)
				\\&=(\id_\E\otimes\id_\G\otimes\omega)\bigr((\id_\E\otimes\Dt)\co_\E(\xi)\bigr).
\end{align*}
Hence
$\co_\E(\omega*\xi)^*=(\id_{\E^*}\otimes\id_\G\otimes\omega_{v,u})\bigl((\id_{\E^*}\otimes\Dt)\co_\E(\xi)^*\bigr)$.
Since $\xi\in \E_\si$ we have $\co_\E(\xi)^*\in
\dom(\id_{\E^*}\otimes\Lambda)$ and hence, by~\eqref{263} and~\eqref{263.1},
$\co_\E(\omega*\xi)^*\in \dom(\id_{\E^*}\otimes\Lambda)$, that
is, $\omega*\xi\in \E_\si$ and
\begin{align*}
\bbra{\omega*\xi}&=(\id_{\E^*}\otimes\La)\bigl(\co_\E(\omega*\xi)^*\bigr)
			   \\&=(\id_{\E^*}\otimes\La)\Bigl((\id_{\E^*}\otimes\id_\G\otimes\omega_{v,u})\bigl((\id_{\E^*}\otimes\Dt)\co_\E(\xi)^*\bigr)\Bigr)
			   \\&=\bigl(1_B\otimes(\id_{\K(H)}\otimes\omega_{\mo^{\frac{1}{2}}v,u})(V)\bigr)(\id_{\E^*}\otimes\La)\bigl(\co_\E(\xi)^*\bigr)
               \\&=\bigl(1_B\otimes(\id_{\K(H)}\otimes\omega_{\mo^{\frac{1}{2}}v,u})(V)\bigr)\bbra{\xi}.
\end{align*}
The formula $\kket{\omega*\xi}=\kket{\xi}(1_B\otimes\rho_\omega)$ now follows by taking adjoints.
\end{proof}

If $\G$ is unimodular, then $\E_\si$ is actually a Banach left $L^1(\G)$-module. In order to obtain a Banach left module also in the general non-unimodular case, we define following subspace of $L^1(\G)$:
$$L^1_0(\G):=\{\omega\in L^1(\G):\mo^{\frac{1}{2}}\omega\in L^1(\G)\},$$
where $(\mo^{\frac{1}{2}}\omega)(x):=\omega(x\mo^{\frac{1}{2}})$ for all left multipliers $x$ of $\mo^{\frac{1}{2}}$. The condition $\mo^{\frac{1}{2}}\omega\in L^1(\G)$ means that there is $\theta\in L^1(\G)$ such that $\theta(x)=\omega(x\mo^{\frac{1}{2}})$ for all left multipliers $x$ of $\mo^{\frac{1}{2}}$, and in this case we put $\mo^{\frac{1}{2}}\omega=\theta$.

\begin{proposition} $L^1_0(\G)$ is a subalgebra of $L^1(\G)$.
\end{proposition}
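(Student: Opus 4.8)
The plan is to show that $L^1_0(\G)$ is closed under the convolution product $\omega\cdot\theta=(\omega\otimes\theta)\circ\Dt$ of $L^1(\G)$. The key algebraic input is the fact that $\Dt(\mo)=\mo\otimes\mo$, proved in \cite[Proposition~7.9]{Kustermans-Vaes:LCQG}, which says that the modular element is ``grouplike''; morally, for $\omega,\theta\in L^1_0(\G)$ one wants
$$(\omega\cdot\theta)(x\mo^{\frac12})=((\omega\otimes\theta)\circ\Dt)(x\mo^{\frac12})=(\omega\otimes\theta)\bigl(\Dt(x)(\mo^{\frac12}\otimes\mo^{\frac12})\bigr)=(\mo^{\frac12}\omega\otimes\mo^{\frac12}\theta)(\Dt(x)),$$
which exhibits $\mo^{\frac12}(\omega\cdot\theta)$ as the functional $(\mo^{\frac12}\omega)\cdot(\mo^{\frac12}\theta)\in L^1(\G)$, hence $\omega\cdot\theta\in L^1_0(\G)$. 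So the real content is making this formal computation rigorous given that $\mo$ is only a (densely defined, strictly positive) affiliated element, so that multiplication by $\mo^{\frac12}$ and the identity $\Dt(\mo^{\frac12})=\mo^{\frac12}\otimes\mo^{\frac12}$ must be handled with care regarding domains.

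First I would recall or record that, from $\Dt(\mo)=\mo\otimes\mo$ together with functional calculus for strictly positive affiliated operators (and the fact that $\Dt$ is a normal $*$-homomorphism between the relevant von Neumann algebras $\G''$ and $(\G\otimes\G)''$), one gets $\Dt(\mo^{z})=\mo^{z}\otimes\mo^{z}$ for all $z\in\C$, at least in the sense of affiliated operators; in particular for $z=\tfrac12$. The cleanest way to avoid unbounded-operator bookkeeping is to use the smoothing elements $e_n$ from \eqref{307}: each $e_n$ is bounded, analytic for the relevant one-parameter groups, commutes with all powers of $\mo$, and $e_n\to 1$ strictly, while $\mo^{z}e_n$ is bounded for every $z\in\C$. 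Then I would work with the approximants $\mo^{\frac12}e_n\,\omega$ (equivalently the functionals $x\mapsto \omega(x\mo^{\frac12}e_n)$, which are genuine normal functionals since $\mo^{\frac12}e_n\in\G''$) and pass to the limit, using that $\mo^{\frac12}\omega\in L^1(\G)$ exactly when $\sup_n\|\mo^{\frac12}e_n\,\omega\|<\infty$, with $\mo^{\frac12}e_n\,\omega\to\mo^{\frac12}\omega$ in norm in that case.

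The main steps would then be: (1) characterize membership $\omega\in L^1_0(\G)$ via the boundedness of the net $(\mo^{\frac12}e_n\,\omega)_n$ in $L^1(\G)$ and identify its limit with $\mo^{\frac12}\omega$; (2) compute, for $\omega,\theta\in L^1_0(\G)$ and each $n$,
\begin{align*}
\mo^{\frac12}e_n(\omega\cdot\theta)
 &= (\omega\otimes\theta)\bigl(\Dt(\,\cdot\,)(\mo^{\frac12}e_n\otimes 1)\bigr)\\
 &= (\omega\otimes\theta)\bigl(\Dt(\,\cdot\,)(\mo^{\frac12}e_n\otimes\mo^{-\frac12}e_n)(1\otimes\mo^{\frac12}e_n^{-1})\bigr),
\end{align*}
hmm, that last rewriting is awkward; a cleaner route is to instead insert the grouplike identity $\Dt(\mo^{\frac12})=\mo^{\frac12}\otimes\mo^{\frac12}$ directly on a suitable core and write $\mo^{\frac12}e_n(\omega\cdot\theta)$ as the functional $x\mapsto(\omega\otimes\theta)(\Dt(x)\Dt(\mo^{\frac12})(e_n\otimes 1))$ up to controlled error terms, so that it is dominated in norm by $\|\mo^{\frac12}\omega\|\,\|\theta\|$ (or a symmetric bound); (3) conclude $\sup_n\|\mo^{\frac12}e_n(\omega\cdot\theta)\|<\infty$, hence $\omega\cdot\theta\in L^1_0(\G)$, and along the way read off $\mo^{\frac12}(\omega\cdot\theta)=(\mo^{\frac12}\omega)\cdot(\mo^{\frac12}\theta)$. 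I expect the main obstacle to be precisely step (2): justifying the manipulation $\Dt(x\mo^{\frac12})=\Dt(x)(\mo^{\frac12}\otimes\mo^{\frac12})$ at the level of functionals when $x$ ranges only over left multipliers of $\mo^{\frac12}$, i.e. controlling domains so that the formal identity is legitimate. Using the $e_n$'s to reduce everything to bounded elements, together with strict density of $\dom(\mo^{\frac12})$ and normality/strict continuity of $\Dt$, should resolve this, but it is the technical heart of the argument; the rest is routine.
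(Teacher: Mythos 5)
Your first display is precisely the paper's entire proof: it simply performs the formal computation $(\omega_1\cdot\omega_2)(x\mo^{\frac12})=(\omega_1\otimes\omega_2)\bigl(\Dt(x)(\mo^{\frac12}\otimes\mo^{\frac12})\bigr)=\bigl((\mo^{\frac12}\omega_1)\cdot(\mo^{\frac12}\omega_2)\bigr)(x)$ for left multipliers $x$ of $\mo^{\frac12}$, using $\Dt(\mo)=\mo\otimes\mo$, and reads off both membership in $L^1_0(\G)$ and the product formula $\mo^{\frac12}(\omega_1\cdot\omega_2)=(\mo^{\frac12}\omega_1)\cdot(\mo^{\frac12}\omega_2)$. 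The extra regularization with the elements $e_n$ that you outline is not carried out in the paper, so your proposal is essentially the same argument with additional (optional) domain bookkeeping.
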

\begin{proof} Take $\omega_1,\omega_2\in L^1_0(\G)$. Then,
for every left multiplier $x$ of $\mo^{\frac{1}{2}}$, we have
\begin{align*}
(\omega_1\cdot\omega_2)(x\mo^{\frac{1}{2}})&=(\omega_1\otimes\omega_2)\bigl(\Dt(x\mo^{\frac{1}{2}})\bigr)
										\\&=(\omega_1\otimes\omega_2)\bigl(\Dt(x)(\mo^{\frac{1}{2}}\otimes\mo^{\frac{1}{2}})\bigr)
										\\&=(\mo^{\frac{1}{2}}\omega_1\otimes\mo^{\frac{1}{2}}\omega_2)\Dt(x)
										\\&=\bigl((\mo^{\frac{1}{2}}\omega_1)\cdot(\mo^{\frac{1}{2}}\omega_2)\bigr)(x).
\end{align*}
Thus $\mo^{\frac{1}{2}}(\omega_1\cdot\omega_2)\in L^1(\G)$, that is,
$\omega_1\cdot\omega_2\in L^1_0(\G)$, and
\begin{equation}\label{310}
\mo^{\frac{1}{2}}(\omega_1\cdot\omega_2)=(\mo^{\frac{1}{2}}\omega_1)\cdot(\mo^{\frac{1}{2}}\omega_2).\qedhere
\end{equation}
\end{proof}

Now define the following norm on $L^1_0(\G)$,
$$\|\omega\|_0:=\max\left\{\|\omega\|,\|\mo^{\frac{1}{2}}\omega\|\right\}.$$

\begin{proposition} The space $L^1_0(\G)$ endowed with the norm $\|\cdot\|_0$ \textup(and the product of $L^1(\G)$\textup) is a Banach algebra.
\end{proposition}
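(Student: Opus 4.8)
The plan is to check the two conditions defining a Banach algebra: submultiplicativity of $\|\cdot\|_0$ on the algebra $L^1_0(\G)$ (whose product we already know is the one inherited from $L^1(\G)$), and completeness of $(L^1_0(\G),\|\cdot\|_0)$. That $\|\cdot\|_0$ is genuinely a norm is automatic: homogeneity and the triangle inequality hold coordinatewise for $\|\cdot\|$ and $\|\mo^{\frac12}(\cdot)\|$, and $\|\omega\|\le\|\omega\|_0$ forces definiteness.

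For submultiplicativity I would simply invoke the preceding proposition. Given $\omega_1,\omega_2\in L^1_0(\G)$, we already know that $\omega_1\cdot\omega_2\in L^1_0(\G)$ and, by~\eqref{310}, that $\mo^{\frac12}(\omega_1\cdot\omega_2)=(\mo^{\frac12}\omega_1)\cdot(\mo^{\frac12}\omega_2)$. Since $L^1(\G)$ is a Banach algebra, this gives $\|\mo^{\frac12}(\omega_1\cdot\omega_2)\|\le\|\mo^{\frac12}\omega_1\|\,\|\mo^{\frac12}\omega_2\|\le\|\omega_1\|_0\,\|\omega_2\|_0$, and likewise $\|\omega_1\cdot\omega_2\|\le\|\omega_1\|\,\|\omega_2\|\le\|\omega_1\|_0\,\|\omega_2\|_0$. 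Taking the maximum yields $\|\omega_1\cdot\omega_2\|_0\le\|\omega_1\|_0\,\|\omega_2\|_0$.

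The real content is completeness. Let $(\omega_n)$ be a $\|\cdot\|_0$-Cauchy sequence. By definition of $\|\cdot\|_0$, both $(\omega_n)$ and $(\mo^{\frac12}\omega_n)$ are $\|\cdot\|$-Cauchy in $L^1(\G)$, so completeness of $L^1(\G)$ gives $\omega,\theta\in L^1(\G)$ with $\omega_n\to\omega$ and $\mo^{\frac12}\omega_n\to\theta$ in norm. It then remains to show $\omega\in L^1_0(\G)$ with $\mo^{\frac12}\omega=\theta$, i.e.\ that $\theta(x)=\omega(x\mo^{\frac12})$ for every left multiplier $x$ of $\mo^{\frac12}$. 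Fix such an $x$; then $x\mo^{\frac12}\in\M(\G)$, so $\rho\mapsto\rho(x\mo^{\frac12})$ is $\|\cdot\|$-continuous on $L^1(\G)$, whence $\theta(x)=\lim_n(\mo^{\frac12}\omega_n)(x)=\lim_n\omega_n(x\mo^{\frac12})=\omega(x\mo^{\frac12})$. Thus $\omega\in L^1_0(\G)$, and $\|\omega_n-\omega\|_0=\max\{\|\omega_n-\omega\|,\|\mo^{\frac12}\omega_n-\theta\|\}\to0$.

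I do not anticipate a genuine obstacle. The one point needing a moment's thought is that the assignment $\omega\mapsto\mo^{\frac12}\omega$ be ``closed'': but its defining identity pairs $\omega$ only against the bounded elements $x\mo^{\frac12}$, so it is preserved under $\|\cdot\|$-limits, as used above; and the $\theta$ it produces is unique since such $x\mo^{\frac12}$ (already for $x=a(1+\varepsilon\mo)^{-1}$ with $a\in\M(\G)$ and $\varepsilon>0$) are $\sigma$-weakly dense in $\M(\G)$. Hence the statement is a short consequence of the previous proposition together with the completeness of $L^1(\G)$.
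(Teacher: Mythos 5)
Your proof is correct and follows essentially the same route as the paper: submultiplicativity from Equation~\eqref{310} together with the Banach algebra property of $L^1(\G)$, and completeness by extracting the two $L^1$-limits $\omega$ and $\theta$ from a $\|\cdot\|_0$-Cauchy sequence and checking $\theta(x)=\omega(x\mo^{\frac12})$ for left multipliers $x$ of $\mo^{\frac12}$. Your additional remarks on the closedness of $\omega\mapsto\mo^{\frac12}\omega$ and the uniqueness of $\theta$ are sound but not needed beyond what the paper records.
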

\begin{proof} By Equation~\eqref{310}, we have
$$\|\omega_1\cdot\omega_2\|_0\leq\|\omega_1\|_0\|\omega_2\|_0$$ for all $\omega_1,\omega_2\in L^1_0(\G)$. Thus all we have to prove
is that $L^1_0(\G)$ is a Banach space with the norm $\|\cdot\|_0$.
Take a Cauchy sequence $(\omega_n)$ in $L^1_0(\G)$ (with respect to
$\|\cdot\|_0$). Then, by definition of the norm $\|\cdot\|_0$,
both $(\omega_n)$ and $(\mo^{\frac{1}{2}}\omega_n)$ are Cauchy sequences in
$L^1(\G)$. Let $\omega$ and $\theta$ be the respective limits in
$L^1(\G)$. Then, for every left multiplier $x$ of $\mo^{\frac{1}{2}}$, we have
$$(\mo^{\frac{1}{2}}\omega)(x)=\omega(x\mo^{\frac{1}{2}})=\lim\limits_{n\to \infty}\omega_n(x\mo^{\frac{1}{2}})=\lim\limits_{n\to \infty}\mo^{\frac{1}{2}}\omega_n(x)=\theta(x).$$
Hence $\mo^{\frac{1}{2}}\omega=\theta\in L^1(\G)$, that is, $\omega\in
L^1_0(\G)$, and therefore $\|\omega_n-\omega\|_0\to 0$.
\end{proof}

Note that $L^1_{00}(\G)$ is contained in $L^1_0(\G)$. If fact, if $u\in L^2(\G)$ and $v\in \dom(\mo^{\frac{1}{2}})$, then
\begin{equation}\label{524}
\mo^{\frac{1}{2}}\omega_{u,v}(x)=\omega_{u,v}(x\mo^{\frac{1}{2}})=\braket{u}{x\mo^{\frac{1}{2}}v}=\omega_{u,\mo^{\frac{1}{2}}v}(x)
\end{equation}
for every left multiplier $x$ of $\mo^{\frac{1}{2}}$. This means that
$\mo^{\frac{1}{2}}\omega_{u,v}=\omega_{u,\mo^{\frac{1}{2}}v}\in L^1(\G)$.

\begin{proposition}\label{309} The subspace $L^1_{00}(\G)$ is dense in $L^1_0(\G)$ \textup(with respect to $\|\cdot\|_0$\textup).
\end{proposition}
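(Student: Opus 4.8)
The plan is to show that every $\omega\in L^1_0(\G)$ can be approximated, in the $\|\cdot\|_0$-norm, by elements of $L^1_{00}(\G)$. By definition of $L^1_0(\G)$, both $\omega$ and $\theta:=\mo^{\frac12}\omega$ lie in $L^1(\G)$. The essential difficulty is that these two functionals must be approximated \emph{simultaneously}: one needs a sequence $\omega_n\in L^1_{00}(\G)$ with $\omega_n\to\omega$ and $\mo^{\frac12}\omega_n\to\theta$ at the same time, and by~\eqref{524} the latter condition reads $\omega_{u_n,\mo^{\frac12}v_n}\to\theta$. So the natural device is to ``smooth'' $\omega$ by the elements $e_n$ from~\eqref{307}, which commute with all powers of $\mo$ and are analytic for the modular group; multiplying $\omega$ (on the appropriate side) by $e_n$ should produce functionals that are automatically in the smaller space while converging in both norms.

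First I would make precise how $L^1_0(\G)$ and $L^1_{00}(\G)$ sit inside $L^1(\G)=\G''_*$, recalling that $L^1(\G)$ acts on $\G''$ by left and right multiplication, and that for $a\in\G''$ the functional $a\omega$ (defined by $(a\omega)(x)=\omega(xa)$) has norm $\le\|a\|\,\|\omega\|$. The point of the $e_n$ is that $e_n\to 1$ strictly/$\sigma$-weakly while $e_n\mo^{\frac12}$ is bounded; more precisely, since $\mo$ is affiliated with $\G$ and $e_n$ is built by integrating $\mo^{\ii t}$ against a Gaussian, the product $e_n\mo^{z}$ is a bounded element of $\G''$ for every $z\in\C$ (this is exactly the content cited from \cite[Proposition~8.2]{Kustermans:KMS}). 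Second, given $\omega\in L^1_0(\G)$, write $\omega=\omega_{u,v}$ for some $u,v\in H$ (possible since $\G''$ is in standard form) and set $v_n:=e_n v$. Then $v_n\in\dom(\mo^{\frac12})$ because $\mo^{\frac12}e_n$ is bounded, so $\omega_n:=\omega_{u,v_n}\in L^1_{00}(\G)$. Third, I would check the two convergences: $\omega_n\to\omega$ in $L^1(\G)$ follows from $e_n v\to v$ in $H$; and by~\eqref{524}, $\mo^{\frac12}\omega_n=\omega_{u,\mo^{\frac12}e_n v}$, so I must show $\mo^{\frac12}e_n v\to \mo^{\frac12}v=:w$, where $w$ is the vector implementing $\theta=\mo^{\frac12}\omega$ in the sense that $\theta=\omega_{u,w}$ (here one uses that $\omega\in L^1_0(\G)$ precisely guarantees such a $w$ exists, i.e.\ $v\in\dom(\mo^{\frac12})$ after all, or at least that $\mo^{\frac12}\omega$ is represented by a genuine $L^1$-functional). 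Since $e_n$ commutes with $\mo^{\frac12}$ and $e_n\to 1$ strongly on $\dom(\mo^{\frac12})$ with uniform control, $\mo^{\frac12}e_n v = e_n\mo^{\frac12}v\to\mo^{\frac12}v$, giving $\|\mo^{\frac12}\omega_n-\mo^{\frac12}\omega\|\to 0$.

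The main obstacle I anticipate is the bookkeeping needed to identify $\mo^{\frac12}\omega$ with a vector functional $\omega_{u,w}$ and to justify $v\in\dom(\mo^{\frac12})$ (or, more cautiously, to run the argument in the linking-algebra / polar-decomposition picture so that one never needs $v$ itself to be in the domain). Concretely, one has to be careful that the \emph{same} pair of vectors can be used for $\omega$ and to control $\mo^{\frac12}\omega$; a clean way around this is to use right multiplication by $e_n$ on the functional directly: set $\omega_n:=\omega\, e_n$ (equivalently $\omega_n(x)=\omega(x e_n)$) — this is visibly in $L^1(\G)$ with $\|\omega_n\|\le\|\omega\|$, it lies in $L^1_{00}(\G)$ because $\omega_n=\omega_{u,e_n^* v}$ and $\mo^{\frac12}e_n^*$ is bounded, and $\mo^{\frac12}\omega_n = (\mo^{\frac12}\omega)e_n$ by~\eqref{310}-type manipulation using $\Dt(\mo)=\mo\otimes\mo$, so both $\omega_n\to\omega$ and $\mo^{\frac12}\omega_n\to\mo^{\frac12}\omega$ reduce to the single fact that $\zeta e_n\to\zeta$ in $L^1(\G)$ for every $\zeta\in L^1(\G)$ — which holds because $e_n\to 1$ $\sigma$-weakly and multiplication $L^1(\G)\times\G''\to L^1(\G)$ is separately continuous for these topologies, together with a uniform bound $\|e_n\|\le 1$. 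This reduces the whole proposition to the approximate-identity behaviour of $(e_n)$ already established in \cite{Kustermans:KMS}, and I would present it in that order.
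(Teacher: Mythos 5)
Your proposal is correct and rests on the same two pillars as the paper's proof: the regularizing elements $e_n$ from~\eqref{307} and the standard form of $\G''$, which lets you write $\omega=\omega_{u,v}$. You rightly identify the genuine danger in your first sketch --- nothing guarantees that the representing vector $v$ lies in $\dom(\mo^{\frac12})$, even though $\mo^{\frac12}\omega\in L^1(\G)$ --- and the two texts resolve it differently. The paper keeps the weaker fact that $e_n$ merely \emph{preserves} $\dom(\mo^{\frac12})$, and therefore first approximates $v$ by vectors $v_k\in\dom(\mo^{\frac12})$, sets $v_{n,k}=e_nv_k$, and extracts a diagonal subsequence from the two-index family. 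Your final version instead uses the stronger fact (also from \cite[Proposition~8.2]{Kustermans:KMS}, and used elsewhere in the paper, e.g.\ in Proposition~\ref{531}) that $\mo^{\frac12}e_n$ is \emph{bounded}, so $e_nv\in\dom(\mo^{\frac12})$ for arbitrary $v\in H$; together with the identity $\mo^{\frac12}(e_n\omega)=e_n(\mo^{\frac12}\omega)$ (proved exactly as in~\eqref{310}, using $e_n\mo^{\frac12}=\mo^{\frac12}e_n$) this collapses both convergences to the single statement that $e_n\zeta\to\zeta$ in $L^1(\G)$-norm for every $\zeta\in L^1(\G)$, with no diagonal argument. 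That is a genuine simplification. One small correction: that last statement does not follow from $e_n\to 1$ $\sigma$-weakly plus separate continuity of the module action (that only gives weak convergence in $L^1$); you should instead use that $e_n\to1$ in the \emph{strong} operator topology with $\|e_n\|\le1$, write $\zeta=\omega_{u',v'}$ in standard form, and estimate $\|e_n\zeta-\zeta\|=\|\omega_{u',(e_n-1)v'}\|\le\|u'\|\,\|(e_n-1)v'\|\to0$ --- which is also the fact the paper invokes when it asserts $\|e_n\mo^{\frac12}\omega-\mo^{\frac12}\omega\|\to0$. (Also, $e_n^*=e_n$, so your $\omega_{u,e_n^*v}$ is just $\omega_{u,e_nv}$.)
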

\begin{proof}
Let $\omega\in L^1_0(\G)$. Then $\omega\in L^1(\G)\cong \G''_*$, the predual of the von Neumann algebra $\G''$, which is in standard form, so that there exist $u,v\in H$ such that $\omega=\omega_{u,v}$. Take a sequence $(v_k)\sbe
\dom(\mo^{\frac{1}{2}})$ such that $v_k\to v$, and define $v_{n,k}:=e_nv_k$ (where $e_n$ are defined as in Equation~\eqref{307}). Since $e_n$ commutes with $\mo^{\frac{1}{2}}$, it follows that $v_{n,k}\in\dom(\mo^{\frac{1}{2}})$. Observe that $\omega_{u,v_{n,k}}\in L^1_{00}(\G)$ for all $n,k\in \Nat$. Since $v_{n,k}\to v$ as $n,k\to \infty$, we have $\omega_{u,v_{n,k}}\to \omega_{u,v}$ in $L^1(\G)$ as $n,k\to \infty$. Now note that
\begin{align*}
	\|\mo^{\frac{1}{2}}\omega_{u,v_{n,k}}-\mo^{\frac{1}{2}}\omega_{u,v}\|&=\|\mo^{\frac{1}{2}}\omega_{u,e_nv_k}-\mo^{\frac{1}{2}}\omega_{u,v}\|
                        \\&=\|\mo^{\frac{1}{2}}e_n\omega_{u,v_k}-\mo^{\frac{1}{2}}\omega_{u,v}\|
						\\&\leq \|\mo^{\frac{1}{2}}e_n\omega_{u,v_k}-\mo^{\frac{1}{2}}e_n\omega_{u,v}\|+
                                         \|\mo^{\frac{1}{2}}e_n\omega_{u,v}-\mo^{\frac{1}{2}}\omega_{u,v}\|.
\end{align*}
For the second term above, we use $e_n\mo^{\frac{1}{2}}=\mo^{\frac{1}{2}}e_n$ to get
$$\|\mo^{\frac{1}{2}}e_n\omega_{u,v}-\mo^{\frac{1}{2}}\omega_{u,v}\|=\|e_n\mo^{\frac{1}{2}}\omega-\mo^{\frac{1}{2}}\omega\|\to
0,\quad\mbox{as }n\to\infty$$
For the first term, note that, for each fixed $n$, we have
$$\|\mo^{\frac{1}{2}}e_n\omega_{u,v_k}-\mo^{\frac{1}{2}}e_n\omega_{u,v}\|\to 0,\quad\mbox{as }k\to\infty.$$
Thus we can find a sequence $(k_n)$ of natural numbers such that $k_1<k_2<\ldots$ and
$$\|\mo^{\frac{1}{2}}e_n\omega_{u,v_k}-\mo^{\frac{1}{2}}e_n\omega_{u,v}\|<1/n.$$
Finally, defining $v_n:=v_{n,k_n}$, we conclude that $\omega_n:=\omega_{u,v_n}\in L^1_{00}(\G)$ and
$$\|\mo^{\frac{1}{2}}\omega_n-\mo^{\frac{1}{2}}\omega\|\leq 1/n + \|e_n\mo^{\frac{1}{2}}\omega-\mo^{\frac{1}{2}}\omega\|\to
0.$$
Therefore $\|\omega_n-\omega\|_0\leq\|\omega_n-\omega\|+\|\mo^{\frac{1}{2}}\omega_n-\mo^{\frac{1}{2}}\omega\|\to 0$.
\end{proof}

Define
$$\rho:L^1_0(\G)\to \Ls(H),\quad\rho(\omega):=(\id\otimes\mo^{\frac{1}{2}}\omega)(V^*).$$
Note that $\rho$ is, in fact, an extension of the map $\rho:L^1_{00}(\G)\to\Ls(H)$ previously defined, so that there is no problem of
notation.

\begin{proposition}\label{508} The map $\rho:L^1_0(\G)\to\Ls(H)$ is an injective, contractive, algebra anti-homo\-morphism whose image is dense in $\cdualG$.
\end{proposition}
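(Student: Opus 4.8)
The plan is to verify each claimed property of $\rho$ separately, extracting everything from the corresponding properties of the right regular corepresentation $V\in\M(\G\otimes\dualG)$ and the way it interacts with $\La$, $\Gamma$, and the modular element $\mo$.

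First I would establish that $\rho$ is a contractive algebra anti-homomorphism. Contractivity is immediate from $\|\rho(\omega)\|=\|(\id\otimes\mo^{\frac12}\omega)(V^*)\|\leq\|\mo^{\frac12}\omega\|\leq\|\omega\|_0$ since $V^*$ is unitary. For the anti-multiplicativity, I would use the fundamental corepresentation identity $(\id\otimes\Dt_{\dualG})(V)=V_{12}V_{13}$ -- or rather the form appropriate for $V$ living in $\M(\G\otimes\dualG)$, namely $(\Dt\otimes\id)(V)=V_{13}V_{23}$, which upon slicing the first two legs against $\mo^{\frac12}\omega_1$ and $\mo^{\frac12}\omega_2$ and using $(\mo^{\frac12}\omega_1)\cdot(\mo^{\frac12}\omega_2)=\mo^{\frac12}(\omega_1\cdot\omega_2)$ from Equation~\eqref{310} gives $\rho(\omega_1)\rho(\omega_2)=\rho(\omega_2\cdot\omega_1)$; the order reversal comes precisely from $(\Dt\otimes\id)(V)=V_{13}V_{23}$ slicing into a product in the reverse order. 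I would double-check the leg-numbering conventions here, since that is the only place a sign-of-the-answer error could sneak in.

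Next, for the image: I would show $\rho(L^1_{00}(\G))$, equivalently $\rho(L^1_0(\G))$ since $L^1_{00}$ is dense by Proposition~\ref{309} and $\rho$ is continuous, has closed linear span equal to $\cdualG=\dualJ\dualG\dualJ$. The natural route is: $\cspn\{(\id\otimes\omega)(V):\omega\in L^1(\G)\}$ is a standard description of (a dense subalgebra of) the dual $\dualG$ -- more precisely of $\dualG^{\,\op}$ or of $\cdualG$ depending on conventions, since $V$ rather than $W$ is used. Concretely, $V=(\dualJ\otimes J)\Sigma W^*\Sigma(\dualJ\otimes J)$-type relations, or better the known fact that the right regular corepresentation generates the commutant $\cdualG$ on its second leg, give $\cspn\{(\id\otimes\omega)(V^*):\omega\in L^1(\G)\}$ dense in $\cdualG$. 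Then I must pass from the full set of functionals $\omega$ to those of the form $\mo^{\frac12}\omega$ with $\omega\in L^1_0(\G)$; but as noted after Equation~\eqref{524}, $\{\mo^{\frac12}\omega_{u,v}:u\in H,v\in\dom(\mo^{\frac12})\}=\{\omega_{u,w}:u\in H,w\in\dom(\mo^{\frac12})\cdot\text{(image)}\}$, and since $\dom(\mo^{\frac12})$ is dense in $H$ and $\mo^{\frac12}$ has dense range, the functionals $\mo^{\frac12}\omega$ as $\omega$ ranges over $L^1_0(\G)$ are norm-dense in $L^1(\G)$. Continuity of $\eta\mapsto(\id\otimes\eta)(V^*)$ then yields density of the image.

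Finally, injectivity: if $\rho(\omega)=(\id\otimes\mo^{\frac12}\omega)(V^*)=0$, then since $V$ is a corepresentation that generates a nondegenerate pairing -- equivalently since slicing the first leg of $V^*$ over all of $\G$ (hence over $L^1(\G)$, which separates points of the von Neumann algebra $\G''$) is faithful on the second-leg functional -- one concludes $\mo^{\frac12}\omega=0$ in $L^1(\G)$, and then $\omega=0$ because $\mo^{\frac12}$ is injective with dense range. I would phrase this using that $(\id\otimes\id)(V^*)$ being "nondegenerate" means $(\theta\otimes\id)(V^*)=0$ for all $\theta\in L^1(\G)$ forces the right-leg functional to vanish; this is standard for regular corepresentations of locally compact quantum groups. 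The main obstacle I anticipate is purely bookkeeping: getting the conventions for $V$ versus $W$, for $\dualG$ versus $\cdualG$ versus their opposites, and the leg-numbering in $(\Dt\otimes\id)(V)=V_{13}V_{23}$ all mutually consistent so that anti-multiplicativity and the identification of the image with $\cdualG$ (rather than $\dualG$ or an opposite) come out correctly. Once those conventions are pinned down against \cite{Kustermans-Vaes:LCQG}, each individual verification is short.
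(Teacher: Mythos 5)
Your proposal is correct in substance and follows essentially the same route as the paper: the paper merely packages the corepresentation identity differently, writing $W^\op=\Sigma V^*\Sigma$ so that $\rho(\omega)=\laop{}(\mo^{\frac{1}{2}}\omega)$, and then reads off anti-multiplicativity from the fact that $\laop{}$ is a homomorphism on $L^1(\opG)$, which is the opposite algebra of $L^1(\G)$, combined with Equation~\eqref{310}; your direct slicing of the corepresentation identity for $V^*$ is the same computation in different clothing, and your density and injectivity arguments likewise track the paper's. Two points should be tightened. First, with the paper's convention (Equations~\eqref{086} and~\eqref{085}) the \emph{second} leg of $V$ is the $\G$-leg --- consistent with $\rho(\omega)=(\id\otimes\mo^{\frac{1}{2}}\omega)(V^*)$ slicing the second leg --- so the relevant identity is $(\id\otimes\Dt)(V)=V_{12}V_{13}$, hence $(\id\otimes\Dt)(V^*)=V_{13}^*V_{12}^*$, and slicing legs $2$ and $3$ with $\mo^{\frac{1}{2}}\omega_1$ and $\mo^{\frac{1}{2}}\omega_2$ gives $\rho(\omega_1\cdot\omega_2)=\rho(\omega_2)\rho(\omega_1)$; the identity $(\Dt\otimes\id)(V)=V_{13}V_{23}$ you wrote, with slices in the first two legs, belongs to the opposite leg convention and does not match the definition of $\rho$, exactly the bookkeeping hazard you flagged. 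Second, the final step of injectivity is not quite ``$\mo^{\frac{1}{2}}$ is injective with dense range'': from $\mo^{\frac{1}{2}}\omega=0$ you only know $\omega(x\mo^{\frac{1}{2}})=0$ for all left multipliers $x$ of $\mo^{\frac{1}{2}}$, and to conclude $\omega=0$ you must exhibit enough such multipliers; the paper takes $x=ye_n\mo^{-\frac{1}{2}}$ with $y\in\G$ and $e_n$ as in Equation~\eqref{307}, so that $\omega(ye_n)=0$ and $e_n\to 1$ strictly forces $\omega=0$. With these two repairs your argument coincides with the paper's.
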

\begin{proof} Consider the opposite $\opG$ of $\G$. The left regular corepresentation $W^\op$ of $\opG$
is equal to $\Sigma V^*\Sigma$ (see \cite[Proposition~1.14.10]{Vaes:Thesis}). It follows that
$$\rho(\omega)=(\id\otimes\mo^{\frac{1}{2}}\omega)(V^*)=(\mo^{\frac{1}{2}}\omega\otimes\id)(W^\op)=\laop{}(\mo^{\frac{1}{2}}\omega)$$
for all $\omega\in L^1_0(\G)$. Since $L^1(\opG)$ equals the opposite algebra of $L^1(\G)$, we get
\begin{align*}
\rho(\omega_1\cdot\omega_2)&=\laop{}\bigl(\mo^{\frac{1}{2}}(\omega_1\cdot\omega_2)\bigr)
						  \\&=\laop{}\bigl((\mo^{\frac{1}{2}}\omega_1)\cdot(\mo^{\frac{1}{2}}\omega_2)\bigr)
						  \\&=\laop{}(\mo^{\frac{1}{2}}\omega_2)\laop{}(\mo^{\frac{1}{2}}\omega_1)
						  \\&=\rho(\omega_2)\rho(\omega_1).
\end{align*}
Thus $\rho$ is an anti-homomorphism. Note also that
$\|\rho(\omega)\|\leq\|\mo^{\frac{1}{2}}\omega\|\leq\|\omega\|_0$. Hence
$\rho$ is contractive. If $\rho(\omega)=\laop{}(\mo^{\frac{1}{2}}\omega)=0$, then $\mo^{\frac{1}{2}}\omega=0$ because $\laop{}$ is injective.
This implies $\omega(x\mo^{\frac{1}{2}})=0$ for every left multiplier $x$ of $\mo^{\frac{1}{2}}$. Taking
$x=ye_n\mo^{-\frac{1}{2}}$ we get $\omega(ye_n)=0$ for all $n\in \Nat$ and $y\in \G$
and hence $\omega=0$ because $e_n\to 1$ strictly. Therefore $\rho:L^1_0(\G)\to \Ls(H)$ is an injective, contractive, algebra anti-homomorphism.

Finally, note that $\rho\bigl(L^1_0(\G)\bigr)=\laop{}\bigl(\mo^{\frac{1}{2}}L^1_0(\G)\bigr)\sbe\widehat{\opG}=\cdualG$.
Since $\mo^{\frac{1}{2}}L^1_0(\G)$ contains $\mo^{\frac{1}{2}}L^1_{00}(\G)$, which
contains elements of the form $\omega_{u,\mo^{\frac{1}{2}}v}$, where $u\in H$ and $v\in \dom(\mo^{\frac{1}{2}})$,
and since such elements span a dense subspace of $L^1(\G)$,
we conclude that $\rho\bigl(L^1_0(\G)\bigr)$ is dense in $\cdualG$ as well
(the image of $\laop{}$ is dense in $\widehat{\opG}=\cdualG$).
\end{proof}

The next result implies that $\E_\si$ is a Banach left $L^1_0(\G)$-module.

\begin{proposition}\label{358} Let $\E$ be a Hilbert $B,\G$-module. If $\omega\in L^1_0(\G)$ and $\xi\in \E_\si$, then $\omega\in \E_\si$ and
$$\kket{\omega*\xi}=\kket{\xi}(1_B\otimes\rho_\omega).$$
In particular, $\|\omega*\xi\|_\si\leq\|\omega\|_0\|\xi\|_\si$ for all $\xi\in \E_\si$ and $\omega\in L^1_0(\G)$.
\end{proposition}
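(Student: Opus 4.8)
The plan is to deduce Proposition~\ref{358} from Proposition~\ref{264} by a density argument, using the density of $L^1_{00}(\G)$ in $L^1_0(\G)$ (Proposition~\ref{309}) together with the fact that $\E_\si$ is complete in the $\si$-norm. First I would fix $\xi\in\E_\si$ and $\omega\in L^1_0(\G)$, and choose, by Proposition~\ref{309}, a sequence $(\omega_n)$ in $L^1_{00}(\G)$ with $\|\omega_n-\omega\|_0\to 0$; in particular $\omega_n\to\omega$ in $L^1(\G)$ and $\mo^{\frac12}\omega_n\to\mo^{\frac12}\omega$ in $L^1(\G)$. By Proposition~\ref{264} each $\omega_n*\xi$ lies in $\E_\si$ with $\kket{\omega_n*\xi}=\kket{\xi}(1_B\otimes\rho_{\omega_n})$ and $\|\omega_n*\xi\|_\si\le\|\omega_n\|_\rho\|\xi\|_\si$.

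The next step is to control the two pieces of the $\si$-norm of $\omega_n*\xi-\omega_m*\xi=(\omega_n-\omega_m)*\xi$. For the plain norm, $\|(\omega_n-\omega_m)*\xi\|\le\|\omega_n-\omega_m\|\,\|\xi\|\to 0$ since the $L^1(\G)$-action is contractive on $\M(\E)$. For the ket part, Proposition~\ref{264} gives $\|\kket{(\omega_n-\omega_m)*\xi}\|=\|\kket{\xi}(1_B\otimes\rho_{\omega_n}-1_B\otimes\rho_{\omega_m})\|\le\|\kket{\xi}\|\,\|\rho_{\omega_n-\omega_m}\|$, and by Proposition~\ref{508} $\|\rho_{\omega_n-\omega_m}\|\le\|\mo^{\frac12}(\omega_n-\omega_m)\|\to 0$. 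Hence $(\omega_n*\xi)$ is $\si$-Cauchy, so it converges in the $\si$-norm to some $\zeta\in\E_\si$ (using completeness of $\E_\si$ as recalled after Proposition~\ref{003}). Since $\si$-convergence implies norm convergence, $\zeta=\lim\omega_n*\xi=\omega*\xi$ in $\E$, so in particular $\omega*\xi\in\E_\si$.

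It then remains to identify $\kket{\omega*\xi}$. Since $\omega_n*\xi\to\omega*\xi$ in the $\si$-norm, we have $\kket{\omega_n*\xi}\to\kket{\omega*\xi}$ in operator norm in $\Ls(B\otimes H,\E)$. On the other hand $\rho_{\omega_n}\to\rho(\omega)=(\id\otimes\mo^{\frac12}\omega)(V^*)$ in $\Ls(H)$, because $\|\rho_{\omega_n}-\rho(\omega)\|=\|\rho_{\omega_n-\omega}\|\le\|\mo^{\frac12}(\omega_n-\omega)\|\to 0$ (here one uses that $\rho$ on $L^1_0(\G)$ extends $\rho$ on $L^1_{00}(\G)$, as noted before Proposition~\ref{508}). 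Therefore $\kket{\xi}(1_B\otimes\rho_{\omega_n})\to\kket{\xi}(1_B\otimes\rho_\omega)$ in norm, and passing to the limit in the identity of Proposition~\ref{264} yields $\kket{\omega*\xi}=\kket{\xi}(1_B\otimes\rho_\omega)$. The estimate $\|\omega*\xi\|_\si\le\|\omega\|_0\|\xi\|_\si$ follows from $\|\omega*\xi\|\le\|\omega\|\,\|\xi\|\le\|\omega\|_0\|\xi\|_\si$ and $\|\kket{\omega*\xi}\|\le\|\kket{\xi}\|\,\|\rho_\omega\|\le\|\xi\|_\si\|\mo^{\frac12}\omega\|\le\|\xi\|_\si\|\omega\|_0$, and hence $\E_\si$ is a Banach left $L^1_0(\G)$-module. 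The only delicate point is making sure that the limit of $\omega_n*\xi$ in the $\si$-norm is genuinely the element $\omega*\xi$ (rather than just some element of $\E_\si$ with the same ket); this is handled by the observation that the $\si$-norm dominates the ordinary norm, so the two limits must agree.
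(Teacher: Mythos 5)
Your argument is correct and is essentially the paper's own proof: approximate $\omega$ in $\|\cdot\|_0$ by a sequence from $L^1_{00}(\G)$, apply Proposition~\ref{264} termwise, check that $(\omega_n*\xi)$ is $\si$-Cauchy, and use the $\si$-completeness of $\E_\si$ together with the fact that the $\si$-norm dominates the ordinary norm to identify the $\si$-limit with $\omega*\xi$. The only cosmetic point is the final estimate: bound the two pieces by $\|\omega\|_0\|\xi\|$ and $\|\omega\|_0\|\kket{\xi}\|$ respectively (rather than each by $\|\omega\|_0\|\xi\|_\si$, which as written would give an extra factor of $2$) so that summing yields exactly $\|\omega*\xi\|_\si\leq\|\omega\|_0\|\xi\|_\si$.
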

\begin{proof} Let $(\omega_n)$ be a sequence in $L^1_{00}(\G)$ converging to $\omega$ (with respect to $\|\cdot\|_0$). In particular, $\omega_n\to\omega$ in $L^1(\G)$, and hence $\omega_n*\xi\to \omega*\xi$ in $\E$. Since $\rho_{\omega_n}\to \rho_{\omega}$, we also have
$$\kket{\omega_n*\xi}=\kket{\xi}(1_B\otimes\rho_{\omega_n})\to \kket{\xi}(1_B\otimes\rho_\omega).$$
This implies that $(\omega_n*\xi)$ is a Cauchy sequence with respect to $\|\cdot\|_\si$. By Lemma~5.28 in \cite{Buss-Meyer:Square-integrable}, this sequence converges to some $\eta\in \E_\si$. In particular, $\omega_n*\xi\to \eta$ in $\E$. It follows that $\omega*\xi=\eta\in \E_\si$. Moreover,
\begin{equation*}
    \kket{\omega*\xi}=\kket{\eta}=\lim\limits_n\kket{\omega_n*\xi}
		=\lim\limits_n\kket{\xi}(1_B\otimes\rho_{\omega_n})=\kket{\xi}(1_B\otimes\rho_\omega).\qedhere
\end{equation*}
\end{proof}

\begin{remark}\label{500} Let us return to the group case, that is, $\G=\cont_0(G)$, where $G$ is some locally compact group.
There is a small difference of
convention with respect to the modular element $\mo$ of $\G=\cont_0(G)$,
in the sense that it is not given by the modular function $\dtg_G$ of $G$, but by its inverse, that is, by
the function $t\mapsto \mo_G(t)^{-1}$ (see comments after Definition~1.9.1 in \cite{Vaes:Thesis}). It follows that $L^1_0(\G)$ corresponds to
$$L^1_0(G)=\{\omega\in L^1(G):\dtg_G^{-\frac{1}{2}}\cdot\omega\in L^1(G)\},$$
where $\cdot$ denotes pointwise multiplication.
Given $\omega\in L^1_0(G)$, the operator $\rho_\omega\in \Ls\bigl(L^2(G)\bigr)$ corresponds to the operator given by right convolution with $\omega$.
Thus, for groups, Proposition~\ref{358} says exactly what have already seen in the beginning of this section.
\end{remark}

Before finishing this section, we want to relate co-amenability of $\G$ (as defined in \cite{Bedos-Tuset:Amenability_co-amenability}) with the existence of bounded approximate units for $L^1_0(\G)$.

\begin{proposition}\label{531} The Banach algebra $L^1_0(\G)$ has a bounded approximate unit if and only if $\G$ is co-amenable.
\end{proposition}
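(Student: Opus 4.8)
The plan is to reduce the statement to the B\'edos--Tuset characterization of co-amenability: $\G$ is co-amenable if and only if $L^1(\G)$, with its usual norm, has a bounded approximate identity \cite{Bedos-Tuset:Amenability_co-amenability}. Granting this, it is enough to prove that $L^1_0(\G)$ has a bounded approximate identity for $\|\cdot\|_0$ if and only if $L^1(\G)$ has one for $\|\cdot\|$.

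\emph{The ``only if'' part} is a soft density argument. Let $(e_i)$ be a $\|\cdot\|_0$-bounded approximate identity of $L^1_0(\G)$, say $\|e_i\|_0\leq M$; then also $\|e_i\|\leq M$. By Proposition~\ref{309} together with the density of $L^1_{00}(\G)$ in $L^1(\G)$, the subalgebra $L^1_0(\G)$ is $\|\cdot\|$-dense in $L^1(\G)$; since $\|\cdot\|\leq\|\cdot\|_0$ on $L^1_0(\G)$, a routine three-term estimate -- approximate $\omega\in L^1(\G)$ by $\omega'\in L^1_0(\G)$ in $\|\cdot\|$, use $\|e_i\cdot\omega'-\omega'\|\leq\|e_i\cdot\omega'-\omega'\|_0\to0$ and the bound $M$, and argue symmetrically on the other side -- shows that $(e_i)$ is a $\|\cdot\|$-bounded approximate identity for $L^1(\G)$. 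Hence $\G$ is co-amenable.

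\emph{The ``if'' part} carries the content. Assume $\G$ co-amenable. The key algebraic input is \eqref{310}, $\mo^{\frac{1}{2}}(\omega_1\cdot\omega_2)=(\mo^{\frac{1}{2}}\omega_1)\cdot(\mo^{\frac{1}{2}}\omega_2)$: it shows that it suffices to produce a net $(u_i)$ in $L^1_0(\G)$ which is $\|\cdot\|_0$-bounded and such that \emph{both} $(u_i)$ and $(\mo^{\frac{1}{2}}u_i)$ are two-sided bounded approximate identities for $L^1(\G)$. Indeed, for $\omega\in L^1_0(\G)$ one then has $u_i\cdot\omega\to\omega$ in $\|\cdot\|$ directly and $\mo^{\frac{1}{2}}(u_i\cdot\omega)=(\mo^{\frac{1}{2}}u_i)\cdot(\mo^{\frac{1}{2}}\omega)\to\mo^{\frac{1}{2}}\omega$ in $\|\cdot\|$ (since $\mo^{\frac{1}{2}}\omega\in L^1(\G)$), hence $u_i\cdot\omega\to\omega$ in $\|\cdot\|_0$, and symmetrically $\omega\cdot u_i\to\omega$. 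To build such a $(u_i)$ one starts from a bounded approximate identity for $L^1(\G)$ which, by co-amenability and the fact that the normal states of $\G''$ are vector states (standard form, as in the text), may be taken of the form $(\omega_{\eta_i,\eta_i})$ with unit vectors $\eta_i\in H$; the task is to arrange $\eta_i\in\dom(\mo^{\frac{1}{2}})$ with $\sup_i\|\mo^{\frac{1}{2}}\eta_i\|<\infty$. Then $\omega_{\eta_i,\eta_i}\in L^1_{00}(\G)\sbe L^1_0(\G)$, $\mo^{\frac{1}{2}}\omega_{\eta_i,\eta_i}=\omega_{\eta_i,\mo^{\frac{1}{2}}\eta_i}$ by \eqref{524}, the $\|\cdot\|_0$-norms are bounded, and it remains to check that $(\omega_{\eta_i,\mo^{\frac{1}{2}}\eta_i})$ is again an approximate identity for $L^1(\G)$ -- which follows once $\mo^{\frac{1}{2}}\eta_i$ is, in an appropriate sense, asymptotically close to $\eta_i$.

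\emph{The main obstacle} is precisely the control of the modular size of the approximate identity, i.e.\ producing the vectors $\eta_i\in\dom(\mo^{\frac{1}{2}})$ above with uniformly bounded $\|\mo^{\frac{1}{2}}\eta_i\|$. Smoothing a vector $\eta\in H$ by the Gaussian elements $e_n$ of \eqref{307} puts $e_n\eta$ into $\dom(\mo^{z})$ for all $z$ and one does have $\|e_n\mo^{\frac{1}{2}}\xi\|\leq\|\mo^{\frac{1}{2}}\xi\|$ for $\xi\in\dom(\mo^{\frac{1}{2}})$; but for a vector $\eta$ merely in $H$ the bound $\|e_n\mo^{\frac{1}{2}}\eta\|\leq\|e_n\mo^{\frac{1}{2}}\|\,\|\eta\|$ degenerates, since $\|e_n\mo^{\frac{1}{2}}\|$ grows like $e^{n^2/4}$, so no single $e_n$ both approximates and is cheap. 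What rescues the construction is that the approximate identity $(\omega_{\eta_i,\eta_i})$ concentrates at the counit $\varepsilon$, which annihilates the modular element in the sense $\varepsilon(\mo^{z})=1$ for all $z$ (from $\Dt(\mo)=\mo\otimes\mo$ and multiplicativity of $\varepsilon$); consequently the approximating vectors can be taken $\mo^{\frac{1}{2}}$-regular with uniformly bounded modular norm, at the cost of a careful coupling of the approximate-identity index with the smoothing parameter $n$ (letting $n=n(i)\to\infty$ slowly relative to the rate of concentration). Carrying out this coupled double limit is the technical heart of the argument. An alternative presentation routes the construction through the equivalence of co-amenability of $\G$ with amenability of $\dualG$ and produces $\mo^{\frac{1}{2}}$-controlled Reiter vectors for $\dualG$ directly, but the analytic difficulty is unchanged.
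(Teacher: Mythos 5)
Your reduction is the right one, and your ``only if'' direction is fine (it is the easy half, which the paper in fact leaves implicit: a $\|\cdot\|_0$-bounded approximate unit is $\|\cdot\|$-bounded, $L^1_0(\G)\supseteq L^1_{00}(\G)$ is $\|\cdot\|$-dense, and one invokes the characterization of co-amenability by a bounded approximate identity in $L^1(\G)$). The gap is in the ``if'' direction: you correctly isolate what must be produced --- a $\|\cdot\|_0$-bounded net $(u_i)$ in $L^1_0(\G)$ such that both $(u_i)$ and $(\mo^{\frac{1}{2}}u_i)$ are bounded approximate identities for $L^1(\G)$, which by \eqref{310} suffices --- but you do not actually produce it. You defer the construction to ``a careful coupling of the approximate-identity index with the smoothing parameter $n$, letting $n=n(i)\to\infty$,'' call this ``the technical heart,'' and stop. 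As written, the proof is incomplete precisely at the step that carries the content.

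Moreover, the obstacle you describe is an artifact of insisting that $n\to\infty$; no such limit, and hence no coupled double limit, is needed. The paper's argument fixes a \emph{single} element $e=e_n$ from \eqref{307} (one fixed $n$), which is a bounded right multiplier of $\mo^{\frac{1}{2}}$ with $\epsilon(e)=\epsilon(\mo^{\frac{1}{2}}e)=1$ (the latter from $\Dt(\mo)=\mo\otimes\mo$ and $(\epsilon\otimes\id)\circ\Dt=\id$). Starting from a bounded approximate identity $(\omega_i)=(\omega_{\xi_i,\xi_i})$ of normal states which converges pointwise on $\M(\G)$ to the counit $\epsilon$, one shows for \emph{any} $a\in\M(\G)$ with $\epsilon(a)=1$ that
$\|a\xi_i-\xi_i\|^2=\omega_i(a^*a)-\omega_i(a^*)-\omega_i(a)+1\to 0$, whence $\|a\omega_i-\omega_i\|\leq\|a\xi_i-\xi_i\|\to 0$. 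Applying this to $a=e$ and to $a=\mo^{\frac{1}{2}}e$ shows that $(e\omega_i)$ and $(\mo^{\frac{1}{2}}e\omega_i)$ are both bounded approximate identities for $L^1(\G)$, while $\|e\omega_i\|_0\leq\max\{\|e\|,\|\mo^{\frac{1}{2}}e\|\}$ is bounded because $n$ is fixed. The counit does all the work of restoring the approximate-identity property after the modular correction; there is no need to make the smoothing parameter degenerate, and hence no need for the uniform control of $\|\mo^{\frac{1}{2}}\eta_i\|$ along a diagonal that your sketch identifies as problematic. To repair your write-up, replace the coupled limit by this fixed-$e$ argument.
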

\begin{proof}
Suppose that $\G$ is co-amenable. Then one can find an approximate unit $(\omega_i)$ for $L^1(\G)\cong M_*$ consisting of normal states, where $M:=\G''$ (see \cite[Theorem~2]{Hu-Neufang-Ruan:Multipliers}). Since $M$ is in standard form, each $\omega_i$ has the form $\omega_i=\omega_{\xi_i,\xi_i}$, where $\xi_i\in H$ are unit vectors. By the Banach--Alaoglu Theorem, we may assume (by passing to a subnet, if necessary) that $\omega_i(x)\to \epsilon(x)$ for all $x\in M$, where $\epsilon\in M^*$ is some state whose restriction
to $\G$ is (necessarily) the counit of $\G$ (see the proof of \cite[Theorem~3.1]{Bedos-Tuset:Amenability_co-amenability}). In particular,
$$\epsilon(x)=\lim\limits_i\omega_i(x)=\lim\limits_i\braket{\xi_i}{x\xi_i}\quad\mbox{ for all }x\in \M(\G).$$

Let $e\in \M(\G)$ with $\epsilon(e)=1$. We claim that $\|e\omega_i-\omega_i\|\to 0$. In fact, recall that $\epsilon$ is a $*$-homomorphism. Thus
$$\|e\xi_i-\xi_i\|^2=\braket{\xi_i}{e^*e\xi_i}-\braket{\xi_i}{e^*\xi_i}-\braket{\xi_i}{e\xi_i}+1\to\epsilon(e^*e)-\epsilon(e^*)-\epsilon(e)+1=0.$$ Hence
$$\|e\omega_i-\omega_i\|=\|\omega_{\xi_i,e\xi_i}-\omega_{\xi_i,\xi_i}\|\leq\|e\xi_i-\xi_i\|\to 0.$$
Note that this implies that $(e\omega_i)$ is also a (bounded) approximate unit for $L^1(\G)$.
Now suppose, in addition, that $e$ is a
right multiplier of $\mo^{\frac{1}{2}}$ (for instance, one can take $e=e_n$ defined by Equation~\eqref{307}, for any $n\in \Nat$).
Then, for all $\omega\in L^1(\G)$, we have $e\omega\in L^1_0(\G)$ and
$$\|e\omega\|_0\leq\max\left\{\|e\|,\|\mo^{\frac{1}{2}}e\|\right\}\|\omega\|.$$
In other words, $\omega\mapsto e\omega$ is a bounded linear map $L^1(\G)\to L^1_0(\G)$.
Note that $\epsilon(\mo^{\frac{1}{2}}e)=1$ (this follows from the relations $\Dt(\mo)=\mo\otimes \mo$ and
$(\epsilon\otimes\id)\circ\Dt=\id$). From the claim we have just proved
above (applied to $\mo^{\frac{1}{2}}e$),
it follows that $\|\mo^{\frac{1}{2}}e\omega_i-\omega_i\|\to 0$ and therefore $(\mo^{\frac{1}{2}}e\omega_i)$ is also a (bounded) approximate unit for $L^1(\G)$.
To complete the proof, we show that the (bounded) net $(e\omega_i)$ is an approximate unit for $L^1_0(\G)$.
In fact, by Equation~\eqref{310} and the fact that the nets $(e\omega_i)$ and $(\mo^{\frac{1}{2}}e\omega_i)$
are approximate units for $L^1(\G)$, we get
$$\|(e\omega_i)\cdot\omega-\omega\|_0
\leq\|(e\omega_i)\cdot\omega-\omega\|+\|(\mo^{\frac{1}{2}}e\omega_i)\cdot(\mo^{\frac{1}{2}}\omega)-\mo^{\frac{1}{2}}\omega\|\to 0$$
for any $\omega\in L^1_0(\G)$. Analogously, $\|\omega\cdot(e\omega_i)-\omega\|_0\to 0$ for all $\omega\in L^1_0(\G)$.
\end{proof}

\section{Relative continuity and generalized fixed point algebras}

Throughout this section we fix a locally compact quantum group $\G$.
We also fix a \emph{$\G$-\cstar{}algebra} $B$, that is, a \cstar{}algebra $B$ endowed with a continuous coaction $\co_B\colon B\to \M(B\otimes\G)$ of $\G$. Recall that the reduced crossed product is the \cstar{}subalgebra of $\Ls(B\otimes H)$ defined as the closed linear span of operators of the form $\co_B(b)(1\otimes x)$ with $b\in B$ and $x\in \cdualG$. It is easy to see that these operators are equivariant with respect to the coaction on $B\otimes H$ defined in Equation~\eqref{136}. In other words, $B\rtimes_\red \cdualG$ is a \cstar{}subalgebra of $\Ls^\G(B\otimes H)$, the space of all $\G$-equivariant operators on $B\otimes H$.

Now let $\E$ be a $\G$-equivariant $B$-module (henceforth also called a Hilbert $B,\G$-module), meaning a Hilbert $B$-module equipped with a $\G$-coaction compatible with $\co_B$. Recall that, given $\xi,\eta\in \E_\si$, the bra-ket operators $\bbraket{\xi}{\eta}\defeq\bbra{\xi}\circ\kket{\eta}$ (that is, the composition of the operators $\bbra{\xi}\in \Ls(\E,B\otimes H)$ and $\kket{\eta}\in \Ls(B\otimes H,\E)$) live in $\Ls(B\otimes H)$. These operators are also equivariant, that is, $\bbraket{\xi}{\eta}\in \Ls^\G(B\otimes H)$.
Therefore it is natural to ask whether $\bbraket{\xi}{\eta}\in B\rtimes_\red\cdualG$. This turns out to be a crucial question and is therefore turned into a definition:

\begin{definition}\label{574} Let $B$ be a $\G$-\cstar{}algebra,
let $\E$ be a Hilbert $B,\G$-module, and suppose that $\xi,\eta\in \E_\si$. We say that the pair $(\xi,\eta)$ is
\emph{relatively continuous}\index{relatively continuous!pair of elements},
and write $\xi\rc\eta$, if $\bbraket{\xi}{\eta}\in B\rtimes_\red\cdualG$. A subset $\R\sbe\E_\si$ is called
\emph{relatively continuous}\index{relatively continuous!subset} if $\xi\rc\eta$ for all $\xi,\eta\in \R$, that is,
$$\bbraket{\R}{\R}:=\{\bbraket{\xi}{\eta}:\xi,\eta\in \R\}\sbe B\rtimes_\red\cdualG.$$
For a relatively continuous subset $\R$ of $\E$, we define the following subspaces:
$$\F(\E,\R):=\cspn(\kket{\R}\circ B\rtimes_\red\cdualG)\sbe\Ls(B\otimes H,\E),$$
$$\I(\E,\R):=\cspn(B\rtimes_\red\cdualG \circ\bbraket{\R}{\R}\circ B\rtimes_\red\cdualG)\sbe B\rtimes_\red\cdualG,$$
and the \emph{generalized fixed point algebra}
$$\Fix(\E,\R):=\cspn(\kket{\R}\circ B\rtimes_\red\cdualG\circ \bbra{\R})\sbe\Ls(\E).$$
We say that $\R$ is \emph{saturated}\index{relatively continuous!saturated subset} if $\I(\E,\R)=B\rtimes_\red\cdualG$.
\end{definition}

\begin{remark} {\bf (1)} Relative continuity was first defined by Exel \cite{Exel:SpectralTheory} in the case of Abelian groups and
was generalized to non-Abelian groups by Meyer in \cite{Meyer:Generalized_Fixed}. Our definition generalizes Meyer's definition to quantum groups.
Exel's definition involves integrable elements instead of square-integrable elements as above, but this turns out to be essentially equivalent
(see \cite{Buss-Meyer:Continuous}).

{\bf (2)} Relative continuity is \emph{not} an equivalence relation. For instance, it is not true, in general,
that $\xi\rc\xi$. Of course, it is a symmetric relation, that is, if $\xi\rc\eta$, then $\eta\rc\xi$.
Note that it is not transitive, that is, the conditions $\xi\rc\eta$ and $\eta\rc\zeta$ do not imply that $\xi\rc\zeta$ because
we always have $\xi\rc 0$ and $0\rc\zeta$.

{\bf (3)} Observe that we are assuming continuity of the coaction of $\G$ on $B$ (because this is necessary to
define the reduced crossed product $B\rtimes_\red\cdualG$), but we do not assume that the coaction of $\G$ on $\E$ is continuous. The reason for this is that the coaction on $H$ (or more generally on $B\otimes H$) is not continuous in general, except if $\G$ is regular; and this coaction will turn out to be the most important example in order to develop the theory.

{\bf (4)} Let $\E$ be a Hilbert $B,\G$-module, and assume that $\R\sbe\E$ is a relatively continuous subset.
It is clear from the definition above that $\I(\E,\R)$ is an ideal of $A:=B\rtimes_\red\cdualG$ and $\Fix(\E,\R)$ is a \cstar{}subalgebra of $\Ls(\E)$.
Let $\F:=\F(\E,\R)$.
Since $\bbraket{\R}{\R}\sbe A$, we have
\begin{equation}\label{095}
\cspn\kket{\R}\sbe\F.
\end{equation}
In fact, let $(e_i)$ be an approximate unit for $A$. If $T$ is
an operator on $B\otimes H$ such that $T^*T\in A$, then we have
$$\|Te_i-T\|^2=\|e_iT^*Te_i-e_iT^*T-T^*Te_i+T^*T\|\to 0.$$
Note that, by definition, we have
\begin{equation}\label{507}
\I(\E,\R)=\cspn\F^*\circ\F\quad\mbox{and}\quad\Fix(\E,\R)=\cspn\F\circ\F^*.
\end{equation}
In particular, we get
\begin{equation}\label{391}
\cspn\bbraket{\R}{\R}\sbe\I(\E,\R)\quad\mbox{and}\quad\cspn\kket{\R}\bbra{\R}\sbe\Fix(\E,\R).
\end{equation}
We are going to see later that the inclusions \eqref{095} and \eqref{391} become equalities if we impose more conditions on $\R$.
\end{remark}

\begin{proposition}\label{079} Let $\E$ be a Hilbert $B,\G$-module, let $\R\sbe\E_\si$ be a relatively continuous subset
and denote $A:=B\rtimes_\red\cdualG$. Then $\F:=\F(\E,\R)\sbe \Ls^\G(B\otimes H,\E)$ is a \emph{concrete Hilbert $A$-module} \textup(as defined in \cite[Section~5]{Meyer:Generalized_Fixed} in the group case\textup), meaning that $\F\circ A\sbe\F$ and $\F^*\circ\F\sbe A$. Moreover, if $\R$ is dense in $\E$, then $\F$ is \emph{essential} in the sense that $\cspn\F(B\otimes H)=\E$.
\end{proposition}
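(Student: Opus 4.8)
The plan is to verify the three defining properties of a concrete Hilbert $A$-module, with $A:=B\rtimes_\red\cdualG$, and then to treat essentiality on its own. For the first part: $\F\sbe\Ls^\G(B\otimes H,\E)$ because for $\xi\in\E_\si$ the operator $\kket{\xi}$ is $\G$-equivariant for the coaction $\co_{B\otimes H}$ of Equation~\eqref{136} (as recalled above), every operator in $A$ is $\G$-equivariant on $B\otimes H$, compositions of equivariant operators are equivariant, and $\Ls^\G$ is norm closed in $\Ls$; the inclusion $\F\circ A\sbe\F$ is immediate from $\kket{\R}\circ A\circ A\sbe\kket{\R}\circ A$ (since $A$ is an algebra) upon taking closed linear spans; and for $\F^*\circ\F\sbe A$ one uses $A=A^*$ and $\kket{\xi}^*=\bbra{\xi}$ to write $\F^*=\cspn(A\circ\bbra{\R})$, computes a generic product $(a_1\circ\bbra{\xi})\circ(\kket{\eta}\circ a_2)=a_1\circ\bbraket{\xi}{\eta}\circ a_2$, and invokes relative continuity ($\bbraket{\xi}{\eta}\in A$) together with the fact that $A$ is a closed algebra. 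This part uses nothing beyond what is already set up.

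For essentiality assume $\R$ is dense in $\E$. Because $\co_B$ is continuous and $\cdualG\sbe\Ls(H)$ acts nondegenerately on $H$, the algebra $A$ acts nondegenerately on $B\otimes H$, i.e.\ $\cspn A(B\otimes H)=B\otimes H$; combined with boundedness of the operators $\kket{\xi}$ this gives $\cspn\F(B\otimes H)=\cspn\kket{\R}(B\otimes H)$, so $\F$ is essential if and only if the closed linear span of the ranges of the $\kket{\xi}$, $\xi\in\R$, equals $\E$. To prove the latter I would follow the group-case pattern. For $\G=\cont_0(G)$ one has $\kket{\xi}(f)=\int_G\gamma_t(\xi)f(t)\dd{t}$, so taking $f$ to be the function $t\mapsto\omega(t)b$ with $\omega\in\contc(G)$ and $b\in B$ yields $\kket{\xi}(f)=(\omega*\xi)\cdot b$; hence $(\omega*\xi)\cdot b\in\ran\kket{\xi}$ for every $\xi\in\R$, and density of $\R$ in $\E$, density of $\contc(G)$ in $L^1(G)$, nondegeneracy of the $L^1(G)$-action ($\cspn(L^1(G)*\E)=\E$), and an approximate unit for $B$ then force $\E=\cspn_{\xi\in\R}\ran\kket{\xi}$.

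In the quantum setting the vectors $b\otimes\La(x)$ -- with $x$ in a dense subset of $\dom(\f)\cap\dom(\La)$, so that $\La(x)$ runs over a dense subset of $H$ -- take the place of $t\mapsto\omega(t)b$, and one wants to rewrite the identity $\kket{\xi}(b\otimes\La(x))=(\id_\E\otimes\f)\bigl(\co_\E(\xi)(b\otimes x)\bigr)$ recorded above in the form $\kket{\xi}(b\otimes\La(x))=(\omega_x*\xi)\cdot b$ for a suitable $\omega_x\in L^1(\G)$ built from $\f$ and $x$. This is the main obstacle: the naive choice $\omega_x=\f(\,\cdot\,x)$ need not be bounded, so one has to bring in the modular element $\mo$ -- working through the right Haar weight $\psi=\f_\mo$ and $\Gamma(\,\cdot\,)=\La(\,\cdot\,\mo^{\frac{1}{2}})$ -- and then establish both the formula and the density of the resulting $\omega_x$ in $L^1(\G)$, relying on the properties of the slice maps $\id_\E\otimes\f$ and $\id_\E\otimes\La$ from Kustermans--Vaes recalled earlier. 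Granting that, the nondegeneracy $\cspn(L^1(\G)*\E)=\E$ (which holds since $\co_B$ is continuous), density of $\R$, and an approximate unit for $B$ conclude the argument exactly as in the group case.
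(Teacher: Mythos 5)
Your proof is correct and follows essentially the same route as the paper: the algebraic verifications of $\F\circ A\sbe\F$ and $\F^*\circ\F\sbe A$ are exactly the paper's (the latter reducing to $\bbraket{\R}{\R}\sbe A$), and essentiality is reduced, via nondegeneracy of $A$ on $B\otimes H$, to $\cspn\kket{\R}(B\otimes H)=\E$. The only difference is that the paper obtains this last identity by citing Lemmas~5.17 and~6.2 of \cite{Buss-Meyer:Square-integrable}, whereas you sketch their content directly -- your formula $\kket{\xi}\bigl(1_B\otimes\La(x_\omega)\bigr)=\omega*\xi$ with $\omega=a\f b^*$ and the modular-element workaround is precisely \cite[Lemma~5.17]{Buss-Meyer:Square-integrable} (restated later in the paper as Lemma~\ref{287}), so the step you leave as ``granting that'' is exactly the citation the paper makes.
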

\begin{proof} Since $A\sbe\Ls^\G(B\otimes H)$, and since the bra-ket operators are $\G$-equivariant, we have $\F\sbe \Ls^\G(B\otimes H,\E)$. From the definition of $\F$, it is obvious that $\F\circ A\sbe \F$ and $\F^*\circ\F\sbe A$, and hence $\F$ is a concrete Hilbert $A$-module. Now, suppose that $\R$ is dense in $\E$. Since $A$ is a nondegenerate \cstar{}subalgebra of $\Ls(B\otimes H)$, we have $A(B\otimes H)=B\otimes H$. And since $\R$ is dense in $\E$, Lemmas~5.17 and 6.2 in \cite{Buss-Meyer:Square-integrable} imply that $\cspn  \kket{\R}(B\otimes H)=\E$. Therefore
$$\cspn\bigl(\F(B\otimes H)\bigr)=\cspn\bigl(\kket{\R}\circ A(B\otimes H)\bigr)=\cspn\bigl(\kket{\R}(B\otimes H)\bigr)=\E.$$
Hence $\F$ is essential.
\end{proof}

Proposition~\ref{079} and Equation~\eqref{507} show that $\Fix(\E,\R)$ is contained in $\Ls^\G(\E)$. Since $\Ls^\G(\E)$ is (under the canonical identification $\Ls(\E)\cong\M(\K(\E))$) the multiplier fixed point algebra $\M_1\bigl(\K(\E)\bigr)=\{x\in\M(\K(\E)):\co_{\K(\E)}(x)=x\otimes 1\}$, we see that the elements of $\Fix(\E,\R)$ are fixed by the coaction of $\K(\E)$ and $\Fix(\E,\R)$ is a \cstar{}subalgebra of $\M_1\bigl(\K(\E)\bigr)$. Note that, by \cite[Theorem~5.2]{Meyer:Generalized_Fixed} (or rather by its obvious generalization to quantum groups), $\Fix(\E,\R)$ is a nondegenerate \cstar{}subalgebra of $\Ls(\E)$ if and only if $\F(\E,\R)$ is essential. For instance, this is the case if $\R$ is dense.

\begin{proposition}\label{111} Let $\E$ be a Hilbert $B,\G$-module, and let $\R\sbe\E_\si$ be a relatively continuous subset of $\E$.
Then $\F(\E,\R)$ is a Morita equivalence between the generalized fixed point algebra
$\Fix(\E,\R)$ and the ideal $\I(\E,\R)$ in $B\rtimes_\red\cdualG$.
\end{proposition}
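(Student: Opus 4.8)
The plan is to view $\F:=\F(\E,\R)$, equipped with the $A$-valued inner product $\langle S,T\rangle:=S^*\circ T$ and the right action by composition $S\cdot a:=S\circ a$ (where $A:=B\rtimes_\red\cdualG$), as a concrete Hilbert module over $A$ in the sense of Proposition~\ref{079}, and then to appeal to the standard identification of concrete Hilbert modules with imprimitivity bimodules. Essentially everything becomes formal once Proposition~\ref{079} and Equation~\eqref{507} are in hand.

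First I would record, from Proposition~\ref{079}, that $\F\circ A\sbe\F$ and $\F^*\circ\F\sbe A$, so that $\F$ is indeed a right Hilbert $A$-module with the structure above. Next, writing $\I:=\I(\E,\R)$, Equation~\eqref{507} gives $\I=\cspn(\F^*\circ\F)$; since $\F\circ A\sbe\F$ and $A^*=A$, this is a closed two-sided ideal of $A$ (as already observed after Definition~\ref{574}). Because $\langle S,T\rangle=S^*\circ T$ lies in $\F^*\circ\F\sbe\I$ for all $S,T\in\F$, the module $\F$ is already a Hilbert $\I$-module, and it is \emph{full} over $\I$ by the very definition of $\I$. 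The key step is to identify its algebra of compact operators: for $S,T\in\F$ the rank-one operator $\theta_{S,T}\colon U\mapsto S\cdot\langle T,U\rangle=(S\circ T^*)\circ U$ is simply left composition with $S\circ T^*\in\Ls(\E)$, and by the theory of concrete Hilbert modules (\cite[Section~5]{Meyer:Generalized_Fixed}, in the obvious generalization to coactions of quantum groups, with $L^2(G,B)$ replaced by $B\otimes H$ and $B\rtimes_\red G$ by $B\rtimes_\red\cdualG$) the assignment $\theta_{S,T}\mapsto S\circ T^*$ extends to an isomorphism $\K(\F)\xrightarrow{\ \sim\ }\cspn(\F\circ\F^*)=\Fix(\E,\R)$, the last equality again being Equation~\eqref{507}. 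Under this isomorphism the left $\Fix(\E,\R)$-valued inner product is ${}_{\Fix(\E,\R)}\langle S,T\rangle=S\circ T^*$, the compatibility $\bigl(S\circ T^*\bigr)\circ U=S\circ(T^*\circ U)=S\cdot\langle T,U\rangle$ holding automatically by associativity of composition, and fullness on the left amounts once more to Equation~\eqref{507}. Hence $\F$ is a $\Fix(\E,\R)$-$\I(\E,\R)$-imprimitivity bimodule, i.e.\ it implements a Morita equivalence between $\Fix(\E,\R)$ and the ideal $\I(\E,\R)\sbe B\rtimes_\red\cdualG$, as claimed.

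The only genuinely nontrivial ingredient is the identification $\K(\F)\cong\cspn(\F\circ\F^*)$: one must know not merely that $\theta_{S,T}\mapsto S\circ T^*$ is a well-defined surjective $*$-homomorphism but that it is \emph{isometric}, hence an isomorphism of $C^*$-algebras. This is exactly the content of the cited result of Meyer, and its proof carries over verbatim to the quantum setting; the relevant nondegeneracy facts (for instance that $S\circ e_i\to S$ for $S\in\F$ and an approximate unit $(e_i)$ of $\I$, as in the computation following Definition~\ref{574}, and the corresponding statement for an approximate unit of $\Fix(\E,\R)$ acting on the left) do not use the group structure. Everything else — that $\I(\E,\R)$ is an ideal, that the two inner products are compatible, that $\F$ is full on both sides — is immediate from the definitions together with Equation~\eqref{507}, so no separate verification is needed.
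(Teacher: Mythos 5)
Your proposal is correct and follows essentially the same route as the paper: both reduce the statement to the identification $\K(\F)\cong\cspn(\F\circ\F^*)=\Fix(\E,\R)$ for the concrete Hilbert module $\F=\F(\E,\R)$, citing Meyer's results on concrete Hilbert modules (the paper invokes \cite[Theorem~5.2]{Meyer:Generalized_Fixed}) together with Equation~\eqref{507} and the observation that $\F$ is full over the ideal $\I(\E,\R)=\cspn(\F^*\circ\F)$. The extra verifications you spell out (compatibility of the two inner products, two-sidedness of the ideal) are routine and consistent with the paper's terser argument.
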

\begin{proof} Theorem~5.2 in \cite{Meyer:Generalized_Fixed} yields a canonical identification $\K(\F)\cong \cspn\F\circ\F^*=\Fix(\E,\R)$, where $\F:=\F(\E,\R)$. And by definition of the $A$-valued inner product on $\F$: $\braket{x}{y}\defeq x^*\circ y$, we have $\cspn\{\<x|y\>:x,y\in\F\}=\cspn\F^*\F=\I(\E,\R)$.
\end{proof}

In the situation above, $\R$ is, by definition, saturated if and only if $\I(\E,\R)$ is the entire reduced crossed product $B\rtimes_\red\cdualG$.
Thus, in this case, $\F(\E,\R)$ is a Morita equivalence between $\Fix(\E,\R)$ and $B\rtimes_\red\cdualG$.

For a locally compact group $G$, $\cont_c(G)$ is a relatively continuous subspace of the
$G$-\cstar{}algebra $\cont_0(G)$ endowed with the translation action $\alpha_t(f)|_s=f(st)$ for all $t,s\in G$ and $f\in \cont_0(G)$. This action
is equivalent to the action obtained by considering $\cont_0(G)$ as a quantum group and letting it coact on itself by the comultiplication.

Thus, it is natural to ask what happens in the general case of a locally compact quantum group $\G$ coacting on itself by the comultiplication.
The comultiplication, viewed as a coaction of $\G$ on itself, is always integrable for locally compact quantum group (this is a special case of Proposition~4.20 in \cite{Buss-Meyer:Square-integrable}), but here is a first place where we see a difference between integrability and \emph{continuous integrability} (meaning the existence of a dense relatively continuous subspace):

\begin{proposition}\label{067} Let $\G$ be a locally compact quantum group and let $\G$ coact on itself by the comultiplication $\Dt$.
Then there is a non-zero relatively continuous subset of $\G$ if and only if $\G$ is semi-regular.
In this case, any subset $\R\sbe\G_\si$ is relatively continuous \textup(in particular, $\G_\si$ itself is relatively continuous\textup)
and
$$\F(\G,\R)=(1_\G\otimes H_0^*)W\sbe\Ls(\G\otimes H,\G),\footnote{Here $H_0^*$ denotes the set of
all $\xi^*\in \Ls(H,\C)$, with $\xi\in H_0$, where $\xi^*$ denotes the element of $\Ls(H,\C)$ given by $\xi^*(\eta)=\braket{\xi}{\eta}$ for all $\eta\in H$.}$$
where $H_0:=\cspn\bigl(\cdualG\La(\G\R^*)\bigr)\sbe H$ and $W\in \Ls(\G\otimes H)$ is the left regular corepresentation of $\G$ \textup(see \cite[Section~4]{Kustermans-Vaes:LCQG} for its definition\textup). In particular, if $\R\not=\{0\}$, then
$$\Fix(\G,\R)=\C 1_\G\cong\C\quad\mbox{and}\quad\I(\G,\R)=W^*(1\otimes\K(H_0))W\cong\K(H_0).$$
There is a saturated, relatively continuous subset of $\G$ if and only if $\G$ is regular.
\end{proposition}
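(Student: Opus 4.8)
The plan is to reduce every assertion to one computation: an explicit formula for the bra--ket operators of the comultiplication coaction in terms of the left regular corepresentation $W$, after which all the claims become statements about the closed span $\cspn(\G\cdualG)\sbe\Ls(H)$ and its relation to $\K(H)$.

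First I would pin down $\G_\si$. Since $\co_\G=\Dt$, an element $\xi\in\G$ lies in $\G_\si$ exactly when $\xi^*\in\dom(\La)$ (equivalently $\f(\xi\xi^*)<\infty$, using left invariance of $\f$), so $\G_\si=(\dom(\La)\cap\G)^*$ and $\{\La(\xi^*):\xi\in\G_\si\}$ is dense in $H$. The technical heart is the pair of formulas
\[\bbra{\xi}=W^*\bigl(1_\G\otimes\ket{\La(\xi^*)}\bigr),\qquad \kket{\xi}=\bigl(1_\G\otimes\bra{\La(\xi^*)}\bigr)W\qquad(\xi\in\G_\si),\]
which I would obtain from the defining relation $W^*(\La(a)\otimes\La(b))=(\La\otimes\La)(\Dt(b)(a\otimes 1))$ upgraded to the module identity $(\id_\G\otimes\La)\bigl(\Dt(\xi^*)(\eta\otimes 1)\bigr)=W^*(\eta\otimes\La(\xi^*))$, using the properties of the slice map $\id_\G\otimes\La$ recalled in Section~2 (from \cite{Kustermans-Vaes:Weight}). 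Composing gives immediately $\bbraket{\xi}{\eta}=W^*\bigl(1_\G\otimes\ket{\La(\xi^*)}\bra{\La(\eta^*)}\bigr)W$.

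Next I would identify the ambient algebra. From $\Dt(g)=W^*(1\otimes g)W$ and the fact that $1\otimes\cdualG$ commutes with $\G\otimes\dualG$ (hence with $W$, as $\cdualG$ commutes with $\dualG$), one gets $\G\rtimes_\red\cdualG=\cspn\bigl(\Dt(\G)(1\otimes\cdualG)\bigr)=W^*\bigl(1\otimes\cspn(\G\cdualG)\bigr)W$. Because the vectors $\La(\xi^*)$ are dense in $H$, the closed span of $\bbraket{\G_\si}{\G_\si}$ equals $W^*(1\otimes\K(H))W$; hence $\G_\si$ is relatively continuous iff $\K(H)\sbe\cspn(\G\cdualG)$, which is precisely semi-regularity, and in that case any subset of $\G_\si$ is relatively continuous. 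For the converse, a non-zero relatively continuous $\R$ supplies $\xi_0\neq 0$ with the non-zero compact $\ket{\La(\xi_0^*)}\bra{\La(\xi_0^*)}$ in $\cspn(\G\cdualG)$, and I would invoke the characterization (\cite{Baaj-Skandalis-Vaes:Non-semi-regular}) that $\cspn(\G\cdualG)$ contains a non-zero compact iff it contains all of $\K(H)$, i.e. iff $\G$ is semi-regular. \emph{This last step is the main obstacle:} the $\G$--$\cdualG$ bimodule structure of $\cspn(\G\cdualG)$ only propagates a single rank-one $\ket{u}\bra{u}$ to the compacts $\ket{\overline{\G u}}\bra{\overline{\cdualG u}}$ supported on the cyclic subspaces of $u$, which need not be all of $H$, so one genuinely needs the ``all-or-nothing'' behaviour of compacts in $\cspn(\G\cdualG)$ rather than a soft bimodule argument.

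Finally I would read off the three objects. One computes $\bra{\La(\xi^*)}\G\cdualG=H_0^*$ with $H_0=\cspn(\cdualG\La(\G\R^*))$, so $\F(\G,\R)=\cspn\bigl(\kket{\R}\circ\G\rtimes_\red\cdualG\bigr)=(1_\G\otimes H_0^*)W$. Then $\F^*\F=W^*\bigl(1\otimes\ket{H_0}\bra{H_0}\bigr)W=W^*(1\otimes\K(H_0))W\cong\K(H_0)$ gives $\I(\G,\R)$, while $\F\F^*=1_\G\otimes\braket{H_0}{H_0}=\C 1_\G$ for $\R\neq\{0\}$ gives $\Fix(\G,\R)=\C 1_\G\cong\C$. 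For saturation, $\I(\G,\R)=\G\rtimes_\red\cdualG$ forces $\K(H_0)=\cspn(\G\cdualG)\sbe\K(H)$, which together with semi-regularity yields $\cspn(\G\cdualG)=\K(H)$, i.e. regularity (and then $H_0=H$); conversely, if $\G$ is regular then $\R=\G_\si$ has $H_0=\cspn\bigl(\cdualG\La(\dom(\La)\cap\G)\bigr)=H$ by nondegeneracy of $\cdualG$, so $\I(\G,\G_\si)=W^*(1\otimes\K(H))W=\G\rtimes_\red\cdualG$ is saturated. Thus a saturated relatively continuous subset of $\G$ exists iff $\G$ is regular.
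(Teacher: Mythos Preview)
Your proposal is correct and follows essentially the same route as the paper's proof: identify $\G_\si=\dom(\La)^*$, derive $\bbra{\xi}=W^*(1_\G\otimes\La(\xi^*))$ from the slice-map identity $(\id\otimes\La)\Dt(x)=W^*(1\otimes\La(x))$, rewrite $\G\rtimes_\red\cdualG=W^*(1\otimes C)W$ with $C=\cspn(\G\cdualG)$ using $W\in\M(\G\otimes\dualG)$, and then reduce everything to the inclusion $\K(H)\sbe C$. The only cosmetic difference is that the paper phrases your ``all-or-nothing'' step as ``$C$ is an irreducible \cstar{}subalgebra of $\Ls(H)$, so $C\cap\K(H)\neq\{0\}$ is equivalent to semi-regularity'' (also citing \cite{Baaj-Skandalis-Vaes:Non-semi-regular}), and in the saturation direction it argues directly that $\I(\G,\R)\sbe W^*(1\otimes\K(H))W\sbe\G\rtimes_\red\cdualG$ rather than going through $\K(H_0)=C$; the substance is identical.
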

Recall that a quantum group $\G$ is called \emph{semi-regular} if the \cstar{}subalgebra $C:=\cspn(\G\cdualG)\sbe \Ls(H)$ contains $\K(H)$ and it is called \emph{regular} if $C=\K(H)$.
\begin{proof} Given $x\in \G^+$, from the left invariance of $\f$, it follows that $x\in \dom(\f)$ if and only if $\Delta(x)\in \dom(\id\otimes\f)$ (see Result~2.4 and Proposition~5.15 in \cite{Kustermans-Vaes:LCQG}). In other words, $x\in \G_\ii^+$ if and only if $x\in \dom(\f)$.
Thus, for $\xi\in \G$, we have $\Delta(\xi)\in \dom(\id\otimes\La)$ if and only if $\xi\in \G_\si$ if and only if $\xi\xi^*\in \dom(\f)$, that is, $\G_\si=\dom(\La)^*$. Moreover, from \cite[Result~2.10]{Kustermans-Vaes:LCQG} (or also \cite[Proposition~1.T2.3]{Vaes:Thesis}), it follows that
$$(\id\otimes\La)(\Delta(x))=W^*(1\otimes\La(x))\quad\mbox{for all }x\in \dom(\La).$$
Therefore,
\begin{equation}\label{eq:FormulaBraOperatorA=G}
\bbra{\xi}=(\id\otimes\La)(\Delta(\xi^*))=W^*(1\otimes\La(\xi^*))\quad\mbox{for all }\xi\in \G_\si.
\end{equation}
It follows that
\begin{equation}\label{505}
\bbraket{\xi}{\eta}=W^*(1\otimes|\Lambda(\xi^*)\>\<\Lambda(\eta^*)|)W\in W^*(1\otimes\K(H))W\quad\mbox{for all }\xi,\eta\in \G_\si,
\end{equation}
where, for $u,v\in H$, $\ket{u}\bra{v}$ denotes the compact operator on $H$ defined by $\ket{u}\bra{v}(w)=u\braket{v}{w}$.

On the other hand, since $\Delta(x)=W^*(1\otimes x)W$ and $W\in \M(\G\otimes\dualG)$, it follows that
\begin{equation}\label{506}
\G\rtimes_\red\cdualG=W^*(1\otimes C)W,
\end{equation}
where $C:=\cspn(\G\cdualG)$. Thus the existence of a non-zero relatively continuous subset implies $C\cap
\K(H)\not=\{0\}$. Since $C$ is an irreducible \cstar{}subalgebra of $\Ls(H)$, the condition $C\cap\K(H)\not=\{0\}$ is equivalent to semi-regularity (see also \cite[Proposition~5.6]{Baaj-Skandalis-Vaes:Non-semi-regular}). Conversely, if $\G$ is semi-regular, then $\K(H)\sbe C$ and hence any subset
$\R\sbe\G_\si=\dom(\La)^*$ is relatively continuous by the same calculation above. Moreover, by Equations~\eqref{eq:FormulaBraOperatorA=G} and ~\eqref{506}, we have
$$\F(\G,\R)=\cspn\bigl((1\otimes\La(\R^*)^*)W(\G\rtimes_\red\cdualG)\bigr)=\cspn\bigl((1\otimes\La(\R^*)^*C)W\bigr)=(1\otimes H_0^*)W.$$

Finally, we prove that there is a saturated relatively continuous subset $\R\sbe\G$ if and only if $\G$ is regular.
In fact, if $\G$ is regular, then $\K(H)=C$ and hence for $\R=\dom(\La)^*$ we get
$$\cspn\bbraket{\R}{\R}=W^*(1\otimes (\cspn\{|\Lambda(\xi)\>\<\Lambda(\eta)|:\xi,\eta\in \dom(\La)\}))W$$
$$=W^*(1\otimes \K(H))W=\G\rtimes_\red\cdualG.$$
It follows from Equation~\eqref{391} that $\I(\G,\R)=\G\rtimes_\red\cdualG$, that is, $\R$ is saturated. Conversely, suppose that $\R\sbe
\G_\si=\dom(\La)^*$ is relatively continuous and saturated. In particular, $\R\not=\{0\}$ and hence $\G$ is semi-regular.
It follows from Equations~\eqref{505} and~\eqref{506} that
\begin{align*}
\G\rtimes_\red\cdualG&=\I(\G,\R)=\cspn\bigl(\G\rtimes_\red\cdualG(\bbraket{\R}{\R})\G\rtimes_\red\cdualG\bigr)
\\&= \cspn\bigl(W^*(1\otimes C)(\ket{\La(\R^*)}\bra{\La(\R^*)})(1\otimes C)W\bigr)
\\&\sbe W^*(1\otimes \K(H))W\sbe \G\rtimes_\red\cdualG.
\end{align*}
In the last inclusion above we have used the semi-regularity of $\G$.
We conclude that $W^*(1\otimes \K(H))W=\G\rtimes_\red\cdualG$
and this is equivalent to the regularity of $\G$.
\end{proof}

Next, we analyze the $\G$-Hilbert space $H=L^2(\G)$. Recall that the coaction on $H$ is given
by $\co_H(\xi)=\hat W^*(\xi\otimes 1)$ for all $\xi\in H$, where $\hat W$ is the left regular corepresentation of the dual $\dualG$.
We already know that $H$ is square-integrable. In fact, this will follow again from the result below where we
show that we can always find a dense, relatively continuous subspace of $H$.

Before stating the result we need some preparation. Recall that $\G$ is equal to the closure in $\Ls(H)$ of the space of
the operators $\dualla(\omega)=(\omega\otimes\id)(\hat W)$ with $\omega\in \Ls(H)_*$.
Similarly, the dual $\dualG$ of $\G$ is given by the closure of the operators $\la(\omega)=(\omega\otimes\id)(W)$ with $\omega\in \Ls(H)_*$.
By Theorem~1.11.13 in \cite{Vaes:Thesis},
the dual left Haar weight $\hat\f$ of $\dualG$ has a GNS-construction of the form $(H,\hat\iota,\hat\La)$, where
$\hat\iota$ denotes the inclusion $\dualG\hookrightarrow\Ls(H)$.

Let $\T_{\hat\f}\sbe\dualG$ be the Tomita $*$-algebra of the dual left Haar weight $\hat\f$:
\begin{multline}\label{501}
\T_{\hat\f}:=\{x\in\dualG:x\mbox{ is analytic with respect to }\hat{\sigma} \\
\mbox{ and }\hat{\sigma}_z(x)\in \dom(\hat{\La})\cap\dom(\hat{\La})^*, \mbox{ for all }z\in\C\}.
\end{multline}
We need the following result from \cite[Proposition~1.11.25]{Vaes:Thesis} (applied to the dual $\dualG$, and assuming the inner products to be linear in the second variable):

\begin{lemma}\label{269}
For every $a\in \T_{\hat\f}$ and $\eta\in H$, we have
$$\hat\lambda(\omega_{\hat\La(a),\eta})\in
\dom(\La)\quad\mbox{and}\quad\La\bigl(\dualla(\omega_{\hat\La(a),\eta})\bigr)=\dualJ \hat\sigma_{\frac{\ii}{2}}(a)\dualJ \eta,$$
where $\hat\sigma$ is the modular group of $\hat\f$ and $\dualJ$ is the modular conjugation of $\hat\f$ in the
GNS-construction $(H,\hat\iota,\hat\La)$.
\end{lemma}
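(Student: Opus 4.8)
The plan is to deduce this from \cite[Proposition~1.11.25]{Vaes:Thesis} by applying that result to the dual quantum group $\dualG$ in place of $\G$. Stated for a generic locally compact quantum group, Vaes's proposition asserts (modulo the convention for which slot of the inner product is taken to be linear) that for every element $b$ of the Tomita $*$-algebra $\T_\f$ of the left Haar weight $\f$ and every $\eta\in H$, one has $\la(\omega_{\La(b),\eta})=(\omega_{\La(b),\eta}\otimes\id)(W)\in\dom(\hat\La)$ together with $\hat\La\bigl(\la(\omega_{\La(b),\eta})\bigr)=J\,\sigma_{\frac{\ii}{2}}(b)\,J\eta$, where $\hat\La$ is the GNS-map of the dual left Haar weight $\hat\f$ and $J$, $\sigma$ are the modular conjugation and modular automorphism group of $\f$ in the GNS-construction $(H,\iota,\La)$. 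So first I would transcribe this statement carefully from Vaes's thesis, keeping precise track of the inner-product convention and of which ``hats'' sit where.

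The second step is to apply it verbatim with $\dualG$ in the role of $\G$, using the duality facts assembled in \cite[Section~1.11]{Vaes:Thesis}: the bidual $\widehat{\dualG}$ is $\G$, and $(H,\iota,\La)$ is a GNS-construction for its left Haar weight $\widehat{\hat\f}=\f$; the left regular corepresentation of $\dualG$ is $\hat W$, so the map generating $\widehat{\dualG}=\G$ from $\Ls(H)_*$ via the first leg is $\theta\mapsto(\theta\otimes\id)(\hat W)=\dualla(\theta)$; and the left Haar weight of $\dualG$ is $\hat\f$, with GNS-construction $(H,\hat\iota,\hat\La)$, modular automorphism group $\hat\sigma$, modular conjugation $\dualJ$ and Tomita $*$-algebra $\T_{\hat\f}$ exactly as in~\eqref{501}. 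Feeding these data into the statement of the first paragraph --- so that ``$\la$'' becomes ``$\dualla$'', the outer ``$\hat\La$'' becomes ``$\La$'', the inner ``$\La$'' becomes ``$\hat\La$'', the pair $(J,\sigma)$ becomes $(\dualJ,\hat\sigma)$, and $\T_\f$ becomes $\T_{\hat\f}$ --- produces precisely the assertion $\dualla(\omega_{\hat\La(a),\eta})\in\dom(\La)$ and $\La\bigl(\dualla(\omega_{\hat\La(a),\eta})\bigr)=\dualJ\,\hat\sigma_{\frac{\ii}{2}}(a)\,\dualJ\eta$ for $a\in\T_{\hat\f}$ and $\eta\in H$.

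I do not expect a conceptual obstacle here; the difficulty is entirely bookkeeping, and there are two points that need care. First, Vaes's thesis fixes the inner product to be linear in one variable whereas here it is linear in the other; passing between the two conventions interchanges $\omega_{u,v}$ with $\omega_{v,u}$ and forces a corresponding shift in the analytic parameter entering $\hat\sigma$, but since $\T_{\hat\f}$ is a $*$-algebra stable under every $\hat\sigma_z$ and since $\dualJ$, $\hat\sigma$ and the $*$-operation satisfy the usual commutation relations, the identity rearranges into precisely the form stated. Second, one should verify that no stray power of the scaling constant $\nu$ is introduced in the dualization; this is harmless because only the \emph{left} Haar weights $\f$ and $\hat\f$ (never $\psi$ or its dual) intervene, so the relevant modular objects dualize without $\nu$-factors. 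Once these two checks are made, the lemma follows.
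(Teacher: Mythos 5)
Your proposal is correct and coincides with the paper's treatment: Lemma~\ref{269} is presented there precisely as \cite[Proposition~1.11.25]{Vaes:Thesis} applied to the dual $\dualG$, with the inner products taken linear in the second variable, and no further argument is given. Your careful bookkeeping of the duality dictionary and of the convention change is, if anything, more detailed than what the paper records.
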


\begin{proposition}\label{268} Let $\G$ be a locally compact quantum group and consider $H=L^2(\G)$ with the coaction $\co_H$
defined above.
Then $\R:=\hat\La(\T_{\hat\f})$ is a dense, relatively continuous subspace of $H$ and we have
$\kket{\xi}=\dualJ \hat\sigma_{\frac{\ii}{2}}(a)^*\dualJ$ for all $\xi=\hat\La(a)\in \R$. Moreover,
$$\F(H,\R)=\I(H,\R)=\Fix(H,\R)=\cdualG.$$
\end{proposition}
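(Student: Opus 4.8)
The plan is to compute every object in sight directly, using the defining formula $\co_H(\xi)=\hat W^*(\xi\otimes 1)$ together with Lemma~\ref{269}. Here $H$ is regarded as a Hilbert $\C$-module carrying the trivial coefficient coaction; note that then $\C\rtimes_\red\cdualG=\cdualG$, since $\co_\C$ is trivial and $1\otimes\cdualG$ is just $\cdualG\sbe\Ls(H)$. Density of $\R=\hat\La(\T_{\hat\f})$ in $H$ is immediate from the standard facts that $\T_{\hat\f}$ is a core for $\hat\La$ and that $\hat\La$ has dense range (see \cite{Vaes:Thesis}); this also makes clear that $\R$ is a linear subspace.

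First I would establish the identity
$$\co_H(\xi)^*(\eta\otimes 1)=(\xi^*\otimes 1)\hat W(\eta\otimes 1)=(\omega_{\xi,\eta}\otimes\id)(\hat W)=\dualla(\omega_{\xi,\eta})\in\G\qquad(\xi,\eta\in H),$$
which is just unravelling $\co_H(\xi)=\hat W^*(\xi\otimes 1)$ together with the definition $\dualla(\omega)=(\omega\otimes\id)(\hat W)$. Taking $\xi=\hat\La(a)$ with $a\in\T_{\hat\f}$ and applying Lemma~\ref{269}, we get $\co_H(\hat\La(a))^*(\eta\otimes 1)\in\dom(\La)$ for all $\eta\in H$ -- hence $\hat\La(a)\in H_\si$ -- and, since $\id_\C\otimes\La=\La$,
$$\bbra{\hat\La(a)}\eta=\La\bigl(\co_H(\hat\La(a))^*(\eta\otimes 1)\bigr)=\La\bigl(\dualla(\omega_{\hat\La(a),\eta})\bigr)=\dualJ\,\hat\sigma_{\frac{\ii}{2}}(a)\,\dualJ\,\eta.$$
Thus $\bbra{\hat\La(a)}=\dualJ\hat\sigma_{\frac{\ii}{2}}(a)\dualJ$ as an operator on $H$, and taking adjoints -- using that $\dualJ$ is an anti-unitary involution -- yields $\kket{\hat\La(a)}=\dualJ\hat\sigma_{\frac{\ii}{2}}(a)^*\dualJ$, the asserted formula for $\kket{\xi}$.

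Relative continuity of $\R$ then follows at once: for $a,b\in\T_{\hat\f}$,
$$\bbraket{\hat\La(a)}{\hat\La(b)}=\bbra{\hat\La(a)}\circ\kket{\hat\La(b)}=\dualJ\,\hat\sigma_{\frac{\ii}{2}}(a)\hat\sigma_{\frac{\ii}{2}}(b)^*\,\dualJ,$$
and since $\T_{\hat\f}\sbe\dualG$ is a $\hat\sigma$-invariant $*$-subalgebra, the middle operator lies in $\dualG$, so $\bbraket{\hat\La(a)}{\hat\La(b)}\in\dualJ\dualG\dualJ=\cdualG=\C\rtimes_\red\cdualG$. For the generalized fixed point algebra I would observe that $\hat\sigma_{\frac{\ii}{2}}$ maps $\T_{\hat\f}$ bijectively onto itself and $\T_{\hat\f}^*=\T_{\hat\f}$, both directly from \eqref{501}, so $\kket{\R}=\dualJ\T_{\hat\f}\dualJ$; since $\T_{\hat\f}$ is norm-dense in $\dualG$, $\cspn\kket{\R}=\dualJ\dualG\dualJ=\cdualG$. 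Hence, because $\cdualG$ is a \cstar{}algebra,
$$\F(H,\R)=\cspn\bigl(\kket{\R}\circ\C\rtimes_\red\cdualG\bigr)=\cspn\bigl(\kket{\R}\circ\cdualG\bigr)=\cdualG,$$
and then \eqref{507} gives $\I(H,\R)=\cspn\bigl(\F(H,\R)^*\circ\F(H,\R)\bigr)=\cdualG$ and $\Fix(H,\R)=\cspn\bigl(\F(H,\R)\circ\F(H,\R)^*\bigr)=\cdualG$; in particular $\R$ is saturated.

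The main obstacle is the leg-bookkeeping behind the identity $\co_H(\xi)^*(\eta\otimes 1)=\dualla(\omega_{\xi,\eta})$: one must keep careful track of which leg of $\hat W$ is the ``$\dualG$-leg'' and which the ``$\G$-leg'' (recall $\hat W=\Sigma W^*\Sigma$), check that in $\co_H(\xi)=\hat W^*(\xi\otimes 1)$ the vector $\xi$ is absorbed into precisely the leg sliced by $\dualla(\omega)=(\omega\otimes\id)(\hat W)$, and verify that the $\dualJ$ of Lemma~\ref{269} is the same modular conjugation appearing in $\cdualG=\dualJ\dualG\dualJ$. A more routine, but still necessary, point is to quote correctly that $\T_{\hat\f}$ is at once a core for $\hat\La$ (for density of $\R$ in $H$) and norm-dense in $\dualG$ (for $\cspn\kket{\R}=\cdualG$).
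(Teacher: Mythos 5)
Your proposal is correct and follows essentially the same route as the paper: unwind $\co_H(\xi)^*(\eta\otimes 1)=\dualla(\omega_{\xi,\eta})$, apply Lemma~\ref{269} to identify $\kket{\hat\La(a)}=\dualJ\hat\sigma_{\frac{\ii}{2}}(a)^*\dualJ\in\cdualG$, and deduce relative continuity and $\F(H,\R)=\I(H,\R)=\Fix(H,\R)=\cdualG$ from the density of $\dualJ\hat\sigma_{\frac{\ii}{2}}(\T_{\hat\f})^*\dualJ=\dualJ\T_{\hat\f}\dualJ$ in $\cdualG$. The only cosmetic difference is that you verify $\bbraket{\hat\La(a)}{\hat\La(b)}\in\cdualG$ by an explicit product, where the paper simply observes $\kket{\xi}\in\cdualG$; these are equivalent.
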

\begin{proof}
By definition, we have
$$\xi\in H_\si\Longleftrightarrow \co_H(\xi)^*(\eta\otimes 1)\in\dom(\Lambda),\,\forall\, \eta\in H\Longleftrightarrow $$
$$(\xi^*\otimes 1)\hat W(\eta\otimes 1)=(\omega_{\xi,\eta}\otimes\id)(\hat W)=\hat\lambda(\omega_{\xi,\eta})\in \dom(\Lambda),\,\forall\,\eta\in H.$$
Lemma~\ref{269} implies that $\xi:=\hat\La(a)\in H_\si$
for all $a\in \T_{\hat\f}$, and
$$\bbra{\xi}\eta=\dualJ \hat\sigma_{\frac{\ii}{2}}(a)\dualJ \eta,\quad\forall\, \eta\in H.$$
In other words, we have $\kket{\xi}=\dualJ \hat\sigma_{\frac{\ii}{2}}(a)^*\dualJ $.
Moreover, since
$\dualJ \hat\sigma_{\frac{\ii}{2}}(a)^*\dualJ \in\cdualG=\C\rtimes_\red\cdualG$,
we get that $\R=\hat\La(\T_{\hat\f})$ is a dense,
relatively continuous subspace of $H$. Since $\hat
J\hat\sigma_{\frac{\ii}{2}}(\T_{\hat\f})^*\dualJ $ is dense in $\cdualG$, we conclude that
$$\F(H,\R)=\cspn(\kket{\R}\cdualG)=\cspn(\dualJ \hat\sigma_{\frac{\ii}{2}}(\T_{\hat\f})^*\dualJ \cdualG)=\cdualG.$$
And hence $\Fix(H,\R)=\I(H,\R)=\cdualG$.
\end{proof}

Next, we consider one of the most important examples, namely, the Hilbert $B,\G$-module $B\otimes L^2(\G)$,
where $B$ is some fixed $\G$-\cstar{}algebra. Recall that we always consider $B\otimes L^2(\G)$ endowed with the coaction
defined by Equation~\eqref{136}:
\begin{equation*}
\co_{B\otimes H}(\zeta)=\Sigma_{23}
W_{23}(\co_B\otimes\id)(\zeta),\quad\zeta\in B\otimes H.
\end{equation*}

\begin{proposition}\label{270} Let $B$ be a $\G$-\cstar{}algebra.
Then $\R:=B\odot\hat\La(\T_{\hat\f})$ is a dense, relatively continuous subspace of the Hilbert $B,\G$-module
$B\otimes H$, and
$$\kket{b\otimes\xi}=\bigl(1_B\otimes\dualJ \hat\sigma_{\frac{\ii}{2}}(a)^*\dualJ\bigr)\co_B(b)
\quad\mbox{for all }b\in B\mbox{ and } \xi=\hat\La(a)\in \hat\La(\T_{\hat\f}).$$
Moreover,
$\F(B\otimes H,\R)=\I(B\otimes H,\R)=\Fix(B\otimes H)=B\rtimes_\red\cdualG.$
\end{proposition}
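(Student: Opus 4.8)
The plan is to reduce the whole statement to the case $B=\C$, which is Proposition~\ref{268}, by exploiting how the bra--ket operators interact with the module structure (Proposition~\ref{003}). The one genuinely new computation is the formula for $\kket{b\otimes\hat\La(a)}$; once it is in hand, the identification of $\F$, $\I$ and $\Fix$ will follow by repeating the argument in the proof of Proposition~\ref{268} almost verbatim.

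First I would show that for $v\in H$ one has $\co_{B\otimes H}(1_B\otimes v)=1_B\otimes\co_H(v)$, where $1_B\otimes v$ is regarded as an element of $\M(B\otimes H)=\Ls(B,B\otimes H)$. This is a direct substitution into \eqref{136}: since $(\co_B\otimes\id)(1_B\otimes v)=1_{\M(B\otimes\G)}\otimes v$ (because $\co_B(1_B)=1$), one gets $\Sigma_{23}W_{23}(1_{B\otimes\G}\otimes v)=1_B\otimes\Sigma W(1_\G\otimes v)=1_B\otimes\co_H(v)$, using the formula $\co_H(v)=\Sigma W(1_\G\otimes v)$ recorded just after \eqref{136}. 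Combining this with the elementary-tensor (``Fubini'') behaviour of the slice maps $\id_B\otimes\La$ and $\id_{\E^*}\otimes\La$ coming from \cite{Kustermans-Vaes:Weight} — applied to $\co_H(v)^*$, which lies in $\dom(\id_{H^*}\otimes\La)$ with $(\id_{H^*}\otimes\La)(\co_H(v)^*)=\bbra{v}$ whenever $v\in H_\si$ — yields $1_B\otimes v\in\M(B\otimes H)_\si$ with $\bbra{1_B\otimes v}=1_B\otimes\bbra{v}$, hence $\kket{1_B\otimes v}=1_B\otimes\kket{v}$. Since $b\otimes v=(1_B\otimes v)\cdot b$ and $b\in B\sbe\M(B)$, Proposition~\ref{003}(ii) then gives $b\otimes v\in\M(B\otimes H)_\si$ and
$$\kket{b\otimes v}=\kket{1_B\otimes v}\circ\co_B(b)=\bigl(1_B\otimes\kket{v}\bigr)\co_B(b).$$
For $v=\hat\La(a)$ with $a\in\T_{\hat\f}$, Proposition~\ref{268} identifies $\kket{v}=\dualJ\hat\sigma_{\frac{\ii}{2}}(a)^*\dualJ\in\cdualG$, which is exactly the asserted formula.

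Next I would observe that $\kket{b\otimes\hat\La(a)}=(1_B\otimes\dualJ\hat\sigma_{\frac{\ii}{2}}(a)^*\dualJ)\co_B(b)$ and its adjoint $\bbra{b\otimes\hat\La(a)}=\co_B(b)^*(1_B\otimes\dualJ\hat\sigma_{\frac{\ii}{2}}(a)\dualJ)$ both lie in $B\rtimes_\red\cdualG$ — this is a \cstar{}algebra, hence self-adjoint and equal to $\cspn\bigl((1_B\otimes\cdualG)\co_B(B)\bigr)$ as well. Consequently $\bbraket{\xi}{\eta}\in B\rtimes_\red\cdualG$ for all $\xi,\eta\in\R:=B\odot\hat\La(\T_{\hat\f})$, so $\R$ is relatively continuous, and $\R$ is dense in $B\otimes H$ because $\hat\La(\T_{\hat\f})$ is dense in $H$ (Proposition~\ref{268}) and $B\odot H$ is dense in $B\otimes H$. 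For the final equalities: the linear span of $\kket{\R}$ is the span of $\{(1_B\otimes c)\co_B(b):c\in\dualJ\hat\sigma_{\frac{\ii}{2}}(\T_{\hat\f})^*\dualJ,\ b\in B\}$, and since $\dualJ\hat\sigma_{\frac{\ii}{2}}(\T_{\hat\f})^*\dualJ$ is dense in $\cdualG$ (as shown inside the proof of Proposition~\ref{268}) its closure is $\cspn\bigl((1_B\otimes\cdualG)\co_B(B)\bigr)=B\rtimes_\red\cdualG$. Because $B\rtimes_\red\cdualG$ has an approximate unit, $\F(B\otimes H,\R)=\cspn(\kket{\R}\circ B\rtimes_\red\cdualG)=B\rtimes_\red\cdualG$, sitting inside $\Ls^\G(B\otimes H)$ as a concrete Hilbert module over itself. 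Then \eqref{507} gives $\I(B\otimes H,\R)=\cspn\bigl(\F^*\circ\F\bigr)=B\rtimes_\red\cdualG$ and $\Fix(B\otimes H,\R)=\cspn\bigl(\F\circ\F^*\bigr)=B\rtimes_\red\cdualG$, using $\K(B\rtimes_\red\cdualG)=B\rtimes_\red\cdualG$.

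The main obstacle is the step identifying $\co_{B\otimes H}(1_B\otimes v)$ and transporting the square-integrability of $v\in H$ to that of $1_B\otimes v\in\M(B\otimes H)$: this requires careful bookkeeping of the tensor legs in \eqref{136} and the elementary-tensor properties of the slice maps (passing through the linking algebra of $B\otimes H$). Once the formula $\kket{b\otimes\hat\La(a)}=(1_B\otimes\kket{\hat\La(a)})\co_B(b)$ is established, the remaining steps are routine repetitions of the $B=\C$ case.
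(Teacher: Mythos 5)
Your proof is correct and follows essentially the same route as the paper: the paper obtains Proposition~\ref{270} as the special case $\E=B$ of Proposition~\ref{173}, whose proof consists of exactly the computation you carry out — identifying $\co_{B\otimes H}(b\otimes v)=(1_B\otimes\co_H(v))\co_B(b)$, deducing $\kket{b\otimes v}=(1_B\otimes\kket{v})\co_B(b)$ from the module properties of the slice maps, and then invoking Proposition~\ref{268} and the density of $\dualJ\hat\sigma_{\frac{\ii}{2}}(\T_{\hat\f})^*\dualJ$ in $\cdualG$ to identify $\F$, $\I$ and $\Fix$ with $B\rtimes_\red\cdualG$. The only cosmetic differences are that you factor $b\otimes v=(1_B\otimes v)\cdot b$ and apply Proposition~\ref{003}(ii), and that you establish square-integrability of $1_B\otimes v$ directly from the slice-map properties rather than via the integrability of $\ket{b}\bra{b}\otimes\ket{v}\bra{v}$ as in the paper's Proposition~\ref{173}; both are sound.
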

\begin{proof}
This is a special case of Proposition~\ref{173} below, so we omit the proof.
\end{proof}

Recall that given a Hilbert $B,\G$-module $(\E,\co_\E)$, $\E\rtimes_\red\cdualG$ denotes the closed linear span of $\co_\E(\E)(1_B\otimes\cdualG)$ in $\Ls(B\otimes H,\E\otimes H)$, where the embedding $\G\into \Ls(H)$ is used to view $\M(\E\otimes\G)$ as a subspace of $\Ls(B\otimes H,\E\otimes H)$.
If the coaction of $\G$ on $\E$ is continuous, $\E\rtimes_\red\cdualG$ is also the closed linear span of $(1_{\K(\E)}\otimes\cdualG)\co_\E(\E)$ and
we have a canonical isomorphism $\K(\E\rtimes_\red\cdualG)\cong\K(\E)\rtimes_\red\cdualG$ (for the canonical coaction on $\K(\E)$ induced by $\co_\E$).

We also consider on the Hilbert $B$-module $\E\otimes H$ the following coaction of $\G$:
$$\co_{\E\otimes H}(\zeta)=\Sigma_{23}W_{23}(\co_\E\otimes\id_H)(\zeta),\quad \zeta\in \E\otimes H,$$
where $\Sigma:\G\otimes H\to H\otimes\G$ is the flip operator. Notice that this is a generalization of the coaction on $B\otimes H$ defined by Equation~\eqref{136}. Thus, the following result generalizes Proposition~\ref{270}.

\begin{proposition}\label{173} Let $\E$ be a Hilbert $B,\G$-module and consider on $\E\otimes H$ the coaction of $\G$ defined above.
If $\xi\in \E$ and $v\in H_\si$, then
$\xi\otimes v\in (\E\otimes H)_\si$ and
$$\kket{\xi\otimes v}=(1_\E\otimes\kket{v})\co_\E(\xi),$$
where here we view $\co_\E(\xi)\in \M(\E\otimes\G)$ as an element of $\Ls(B\otimes H,\E\otimes H)$ using the representation $\G\into\Ls(H)$.
Suppose that the coaction of $\G$ on $\E$ is continuous. Then $\R:=\E\odot\hat\La(\T_{\hat\f})$
is a dense, relatively continuous subspace of $\E\otimes H$, and
\begin{multline*}
\F(\E\otimes H,\R)=\E\rtimes_\red\cdualG,\,\,\,\Fix(\E,\R)=\K(\E\rtimes_\red\cdualG)\cong\K(\E)\rtimes_\red\cdualG,\,\,\,\I(\E,\R)=I\rtimes_\red\cdualG,
\end{multline*}
where $I:=\cspn\braket{\E}{\E}_B$ (this is a $\G$-invariant ideal in $B$). In particular, if $\E$ is full, then $\R$ is saturated.
\end{proposition}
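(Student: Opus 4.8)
The first assertion is the heart of the matter; the rest is formal. I would begin by unwinding the coaction on $\E\otimes H$ on a simple tensor: combining $\co_{\E\otimes H}(\zeta)=\Sigma_{23}W_{23}(\co_\E\otimes\id_H)(\zeta)$ with $\co_H(v)=\Sigma W(1\otimes v)$, a leg-by-leg computation gives
$$\co_{\E\otimes H}(\xi\otimes v)=(\id_\E\otimes\co_H(v))\circ\co_\E(\xi),$$
where $\co_\E(\xi)\in\Ls(B\otimes\G,\E\otimes\G)$, the operator $\id_\E\otimes\co_H(v)$ lies in $\Ls(\E\otimes\G,\E\otimes H\otimes\G)$, and the composite is read inside $\M\bigl((\E\otimes H)\otimes\G\bigr)$; taking adjoints, $\co_{\E\otimes H}(\xi\otimes v)^*=\co_\E(\xi)^*(\id_\E\otimes\co_H(v)^*)$.

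Next I would transport square-integrability from $v$ to $\xi\otimes v$. Since $v\in H_\si$ is equivalent to $\co_H(v)^*\in\dom(\id_{H^*}\otimes\La)$ with $(\id_{H^*}\otimes\La)(\co_H(v)^*)=\bbra{v}$, and since $\co_\E(\xi)^*$ is bounded and acts only on the $\E$- and $\G$-legs, the multiplicativity and amplification properties of the (generalized KSGNS) slices $\id\otimes\La$ -- the same facts from \cite[Propositions~3.18, 3.27, 3.38]{Kustermans-Vaes:Weight}, transported through linking algebras, that were used in the proof of Proposition~\ref{003} -- show that $\co_{\E\otimes H}(\xi\otimes v)^*\in\dom(\id_{(\E\otimes H)^*}\otimes\La)$, i.e. $\xi\otimes v\in(\E\otimes H)_\si$, and that $\bbra{\xi\otimes v}=\co_\E(\xi)^*(1_\E\otimes\bbra{v})$; taking adjoints yields $\kket{\xi\otimes v}=(1_\E\otimes\kket{v})\co_\E(\xi)$. (One may also phrase this as the behaviour of square-integrability under external tensor products of coactions, $\co_{\E\otimes H}$ being the external product of $\co_\E$ with $\co_H$.) This is the step I expect to be the main obstacle, precisely because $\xi$ is only assumed to lie in $\E$, not in $\E_\si$, so the unbounded-weight calculus of \cite{Kustermans-Vaes:Weight,Buss-Meyer:Square-integrable} has to be handled with care.

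The remainder is routine. For $v=\hat\La(a)\in\hat\La(\T_{\hat\f})$ one has $\kket{v}\in\cdualG$ (Proposition~\ref{268}), and since $\cdualG$ commutes with $\G\subseteq\Ls(H)$ we get $(1_\E\otimes d)\co_\E(\xi)=\co_\E(\xi)(1_B\otimes d)$ for all $d\in\cdualG$; hence
$$\kket{\xi\otimes v}=\co_\E(\xi)(1_B\otimes\kket{v}),\qquad \bbraket{\xi_1\otimes v_1}{\xi_2\otimes v_2}=\co_B(\braket{\xi_1}{\xi_2})\,(1_B\otimes\bbraket{v_1}{v_2}).$$
As $\bbraket{v_1}{v_2}\in\cdualG$, the second formula shows $\R=\E\odot\hat\La(\T_{\hat\f})$ is relatively continuous; it is dense because $\E\odot H$ is dense in $\E\otimes H$ and $\hat\La(\T_{\hat\f})$ is dense in $H$. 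By Proposition~\ref{268} the closed span of $\{\kket{v}:v\in\hat\La(\T_{\hat\f})\}$ is $\cdualG$, so the first formula gives $\cspn\kket{\R}=\cspn\bigl(\co_\E(\E)(1_B\otimes\cdualG)\bigr)=\E\rtimes_\red\cdualG$; since $\E\rtimes_\red\cdualG\cong\E\rot{\co_B}(B\rtimes_\red\cdualG)$ is a nondegenerate Hilbert $B\rtimes_\red\cdualG$-module, $\F(\E\otimes H,\R)=\cspn\bigl(\kket{\R}\circ B\rtimes_\red\cdualG\bigr)=\E\rtimes_\red\cdualG$. Proposition~\ref{111} then identifies $\Fix(\E\otimes H,\R)$ with $\K(\E\rtimes_\red\cdualG)$ and $\I(\E\otimes H,\R)$ with $\cspn(\F^*\F)$; computing $\bigl(\co_\E(\xi_1)(1\otimes d_1)\bigr)^*\co_\E(\xi_2)(1\otimes d_2)=\co_B(\braket{\xi_1}{\xi_2})(1\otimes d_1^*d_2)$ and invoking continuity of $\co_\E$ identifies these with $\K(\E)\rtimes_\red\cdualG$ and $I\rtimes_\red\cdualG$, where $I=\cspn\braket{\E}{\E}_B$ is a $\G$-invariant ideal (because $\co_B(\braket{\xi}{\eta})=\co_\E(\xi)^*\co_\E(\eta)$ and $\co_\E$ is nondegenerate) carrying the restriction of $\co_B$. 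If $\E$ is full then $I=B$, so $\I(\E\otimes H,\R)=B\rtimes_\red\cdualG$, i.e. $\R$ is saturated.

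Two bookkeeping points I would treat as standard: that the restriction of $\co_B$ to the $\G$-invariant ideal $I$ is again continuous, so $I\rtimes_\red\cdualG$ is defined and sits inside $B\rtimes_\red\cdualG$ as $\cspn\bigl(\co_B(I)(1\otimes\cdualG)\bigr)$; and the leg-numbering together with the embeddings $\M(\E\otimes\G)\hookrightarrow\Ls(B\otimes H,\E\otimes H)$ used throughout. Everything genuinely new is concentrated in the first two paragraphs.
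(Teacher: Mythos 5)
Your first paragraph and the square\nobreakdash-integrability step are fine: the identity $\co_{\E\otimes H}(\xi\otimes v)=(1_\E\otimes\co_H(v))\co_\E(\xi)$ and the slice\nobreakdash-map manipulation giving $\bbra{\xi\otimes v}=\co_\E(\xi)^*(1_\E\otimes\bbra{v})$ are exactly the paper's computation; the paper merely establishes $\xi\otimes v\in(\E\otimes H)_\si$ first by a different route (integrability of $\ket{\xi}\bra{\xi}\otimes\ket{v}\bra{v}$ in $\K(\E)\otimes\K(H)$ via \cite[Proposition~5.20]{Buss-Meyer:Square-integrable}), whereas you extract it directly from the closedness and multiplicativity of the slice maps. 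Either route works.

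The third paragraph, however, contains a genuine error: $\cdualG$ does \emph{not} commute with $\G$ inside $\Ls(H)$. (It commutes with $\dualG$, since $\cdualG=\dualJ\dualG\dualJ\sbe(\dualG'')'$; but if $\cdualG$ commuted with $\G$ one would get $\G''\sbe\dualG''\cap\G''=\C$. In the group case $\G=\contz(G)$ this is the familiar fact that right translations do not commute with multiplication operators.) Consequently the identities $(1_\E\otimes d)\co_\E(\xi)=\co_\E(\xi)(1_B\otimes d)$, $\kket{\xi\otimes v}=\co_\E(\xi)(1_B\otimes\kket{v})$ and $\bbraket{\xi_1\otimes v_1}{\xi_2\otimes v_2}=\co_B(\braket{\xi_1}{\xi_2})(1_B\otimes\bbraket{v_1}{v_2})$ are all false in general; a warning sign is that your derivation never uses the hypothesis that $\co_\E$ is continuous, which is exactly what the statement requires for this half. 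The correct argument keeps the operators in the order you derived them, $\kket{\xi\otimes v}=(1_\E\otimes\kket{v})\co_\E(\xi)\in(1_{\K(\E)}\otimes\cdualG)\co_\E(\E)$, and invokes continuity of $\co_\E$ to identify the \emph{closed linear spans} $\cspn\bigl((1_{\K(\E)}\otimes\cdualG)\co_\E(\E)\bigr)=\cspn\bigl(\co_\E(\E)(1_B\otimes\cdualG)\bigr)=\E\rtimes_\red\cdualG$; relative continuity of $\R$ and $\F(\E\otimes H,\R)=\E\rtimes_\red\cdualG$ then follow because $\E\rtimes_\red\cdualG$ is a concrete Hilbert $B\rtimes_\red\cdualG$-module (so $(\E\rtimes_\red\cdualG)^*(\E\rtimes_\red\cdualG)\sbe B\rtimes_\red\cdualG$), and the identifications of $\Fix$ and $\I$ come from $\K(\E\rtimes_\red\cdualG)\cong\K(\E)\rtimes_\red\cdualG$ and $\cspn(\E\rtimes_\red\cdualG)^*(\E\rtimes_\red\cdualG)=I\rtimes_\red\cdualG$ rather than from any elementwise factorization.
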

\begin{proof}
If $v\in H_\si$, then $\ket{v}\bra{v}\in \K(H)_\ii$ and hence $T\otimes \ket{v}\bra{v}\in (\K(\E)\otimes\K(H))_\ii$ for all $T\in \K(\E)$ because the canonical homomorphism $\K(\E)\to\M(\K(\E)\otimes\K(H))$ is nondegenerate and $\G$-equivariant. In particular, $\ket{\xi}\bra{\xi}\otimes\ket{v}\bra{v}\in (\K(\E)\otimes\K(H))_\ii$ for all $\xi\in \E$. It follows from \cite[Proposition~5.20]{Buss-Meyer:Square-integrable} that $\xi\otimes v\in (\E\otimes H)_\si$.

To compute $\bbra{\xi\otimes v}$, first note that
\begin{align*}
\co_{\E\otimes H}(\xi\otimes v)&=\Sigma_{23}W_{23}(\co_\E\otimes\id_H)(\xi\otimes v)
							 \\&=\Sigma_{23}W_{23}(\co_\E(\xi)\otimes v)
							 \\&=\Sigma_{23}W_{23}(1_\E\otimes 1_\G\otimes v)\co_\E(\xi)
							 \\&=(1_\E\otimes\Sigma W(1_\G\otimes v))\co_\E(\xi)
							 \\&=(1_\E\otimes\co_H(v))\co_\E(\xi).
\end{align*}							
Now, if $\eta\in \E$ and $\zeta\in H$, then
\begin{align*}
\bbra{\xi\otimes v}(\eta\otimes\zeta)&=(\id_{B}\otimes\La)\bigl(\co_{\E\otimes H}(\xi\otimes v)^*(\eta\otimes\zeta\otimes 1)\bigr)
				\\&=(\id_{B}\otimes\La)\bigl(\co_\E(\xi)^*(\eta\otimes\co_H(v)^*(\zeta\otimes 1))\bigr)
                \\&=(\id_{B}\otimes\La)\bigl(\co_\E(\xi)^*(\eta\otimes 1)(1\otimes \co_H(v)^*(\zeta\otimes 1))\bigr)
                \\&=\co_\E(\xi)^*(\eta\otimes 1)(\id_{B}\otimes\La)\bigl(1\otimes \co_H(v)^*(\zeta\otimes 1)\bigr)
				\\&=\co_\E(\xi)^*\bigl(\eta\otimes \La(\co_H(v)^*(\zeta\otimes 1))\bigr)
				\\&=\co_\E(\xi)^*(1_\E\otimes\bbra{v})(\eta\otimes\zeta).
\end{align*}
Therefore, $\bbra{\xi\otimes v}=\co_\E(\xi)^*(1_\E\otimes\bbra{v})$, that is, $\kket{\xi\otimes v}=(1_\E\otimes \kket{v})\co_\E(\xi)$.
If $v\in \hat\La(\T_{\hat\f})$, then we know from Proposition~\ref{268} that $\kket{v}\in \cdualG$.
Thus, if $\co_\E$ is continuous,
then
$$\kket{\xi\otimes v}=(1_\E\otimes \kket{v})\co_\E(\xi)\in (1\otimes\cdualG)\co_\E(\E)\sbe\E\rtimes_\red\cdualG.$$
Since $\E\rtimes_\red\cdualG$ is a Hilbert $B\rtimes_\red\cdualG$-module,
it follows that $\R=\E\odot\hat\La(\T_{\hat\f})$ is a dense relatively
continuous subspace of $\E\otimes H$ and
\begin{align*}
\F(\E\otimes H,\R)&=\cspn\bigl(\kket{\R} (B\rtimes_\red\cdualG)\bigr)
				\\&=\cspn\bigl((1_\E\otimes \kket{\hat\La(\T_{\hat\f})})\co_\E(\E)(B\rtimes_\red\cdualG)\bigr)
				\\&=\cspn\bigl((1_\E\otimes \cdualG)\co_\E(\E)(B\rtimes_\red\cdualG)\bigr)
				\\&=\cspn\bigl((\E\rtimes_\red\cdualG)(B\rtimes_\red\cdualG)\bigr)=\E\rtimes_\red\cdualG.
\end{align*}
It follows that
\begin{equation*}
\Fix(\E,\R)=\K(\E\rtimes_\red\cdualG)\cong\K(\E)\rtimes_\red\cdualG\quad\mbox{and}\quad \I(\E,\R)=I\rtimes_\red\cdualG.\qedhere
\end{equation*}
\end{proof}

\begin{remark} Let notation be as in Proposition~\ref{173}.
For each $\xi\in \E$, the operator $\co_\E(\xi)\in \M(\E\otimes\G)$, considered as an element of
$\Ls(B\otimes H,\E\otimes H)$, is $\G$-equivariant, that is, for all $\zeta\in B\otimes H$ we have
$$\co_{\E\otimes H}(\co_\E(\xi)\zeta)=(\co_\E(\xi)\otimes 1_\G)\co_{B\otimes H}(\zeta).$$
Therefore, by Proposition~\ref{003}(iii), $\co_{\E}(\xi)\zeta\in (\E\otimes H)_\si$ for all $\zeta\in (B\otimes H)_\si$, and
$$\kket{\co_\E(\xi)\zeta}=\co_{\E}(\xi)\kket{\zeta}.$$
By Proposition~\ref{270}, $\R:=B\odot\hat\La(\T_{\hat\f})$ is a relatively continuous
subspace of $B\otimes H$ and $\kket{\R}$ is dense in $B\rtimes_\red\cdualG$.
It follows that $\co_\E(\E)\R$ is a relatively
continuous subset of $\E\otimes H$ and
$$\F(\E\otimes H,\co_\E(\E)\R)=\cspn \co_\E(\E)(B\rtimes_\red\cdualG)=\E\rtimes_\red\cdualG.$$
Since the linear span of $\co_\E(\E)(B\otimes\G)$ is dense in $\E\otimes\G$, it follows that the linear span of $\co_\E(\E)\R$ is
a dense, relatively continuous subspace of $\E\otimes H$. Note that this argument does not use continuity of the coaction $\co_\E$.
\end{remark}

For a compact group $G$, every subset of a Hilbert $B,G$-module is relatively continuous.
Now we show that this is also true for compact quantum groups.

\begin{proposition}\label{398} Let $\G$ be a compact quantum group and let $\E$ be a Hilbert $B,\G$-module. Then any
subset of $\E$ is relatively continuous. In particular, $\E$ itself is relatively continuous. Moreover, we have
$$\F_\E:=\F(\E,\E)=(1_\E\otimes\dtg_1^*)\E\rtimes_\red\cdualG,$$
where $\dtg_1:=\La(1)\in H$ and $\dtg_1^*$ denotes the element of $\Ls(H,\C)$ given by $\dtg_1(\eta)=\braket{\dtg_1}{\eta}$ for all
$\eta\in H$. The generalized fixed point algebra $\Fix(\E):=\Fix(\E,\E)$
is the usual fixed point algebra
$$\Fix(\E)=(1\otimes\dtg_1^*)\K(\E\rtimes_\red\cdualG)(1\otimes\dtg_1)=\{x\in \K(\E):\co_{\K(\E)}(x)=x\otimes 1_\G\},$$
and it is Morita equivalent to the ideal $\I_\E:=\I(\E,\E)\sbe B\rtimes_\red\cdualG$ given by
$$\I_\E=\cspn(\E\rtimes_\red\cdualG)^*(1\otimes p_1)(\E\rtimes_\red\cdualG)=\cspn\co_\E(\E)^*(1_\E\otimes p_1)\co_\E(\E),$$
where $p_1:=\ket{\dtg_1}\bra{\dtg_1}\in \K(H)$.
\end{proposition}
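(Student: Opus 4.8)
The plan is to pull everything back to the explicit bra--ket formulas that hold in the compact case. Since $\G$ is compact the Haar weight $\f$ is bounded, so Example~\ref{477} applies: $\E_\si=\E$ (indeed $\M(\E)_\si=\M(\E)$) and $\bbra{\xi}=\co_\E(\xi)^*(1_\E\otimes\dtg_1)$, $\kket{\xi}=(1_\E\otimes\dtg_1^*)\co_\E(\xi)$ for every $\xi\in\M(\E)$. Since $p_1=\ket{\dtg_1}\bra{\dtg_1}=\dtg_1\dtg_1^*$, we have $1_\E\otimes p_1=(1_\E\otimes\dtg_1)(1_\E\otimes\dtg_1^*)$ and hence $\bbraket{\xi}{\eta}=\co_\E(\xi)^*(1_\E\otimes p_1)\co_\E(\eta)$ for all $\xi,\eta\in\E$.

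The decisive step is to show that this last operator lies in $B\rtimes_\red\cdualG$. First I would record that a compact quantum group is regular, so the coaction $\co_\E$ is continuous and $\E\rtimes_\red\cdualG=\cspn\bigl((1_{\K(\E)}\otimes\cdualG)\co_\E(\E)\bigr)=\cspn\bigl(\co_\E(\E)(1_B\otimes\cdualG)\bigr)$ is a concrete Hilbert $B\rtimes_\red\cdualG$-module with $\K(\E\rtimes_\red\cdualG)\cong\K(\E)\rtimes_\red\cdualG$. The only genuinely quantum-group-theoretic input is that $p_1\in\cdualG$: for compact $\G$ the space $\C\dtg_1$ is the one-dimensional trivial isotypic subspace of $H$, and $p_1$ is the corresponding minimal central projection, which belongs to $\cdualG=\widehat{\opG}$. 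Granting this, $(1_\E\otimes p_1)\co_\E(\eta)\in(1_{\K(\E)}\otimes\cdualG)\co_\E(\E)\subseteq\E\rtimes_\red\cdualG$, and therefore $\bbraket{\xi}{\eta}=\co_\E(\xi)^*\bigl[(1_\E\otimes p_1)\co_\E(\eta)\bigr]\in(\E\rtimes_\red\cdualG)^*(\E\rtimes_\red\cdualG)\subseteq B\rtimes_\red\cdualG$. Thus every subset of $\E$ is relatively continuous; this is the step I expect to be the main obstacle, everything else being formal manipulation with the calculus of Proposition~\ref{003}.

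Next I would compute the Hilbert module. Using $\kket{\xi}=(1_\E\otimes\dtg_1^*)\co_\E(\xi)$, the relation $\co_\E(\xi b)=\co_\E(\xi)\co_B(b)$ and density of $\E\cdot B$ in $\E$, one checks $\cspn\bigl(\co_\E(\E)\,(B\rtimes_\red\cdualG)\bigr)=\E\rtimes_\red\cdualG$, whence $\F_\E=\F(\E,\E)=\cspn\bigl(\kket{\E}\circ B\rtimes_\red\cdualG\bigr)=(1_\E\otimes\dtg_1^*)\,\E\rtimes_\red\cdualG$ (the right-hand side is already norm-closed, since $1_\E\otimes p_1$ maps $\E\rtimes_\red\cdualG$ into itself and $1_\E\otimes\dtg_1^*$ is, up to the scalar $\f(1)^{1/2}$, isometric on the range of $1_\E\otimes p_1$). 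I would also derive the cleaner description $\F_\E=\cspn\kket{\E}$ from \eqref{095} together with Propositions~\ref{358} and~\ref{508} --- each $\kket{\xi}a$ with $a\in B\rtimes_\red\cdualG$ is a norm-limit of kets $\kket{\omega*(\xi b)}$, because $\R=\E$ is a $B$-submodule stable under the $L^1_0(\G)$-action. By Proposition~\ref{111} and~\eqref{507}, $\Fix(\E)=\cspn\F_\E\F_\E^*$ and $\I_\E=\cspn\F_\E^*\F_\E$; feeding in $\F_\E=(1_\E\otimes\dtg_1^*)\,\E\rtimes_\red\cdualG$ together with $\cspn(\E\rtimes_\red\cdualG)(\E\rtimes_\red\cdualG)^*=\K(\E\rtimes_\red\cdualG)\cong\K(\E)\rtimes_\red\cdualG$ gives $\Fix(\E)=(1_\E\otimes\dtg_1^*)\K(\E\rtimes_\red\cdualG)(1_\E\otimes\dtg_1)$ and $\I_\E=\cspn(\E\rtimes_\red\cdualG)^*(1_\E\otimes p_1)(\E\rtimes_\red\cdualG)$, while feeding in $\F_\E=\cspn\kket{\E}$ gives $\I_\E=\cspn\bbraket{\E}{\E}=\cspn\co_\E(\E)^*(1_\E\otimes p_1)\co_\E(\E)$. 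The Morita equivalence of $\Fix(\E)$ with the ideal $\I_\E$ is then immediate from Proposition~\ref{111}.

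It remains to identify $\Fix(\E)$ with the ordinary fixed-point algebra. From $\F_\E=\cspn\kket{\E}$ and Proposition~\ref{003}(i) one has $\Fix(\E)=\cspn\kket{\E}\bbra{\E}=\cspn E_1\bigl(\ket{\E}\bra{\E}\bigr)=\cspn E_1(\K(\E))$, where $E_1=(\id\otimes\f)\circ\co_{\K(\E)}$. For compact $\G$, $E_1$ maps $\K(\E)$ into $\{x\in\K(\E):\co_{\K(\E)}(x)=x\otimes 1_\G\}$ and acts as multiplication by $\f(1)$ on that subalgebra, so $\tfrac1{\f(1)}E_1$ is an idempotent with closed range equal to it; hence $\cspn E_1(\K(\E))=\{x\in\K(\E):\co_{\K(\E)}(x)=x\otimes 1_\G\}$. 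Comparing with the previous paragraph yields the remaining asserted identities.
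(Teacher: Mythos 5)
Your proof is correct and follows essentially the same route as the paper: Example~\ref{477} gives $\E_\si=\E$ and the explicit bra--ket formulas, the crux is $p_1\in\cdualG$, and the identifications of $\F_\E$, $\Fix(\E)$ and $\I_\E$ then proceed exactly as in the text (via \eqref{095}, Propositions~\ref{003}, \ref{358}, \ref{508} and~\ref{111}, plus the observation that $E_1$ is, up to the scalar $\f(1)$, an idempotent onto the honest fixed-point algebra, using unitality of $\G$ to see that $E_1$ lands in $\K(\E)$). The one point where you diverge is the justification that $p_1\in\cdualG$: you invoke the Peter--Weyl structure of $\widehat{\opG}$ (the trivial isotypic component and its minimal central projection), whereas the paper verifies directly that $(\id\otimes\f)(V)=p_1$ on $\La(\G)$ and hence $p_1=\rho(\f)\in\rho\bigl(L^1(\G)\bigr)\sbe\cdualG$ --- both are valid, but the paper's computation is more self-contained and avoids appealing to the structure theory of duals of compact quantum groups.
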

\begin{proof}
We already know that $\E=\E_\si$. Thus we have to show that, for any
$\xi,\eta\in \E$, the element $\bbraket{\xi}{\eta}$ belongs to $B\rtimes_\red\cdualG$. Recall from Example~\ref{477} that
$$\bbra{\xi}=\co_\E(\xi)^*(1_{\K(\E)}\otimes\dtg_1)\quad\mbox{for all }\xi\in \E.$$
Thus
$$\bbraket{\xi}{\eta}=\co_\E(\xi)^*(1_{\K(\E)}\otimes p_1)\co_\E(\eta).$$
We may assume that $\f$ is a state, that is, $\f(1)=1$. Thus $\dtg_1$
is a unitary vector and hence $p_1$ is a projection. Note also that $\f=\omega_{\dtg_1,\dtg_1}\in
L^1(\G)$. We claim that $p_1=\rho(\f)$ (recall that $\rho(\omega)=(\id\otimes\omega)(V^*)$, where $V$ is the right regular corepresentation of $\G$).
In fact, by Equation~\eqref{085}, we have (using that compact quantum groups are unimodular, so that $\Gamma=\La$)
$$(\id\otimes\f)(V)\La(b)=\La\bigl((\id\otimes\f)\Dt(b)\bigr)=\La\bigl(1\f(b)\bigr)=\dtg_1\f(b)$$ for all $b\in \G$. On the other hand
$$p_1\La(b)=|\dtg_1\>\<\dtg_1|\La(b)=\dtg_1\<\La(1)|\La(b)\>=\dtg_1\f(b).$$
Thus $(\id\otimes\f)(V)=p_1$ and hence
$$\rho(\f)=(\id\otimes\f)(V^*)=(\id\otimes\f)(V)^*=p_1^*=p_1.$$
In particular, $p_1\in \rho(L^1(\G))\sbe\cdualG$. We conclude that the operator
\begin{align*}
\bbraket{\xi}{\eta}&=\co_\E(\xi)^*(1_{\K(\E)}\otimes p_1)\co_\E(\eta)
				 \\&=\bigl((1_{\K(\E)}\otimes p_1)\co_\E(\xi)\bigr)^*\bigl((1_{\K(\E)}\otimes p_1)\co_\E(\eta)\bigr)
\end{align*}
belongs to $(\E\rtimes_\red\cdualG)^*(\E\rtimes_\red\cdualG)\sbe B\rtimes_\red\cdualG$.
Here we are using that compact quantum groups are regular, so that $\co_\E$ is automatically continuous (see Proposition~5.8 in \cite{Baaj-Skandalis-Vaes:Non-semi-regular}).
Therefore any subset of $\E$ is relatively continuous.

The equation
$\kket{\xi}=(1\otimes\dtg_1^*)\co_\E(\xi)$ yields
$$\F_\E=\cspn(1\otimes\dtg_1^*)\co_\E(\E)(B\rtimes_\red\cdualG)=(1\otimes\dtg_1^*)\E\rtimes_\red\cdualG.$$
Hence
$$\Fix(\E)=\cspn(1\otimes\dtg_1^*)(\E\rtimes_\red\cdualG)(\E\rtimes_\red\cdualG)^*(1\otimes\dtg_1)
															=(1\otimes\dtg_1^*)\K(\E\rtimes_\red\cdualG)(1\otimes\dtg_1),$$
which is therefore Morita equivalent to
$$\I_\E=\cspn(\E\rtimes_\red\cdualG)^*(1\otimes p_1)(\E\rtimes_\red\cdualG).$$
Since $\co_\E$ is continuous, the linear span of $L^1(\G)*\E$ is dense in $\E$. Combining this with Propositions~\ref{508} and~\ref{358}
(and using that $\G$ is unimodular, so that $L^1_0(\G)=L^1(\G)$),
we get that
\begin{align*}
\overline{\kket{\E}}&=\cspn\bigl((1\otimes\dtg_1^*)\co_\E(\E)\bigl(1\otimes\rho(L^1(\G))\bigr)\bigr)
\\&=\cspn \bigl((1\otimes\dtg_1^*)\co_\E(\E)(1\otimes\cdualG)\bigr)
\\&=(1\otimes\dtg_1^*)(\E\rtimes_\red\cdualG)=\F_\E.
\end{align*}
In particular,
$$\I_\E=\cspn\bbraket{\E}{\E}=\cspn\co_\E(\E)^*(1\otimes p_1)\co_\E(\E),$$
and (using the equality $\omega_{\dtg_1,\dtg_1}=\f$)
\begin{align*}
\Fix(\E)&=\cspn\kket{\E}\bbra{\E}
		\\&=\cspn(1\otimes\dtg_1^*)\co_{\K(\E)}(\K(\E))(1\otimes\dtg_1)
		\\&=\cspn(\id_{\K(\E)}\otimes\f)(\co_{\K(\E)}(\K(\E)))
		\\&=\{x\in \K(\E):\co_{\K(\E)}(x)=x\otimes 1_\G\},
\end{align*}
where the last equality is proved in the following way: since
$\co_{\K(\E)}(\K(\E))$ is contained in $\tilde\M\bigl(\K(\E)\otimes\G\bigr)$ (which is equal to $\K(\E)\otimes\G$ because $\G$ is unital),
and since $\f\in
\G^*$, we have
$$(\id_{\K(\E)}\otimes\f)(\co_{\K(\E)}(\K(\E)))\sbe\{x\in\K(\E):\co_{\K(\E)}(x)=x\otimes 1_\G\}.$$
Conversely, if
$\co_{\K(\E)}(x)=x\otimes 1_\G$, then
$(\id_{\K(\E)}\otimes\f)(\co_{\K(\E)}(x))=x$, and therefore
\begin{equation*}
\Fix(\E)=\{x\in \K(\E):\co_{\K(\E)}(x)=x\otimes 1_\G\}.\qedhere
\end{equation*}
\end{proof}

\begin{remark}
In the case of a $\G$-\cstar{}algebra $A$ with $\G$ compact, the Morita
equivalence between $\Fix(A)$ and the ideal $\I_A$ in
$A\rtimes_\red\cdualG$ was obtained by Ng in \cite{Ng:MoritaEquivalences}. He also defined an interesting condition on the coaction:
$\co_A$ is called \emph{effective} if the linear span of $\co_A(A)(A\otimes 1)$ is dense in $A\otimes\G$.
This condition implies that $\R=A$ is saturated, that is, $\I_A$ is equal to $A\rtimes_\red\cdualG$ (\cite[Lemma~2.6]{Ng:MoritaEquivalences}).
Thus, in this case, $\Fix(A)$ is Morita equivalent to $A\rtimes_\red\cdualG$.
Note that comultiplications are effective and hence any dual coaction is effective.
Observe that this result applied to the comultiplication $\Dt$ of $\G$ and combined with Proposition~\ref{067} yields
a well-known result: any compact quantum group is regular.
\end{remark}

The following result provides a canonical way to associate relatively continuous subspaces of $\E$ to
relatively continuous subspaces of $\K(\E)$. It also provides a formula for the corresponding Hilbert modules over the reduced crossed product and generalized fixed point algebras.

\begin{proposition}\label{171} Let $\G$ be a locally compact quantum group and let
$\E$ be a Hilbert $B,\G$-module with a continuous coaction of $\G$.
\begin{enumerate}
\item[\textup{(i)}] Suppose that there is a left action $\pi:A\to \Ls(\E)$ of a $\G$-\cstar{}algebra $A$
turning $\E$ into a $\G$-equivariant right-Hilbert $A,B$-bimodule. This means that $\pi$ is a $\G$-equivariant nondegenerate $*$-homomorphism.
We will use the module notation for the left action\textup: $a\cdot\xi:=\pi(a)\xi$ for all $a\in A$ and $\xi\in \E$.

If $\R$ is a relatively continuous subset of $A$, then $\R\cdot\E$ is a relatively continuous subset of $\E$ and
$$\F(\E,\R\cdot\E)=\cspn\bigl(\F(A,\R)\cdot \E\rtimes_\red\cdualG\bigr)\cong \F(A,\R)\otimes_{A\rtimes_\red\cdualG}(\E\rtimes_\red\cdualG),$$
where for $x\in \F(A,\R)\sbe \Ls(A\otimes H,A)$
and $y\in \E\rtimes_\red\cdualG\sbe\Ls(B\otimes H,\E\otimes H)$ we are using the notation
$x\cdot y:=(\pi\otimes\id_{H^*})(x)y$. Observe that
$\pi\otimes\id_{H^*}:\Ls(A\otimes H,A)\to \Ls(\E\otimes H,\E)$ and therefore the
composition $(\pi\otimes\id_{H^*})(x)y$ makes sense.
Note that $\Ls(A\otimes H,A)\cong\M(A\otimes H^*)$ and $\Ls(\E\otimes H,\E)\cong\M(\K(\E)\otimes H^*)$.

In particular, if $\R$ is a relatively continuous subspace of $\K(\E)$,
then $\R(\E)$ is a relatively continuous subspace of $\E$ and
$$\F\bigl(\E,\R(\E)\bigr)=\cspn\bigl(\F(\K(\E),\R)\circ (\E\rtimes_\red\cdualG)\bigr)
\cong\F\bigl(\K(\E),\R\bigr)\otimes_{\K(\E)\rtimes_\red\cdualG}(\E\rtimes_\red\cdualG) .$$

\item[\textup{(ii)}]
If $\R$ is a relatively continuous subset of $\E$, then $|\R\>\<\E|$ is a relatively continuous subset of $\K(\E)$ and
$$\F\bigl(\K(\E),|\R\>\<\E|\bigr)=\cspn\bigl(\F(\E,\R)\circ(\E^*\rtimes_\red\cdualG)\bigr)\cong
\F(\E,\R)\otimes_{B\rtimes_\red\cdualG}(\E^*\rtimes_\red\cdualG).$$
\end{enumerate}
\end{proposition}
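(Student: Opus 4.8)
The plan is to prove (i) first --- the ``in particular'' clause of (i) being the special case $A=\K(\E)$, $\pi$ the (nondegenerate, $\G$-equivariant) inclusion $\K(\E)\into\Ls(\E)$ --- and then to treat (ii) by an entirely parallel computation. In each case the argument has three steps: compute the ket-operators of the new elements, check relative continuity, and identify the Hilbert module over the reduced crossed product with the stated internal tensor product.

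\emph{Part (i).} For $a\in\R\sbe A_\si$ and $\xi\in\E$, Proposition~\ref{003}(iv) gives $a\cdot\xi=\pi(a)\xi\in\E_\si$ with $\kket{a\cdot\xi}=(\pi\otimes\id_{H^*})(\kket{a})\circ\co_\E(\xi)$, hence $\bbra{a\cdot\xi}=\co_\E(\xi)^*\circ(\pi\otimes\id_{H})(\bbra{a})$, where $\pi\otimes\id_H\colon\Ls(A,A\otimes H)\to\Ls(\E,\E\otimes H)$ is defined analogously. By multiplicativity of $\pi\otimes\id$ (on the multiplier algebra of the relevant linking algebra), $(\pi\otimes\id_{H})(\bbra{a_1})\circ(\pi\otimes\id_{H^*})(\kket{a_2})=(\pi\otimes\id_{\K(H)})(\bbraket{a_1}{a_2})$, so
$$\bbraket{a_1\cdot\xi_1}{a_2\cdot\xi_2}=\co_\E(\xi_1)^*\,(\pi\otimes\id_{\K(H)})(\bbraket{a_1}{a_2})\,\co_\E(\xi_2)\qquad(a_1,a_2\in\R,\ \xi_1,\xi_2\in\E).$$
Since $\R$ is relatively continuous in $A$, $\bbraket{a_1}{a_2}\in A\rtimes_\red\cdualG=\cspn(\co_A(A)(1\otimes\cdualG))$; applying $\pi\otimes\id_{\K(H)}$, then using $\G$-equivariance of $\pi$ in the form $(\pi\otimes\id_{\K(H)})(\co_A(a))\co_\E(\zeta)=\co_\E(\pi(a)\zeta)$ together with continuity of $\co_\E$ --- which gives $\cspn\bigl((1\otimes\cdualG)\co_\E(\E)\bigr)=\cspn\bigl(\co_\E(\E)(1\otimes\cdualG)\bigr)=\E\rtimes_\red\cdualG$ --- one slides the $\cdualG$-factor past $\co_\E$ and finds $(\pi\otimes\id_{\K(H)})(\bbraket{a_1}{a_2})\co_\E(\xi_2)\in\E\rtimes_\red\cdualG$. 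Finally $\co_\E(\xi_1)^*(\E\rtimes_\red\cdualG)\sbe B\rtimes_\red\cdualG$ because $\co_\E(\xi_1)^*\co_\E(\eta)=\co_B(\braket{\xi_1}{\eta})$; so $\bbraket{a_1\cdot\xi_1}{a_2\cdot\xi_2}\in B\rtimes_\red\cdualG$ and $\R\cdot\E$ is relatively continuous. For the module: $\cspn\kket{\R\cdot\E}\sbe\F(\E,\R\cdot\E)$ by~\eqref{095}, and using $\cspn(\co_\E(\E)(B\rtimes_\red\cdualG))=\E\rtimes_\red\cdualG$, $\F(A,\R)=\cspn(\kket{\R}(A\rtimes_\red\cdualG))$, and nondegeneracy of the left action $(\pi\otimes\id_{\K(H)})$ of $A\rtimes_\red\cdualG$ on $\E\rtimes_\red\cdualG$, both $\F(\E,\R\cdot\E)$ and $\cspn(\F(A,\R)\cdot\E\rtimes_\red\cdualG)$ collapse to $\cspn\bigl((\pi\otimes\id_{H^*})(\kket{\R})\,(\E\rtimes_\red\cdualG)\bigr)$.

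\emph{Part (ii).} For $\xi\in\R$ and $\eta\in\E$, Proposition~\ref{003}(v) gives $\ket{\xi}\bra{\eta}\in\K(\E)_\si$ with $\kket{\ket{\xi}\bra{\eta}}=\kket{\xi}\circ\co_\E(\eta)^*$, so $\bbra{\ket{\xi}\bra{\eta}}=\co_\E(\eta)\circ\bbra{\xi}$ and
$$\bbraket{\ket{\xi_1}\bra{\eta_1}}{\ket{\xi_2}\bra{\eta_2}}=\co_\E(\eta_1)\,\bbraket{\xi_1}{\xi_2}\,\co_\E(\eta_2)^*.$$
Now $\bbraket{\xi_1}{\xi_2}\in B\rtimes_\red\cdualG$, and by continuity of $\co_\E$ and of the induced coaction $\co_{\K(\E)}$ one has $\co_\E(\eta)(1\otimes\cdualG)\sbe\E\rtimes_\red\cdualG$, hence $(1\otimes\cdualG)\co_\E(\eta)^*\sbe(\E\rtimes_\red\cdualG)^*$, and $\cspn\bigl((\E\rtimes_\red\cdualG)(\E\rtimes_\red\cdualG)^*\bigr)=\K(\E\rtimes_\red\cdualG)\cong\K(\E)\rtimes_\red\cdualG$; so the expression above lies in $\K(\E)\rtimes_\red\cdualG$ and $|\R\>\<\E|$ is relatively continuous in $\K(\E)$. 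For the module I use $\co_\E(\eta)^*\co_{\K(\E)}(T)=\co_\E(T^*\eta)^*$ for $T\in\K(\E)$ and continuity of $\co_{\K(\E)}$ to see $\co_\E(\eta)^*(\K(\E)\rtimes_\red\cdualG)\sbe(\E\rtimes_\red\cdualG)^*=\E^*\rtimes_\red\cdualG$, whence (again by~\eqref{095} and nondegeneracy of the left $B\rtimes_\red\cdualG$-action on $\E^*\rtimes_\red\cdualG$) $\F(\K(\E),|\R\>\<\E|)=\cspn\bigl(\kket{\R}\,(\E^*\rtimes_\red\cdualG)\bigr)=\cspn\bigl(\F(\E,\R)\circ(\E^*\rtimes_\red\cdualG)\bigr)$.

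\emph{The tensor-product identification.} In both cases the remaining isomorphism comes from the standard device: the bilinear map $(x,y)\mapsto(\pi\otimes\id_{H^*})(x)\circ y$ on $\F(A,\R)\times(\E\rtimes_\red\cdualG)$ (respectively $(x,z)\mapsto x\circ z$ on $\F(\E,\R)\times(\E^*\rtimes_\red\cdualG)$) is balanced over $A\rtimes_\red\cdualG$ (resp.\ over $B\rtimes_\red\cdualG$), since $(\pi\otimes\id_{H^*})(xc)=(\pi\otimes\id_{H^*})(x)(\pi\otimes\id_{\K(H)})(c)$ and $\pi\otimes\id_{\K(H)}$ restricts on the reduced crossed product to the left module action; it preserves inner products by the concrete-Hilbert-module property of Proposition~\ref{079} applied to $A$ (resp.\ $B$), as $x_1^*x_2$ lies in $A\rtimes_\red\cdualG$ (resp.\ $B\rtimes_\red\cdualG$); and it has dense range in the closed span computed above. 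It therefore descends to an isometric isomorphism of the internal tensor product onto $\F(\E,\R\cdot\E)$, resp.\ onto $\F(\K(\E),|\R\>\<\E|)$. I expect the real difficulty to be not any single conceptual point but the pervasive bookkeeping --- keeping straight the ambient spaces $\Ls(\,\cdot\,,\cdot\,)$ in which each operator lives, and justifying at each step that a factor $1\otimes\cdualG$ may be moved past $\co_\E(\E)$ or $\co_{\K(\E)}(\K(\E))$, which is exactly the place where continuity of $\co_\E$ is essential.
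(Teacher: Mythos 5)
Your proposal is correct and follows essentially the same route as the paper's proof: the same appeal to Proposition~\ref{003}(iv) and (v) for the ket formulas, the same computation of $\bbraket{a_1\cdot\xi_1}{a_2\cdot\xi_2}$ and $\BBRAKET{\ket{\xi_1}\bra{\eta_1}}{\ket{\xi_2}\bra{\eta_2}}$, the same use of continuity of $\co_\E$ to slide the $\cdualG$-factor, and the same Cohen/nondegeneracy collapse of the closed spans. The only differences are cosmetic bookkeeping (which side of $\co_A$ or $\co_\E$ the factor $1\otimes\cdualG$ sits on) and that you spell out the tensor-product identification the paper dismisses as "easy to see."
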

\begin{proof} (i) It follows from Proposition~\ref{003}(iv) that $\R\cdot \E\sbe \E_\si$ and, for all $a\in \R$ and $\xi\in \E$, we have
$$\kket{a\cdot\xi}=(\pi\otimes\id_{H^*})(\kket{a})\co_\E(\xi)=\kket{a}\cdot\co_\E(\xi).$$
Thus, for all $a,b\in \R$ and $\xi,\eta\in \E$ we get
$$\bbraket{a\cdot\xi}{b\cdot\eta}=\co_\E(\xi)^*(\pi\otimes\id_\K)(\bbraket{a}{b})\co_\E(\eta)
																=\co_\E(\xi)^*(\pi\rtimes_\red\cdualG)(\bbraket{a}{b})\co_\E(\eta),$$
where $\K:=\K(H)$. Since $\R$ is relatively continuous, we have
$\bbraket{a}{b}\in A\rtimes_\red\cdualG$. Thus to prove that
$\R\cdot\E$ is relatively continuous it is enough to prove that
$$\co_\E(\E)^*(\pi\rtimes_\red\cdualG)(A\rtimes_\red\cdualG)\co_\E(\E)\sbe B\rtimes_\red\cdualG.$$
If $c\in A$, $\hat x\in \cdualG$ and $\xi,\eta\in \E$ then
$$\co_\E(\xi)^*(\pi\rtimes_\red\cdualG)((1\otimes\hat x)\co_A(c))\co_\E(\eta)=\co_\E(\xi)^*((1\otimes\hat x)\co_{\K(\E)}(\pi(c))\co_\E(\eta)$$
$$=\co_\E(\xi)^*(1\otimes\hat x)\co_\E(\pi(c)\eta)\sbe (\E\rtimes_\red\cdualG)^*(\E\rtimes_\red\cdualG)\sbe B\rtimes_\red\cdualG.$$
Hence $\R\cdot\E$ is relatively continuous. We compute
\begin{align*}
\E\rtimes_\red\cdualG&=(A\rtimes_\red\cdualG)\cdot(\E\rtimes_\red\cdualG)
				\\&=(\pi\rtimes_\red\cdualG)(A\rtimes_\red\cdualG)(\E\rtimes_\red\cdualG)
				\\&=(\pi\otimes\id_\K)(A\rtimes_\red\cdualG)(\E\rtimes_\red\cdualG),
\end{align*}
and hence
\begin{align*}
\F(\E,\R\cdot\E)&=\cspn\kket{\R\cdot\E}(B\rtimes_\red\cdualG)
			  \\&=\cspn(\kket{\R}\cdot\co_\E(\E))(B\rtimes_\red\cdualG)
			  \\&=\cspn(\pi\otimes\id_{H^*})(\kket{\R})\co_\E(\E)(B\rtimes_\red\cdualG)
			  \\&=\cspn(\pi\otimes\id_{H^*})(\kket{\R})(\E\rtimes_\red\cdualG)
			  \\&=\cspn(\pi\otimes\id_{H^*})(\kket{\R})(A\rtimes_\red\cdualG)\cdot(\E\rtimes_\red\cdualG)
			  \\&=\cspn(\pi\otimes\id_{H^*})(\kket{\R})(\pi\otimes\id_\K)(A\rtimes_\red\cdualG)(\E\rtimes_\red\cdualG)
		      \\&=\cspn(\pi\otimes\id_{H^*})\bigl(\kket{\R}(A\rtimes_\red\cdualG)\bigr)(\E\rtimes_\red\cdualG)
			  \\&=\cspn\F(A,\R)\cdot (\E\rtimes_\red\cdualG).
\end{align*}
Finally, it is easy to see that the map
$x\otimes y\mapsto x\cdot y$, where $x\in \F(A,\R)$ and $y\in \E\rtimes_\red\cdualG$,
induces an isomorphism
\begin{equation*}
\F(A,\R)\otimes_{A\rtimes_\red\cdualG}(\E\rtimes_\red\cdualG)\cong \cspn\F(A,\R)\cdot (\E\rtimes_\red\cdualG).
\end{equation*}
(ii) By Proposition~\ref{003}(v), we have $|\R\>\<\E|\sbe \K(\E)_\si$ and, for all $\xi\in \R$, $\eta\in \E$,
$$\KKET{\ket{\xi}\bra{\eta}}=\kket{\xi}\co_\E(\eta)^*.$$
Thus, if $\xi_1,\xi_2\in \R$ and $\eta_1,\eta_2\in\E$ we get
$$\BBRAKET{\ket{\xi_1}\bra{\eta_1}}{\ket{\xi_2}\bra{\eta_2}}=\co_\E(\eta_1)\bbraket{\xi_1}{\xi_2}\co_\E(\eta_2)^*\in
(\E\rtimes_\red\cdualG)(\E\rtimes_\red\cdualG)^*\sbe \K(\E)\rtimes_\red\cdualG.$$
Thus $|\R\>\<\E|$ is relatively continuous and because
$\E^*\rtimes_\red\cdualG=(B\rtimes_\red\cdualG)(\E^*\rtimes_\red\cdualG)$
we conclude that
\begin{align*}
\F(\K(\E),|\R\>\<\E|)&=\cspn \bigl(\KKET{|\R\>\<\E|} (\K(\E)\rtimes_\red\cdualG)\bigr)
					\\&=\cspn \bigl(\kket{\R}\co_\E(\E)^*(\K(\E)\rtimes_\red\cdualG)\bigr)
                    \\&=\cspn (\kket{\R}(\E^*\rtimes_\red\cdualG))
					\\&=\cspn \bigl(\kket{\R}(B\rtimes_\red\cdualG)(\E^*\rtimes_\red\cdualG)\bigr)
					\\&=\cspn\F(\E,\R)(\E^*\rtimes_\red\cdualG).
\end{align*}
Finally, it is easy to see that the map
$z\otimes w\mapsto z\circ w$, where $z\in \F(\E,\R)$ and $w\in \E^*\rtimes_\red\cdualG$,
induces an isomorphism
\begin{equation*}
\F(\E,\R)\otimes_{B\rtimes_\red\cdualG}(\E^*\rtimes_\red\cdualG)\cong \cspn\F(\E,\R)(\E^*\rtimes_\red\cdualG).\qedhere
\end{equation*}
\end{proof}

In the group case, it is a basic observation that $A\rtimes_\red G$ appears as a generalized fixed point algebra of $A\otimes\K\bigl(L^2(G)\bigr)$,
where $G$ is a locally compact group and $A$ is a $G$-\cstar{}algebra. Using the result above we can now prove the following generalization:

\begin{proposition} Let $\G$ be a regular locally compact quantum group. Let $\E$ be a Hilbert $B,\G$-module
with an injective coaction of $\G$ and consider the $\G$-\cstar{}algebra $A\otimes\K$, where $A:=\K(\E)$ and $\K:=\K\bigl(L^2(\G)\bigr)$.
Then there is a dense, relatively continuous subspace $\R\sbe A\otimes\K$ such that
$$\F(A\otimes\K,\R)\cong (A\rtimes_\red\cdualG)\otimes L^2(\G)^*,\quad \Fix(A\otimes\K,\R)\cong A\rtimes_\red\cdualG,$$
$$\mbox{and}\quad\I(A\otimes\K,\R)\cong (A\rtimes_\red\cdualG)\otimes\K\cong (A\otimes\K)\rtimes_\red\cdualG.$$
Hence $A\rtimes_\red\cdualG$  appears as a generalized fixed point algebra of $A\otimes\K$.
\end{proposition}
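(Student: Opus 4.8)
The plan is to run the group-case argument through Propositions~\ref{270} and~\ref{171}. Write $H=L^2(\G)$. Since $\K=\K(H)$, we have $A\otimes\K=\K(A\otimes H)$, where $A\otimes H$ is the Hilbert $A,\G$-module with coaction $\co_{A\otimes H}(\zeta)=\Sigma_{23}W_{23}(\co_A\otimes\id_H)(\zeta)$; the coaction this induces on $\K(A\otimes H)$ is exactly the canonical $\G$-\cstar{}algebra structure on $A\otimes\K$ referred to in the statement. Because $\G$ is regular, $\co_{A\otimes H}$ is continuous -- this is where regularity is used, cf.\ item~(3) of the Remark after Definition~\ref{574} -- so $A\otimes H$ is a Hilbert $A,\G$-module with a continuous coaction, $A\otimes\K$ is a genuine $\G$-\cstar{}algebra, and the canonical identification $\K\bigl((A\otimes H)\rtimes_\red\cdualG\bigr)\cong\K(A\otimes H)\rtimes_\red\cdualG=(A\otimes\K)\rtimes_\red\cdualG$ is available. (The hypothesis that $\co_\E$ is injective enters only through injectivity of $\co_A=\co_{\K(\E)}$, which makes $A$ sit faithfully inside $A\rtimes_\red\cdualG$; if one does not wish to presuppose that $\co_A$ is continuous, one runs the same argument with the Hilbert $B,\G$-module $\E\otimes H$ in place of $A\otimes H$, using $A=\K(\E)$, $\K(\E\rtimes_\red\cdualG)\cong\K(\E)\rtimes_\red\cdualG$, and the harmless reduction to $\E$ full.)

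First I would apply Proposition~\ref{270} with $B$ replaced by $A$ (equivalently, the Remark following Proposition~\ref{173}), producing a dense, relatively continuous subspace $\R_0\sbe A\otimes H$ with $\F(A\otimes H,\R_0)=\I(A\otimes H,\R_0)=\Fix(A\otimes H,\R_0)=A\rtimes_\red\cdualG$; thus $\R_0$ is dense and saturated and $\F(A\otimes H,\R_0)$ is the identity Hilbert bimodule over $A\rtimes_\red\cdualG$. Then I would transport this to $A\otimes\K=\K(A\otimes H)$ by Proposition~\ref{171}(ii), applied to the Hilbert $A,\G$-module $A\otimes H$ and the subspace $\R_0$: the set $\R:=\spn\bigl(|\R_0\>\<A\otimes H|\bigr)\sbe A\otimes\K$ is relatively continuous and dense in $A\otimes\K$ (the latter because $\R_0$ is dense in $A\otimes H$), and Proposition~\ref{171}(ii) gives
$$\F(A\otimes\K,\R)=\cspn\bigl(\F(A\otimes H,\R_0)\circ\bigl((A\otimes H)^*\rtimes_\red\cdualG\bigr)\bigr)\cong\F(A\otimes H,\R_0)\otimes_{A\rtimes_\red\cdualG}\bigl((A\otimes H)^*\rtimes_\red\cdualG\bigr)\cong(A\otimes H)^*\rtimes_\red\cdualG,$$
the last isomorphism because $\F(A\otimes H,\R_0)=A\rtimes_\red\cdualG$ is the identity bimodule. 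Now $A\otimes H$ is a full Hilbert $A$-module, so $(A\otimes H)^*$ is an imprimitivity bimodule between $A$ and $A\otimes\K$; taking compact operators (Proposition~\ref{111}) and using $\K\bigl((A\otimes H)^*\rtimes_\red\cdualG\bigr)\cong\K\bigl((A\otimes H)^*\bigr)\rtimes_\red\cdualG=A\rtimes_\red\cdualG$ I obtain $\Fix(A\otimes\K,\R)\cong A\rtimes_\red\cdualG$; and since $(A\otimes H)^*\rtimes_\red\cdualG$ is full over $(A\otimes\K)\rtimes_\red\cdualG$, also $\I(A\otimes\K,\R)=(A\otimes\K)\rtimes_\red\cdualG$, that is, $\R$ is saturated.

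It remains to put $\F(A\otimes\K,\R)\cong(A\otimes H)^*\rtimes_\red\cdualG$ into the form $(A\rtimes_\red\cdualG)\otimes L^2(\G)^*$ and to see that $(A\otimes\K)\rtimes_\red\cdualG\cong(A\rtimes_\red\cdualG)\otimes\K$. Both reduce to one statement: the isomorphism $(A\otimes H)\rtimes_\red\cdualG\cong(A\rtimes_\red\cdualG)\otimes H$ of Hilbert $A\rtimes_\red\cdualG$-modules. From $\co_{A\otimes H}(a\otimes v)=(1_A\otimes\co_H(v))\co_A(a)$ with $\co_H(v)=\hat W^*(v\otimes 1)$, conjugating by the unitary $1_A\otimes\hat W$ (placed in the appropriate legs) moves the corepresentation off the new copy of $H$ and yields this identification; passing to conjugate modules gives $(A\otimes H)^*\rtimes_\red\cdualG\cong(A\rtimes_\red\cdualG)\otimes L^2(\G)^*$, which is the asserted form of $\F(A\otimes\K,\R)$, while taking compact operators gives $(A\otimes\K)\rtimes_\red\cdualG=\K\bigl((A\otimes H)\rtimes_\red\cdualG\bigr)\cong\K\bigl((A\rtimes_\red\cdualG)\otimes H\bigr)=(A\rtimes_\red\cdualG)\otimes\K$. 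I expect this last step to be the main obstacle: it is the reduced, Hilbert-module incarnation of the Baaj--Skandalis stabilization isomorphism $(A\otimes\K)\rtimes_\red\cdualG\cong(A\rtimes_\red\cdualG)\otimes\K$, and carrying it out cleanly requires careful bookkeeping of the legs of $W$ and $\hat W$ and of the module and coaction structures involved -- once more, this is where regularity of $\G$ is genuinely needed.
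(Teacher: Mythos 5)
Your argument is correct and follows the same skeleton as the paper's proof -- the basic example (Proposition~\ref{270}/\ref{173}) followed by Proposition~\ref{171}(ii) and the regularity-based stabilization -- but it differs in two worthwhile ways. First, you run the construction over $A=\K(\E)$ itself, applying Proposition~\ref{270} to $A\otimes H$ so that $\F(A\otimes H,\R_0)=A\rtimes_\red\cdualG$ is the identity bimodule; the paper instead applies Proposition~\ref{173} to $\E\otimes H$ over $B$ and must then unwind a chain of balanced tensor products $(\E\rtimes_\red\cdualG)\otimes_{B\rtimes_\red\cdualG}(\E^*\rtimes_\red\cdualG)\otimes L^2(\G)^*\cong(A\rtimes_\red\cdualG)\otimes L^2(\G)^*$. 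Your choice shortens that bookkeeping considerably, at the cost of needing $\co_A$ continuous so that $A$ is a genuine $\G$-\cstar{}algebra; your parenthetical hedge about this is unnecessary, since regularity of $\G$ makes all coactions on Hilbert modules automatically continuous (Proposition~5.8 of \cite{Baaj-Skandalis-Vaes:Non-semi-regular}, already invoked in the proof of Proposition~\ref{398}), which is also how the paper implicitly justifies forming $A\rtimes_\red\cdualG$. Second, where the paper simply quotes the duality isomorphism $A\rtimes_\red\cdualG\rtimes_\red\G\cong A\otimes\K$ to get $(A\otimes\K)\rtimes_\red\cdualG\cong(A\rtimes_\red\cdualG)\otimes\K$, you derive the underlying module identification $(A\otimes H)\rtimes_\red\cdualG\cong(A\rtimes_\red\cdualG)\otimes H$ by hand, conjugating $\co_{A\otimes H}(a\otimes v)=(1_A\otimes\hat W^*(v\otimes 1))\co_A(a)$ by $1_A\otimes\hat W$ to strip the corepresentation off the extra leg; this computation is sound (left multiplication by a unitary respects the right $A\rtimes_\red\cdualG$-module structure) and is more self-contained, though it reproves a known consequence of Baaj--Skandalis duality rather than citing it. Either route closes the argument; the remaining identifications $\K\bigl((A\otimes H)^*\rtimes_\red\cdualG\bigr)\cong A\rtimes_\red\cdualG$ and the saturation of $\R$ follow exactly as you say.
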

\begin{proof}
Note that $\co_\E$ is injective if and only if $\co_{A}$ is injective. Thus $(A,\co_A)$ is a reduced coaction of $\G$. Since $\G$ is regular we have the duality isomorphism:
$$A\rtimes_\red\cdualG\rtimes_\red\G\cong A\otimes\K.$$
Hence $(A\otimes\K)\rtimes_\red\cdualG\cong (A\rtimes_\red\cdualG)\otimes\K$.
By Proposition~\ref{173}, there is a dense, relatively continuous
subset $\R_0\sbe \E\otimes L^2(\G)$ such that
$$\F(\E\otimes L^2(\G),\R_0)=\E\rtimes_\red\cdualG\cong\E\otimes_B (B\rtimes_\red\cdualG).$$
By Proposition~\ref{171}(ii), $\R:=\spn(|\R_0\>\<\E|)$ is a dense, relatively continuous subspace of
$\K(\E\otimes L^2(\G))\cong A\otimes\K$ and
$$\F(A\otimes\K,\R)\cong \F(\E\otimes L^2(\G),\R_0)\otimes_{B\rtimes_\red\cdualG}(\E\otimes L^2(\G))^*\rtimes_\red\cdualG.$$
Now note that
\begin{align*}
(\E\otimes L^2(\G))^*\rtimes_\red\cdualG & \cong(\E^*\otimes L^2(\G)^*)\otimes_{A\otimes\K}(A\otimes\K)\rtimes_\red\cdualG
\\&\cong(\E^*\otimes L^2(\G)^*)\otimes_{A\otimes\K}(A\rtimes_\red\cdualG)\otimes\K
\\&\cong\bigl(\E^*\otimes_{A}(A\rtimes_\red\cdualG)\bigr)\otimes \bigl(L^2(\G)^*\otimes_{\K}\K\bigr)
\\&\cong(\E^*\rtimes_\red\cdualG)\otimes L^2(\G)^*.
\end{align*}
Thus
$$\F(A\otimes \K,\R)\cong (\E\rtimes_\red\cdualG)\otimes_{B\rtimes_\red\cdualG}(\E^*\rtimes_\red\cdualG)\otimes L^2(\G)^*\cong
(A\rtimes_\red\cdualG)\otimes L^2(\G)^*.$$
Therefore,
$$\Fix(A\otimes \K,\R)\cong A\rtimes_\red\cdualG$$
and
\begin{equation*}
\I(A\otimes\K,\R)\cong(A\rtimes_\red\cdualG)\otimes\K\cong(A\otimes\K)\rtimes_\red\cdualG.\qedhere
\end{equation*}
\end{proof}

In the situation above, we have $A\otimes\K\cong A\rtimes_\red\cdualG\rtimes_\red\G$. Thus $A\otimes\K$ is a dual coaction
and therefore the following result generalizes the proposition above.

\begin{proposition} Let $\G$ be a regular locally compact quantum group and suppose that $\E$ is a Hilbert $B,\G$-module, where
$B$ is a reduced $\G$-\cstar{}algebra. Consider the dual coaction of $\cdualG$ on $\E\rtimes_\red\cdualG$.
Then there is a dense, relatively continuous subspace $\R$ of $\E\rtimes_\red\cdualG$ such that
\begin{align*}
\F(\E\rtimes_\red\cdualG,\R)\cong L^2(\G)^*\otimes\E,\quad\Fix(\E\rtimes_\red\cdualG,\R)\cong\K(\E),\\
\mbox{and}\quad\I(\E\rtimes_\red\cdualG,\R)\cong I\otimes\K\sbe B\otimes \K\cong B\rtimes_\red\cdualG\rtimes_\red\G,
\end{align*}
where $I:=\cspn\braket{\E}{\E}_B\sbe B$ and $\K:=\K\bigl(L^2(\G)\bigr)$. In particular, if $\E$ is full, then $\R$ is saturated.
\end{proposition}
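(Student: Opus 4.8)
The plan is to recognise the Hilbert $C,\cdualG$-module $\E\rtimes_\red\cdualG$ (with its dual coaction), together with a suitable $\R$, as the \slc-continuously square-integrable module produced by the inverse of the equivalence of Theorem~\ref{289} applied to $\cdualG$, starting from the Hilbert module $L^2(\G)^*\otimes\E$ over the relevant reduced crossed product, and then to read off the asserted formulas. First some preliminary reductions. Since $B$ is a \emph{reduced} $\G$-\cstar{}algebra, the dual coaction makes $C:=B\rtimes_\red\cdualG$ a $\cdualG$-\cstar{}algebra; and since $\G$ is regular, the dual coaction on the Hilbert $C$-module $\E\rtimes_\red\cdualG$ is continuous -- this is exactly where regularity enters, for otherwise even the coaction on $C\otimes H$ need not be continuous, cf.\ part~(3) of the Remark after Definition~\ref{574}. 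Thus the whole apparatus of the previous sections applies with $\cdualG$ in place of $\G$ (with $L^2(\cdualG)=H$, and the reduced crossed product of a $\cdualG$-\cstar{}algebra $D$ written $D\rtimes_\red\G$), and the Baaj--Skandalis duality for the regular $\G$ gives $C\rtimes_\red\G\cong B\otimes\K$ with $\K:=\K(H)$, together with the compatible identification of $C\otimes H$, as a $(C\rtimes_\red\G,C)$-bimodule, with $(B\otimes H)\otimes_B C$, where $B\otimes H$ is the canonical $(B\otimes\K,B)$-imprimitivity bimodule and the left action of $C\rtimes_\red\G\cong B\otimes\K$ on $C\otimes H$ becomes the action of $\K(B\otimes H)=B\otimes\K$ on the first tensor factor.

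Now apply the inverse of the equivalence of Theorem~\ref{289} for $\cdualG$ to the Hilbert $C\rtimes_\red\G\cong B\otimes\K$-module $\F:=L^2(\G)^*\otimes\E$, where $L^2(\G)^*$ is the standard $(\C,\K)$-imprimitivity bimodule, so that $\F$ is a Hilbert $(I\otimes\K)$-module with $\K(\F)\cong\K(\E)$, $I:=\cspn\braket{\E}{\E}_B$. This produces an \slc-continuously square-integrable Hilbert $C,\cdualG$-module $(\E_\F,\R_\F)$ with $\E_\F=\F\otimes_{C\rtimes_\red\G}(C\otimes H)$. The crucial step is to identify $\E_\F$ with $\E\rtimes_\red\cdualG$: using the identification $C\otimes H\cong(B\otimes H)\otimes_B C$, associativity of the interior tensor product, that $B\otimes H$ is an $(B\otimes\K,B)$-imprimitivity bimodule, and that $L^2(\G)^*\otimes_\K H\cong\C$, one computes
$$\E_\F\cong\bigl((L^2(\G)^*\otimes\E)\otimes_{B\otimes\K}(B\otimes H)\bigr)\otimes_B C\cong\bigl(\E\otimes(L^2(\G)^*\otimes_\K H)\bigr)\otimes_B C\cong\E\otimes_B C\cong\E\rtimes_\red\cdualG,$$
the last isomorphism being the canonical one $\xi\otimes x\mapsto\co_\E(\xi)x$; a routine check shows this isomorphism is $\cdualG$-equivariant, carrying the coaction that $\E_\F$ inherits from the coaction on $C\otimes H$ to the dual coaction on $\E\rtimes_\red\cdualG$. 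Transporting $\R_\F$ along this isomorphism yields the desired dense, relatively continuous subspace $\R\sbe\E\rtimes_\red\cdualG$.

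By construction $(\E\rtimes_\red\cdualG,\R)$ corresponds to $\F=L^2(\G)^*\otimes\E$ under the equivalence, whence $\F(\E\rtimes_\red\cdualG,\R)\cong L^2(\G)^*\otimes\E$, and $\R$ is dense because $\F$ is essential ($\cspn\F(C\otimes H)=\E_\F$ by construction). Passing to compact operators and to inner products gives $\Fix(\E\rtimes_\red\cdualG,\R)\cong\K\bigl(L^2(\G)^*\otimes\E\bigr)\cong\K(\E)$ and $\I(\E\rtimes_\red\cdualG,\R)\cong\braket{L^2(\G)^*\otimes\E}{L^2(\G)^*\otimes\E}\cong\K\otimes I=I\otimes\K\sbe B\otimes\K\cong C\rtimes_\red\G$; if $\E$ is full, then $I=B$, so $\I(\E\rtimes_\red\cdualG,\R)$ is the whole crossed product and $\R$ is saturated.

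The main obstacle lies entirely in the preliminary reductions: verifying that the dual coaction on $\E\rtimes_\red\cdualG$ is continuous, and pinning down the precise form of the Baaj--Skandalis duality $C\rtimes_\red\G\cong B\otimes\K$ and its compatibility with the bimodule $C\otimes H$; both rest on the regularity of $\G$ and the reducedness of $B$. Granting these, everything else is a formal computation with interior tensor products and the equivalence of categories. (Equivalently, one can first settle the case $\E=B$ -- that $B\rtimes_\red\cdualG$ with its dual coaction admits a dense relatively continuous subspace whose generalized fixed point algebra is $B$ -- by the slice-map arguments of Propositions~\ref{268}--\ref{270} applied to $\cdualG$, and then pass to general $\E$ via the left action of $\K(\E\rtimes_\red\cdualG)=\K(\E)\rtimes_\red\cdualG$ and Proposition~\ref{171}(i), the relative continuity on $\K(\E)\rtimes_\red\cdualG$ coming from the case $\E=B$ applied to the reduced $\G$-\cstar{}algebra $\K(\E)$.)
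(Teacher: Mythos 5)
Your argument is correct in substance but runs in the opposite direction from the paper's. You start from the target Hilbert $B\otimes\K$-module $\F=L^2(\G)^*\otimes\E$, apply the inverse of the equivalence of Theorem~\ref{289} (for $\cdualG$), and must then identify $\E_\F=\F\otimes_{C\rtimes_\red\G}(C\otimes H)$ with $\E\rtimes_\red\cdualG$ \emph{equivariantly}, i.e.\ check that the coaction induced from $\co_{C\otimes H}$ is carried to the dual coaction -- a Takai/Baaj--Skandalis-type verification that you correctly flag but do not carry out, and which is the real cost of your route (it is more than "routine": the corepresentation acting on the $H$-leg has to be absorbed correctly). The paper instead works forward entirely inside $\E\rtimes_\red\cdualG$: it sets $A:=\cdualG$ with the comultiplication, invokes Proposition~\ref{067} (regularity passes to $\cdualG$) to obtain the dense relatively continuous subspace $\R_0:=A_\si$ with $\F(A,\R_0)\cong L^2(\G)^*$, and feeds the $\cdualG$-equivariant left action $\pi(x)=1\otimes x$ of $A$ on $\E\rtimes_\red\cdualG$ into Proposition~\ref{171}(i) to get $\R:=\spn\bigl(\pi(\R_0)(\E\rtimes_\red\cdualG)\bigr)$ and the formula $\F(\E\rtimes_\red\cdualG,\R)\cong\F(A,\R_0)\otimes_{A\rtimes_\red\G}(\E\rtimes_\red\cdualG\rtimes_\red\G)$; the duality isomorphisms $A\rtimes_\red\G\cong\K$ and $\E\rtimes_\red\cdualG\rtimes_\red\G\cong\E\otimes\K$ are then used only to compute isomorphism classes of plain Hilbert modules, so no coaction-matching is needed. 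Thus the paper's approach buys an explicit $\R$ and sidesteps the equivariant identification, while yours gets the three displayed formulas for free from the categorical equivalence at the price of that identification. Your parenthetical alternative is the closest to the paper, except that the paper uses the left action of $\cdualG$ itself rather than of $\K(\E)\rtimes_\red\cdualG$, which lets it bypass the base case $\E=B$ altogether; as stated, your reduction of that base case to Propositions~\ref{268}--\ref{270} does not quite apply, since those results concern $C\otimes H$ rather than $C$ with its dual coaction.
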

\begin{proof}
Let $A:=\cdualG$, where $\cdualG$ is regarded as a $\cdualG$-\cstar{}algebra (with the comultiplication as coaction).
Since $\G$ is regular, Proposition~\ref{067} implies that
$\R_0:=A_\si$ is a dense, relatively continuous subspace of $A$ and
$$\F(A,\R_0)=(1_\G\otimes L^2(\G)^*)W\cong L^2(\G)^*.$$
The dual coaction on $\E\rtimes_\red\cdualG$ is defined in such way that the canonical nondegenerate $*$-homomorphism
$\pi:A\rightarrow \Ls(\E\rtimes_\red\cdualG)$, $x\mapsto \pi(x):=1\otimes x$, is $\cdualG$-equivariant. So, by Proposition~\ref{171}(i), $\R:=\spn\bigl(\pi(\R_0)(\E\rtimes_\red\cdualG)\bigr)$ is a
dense relatively continuous subspace of $\E\rtimes_\red\cdualG$ and
$$\F(\E\rtimes_\red\cdualG,\R)\cong \F(A,\R_0)\otimes_{A\rtimes_\red\G}(\E\rtimes_\red\cdualG\rtimes_\red\G).$$
Since the coaction on $B$ is reduced and $\G$ is regular, we have $B\rtimes_\red\cdualG\rtimes_\red\G\cong B\otimes\K$, so that
\begin{align*}
\E\rtimes_\red\cdualG\rtimes_\red\G & \cong(\E\otimes_B(B\rtimes_\red\cdualG))\otimes_{B\rtimes_\red\cdualG}(B\rtimes_\red\cdualG\rtimes_\red\G)
\\&\cong\E\otimes_B(B\otimes\K)\cong \E\otimes\K.
\end{align*}
Since $\G$ is regular, we also have $A\rtimes_\red\G=\cdualG\rtimes_\red\G\cong \K$. Thus
$$\F(\E\rtimes_\red\cdualG,\R)\cong L^2(\G)^*\otimes_{\K}(\E\otimes\K)\cong L^2(\G)^*\otimes\E.$$
And therefore
\begin{equation*}
\Fix(\E\rtimes_\red\cdualG,\R)\cong\K(\E)\quad\mbox{and}\quad
\I(\E\rtimes_\red\cdualG,\R)\cong I\otimes\K.\qedhere
\end{equation*}
\end{proof}

\section{Completions of relatively continuous subsets}
\noindent
In general, for a given Hilbert $B,\G$-module $\E$ there might be several relatively continuous subspaces $\R\sbe\E$ yielding the same Hilbert module $\F=\F(\E,\R)$. In this section we impose more conditions on $\R$ to minimize these possibilities. For this we shall use the Banach algebra $L^1_0(\G)\sbe L^1(\G)$ introduced in Section~\ref{409}.

\begin{definition}\label{274} Let $\E$ be a Hilbert $B,\G$-module.
A subspace $\R\sbe \E_\si$ is called \emph{complete}\index{complete}
if $\R$ is $\|\cdot\|_\si$-closed, $L^1_0(\G)$-invariant and also $B$-invariant, that is,
if $\omega*\xi$ and $\xi\cdot b$ belong to $\R$ for all $\xi\in \R$, $\omega\in L^1_0(\G)$ and $b\in B$. Here $*$ denotes the left action of
$L^1(\G)$ on $\E$ induced by the coaction of $\E$ (see Equation~\eqref{483}) and $\cdot$ denotes the right $B$-action.

The \emph{completion}\index{completion} of a subset $\R\sbe \E_\si$, denoted by $\R_\com$, is the smallest complete subspace of $\E_\si$ containing
$\R$.
\end{definition}

Note that $\E_\si$ is complete by Propositions~\ref{003}(ii),~\ref{264} and \cite[Lemma~5.28]{Buss-Meyer:Square-integrable}, and hence $\R$ is complete if and only if $\R$ is an $L^1_0(\G),B$-invariant closed subspace of $\E_\si$. Since the intersection of complete subspaces is clearly complete, the completion of a subset $\R\sbe\E_\si$ always exists and is the intersection of all complete subspaces of $\E_\si$ containing $\R$. If $\R\sbe\E_\si$ is an $L^1_0(\G),B$-invariant subspace, then so is the $\si$-closure $\overline{\R}^\si$ by Proposition~\ref{264} and \cite[Lemma~5.29]{Buss-Meyer:Square-integrable}, and therefore $\R_\com=\overline{\R}^\si$. In general, we can describe the completion of a subset $\R\sbe \E_\si$ as the $\si$-closure of the smallest $L^1_0(\G),B$-invariant subspace of $\E_\si$ containing $\R$.

\begin{proposition}\label{279} Let $\E$ be a Hilbert $B,\G$-module. If $\R\sbe\E$ is relatively continuous,
then so are $\R\cdot B$, $L^1_0(\G)*\R$ and $\overline{\R}^\si$,
and we have
$$\F(\E,\R)=\F(\E,\overline{\R}^\si)=\F(\E,\R\cdot B)=\F(\E,L^1_0(\G)*\R).$$
Moreover, the completion $\R_\com$ of $\R$ is also relatively continuous, and we have
$$\F(\E,\R)=\F(\E,\R_\com).$$
\end{proposition}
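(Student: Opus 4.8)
The plan is to reduce everything to the two ket identities $\kket{\xi\cdot b}=\kket{\xi}\,\co_B(b)$ (Proposition~\ref{003}(ii)) and $\kket{\omega*\xi}=\kket{\xi}\,(1_B\otimes\rho_\omega)$ (Proposition~\ref{358}), valid for $\xi\in\R$, $b\in B$, $\omega\in L^1_0(\G)$, together with the fact that $\co_B(b)$ and $1_B\otimes\rho_\omega$ belong to $\M(B\rtimes_\red\cdualG)$ -- the latter because $\rho_\omega\in\cdualG$ by Proposition~\ref{508}. First I would settle relative continuity. Since $\bbraket{\xi}{\eta}=\kket{\xi}^*\circ\kket{\eta}$, these identities give $\bbraket{\xi\cdot b}{\eta\cdot c}=\co_B(b)^*\,\bbraket{\xi}{\eta}\,\co_B(c)$ and $\bbraket{\omega*\xi}{\theta*\eta}=(1_B\otimes\rho_\omega)^*\,\bbraket{\xi}{\eta}\,(1_B\otimes\rho_\theta)$; since $B\rtimes_\red\cdualG$ is a \cstar{}algebra and $\M(B\rtimes_\red\cdualG)\cdot(B\rtimes_\red\cdualG)\cdot\M(B\rtimes_\red\cdualG)\subseteq B\rtimes_\red\cdualG$, these lie in $B\rtimes_\red\cdualG$, so $\R\cdot B$ and $L^1_0(\G)*\R$ are relatively continuous. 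For $\overline{\R}^\si$ I would use that $(\xi,\eta)\mapsto\bbraket{\xi}{\eta}$ is jointly continuous from the $\si$-norm to the operator norm -- from $\|\bbra{\zeta}\|=\|\kket{\zeta}\|\le\|\zeta\|_\si$ one gets $\|\bbraket{\xi}{\eta}-\bbraket{\xi'}{\eta'}\|\le\|\xi\|_\si\|\eta-\eta'\|_\si+\|\xi-\xi'\|_\si\|\eta'\|_\si$ -- so, $B\rtimes_\red\cdualG$ being norm-closed and $\overline{\R}^\si$ contained in the $\si$-complete space $\E_\si$, the inclusion $\bbraket{\R}{\R}\subseteq B\rtimes_\red\cdualG$ passes to $\bbraket{\overline{\R}^\si}{\overline{\R}^\si}\subseteq B\rtimes_\red\cdualG$.

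For the equalities of Hilbert modules, one inclusion in each is immediate: $\kket{\R\cdot B}$ and $\kket{L^1_0(\G)*\R}$ are contained in $\kket{\R}\,\M(B\rtimes_\red\cdualG)$, hence $\F(\E,\R\cdot B)$ and $\F(\E,L^1_0(\G)*\R)$ lie inside $\F(\E,\R)$. For the reverse inclusions I would compute closed linear spans: $\F(\E,\R\cdot B)=\cspn\bigl(\kket{\R}\,\co_B(B)\,(B\rtimes_\red\cdualG)\bigr)=\cspn\bigl(\kket{\R}\cdot\cspn(\co_B(B)(B\rtimes_\red\cdualG))\bigr)=\F(\E,\R)$, using $\cspn(\co_B(B)(B\rtimes_\red\cdualG))=\cspn(\co_B(B)\co_B(B)(1_B\otimes\cdualG))=B\rtimes_\red\cdualG$; and $\F(\E,L^1_0(\G)*\R)=\cspn\bigl(\kket{\R}\,(1_B\otimes\rho(L^1_0(\G)))\,(B\rtimes_\red\cdualG)\bigr)=\F(\E,\R)$, using now that $\rho(L^1_0(\G))$ is dense in $\cdualG$ (Proposition~\ref{508}) and that $\cspn\bigl((1_B\otimes\cdualG)(B\rtimes_\red\cdualG)\bigr)=B\rtimes_\red\cdualG$, which follows from $B\rtimes_\red\cdualG$ being self-adjoint and equal to both $\cspn(\co_B(B)(1_B\otimes\cdualG))$ and $\cspn((1_B\otimes\cdualG)\co_B(B))$. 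Finally $\zeta\mapsto\kket{\zeta}$ is continuous from the $\si$-norm to the operator norm, so $\kket{\overline{\R}^\si}$ lies in the norm-closure of $\kket{\R}$ and hence $\F(\E,\overline{\R}^\si)\subseteq\F(\E,\R)$; the opposite inclusion is trivial. This gives the displayed chain of equalities.

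For the completion, recall from the paragraph before the proposition that $\R_\com=\overline{\R'}^\si$, where $\R'$ is the smallest $L^1_0(\G),B$-invariant subspace of $\E_\si$ containing $\R$. Each element of $\R'$ is a finite linear combination of elements obtained from some $\xi\in\R$ by applying finitely many operations $\eta\mapsto\omega*\eta$ ($\omega\in L^1_0(\G)$) and $\eta\mapsto\eta\cdot b$ ($b\in B$); iterating the two ket identities, the ket operator of such an element equals $\kket{\xi}\,m$ with $m$ the corresponding product of factors $1_B\otimes\rho_\omega$ and $\co_B(b)$, all lying in $\M(B\rtimes_\red\cdualG)$. Hence $\kket{\zeta}=\sum_i\kket{\xi_i}\,m_i$ for every $\zeta\in\R'$ (some $\xi_i\in\R$, $m_i\in\M(B\rtimes_\red\cdualG)$), so $\bbraket{\zeta}{\zeta'}=\sum_{i,l}m_i^*\,\bbraket{\xi_i}{\xi_l'}\,m_l'\in B\rtimes_\red\cdualG$, i.e. $\R'$ is relatively continuous, and $\kket{\R'}\,(B\rtimes_\red\cdualG)\subseteq\cspn\bigl(\kket{\R}\,(B\rtimes_\red\cdualG)\bigr)=\F(\E,\R)$ gives $\F(\E,\R')=\F(\E,\R)$. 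Applying the $\overline{\R}^\si$-part already proved, now with $\R'$ in place of $\R$, shows that $\R_\com=\overline{\R'}^\si$ is relatively continuous with $\F(\E,\R_\com)=\F(\E,\R')=\F(\E,\R)$.

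The conceptual content here is thin; the main nuisance will be the bookkeeping with closed linear spans and multipliers of $B\rtimes_\red\cdualG$ -- in particular verifying $\cspn\bigl((1_B\otimes\cdualG)(B\rtimes_\red\cdualG)\bigr)=B\rtimes_\red\cdualG$ and that replacing $\rho(L^1_0(\G))$ by its norm-closure $\cdualG$ inside these spans is harmless. Once the two ket identities, the density of $\rho(L^1_0(\G))$ in $\cdualG$, and the $\si$-to-norm continuity of $\zeta\mapsto\kket{\zeta}$ are in place, the rest is routine.
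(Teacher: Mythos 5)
Your proof is correct and follows essentially the same route as the paper: the two ket identities from Propositions~\ref{003}(ii) and~\ref{358}, the multiplier/density facts from Proposition~\ref{508}, and the $\si$-to-norm continuity of $\zeta\mapsto\kket{\zeta}$. The only difference is that you spell out the completion step (writing $\R_\com=\overline{\R'}^\si$ and tracking the ket operators of elements of $\R'$), which the paper leaves as an immediate consequence of the preceding equalities.
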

\begin{proof}
Let $\xi\in \R$, $b\in B$ and $\omega\in L^1_0(\G)$.
By Propositions~\ref{003}(ii) and~\ref{264}, we have the formulas
$\kket{\xi\cdot b}=\kket{\xi}\co_B(b)$ and $\kket{\omega*\xi}=\kket{\xi}(1_B\otimes\rho_\omega)$.
Since $\co_B(b)$ and $(1_B\otimes\rho_\omega)$ are multipliers of $B\rtimes_\red\cdualG$ (remember that $\rho_\omega\in \cdualG$; see
Proposition~\ref{508}), it follows that $\R\cdot B$ and
$L^1_0(\G)*\R$ are relatively continuous. And from the definition of $\|\cdot\|_\si$
it also follows that $\overline{\R}^\si$ is relatively continuous.

Let $A:=B\rtimes_\red\cdualG$. By definition of
$\|\cdot\|_\si$ and because $\R\sbe \overline{\R}^\si$, we get
$$\F(\E,\R)\sbe \F(\E,\overline{\R}^\si)=\cspn(\kket{\overline{\R}^\si}\circ A)\sbe \cspn(\kket{\R}\circ A)=\F(\E,\R).$$
Thus $\F(\E,\R)=\F(\E,\overline{\R}^\si)$. By Proposition~\ref{003}(ii) and because the linear span of $\co_B(B)A$ is dense in $A$ we get
$$\F(\E,\R\cdot B)=\cspn(\kket{\R}\co_B(B)A)=\F(\E,\R).$$
Analogously, by Proposition~\ref{264}, and because the linear span of $(1\otimes
\rho(L^1_0(\G))A$ is dense in $A$ (by Proposition~\ref{508}), we get that
$\F(\E,L^1_0(\G)*\R)=\F(\E,\R)$. It follows that $\R_\com$ is
relatively continuous as well and $\F(\E,\R_\com)=\F(\E,\R)$.
\end{proof}

Given a complete subspace $\R\sbe\E_\si$, Propositions~\ref{003}(ii) and~\ref{264} imply that
$\overline{\kket{\R}}$ is already a (concrete) $A$-module, where $A:=B\rtimes_\red\cdualG$.
In other words, we have $\overline{\kket{\R}}\circ A\sbe \overline{\kket{\R}}$.
Therefore, if $\R$ is also relatively continuous, then it follows from Equation~\eqref{095} that
$$\F(\E,\R)=\cspn(\kket{\R}\circ A)\sbe \overline{\kket{\R}}\sbe \F(\E,\R),$$
that is, $\F(\E,\R)=\overline{\kket{\R}}$ for any complete, relatively continuous subspace. Combining this with Proposition~\ref{279} we get
\begin{equation}\label{277}
\F(\E,\R)=\overline{\kket{\R_\com}},
\end{equation}
for any relatively continuous subset $\R\sbe\E_\si$.

\begin{corollary}\label{280} For any relatively continuous subset $\R$ of a Hilbert $B,\G$-module $\E$, we have
$$\Fix(\E,\R)=\cspn(\kket{\R_\com}\bbra{\R_\com})\quad
\mbox{and}
\quad\quad \I(\E,\R)=\cspn(\bbraket{\R_\com}{\R_\com}).$$
\end{corollary}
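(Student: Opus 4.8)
The plan is to read off both identities directly from Equation~\eqref{277} and Equation~\eqref{507}, the only nontrivial point being a routine norm-continuity argument used to remove the closures that appear in between. First recall that, by Proposition~\ref{279}, the completion $\R_\com$ is again relatively continuous and is contained in $\E_\si$, so all the expressions below are defined. Write $\F:=\F(\E,\R)$. Equation~\eqref{277} identifies $\F$ with the operator-norm closure $\overline{\kket{\R_\com}}$ of the set $\{\kket{\xi}:\xi\in\R_\com\}\sbe\Ls(B\otimes H,\E)$; and since taking adjoints is isometric and $\bbra{\xi}=\kket{\xi}^*$, this also gives $\F^*=\overline{\bbra{\R_\com}}$.

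Next I would substitute these two descriptions into Equation~\eqref{507}, which expresses $\Fix(\E,\R)=\cspn(\F\circ\F^*)$ and $\I(\E,\R)=\cspn(\F^*\circ\F)$. This yields
$$\Fix(\E,\R)=\cspn\bigl(\overline{\kket{\R_\com}}\circ\overline{\bbra{\R_\com}}\bigr),\qquad \I(\E,\R)=\cspn\bigl(\overline{\bbra{\R_\com}}\circ\overline{\kket{\R_\com}}\bigr).$$
It then remains to observe that the inner closures may be deleted. Indeed, given $T_1\in\overline{\kket{\R_\com}}$ and $T_2\in\overline{\bbra{\R_\com}}$, pick $\xi_n,\eta_n\in\R_\com$ with $\kket{\xi_n}\to T_1$ and $\bbra{\eta_n}\to T_2$ in operator norm; joint norm-continuity of the composition of adjointable operators gives $\kket{\xi_n}\circ\bbra{\eta_n}\to T_1\circ T_2$, so $T_1\circ T_2$ lies in $\cspn(\kket{\R_\com}\bbra{\R_\com})$ (the reverse inclusion being trivial). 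Hence $\cspn\bigl(\overline{\kket{\R_\com}}\circ\overline{\bbra{\R_\com}}\bigr)=\cspn(\kket{\R_\com}\bbra{\R_\com})$, and symmetrically $\cspn\bigl(\overline{\bbra{\R_\com}}\circ\overline{\kket{\R_\com}}\bigr)=\cspn(\bbraket{\R_\com}{\R_\com})$ once we recall $\bbraket{\xi}{\eta}=\bbra{\xi}\circ\kket{\eta}$. Combining the last three displays gives the two claimed formulas.

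There is no substantial obstacle: the corollary is a formal consequence of the description $\F(\E,\R)=\overline{\kket{\R_\com}}$ already established in Equation~\eqref{277}, together with the definitions of $\Fix(\E,\R)$ and $\I(\E,\R)$ as closed linear spans of $\F\circ\F^*$ and $\F^*\circ\F$ in Equation~\eqref{507}. The only step deserving an explicit word is the interchange of the closed linear span with the norm-closures of the bra and ket sets, which is the elementary continuity remark above.
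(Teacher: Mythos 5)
Your proposal is correct and follows exactly the route the paper intends: the corollary is stated as an immediate consequence of Equation~\eqref{277} (that $\F(\E,\R)=\overline{\kket{\R_\com}}$) together with Equation~\eqref{507}, and you have merely spelled out the routine continuity argument for dropping the inner closures. Nothing further is needed.
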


Since the bra-ket operators are $\G$-equivariant we see (again) that
$\Fix(\E,\R)$ is a \cstar{}subalgebra of $\Ls^\G(\E)=\M_1\bigl(\K(\E)\bigr)$.
Proposition~\ref{003}(i) yields the equality
\begin{equation}\label{275}
\Fix(\E,\R)=\cspn\{(\id_{\K(\E)}\otimes\f)(\co_{\K(\E)}(\ket{\xi}\bra{\eta})):\xi,\eta\in \R_\com\}.
\end{equation}

The following result gives a useful criterion to show that a subspace is complete or to calculate its completion.

\begin{proposition}\label{276} Let $\E$ be a Hilbert $B,\G$-module, let $\R$ be a subspace of $\E_\si$
and suppose that $D_0\sbe L^1_0(\G)$ and $B_0\sbe B$ are dense subsets.
\begin{enumerate}
\item[\textup{(i)}] $\R$ is complete if and only if it is $\si$-closed, $D_0*\R\sbe\R$ and $\R\cdot B_0\sbe\R$.
\item[\textup{(ii)}] If $D_0* \R\sbe \overline{\R}^\si$ and $\R\cdot B_0\sbe \overline{\R}^\si$, then
the completion of $\R$ is equal to $\overline{\R}^\si$.
\end{enumerate}
\end{proposition}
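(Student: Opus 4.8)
The plan is to derive everything from two continuity estimates already available: the module inequality $\|\xi\cdot b\|_\si\le\|\xi\|_\si\|b\|$ coming from the Banach bimodule structure of $\E_\si$ (the discussion following Theorem~\ref{121}), and the inequality $\|\omega*\xi\|_\si\le\|\omega\|_0\|\xi\|_\si$ of Proposition~\ref{358}; together with the fact that $\E_\si$ is $\|\cdot\|_\si$-complete, so that $\overline{\R}^\si$ makes sense and stays inside $\E_\si$. For part (i), the ``only if'' direction is immediate: since $D_0\sbe L^1_0(\G)$ and $B_0\sbe B$, a complete $\R$ is $\si$-closed and satisfies $D_0*\R\sbe L^1_0(\G)*\R\sbe\R$ and $\R\cdot B_0\sbe\R\cdot B\sbe\R$. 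For the ``if'' direction it remains to upgrade $D_0$-invariance to $L^1_0(\G)$-invariance and $B_0$-invariance to $B$-invariance.

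First I would do the module direction: given $\xi\in\R$ and $b\in B$, pick $b_n\in B_0$ with $b_n\to b$ in norm; then $\xi\cdot b_n\in\R\cdot B_0\sbe\R$ and $\|\xi\cdot b_n-\xi\cdot b\|_\si\le\|\xi\|_\si\|b_n-b\|\to 0$, so $\xi\cdot b\in\R$ because $\R$ is $\si$-closed. Then the $L^1$-direction: given $\xi\in\R$ and $\omega\in L^1_0(\G)$, pick $\omega_n\in D_0$ with $\|\omega_n-\omega\|_0\to 0$; then $\omega_n*\xi\in D_0*\R\sbe\R$, and by Proposition~\ref{358} $\|\omega_n*\xi-\omega*\xi\|_\si\le\|\omega_n-\omega\|_0\|\xi\|_\si\to 0$, whence $\omega*\xi\in\R$. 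Thus $\R$ is $\si$-closed, $L^1_0(\G)$-invariant and $B$-invariant, i.e. complete.

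For part (ii), I would show directly that $\overline{\R}^\si$ is complete and then invoke minimality of the completion. By part (i), applied to the $\si$-closed subspace $\overline{\R}^\si$ of $\E_\si$, it suffices to check that $\overline{\R}^\si$ is $\si$-closed (clear), that $D_0*\overline{\R}^\si\sbe\overline{\R}^\si$, and that $\overline{\R}^\si\cdot B_0\sbe\overline{\R}^\si$. For the last two, pick $\xi\in\overline{\R}^\si$ and $\xi_n\in\R$ with $\|\xi_n-\xi\|_\si\to 0$; for $b\in B_0$ we have $\xi_n\cdot b\in\R\cdot B_0\sbe\overline{\R}^\si$ by hypothesis and $\|\xi_n\cdot b-\xi\cdot b\|_\si\le\|\xi_n-\xi\|_\si\|b\|\to 0$, so $\xi\cdot b\in\overline{\R}^\si$; for $\omega\in D_0$ we have $\omega*\xi_n\in D_0*\R\sbe\overline{\R}^\si$ and $\|\omega*\xi_n-\omega*\xi\|_\si\le\|\omega\|_0\|\xi_n-\xi\|_\si\to 0$ by Proposition~\ref{358}, so $\omega*\xi\in\overline{\R}^\si$. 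Hence $\overline{\R}^\si$ is complete; since it contains $\R$, minimality of the completion gives $\R_\com\sbe\overline{\R}^\si$, while $\overline{\R}^\si\sbe\R_\com$ holds because $\R_\com$ is in particular $\si$-closed and contains $\R$. Therefore $\R_\com=\overline{\R}^\si$.

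I do not expect a genuine obstacle here; the proof is a bootstrapping-by-density argument. The only point requiring care is that ``$D_0$ dense'' must be read with respect to the norm $\|\cdot\|_0$ on $L^1_0(\G)$ (not merely the $L^1$-norm), since it is precisely $\|\cdot\|_0$ that controls $\|\cdot\|_\si$ through Proposition~\ref{358}; and one must keep in mind that all $\si$-closures are taken inside the $\si$-complete space $\E_\si$, so the limiting elements produced above remain square-integrable and the estimates continue to apply.
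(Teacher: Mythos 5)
Your proof is correct and is exactly the argument the paper leaves implicit: the paper simply cites the $\|\cdot\|_\si$-continuity of the $L^1_0(\G)$- and $B$-actions (Proposition~\ref{358} and the Banach bimodule estimate) and says the assertions "follow easily," and your density bootstrapping is precisely that easy deduction, including the correct observation that density of $D_0$ must be taken in the $\|\cdot\|_0$-norm. (Minor quibble: the bimodule inequality $\|\xi\cdot b\|_\si\le\|\xi\|_\si\|b\|$ appears in the discussion just \emph{before} Remark~\ref{100}, not after Theorem~\ref{121}.)
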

\begin{proof} By Proposition~\ref{264} and \cite[Lemma~5.29]{Buss-Meyer:Square-integrable}, the left $L^1_0(\G)$-action and
the right $B$-action on $\E_\si$ are continuous with respect to $\|\cdot\|_\si$. The assertions now follow easily.
\end{proof}

At this point, the following question naturally appears. Let $\R,\R'\sbe\E$ be complete, relatively continuous subspaces and suppose that
$\F(\E,\R)=\F(\E,\R')$. Does it follow that $\R=\R'$? For locally compact groups, that is,
for $\G=\cont_0(G)$, this is in fact true (\cite[Theorem~6.1]{Meyer:Generalized_Fixed}). Unfortunately, this is
not the case for general locally compact quantum groups.
Problems appear for non-co-amenable locally compact quantum groups $\G$.
In these cases, coactions are not necessarily injective. Take any non-injective coaction
$(\E,\co_\E)$ of a locally compact quantum group $\G$. Note that any $\xi\in \ker(\co_\E)$ is square-integrable with $\kket{\xi}=0$.
Thus $\R:=\{0\}$ and $\R':=\ker(\co_\E)$ are different complete, relatively continuous subspaces with
$\F(\E,\R)=\F(\E,\R')=\{0\}$. In order to circumvent this problem we need an extra condition.

\begin{definition}\label{281} Let $\E$ be a Hilbert $B,\G$-module. We say that a complete subspace $\R\sbe \E_\si$
is \emph{slice-complete}\index{slice-complete}, or shortly,
\emph{\slc-complete}\index{\slc-complete} if for all $\xi\in \E_\si$, with $\bbraket{\xi}{\xi}\in B\rtimes_\red\cdualG$, one has
$$\omega*\xi\in \R\mbox{ for all }\omega\in L^1_0(\G)\Longrightarrow \xi\in \R.$$
The \emph{\slc-completion}\index{\slc-completion} of a subset $\R\sbe\E_\si$, denoted by
$\R_\sco$, is the smallest \slc-complete subspace of $\E_\si$
containing $\R$.
\end{definition}

Note that, by definition, $\E_\si$ is
\slc-complete, and intersections of \slc-complete subspaces are again
\slc-complete. Thus the \slc-completion of a subset $\R\sbe
\E_\si$ always exists: it is the intersection of all
\slc-complete subspaces of $\E_\si$ containing $\R$.

Note also that any \slc-complete subspace contains $\ker(\co_\E)$ because $\omega*\xi=0$ for all $\omega\in L^1_0(\G)$
and $\xi\in \ker(\co_\E)$. Thus, if $\co_\E$ is not injective, the trivial subspace $\R=\{0\}$ is complete (and relatively continuous),
but not \slc-complete. The converse is also true:

\begin{proposition}\label{282} Let $\E$ be a Hilbert $B,\G$-module. Then the s-completion of $\{0\}$ is $\ker(\co_\E)$. In particular, $\{0\}$ is
\slc-complete
if and only if $\co_\E$ is injective.
\end{proposition}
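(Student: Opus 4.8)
The plan is to show that $\ker(\co_\E)$ is itself an \slc-complete subspace of $\E_\si$. Granting this, since $\{0\}\sbe\ker(\co_\E)$ and, conversely, every \slc-complete subspace contains $\ker(\co_\E)$ (as observed right before the statement, using $\omega*\xi=0$ for $\xi\in\ker(\co_\E)$ and $\omega\in L^1_0(\G)$), it follows that $\ker(\co_\E)$ is the smallest \slc-complete subspace containing $\{0\}$, i.e.\ $\{0\}_\sco=\ker(\co_\E)$. The last assertion is then immediate: $\{0\}$ is \slc-complete precisely when $\{0\}=\{0\}_\sco=\ker(\co_\E)$, that is, when $\co_\E$ is injective.

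First I would check that $\ker(\co_\E)$ is a complete subspace in the sense of Definition~\ref{274}. It is a linear subspace of $\E$ because $\co_\E$ is linear, and it is contained in $\E_\si$ since $\co_\E(\xi)=0$ forces $\co_\E(\xi)^*(\eta\otimes1)=0\in\dom(\id_B\otimes\La)$ for all $\eta\in\E$; the same computation shows $\bbra{\xi}=\kket{\xi}=0$ on $\ker(\co_\E)$, so the $\si$-norm restricted to $\ker(\co_\E)$ coincides with the ordinary norm. As $\co_\E$ is norm-contractive (by axiom~(2) of a coaction together with contractivity of $\co_B$), $\ker(\co_\E)$ is norm-closed, hence $\si$-closed inside $\E_\si$. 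It is $B$-invariant by axiom~(1), and $L^1_0(\G)$-invariant because $\omega*\xi=(\id_\E\otimes\omega)(\co_\E(\xi))=0$ for $\xi\in\ker(\co_\E)$.

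The main point is \slc-completeness. Let $\xi\in\E_\si$ with $\bbraket{\xi}{\xi}\in B\rtimes_\red\cdualG$ and suppose $\omega*\xi\in\ker(\co_\E)$ for every $\omega\in L^1_0(\G)$; I must deduce $\co_\E(\xi)=0$. By the previous paragraph each $\omega*\xi$ lies in $\E_\si$ with $\kket{\omega*\xi}=0$, while Proposition~\ref{358} gives $\kket{\omega*\xi}=\kket{\xi}(1_B\otimes\rho_\omega)$; hence $\kket{\xi}(1_B\otimes\rho_\omega)=0$ for all $\omega\in L^1_0(\G)$. By Proposition~\ref{508} the operators $\rho_\omega$ span a dense subspace of $\cdualG$, so $\kket{\xi}(1_B\otimes x)=0$ for every $x\in\cdualG$; since $\cdualG$ acts nondegenerately on $H$, the set $(1_B\otimes\cdualG)(B\otimes H)$ is dense in $B\otimes H$, and therefore $\kket{\xi}=0$, equivalently $\bbra{\xi}=\kket{\xi}^*=0$. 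Now Proposition~\ref{003}(i) yields $\kket{\xi}\bbra{\xi}=E_1(\ket\xi\bra\xi)=(\id_{\K(\E)}\otimes\f)\bigl(\co_\E(\xi)\co_\E(\xi)^*\bigr)$, and the left-hand side is $0$; since $\co_\E(\xi)\co_\E(\xi)^*$ is a positive element of $\dom(\id_{\K(\E)}\otimes\f)$ and the slice map $\id_{\K(\E)}\otimes\f$ is faithful on positive elements (because $\f$ is faithful), we conclude $\co_\E(\xi)\co_\E(\xi)^*=0$, whence $\co_\E(\xi)=0$. Thus $\xi\in\ker(\co_\E)$, so $\ker(\co_\E)$ is \slc-complete, completing the argument.

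I expect the delicate steps to be the two ``faithfulness'' passages in the last paragraph: deducing $\kket{\xi}=0$ from $\kket{\xi}(1_B\otimes\cdualG)=0$, which relies on nondegeneracy of the $\cdualG$-action on $H=L^2(\G)$, and deducing $\co_\E(\xi)=0$ from $\bbra{\xi}=0$ via faithfulness of the slice $\id_{\K(\E)}\otimes\f$. An equivalent route for the latter is to use $\bbra{\xi}\eta=(\id_B\otimes\La)\bigl(\co_\E(\xi)^*(\eta\otimes1)\bigr)$ together with the identity $(\id_B\otimes\La)(X)^*(\id_B\otimes\La)(X)=(\id_B\otimes\f)(X^*X)$ and faithfulness of $\id_B\otimes\f$, obtaining $\co_\E(\xi)^*(\eta\otimes1)=0$ for all $\eta\in\E$ and hence $\co_\E(\xi)=0$.
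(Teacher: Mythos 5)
Your proposal is correct, but it takes a genuinely different route from the paper's. The paper's proof of the key implication is more elementary: from $0=\co_\E(\omega*\xi)=(\id_\E\otimes\id_\G\otimes\omega)\bigl((\id_\E\otimes\Dt)\co_\E(\xi)\bigr)$ for all $\omega\in L^1_0(\G)$ it concludes, by density of $L^1_0(\G)$ in $L^1(\G)$, that $(\id_\E\otimes\Dt)\bigl(\co_\E(\xi)\bigr)=0$, and then invokes injectivity of the comultiplication $\Dt$ to get $\co_\E(\xi)=0$; no square-integrability machinery is needed at all. You instead pass through the bra-ket operators: $\kket{\xi}(1_B\otimes\rho_\omega)=0$ for all $\omega$, density of $\rho(L^1_0(\G))$ in $\cdualG$ (Proposition~\ref{508}) and nondegeneracy of $\cdualG$ on $H$ give $\kket{\xi}=0$, and then Proposition~\ref{003}(i) plus faithfulness of $\f$ (hence of the positive slice $\id_{\K(\E)}\otimes\f$) force $\co_\E(\xi)\co_\E(\xi)^*=0$. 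All of these steps check out, and as a by-product you establish the slightly stronger fact that $\kket{\xi}=0$ implies $\co_\E(\xi)=0$ for square-integrable $\xi$; the cost is that your argument leans on Propositions~\ref{358}, \ref{508} and \ref{003} and on the hypothesis $\xi\in\E_\si$ (which the definition of \slc-completeness does supply), whereas the paper's argument uses only the coaction identity and injectivity of $\Dt$. Your preliminary verification that $\ker(\co_\E)$ is complete (contractivity of $\co_\E$ from axiom~(2), vanishing of the bra-ket operators on the kernel, $B$- and $L^1_0(\G)$-invariance) is also fine and is only asserted, not spelled out, in the paper.
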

\begin{proof} It suffices to show that $\R_0:=\ker(\co_\E)$ is \slc-complete. Of course, $\R_0$ is complete. Now suppose that
$\xi\in \E$ and $\omega*\xi\in\R_0$ for all $\omega\in L^1_0(\G)$, that is,
$$0=\co_\E(\omega*\xi)=\co_\E((\id_\E\otimes\omega)(\co_\E(\xi)))=(\id_\E\otimes\id_\G\otimes\omega)(\co_\E\otimes\id_\G)(\co_\E(\xi)).$$
Since $\omega\in L^1_0(\G)$ is arbitrary, it follows that
$$0=(\co_\E\otimes\id_\G)\bigl(\co_\E(\xi)\bigr)=(\id_\E\otimes\Dt)\bigl(\co_\E(\xi)\bigr).$$
And finally, because $\Dt$ is injective, we get $\co_\E(\xi)=0$,
that is, $\xi\in \ker(\co_\E)=\R_0$. Therefore $\R_0$ is \slc-complete.
\end{proof}

If one restricts to injective coactions, that is, reduced coactions, then it is not clear whether there exist
examples of complete subspaces that are not \slc-complete.

\begin{remark}\label{283} Note that every complete subspace $\R\sbe\E$ satisfies the following property:
for all $\xi\in \E_\si$ with $\bbraket{\xi}{\xi}\in B\rtimes_\red\cdualG$, if $\xi\cdot b\in \R$ for all $b\in B$, then $\xi\in \R$.
In fact, let $(e_i)$ be an approximate unit for $B$. Then $\xi\cdot e_i\to\xi$ and
$\co_B(e_i)\to 1$ strictly in $\M(B\rtimes_\red\cdualG)$.
Now the condition $\bbraket{\xi}{\xi}\in B\rtimes_\red\cdualG$ means that $\R':=\{\xi\}$ is relatively continuous.
Thus $\F:=\F(\E,\R')$ is a (concrete) Hilbert $A$-module, where
$A:=B\rtimes_\red\cdualG$. Thus, by Cohen's Factorization Theorem, for
any $x\in\F$, the map $\M(A)\ni a\mapsto x\cdot a\in \F$ is
continuous for the strict topology on $\M(A)$ and the norm topology
on $\F$. Equation~\eqref{095} says that $\kket{\xi}\in \F$. Thus
$$\kket{\xi\cdot e_i}=\kket{\xi}\circ\co_B(e_i)\to \kket{\xi}.$$
Hence $\xi\cdot e_i\to \xi$ in the $\si$-norm and therefore $\xi\in \R$.
\end{remark}

Note that one important point above was the use of a (bounded) approximate unit for $B$. In order to follow the same idea above and
try to prove the same property for the left $L^1_0(\G)$-action, that is, to
prove that every complete subspace is automatically \slc-complete, one needs a bounded approximate unit for $L^1_0(\G)$, that
is, one needs co-amenability of $\G$. This is the content of the next result.

\begin{proposition}\label{293} Let $\E$ be a Hilbert $B,\G$-module and suppose that $\G$ is co-amenable.
Then every complete subspace $\R\sbe \E_\si$ is automatically \slc-complete.
\end{proposition}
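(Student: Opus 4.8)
The plan is to transcribe the argument of Remark~\ref{283}, replacing the bounded approximate unit of $B$ used there by one of the Banach algebra $L^1_0(\G)$ --- the point being that co-amenability is exactly what provides such a bounded approximate unit, via Proposition~\ref{531}. So fix a complete subspace $\R\sbe\E_\si$ and an element $\xi\in\E_\si$ with $\bbraket{\xi}{\xi}\in B\rtimes_\red\cdualG$ and $\omega*\xi\in\R$ for all $\omega\in L^1_0(\G)$; I must produce $\xi\in\R$. Choose, by Proposition~\ref{531}, a bounded approximate unit $(\omega_i)$ of $L^1_0(\G)$. The claim is that $\omega_i*\xi\to\xi$ in $\|\cdot\|_\si$; since each $\omega_i*\xi$ lies in $\R$ by hypothesis and $\R$ is $\si$-closed, this gives $\xi\in\R$.

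Convergence in the norm of $\E$ is the routine half: as $\|\cdot\|\le\|\cdot\|_0$ and $L^1_0(\G)\supseteq L^1_{00}(\G)$ is dense in $L^1(\G)$, the net $(\omega_i)$ is also a bounded approximate unit of $L^1(\G)$, and since $\E$ is a nondegenerate Banach left $L^1(\G)$-module the standard approximate-unit argument yields $\omega_i*\eta\to\eta$ for every $\eta\in\E$, in particular for $\eta=\xi$.

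For the remaining part I must show $\kket{\omega_i*\xi}\to\kket{\xi}$ in operator norm. By Proposition~\ref{358} we have $\omega_i*\xi\in\E_\si$ with $\kket{\omega_i*\xi}=\kket{\xi}\circ(1_B\otimes\rho_{\omega_i})$, so it is enough to know that $1_B\otimes\rho_{\omega_i}\to1$ strictly in $\M(A)$, where $A:=B\rtimes_\red\cdualG$, and then to appeal to Cohen factorization exactly as in Remark~\ref{283}. The strict convergence follows once one checks that $(\rho_{\omega_i})$ is a bounded approximate unit of $\cdualG$: by Proposition~\ref{508}, $\rho$ is a contractive algebra anti-homomorphism with dense range in $\cdualG$, hence $\rho(\omega)\rho(\omega_i)=\rho(\omega_i\cdot\omega)\to\rho(\omega)$ and $\rho(\omega_i)\rho(\omega)=\rho(\omega\cdot\omega_i)\to\rho(\omega)$ for all $\omega\in L^1_0(\G)$, and density together with the uniform bound $\|\rho_{\omega_i}\|\le\|\omega_i\|_0$ upgrades this to a two-sided bounded approximate unit of $\cdualG$; that $1_B\otimes\cdualG$ sits in $\M(A)$ and commutes with $\co_B(B)$ (because $\cdualG$ lies in the commutant of $\G$ on $H$) then transfers strict convergence from $\M(\cdualG)$ to $\M(A)$ on the generators $\co_B(b)(1\otimes\hat x)$ of $A$, hence on all of $A$. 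Finally, the hypothesis $\bbraket{\xi}{\xi}\in A$ says that $\{\xi\}$ is relatively continuous, so $\F(\E,\{\xi\})$ is a concrete Hilbert $A$-module containing $\kket{\xi}$ by~\eqref{095}; Cohen's factorization theorem then makes $a\mapsto\kket{\xi}\circ a$ strictly-to-norm continuous on bounded subsets of $\M(A)$, whence $\kket{\omega_i*\xi}=\kket{\xi}\circ(1_B\otimes\rho_{\omega_i})\to\kket{\xi}$.

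Combining the two halves, $\|\omega_i*\xi-\xi\|_\si=\|\omega_i*\xi-\xi\|+\|\kket{\omega_i*\xi-\xi}\|\to0$, and $\si$-closedness of $\R$ finishes the proof. The only non-formal input is Proposition~\ref{531}, i.e.\ the existence of a bounded approximate unit of $L^1_0(\G)$; everything else is a faithful copy of the $B$-module argument in Remark~\ref{283}. The hard part is really located in Proposition~\ref{531} itself, and co-amenability is used in precisely this one place, consistent with the failure of the statement without it.
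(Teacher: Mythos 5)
Your proof is correct and follows exactly the route the paper takes: it invokes Proposition~\ref{531} for a bounded approximate unit of $L^1_0(\G)$, deduces via Proposition~\ref{358} that $1_B\otimes\rho_{\omega_i}\to 1$ strictly in $\M(B\rtimes_\red\cdualG)$, and then repeats the Cohen-factorization argument of Remark~\ref{283}. The only difference is that you spell out the details (that $(\rho_{\omega_i})$ is a bounded approximate unit of $\cdualG$, the commutation of $1\otimes\cdualG$ with $\co_B(B)$, and the convergence $\omega_i*\xi\to\xi$ in $\E$) which the paper leaves implicit.
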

\begin{proof}
Let $(\omega_i)$ be a bounded
approximate unit for $L^1_0(\G)$ (Proposition~\ref{531}). Then $\rho_{\omega_i}\to 1$
strictly in $\M(B\rtimes_\red\cdualG)$. Using Proposition~\ref{358}, one can now follow the same idea as in Remark~\ref{283}.
\end{proof}

Proposition~\ref{293} applies to actions of locally compact groups, that is, to coactions of
$\G=\cont_0(G)$, where $G$ is a locally compact group, because
$\cont_0(G)$ is always co-amenable as a quantum group.
On the other hand, it does not apply to coactions of groups, that is, to the dual $C_\red^*(G)$, unless $G$ is amenable.
Indeed, the quantum group $C_\red^*(G)$ is co-amenable if and only if $G$ is amenable.

\begin{proposition}\label{285} Let $\E$ be a Hilbert $B,\G$-module, and let $\R$ be a complete, relatively continuous subspace of $\E_\si$.
Equipped with the $\si$-norm, $\R$ is a nondegenerate Banach right $B$-module, that is, $\R\cdot B=\R$.
Moreover, if $\G$ is co-amenable, then $\R$ is also a nondegenerate Banach left $L^1_0(\G)$-module, that is, $L^1_0(\G)*\R=\R$.
\end{proposition}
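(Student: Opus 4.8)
The plan is to realize $\R$, with the $\si$-norm, as an \emph{essential} Banach module over $B$ (and, when $\G$ is co-amenable, over $L^1_0(\G)$) and then to apply the Cohen--Hewitt factorization theorem. First I would check that $\R$ genuinely is a Banach right $B$-module: it is $\si$-closed in the $\si$-complete space $\E_\si$ (by \cite[Lemma~5.28]{Buss-Meyer:Square-integrable}) and $B$-invariant by completeness, and the inequality $\|\xi\cdot b\|_\si\leq\|\xi\|_\si\|b\|$ recalled above makes the right action continuous; likewise, Proposition~\ref{358} gives $\|\omega*\xi\|_\si\leq\|\omega\|_0\|\xi\|_\si$, so $\R$ is also a Banach left $L^1_0(\G)$-module. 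It then suffices to prove that $\R$ is essential, i.e.\ that $\R=\overline{\R\cdot B}^\si$ and, in the co-amenable case, $\R=\overline{L^1_0(\G)*\R}^\si$: since $B$ is a \cstar{}algebra and, by Proposition~\ref{531}, $L^1_0(\G)$ has a bounded approximate unit whenever $\G$ is co-amenable, the Cohen--Hewitt theorem then upgrades these density statements to the stated equalities $\R\cdot B=\R$ and $L^1_0(\G)*\R=\R$.

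For the $B$-module assertion this is exactly the computation already made in Remark~\ref{283}: fix $\xi\in\R$ and an approximate unit $(e_i)$ of $B$. Then $\xi\cdot e_i\to\xi$ in $\E$, and since $\{\xi\}$ is relatively continuous (as $\xi\in\R$), $\F(\E,\{\xi\})$ is a concrete Hilbert $B\rtimes_\red\cdualG$-module containing $\kket{\xi}$ by~\eqref{095}; by Cohen's factorization theorem the map $a\mapsto\kket{\xi}\circ a$ from $\M(B\rtimes_\red\cdualG)$ to $\F(\E,\{\xi\})$ is continuous for the strict topology on the domain and the norm topology on the target. Since $\co_B(e_i)\to 1$ strictly, Proposition~\ref{003}(ii) then gives $\kket{\xi\cdot e_i}=\kket{\xi}\circ\co_B(e_i)\to\kket{\xi}$; hence $\xi\cdot e_i\to\xi$ in the $\si$-norm and $\xi\in\overline{\R\cdot B}^\si$.

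In the co-amenable case I would run the same scheme with $L^1_0(\G)$ in place of $B$. By Proposition~\ref{531} and its proof one can choose a bounded approximate unit $(\omega_i)$ of $L^1_0(\G)$ that is \emph{simultaneously} a bounded approximate unit of $L^1(\G)$; then, since the $L^1(\G)$-action on a Hilbert $B,\G$-module is nondegenerate, $\omega_i*\xi\to\xi$ in $\E$ for every $\xi\in\R$. On the $\si$-side, Proposition~\ref{508} shows that $\rho\colon L^1_0(\G)\to\cdualG$ is a contractive algebra anti-homomorphism with dense range, so $(\rho_{\omega_i})$ is a bounded two-sided approximate unit of $\cdualG$ and hence $1_B\otimes\rho_{\omega_i}\to 1$ strictly in $\M(B\rtimes_\red\cdualG)$; arguing as in the previous paragraph, $\kket{\xi}\in\F(\E,\{\xi\})$ and Proposition~\ref{358} give $\kket{\omega_i*\xi}=\kket{\xi}(1_B\otimes\rho_{\omega_i})\to\kket{\xi}$. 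Thus $\omega_i*\xi\to\xi$ in the $\si$-norm, so $\R=\overline{L^1_0(\G)*\R}^\si$, and Cohen--Hewitt finishes the proof.

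I expect the one genuinely delicate point to be the convergence $\omega_i*\xi\to\xi$ in the $\si$-norm rather than merely in $\E$: this is what forces one to pass to the net $(\rho_{\omega_i})$ inside $\M(\cdualG)$, to observe that it tends strictly to $1$ (via density of $\rho(L^1_0(\G))$ in $\cdualG$), and to arrange the approximate unit of $L^1_0(\G)$ so that it also serves $L^1(\G)$; the corresponding fact for the right $B$-action is already contained in Remark~\ref{283}.
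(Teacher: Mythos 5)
Your proposal is correct and follows essentially the same route as the paper: the paper likewise reduces everything to nondegeneracy, invokes the convergences $\xi\cdot e_j\to\xi$ and $\omega_i*\xi\to\xi$ in the $\si$-norm (established exactly as in your second and third paragraphs, via Remark~\ref{283} and Proposition~\ref{293}), and concludes with Cohen's Factorization Theorem. The only difference is that you spell out the approximate-unit arguments inline rather than citing them.
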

\begin{proof} We already know that $\R$ is a Banach left $L^1_0(\G)$-module and also a Banach right $B$-module. We only
have to prove the nondegeneracy of the actions. Now, if $(e_j)$ and $(\omega_i)$ are bounded approximate units for $B$ and $L^1_0(\G)$,
respectively, then, as we saw in Remark~\ref{283} and Proposition~\ref{293}, we have $\xi\cdot e_j\to \xi$ and
$\omega_i*\xi\to \xi$ with respect to the $\si$-norm, for all $\xi\in\R$.
Therefore, by Cohen's Factorization Theorem, $\R\cdot B=\R$ and $L^1_0(\G)*\R=\R$.
\end{proof}

If $\G$ is not co-amenable, then the conclusion of Proposition~\ref{285}
does not hold in general. A trivial example can be found in the case of non-injective
coactions. In fact, if $(\E,\co_\E)$ is a non-injective coaction,
then $\R:=\ker(\co_\E)$ is relatively continuous and \slc-complete (Proposition~\ref{282}), but
$L^1(\G)*\R=\{0\}$. If one restricts to injective coactions, then it is not clear whether there exist counterexamples.

\begin{proposition}\label{284} Let $\E$ be a Hilbert $B,\G$-module and let $\F\sbe\Ls^{\G}(B\otimes L^2(\G),\E)$ be a concrete Hilbert $A$-module,
where $A:=B\rtimes_\red\cdualG$. Define
$$\R_{\F}:=\{\xi\in\E_\si: \kket{\xi}\in \F\},$$
$$\R_{\F}^0:=\{x(K): x\in \F,\, K\in B\odot \hat\La(\T_{\hat\f})\}.$$
Then $\R_\F^0\sbe\R_\F$, both $\R_\F^0$ and $\R_\F$ are relatively
continuous, $\R_\F$ is complete, and $\kket{\R_\F^0}$ and
$\kket{\R_\F}$ are dense in $\F$. In particular, we have
$$\F(\E,\R_\F^0)=\F(\E,\R_\F)=\F.$$
\end{proposition}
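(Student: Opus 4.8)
The plan is to derive everything from Proposition~\ref{270}, which supplies the distinguished relatively continuous subspace $\R_0\defeq B\odot\hat\La(\T_{\hat\f})$ of $B\otimes H$, together with the formal properties of the bra--ket operators in Propositions~\ref{003},~\ref{358} and~\ref{508}. Write $A\defeq B\rtimes_\red\cdualG$. The key preliminary fact I would isolate first is that $\kket{\R_0}\sbe A$ \emph{and} $\cspn\kket{\R_0}$ is dense in $A$. Indeed, the explicit formula $\kket{b\otimes\hat\La(a)}=\bigl(1_B\otimes\dualJ\hat\sigma_{\frac{\ii}{2}}(a)^*\dualJ\bigr)\co_B(b)$ from Proposition~\ref{270} exhibits $\kket{\R_0}$ inside $(1\otimes\cdualG)\co_B(B)$, and since $\dualJ\hat\sigma_{\frac{\ii}{2}}(\T_{\hat\f})^*\dualJ$ is dense in $\cdualG$ (as used in the proof of Proposition~\ref{268}) while $\cspn\bigl((1\otimes\cdualG)\co_B(B)\bigr)=A$ for a continuous coaction, density follows. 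I shall also record two stability facts: both $\co_B(b)$ (for $b\in B$) and $1_B\otimes\rho_\omega$ (for $\omega\in L^1_0(\G)$, with $\rho_\omega\in\cdualG$ by Proposition~\ref{508}), viewed as concrete operators on $B\otimes H$, lie in $\M(A)$; and a concrete Hilbert $A$-module $\F$ is nondegenerate over $A$, so $\F=\overline{\F\circ A}$ and $\F\circ\M(A)\sbe\F$.

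Granting this, the inclusion $\R_\F^0\sbe\R_\F$ is immediate from Proposition~\ref{003}(iii): for $x\in\F\sbe\Ls^\G(B\otimes H,\E)$ and $K\in\R_0\sbe(B\otimes H)_\si$ one obtains $x(K)\in\E_\si$ with $\kket{x(K)}=x\circ\kket{K}\in\F\circ A\sbe\F$, hence $x(K)\in\R_\F$; in particular $\R_\F^0\sbe\E_\si$. The same identity gives $\kket{\R_\F^0}=\{x\circ\kket{K}:x\in\F,\,K\in\R_0\}$, whose closed linear span equals $\overline{\F\circ A}=\F$, because $\cspn\kket{\R_0}$ is dense in $A$, composition is jointly continuous, and $\F$ is nondegenerate over $A$. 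Since $\kket{\R_\F^0}\sbe\kket{\R_\F}\sbe\F$ — the middle inclusion being the definition of $\R_\F$ — the set $\kket{\R_\F}$ is dense in $\F$ as well. Relative continuity of both sets is then one line: whenever $\kket{\xi},\kket{\eta}\in\F$, $\bbraket{\xi}{\eta}=\kket{\xi}^*\circ\kket{\eta}\in\F^*\circ\F\sbe A$, so $\bbraket{\R_\F}{\R_\F}\sbe A$ and a fortiori $\bbraket{\R_\F^0}{\R_\F^0}\sbe A$.

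It remains to check that $\R_\F$ is complete in the sense of Definition~\ref{274}. For $\si$-closedness: if $\xi_n\in\R_\F$ and $\xi_n\to\xi$ in $\|\cdot\|_\si$, then $\xi\in\E_\si$ by $\si$-completeness of $\E_\si$ (\cite[Lemma~5.28]{Buss-Meyer:Square-integrable}), and $\|\kket{\xi_n-\xi}\|\le\|\xi_n-\xi\|_\si\to0$, so $\kket{\xi}\in\F$ since $\F$ is closed; thus $\xi\in\R_\F$. For $B$-invariance and $L^1_0(\G)$-invariance: by Proposition~\ref{003}(ii) and Proposition~\ref{358}, $\kket{\xi\cdot b}=\kket{\xi}\circ\co_B(b)$ and $\kket{\omega*\xi}=\kket{\xi}\circ(1_B\otimes\rho_\omega)$ for $\xi\in\R_\F$, $b\in B$, $\omega\in L^1_0(\G)$, and both right factors are multipliers of $A$, so both operators stay in $\F$; hence $\xi\cdot b,\,\omega*\xi\in\R_\F$. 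This establishes completeness. Finally, $\F(\E,\R)=\cspn(\kket{\R}\circ A)$ by definition, and since $\kket{\R_\F^0},\kket{\R_\F}\sbe\F$ with both $\cspn\kket{\R_\F^0}$ and $\cspn\kket{\R_\F}$ dense in $\F$, we get $\F(\E,\R_\F^0)=\F(\E,\R_\F)=\overline{\F\circ A}=\F$.

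The only non-formal ingredient is the multiplier bookkeeping flagged in the first paragraph — that $\co_B(b)$ and $1_B\otimes\rho_\omega$, realized as concrete operators on $B\otimes H$, normalize $A=B\rtimes_\red\cdualG$, so that post-composition preserves the concrete Hilbert $A$-module $\F$ — together with the sharpening that $\cspn\kket{\R_0}$ (not merely $\F(B\otimes H,\R_0)$) is dense in $A$. Both rest on the symmetric description $\overline{\cspn\bigl(\co_B(B)(1\otimes\cdualG)\bigr)}=\overline{\cspn\bigl((1\otimes\cdualG)\co_B(B)\bigr)}$ of the reduced crossed product for a continuous coaction, recorded in the preliminaries; the rest is a formal consequence of Propositions~\ref{003},~\ref{270},~\ref{358} and~\ref{508}.
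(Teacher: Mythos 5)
Your argument is correct and follows essentially the same route as the paper: Proposition~\ref{270} supplies the dense subspace $\kket{\R_0}\sbe A$, Proposition~\ref{003}(iii) gives $\kket{x(K)}=x\circ\kket{K}\in\F\circ A\sbe\F$, relative continuity comes from $\F^*\circ\F\sbe A$, and completeness from $\si$-closedness of $\E_\si$ together with the multiplier identities $\kket{\xi\cdot b}=\kket{\xi}\co_B(b)$ and $\kket{\omega*\xi}=\kket{\xi}(1_B\otimes\rho_\omega)$. Your justification that $\F=\overline{\F\circ A}$ via nondegeneracy is the same fact the paper extracts from Cohen's Factorization Theorem, and your explicit check that $\cspn\kket{\R_0}$ (not merely $\F(B\otimes H,\R_0)$) is dense in $A$ is exactly the sharpening the paper's proof relies on.
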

\begin{proof} Let $\R_0:=B\odot \hat\La(\T_{\hat\f})\sbe B\otimes L^2(\G)$. By Proposition~\ref{270}, $\R_0$ is a relatively continuous subset
of $B\otimes L^2(\G)$ and $\kket{\R_0}$ is a dense subspace of $A$. Proposition~\ref{003}(iii) implies that $\R_\F^0\sbe\E_\si$ and
$$\kket{\R_\F^0}=\F\circ\kket{\R_0}\sbe\F\circ A\sbe\F.$$
Thus $\R_\F^0\sbe\R_\F$. This implies that $\kket{\R_\F^0}\sbe\kket{\R_\F}\sbe\F$.
The equation above also shows that $\kket{\R_\F^0}$ is dense in $\F\circ A$
which by Cohen's Factorization Theorem is equal to $\F$. Since $\F^*\circ\F\sbe A$, $\R_\F$
(and therefore $\R_\F^0$) is relatively continuous. Since $\F$ is a
concrete $A$-module and $\E_\si$ is $L^1_0(\G),B$-invariant, it follows from Propositions~\ref{003}(ii) and~\ref{264} that
$\R_\F$ is $L^1_0(\G),B$-invariant as well. From the
definition of $\|\cdot\|_\si$ and because $\E_\si$ is
$\si$-closed, it follows that $\R_\F$ is
$\si$-closed. Thus $\R_\F$ is complete.
\end{proof}

\begin{remark} Proposition~\ref{284} is a quantum version of Meyer's result \cite[Proposition~6.1]{Meyer:Generalized_Fixed} for classical groups.
There is a small difference between our version and Meyer's version in \cite{Meyer:Generalized_Fixed}, namely, the choice of $\R_\F^0$.
For groups, that is, for $\G=\cont_0(G)$, where $G$ is some locally compact group, one can replace $\R_\F^0$ by $\tilde\R_\F^0:=\{x(K): x\in \F,\, K\in \cont_c(G,B)\}$. This set satisfies the same properties of $\R_\F^0$ defined above (this is exactly \cite[Proposition~6.1]{Meyer:Generalized_Fixed}). The point here is that $\cont_c(G,B)$ is also a relatively continuous subset of $L^2(G,B)$ and $\F\bigl(L^2(G,B),\cont_c(G,B)\bigr)=B\rtimes_\red G$
(this is proved in \cite{Meyer:Generalized_Fixed}). For an arbitrary locally compact quantum group $\G$ we may take any relatively continuous
subset $\R_0\sbe B\otimes L^2(\G)$ satisfying $\F\bigl(B\otimes L^2(\G),\R_0\bigr)=B\rtimes_\red\cdualG$ and define $\tilde\R_\F^0:=\{x(K):x\in \F,\, K\in \R_0\}$. An argument analogous to that in the proof of Proposition~\ref{284} shows $\tilde\R_\F^0\sbe\R_\F$ (so that $\tilde\R_\F^0$ is relatively continuous) and $\F(\E,\tilde\R_\F^0)=\F(\E,\R_\F)=\F$. We are going to see later that if $\tilde\R_\F^0$ is chosen in this way, then
the \slc-completion of $\tilde\R_\F^0$ is equal to $\R_\F$. In this sense, all such choices are equivalent.
\end{remark}

\begin{proposition}\label{288} Let $\F\sbe \Ls^\G(B\otimes L^2(\G),\E)$ be a concrete Hilbert $B\rtimes_\red\cdualG$-module, where $\E$ is
some Hilbert $B,\G$-module. Then $\R_\F$ is \slc-complete.
\end{proposition}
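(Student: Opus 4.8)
The plan is to exploit the explicit formula $\kket{\omega*\xi}=\kket{\xi}(1_B\otimes\rho_\omega)$ from Proposition~\ref{358} together with the density of $\rho\bigl(L^1_0(\G)\bigr)$ in $\cdualG$ (Proposition~\ref{508}) in order to turn the hypothesis ``$\omega*\xi\in\R_\F$ for all $\omega$'' into the membership $\kket{\xi}(1_B\otimes\hat x)\in\F$ for every $\hat x\in\cdualG$, and then to bootstrap this up to $\kket{\xi}\in\F$. Since $\R_\F$ is already complete by Proposition~\ref{284}, the only thing to verify is the slice condition in Definition~\ref{281}: given $\xi\in\E_\si$ with $\bbraket{\xi}{\xi}\in A:=B\rtimes_\red\cdualG$ and with $\omega*\xi\in\R_\F$ for all $\omega\in L^1_0(\G)$, one must show $\kket{\xi}\in\F$.

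First I would record that, by Proposition~\ref{358}, $\omega*\xi\in\E_\si$ and $\kket{\omega*\xi}=\kket{\xi}(1_B\otimes\rho_\omega)$, which lies in $\F$ because $\omega*\xi\in\R_\F$. Since $\hat x\mapsto\kket{\xi}(1_B\otimes\hat x)$ is norm-continuous $\Ls(H)\to\Ls(B\otimes H,\E)$, since $\F$ is norm-closed, and since $\{\rho_\omega:\omega\in L^1_0(\G)\}$ is dense in $\cdualG$ by Proposition~\ref{508}, it follows that $\kket{\xi}(1_B\otimes\hat x)\in\F$ for every $\hat x\in\cdualG$.

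The next step is to upgrade this to $\kket{\xi}\circ A\sbe\F$. The key point is that, as $A$ is a \cstar{}algebra and $\co_B$ a \Star{}homomorphism, one also has $A=A^*=\cspn\bigl((1\otimes\cdualG)\co_B(B)\bigr)$; in particular every product $(1\otimes\hat x)\co_B(b)$ lies in $A$. Using that $\F$ is a concrete Hilbert $A$-module (so $\F\circ A\sbe\F$, by Proposition~\ref{079}/Definition~\ref{574}), we get $\kket{\xi}(1\otimes\hat x)\co_B(b)=\kket{\xi}\circ\bigl((1\otimes\hat x)\co_B(b)\bigr)\in\F$ for all $\hat x\in\cdualG$ and $b\in B$. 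As these products span a dense subspace of $A$ and $a\mapsto\kket{\xi}\circ a$ is norm-continuous, we conclude $\kket{\xi}\circ A\sbe\F$. Finally, the hypothesis $\bbraket{\xi}{\xi}\in A$ means exactly that $\{\xi\}$ is a relatively continuous subset, so Equation~\eqref{095} gives $\kket{\xi}\in\F(\E,\{\xi\})=\cspn\bigl(\kket{\xi}\circ A\bigr)\sbe\F$; hence $\xi\in\R_\F$ and $\R_\F$ is \slc-complete.

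I do not expect a genuine obstacle: the argument is a ``saturation'' by an approximate unit, routed through the density of $\rho\bigl(L^1_0(\G)\bigr)$ in $\cdualG$ (this is where $L^1_0(\G)$, rather than $L^1(\G)$, is used). The one subtlety worth isolating is that $\F$ is only \emph{norm}-closed, so one cannot simply push $1\otimes\hat e_i\to 1$ through the strict topology; that is precisely why the detour via $\kket{\xi}\circ A$ and Equation~\eqref{095} is taken, leaning on the identity $A=\cspn\bigl((1\otimes\cdualG)\co_B(B)\bigr)$. (Alternatively one may keep the approximate-unit route: for a bounded self-adjoint approximate unit $(\hat e_i)$ of $\cdualG$ one has $\|\kket{\xi}(1\otimes\hat e_i)-\kket{\xi}\|^2=\|(1\otimes\hat e_i-1)\bbraket{\xi}{\xi}(1\otimes\hat e_i-1)\|\to0$ because $\bbraket{\xi}{\xi}\in A$ and $1\otimes\hat e_i\to1$ strictly on $A$; both routes work.)
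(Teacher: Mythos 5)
Your proof is correct and is essentially the paper's argument: the paper takes a bounded approximate unit for $\cdualG$ of the form $e_i=\rho(\omega_i)$ with $\omega_i\in L^1_0(\G)$ (possible by the density statement in Proposition~\ref{508}) and uses $\bbraket{\xi}{\xi}\in B\rtimes_\red\cdualG$ to conclude $\kket{\xi}(1_B\otimes e_i)=\kket{\omega_i*\xi}\to\kket{\xi}$ in norm, which is precisely the approximate-unit route you sketch in your closing parenthesis. The only quibble concerns your main (detour) route: the membership $\kket{\xi}(1_B\otimes\hat x)\co_B(b)\in\F$ should be justified by $\kket{\xi}(1_B\otimes\hat x)\in\F$ together with $\co_B(b)\in\M(A)$ and $\F\circ\M(A)\sbe\F$ (Cohen factorization), rather than by applying $\F\circ A\sbe\F$ to $\kket{\xi}$ itself, since $\kket{\xi}\in\F$ is exactly what you are trying to prove.
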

\begin{proof} By Proposition~\ref{284}, $\R_\F$ is complete. Suppose that $\xi\in
\E_\si$ is such that $\bbraket{\xi}{\xi}\in B\rtimes_\red\cdualG$ and
$\omega*\xi\in \R_\F$ for all $\omega\in L^1_0(\G)$. By Proposition~\ref{264}, this means that
$$\kket{\omega*\xi}=\kket{\xi}(1_B\otimes \rho_{\omega})\in \F$$ for all $\omega\in L^1_0(\G)$. Since $\rho(L^1_0(\G))$ is
dense in $\cdualG$, there is a bounded approximate unit $(e_i)$ for
$\cdualG$ of the form $e_i=\rho(\omega_i)$ with $\omega_i\in L^1_0(\G)$ for all $i$.
It follows that $(1_B\otimes e_i)$
converges strictly to $1$ in $\M(B\rtimes_\red\cdualG)$. Since
$\bbraket{\xi}{\xi}\in B\rtimes_\red\cdualG$, we get
$$\kket{\omega_i*\xi}=\kket{\xi}\bigl(1_B\otimes \rho(\omega_i)\bigr)\to \kket{\xi}.$$
Therefore $\kket{\xi}\in \F$, that is, $\xi\in \R_\F$ and hence
$\R_\F$ is \slc-complete.
\end{proof}

\section{Continuous square-integrability}
\noindent
Throughout this section, $\G$ denotes a locally compact quantum group and $B$ denotes a $\G$-\cstar{}algebra.
We are ready to give one of the main definitions of this paper.

\begin{definition}\label{288.1}
A \emph{continuously square-integrable Hilbert $B,\G$-module}\index{continuously square-integrable Hilbert $B,\G$-module}
is a pair $(\E,\R)$ consisting of a Hilbert $B,\G$-module $\E$ and a dense, complete, relatively continuous subspace $\R\sbe\E_\si$.
If, in addition, $\R$ is \slc-complete, then we say that $(\E,\R)$ is an
\emph{\slc-continuously square-integrable Hilbert $B,\G$-module}.
\index{\slc-continuously square-integrable Hilbert $B,\G$-module}

The \emph{generalized fixed point algebra}\index{generalized fixed point algebra}
associated to a continuously square-integrable Hilbert $B,\G$-module $(\E,\R)$ is the \cstar{}algebra
$\Fix(\E,\R)=\cspn\kket{\R}\bbra{\R}$.
\end{definition}

By Equation~\eqref{275}, the generalized fixed point algebra $\Fix(\E,\R)$ is
the closed linear span of the ``averages'' $(\id_{\K(\E)}\otimes\f)(x)$ where
$x=\ket{\xi}\bra{\eta}$ with $\xi,\eta\in \R$. Note that in the group case $\G=\cont_0(G)$, the average $(\id_{\K(\E)}\otimes\f)(x)$ is the
strict-unconditional integral $\int_G^\su\alpha_t(x)\dd{t}$ (as defined in \cite{Exel:Unconditional}), where $\alpha$ is the corresponding action of $G$ on $\K(\E)$. In particular, $\Fix(\E,\R)$ is contained in the multiplier fixed point algebra $\M_1\bigl(\K(\E)\bigr)=\{x\in \M\bigl(\K(\E)\bigr):\co_{\K(\E)}(x)=x\otimes 1\}$ and thus elements in $\Fix(\E,\R)$ are fixed by the coaction of $\K(\E)$. Proposition~\ref{111} tell us that $\Fix(\E,\R)$ is Morita equivalent to the ideal $\I(\E,\R)=\cspn\bbraket{\R}{\R}\sbe B\rtimes_\red\cdualG$, where $\F(\E,\R)$ is the canonical candidate for the imprimitivity Hilbert module.

In what follows, we are going to generalize \cite[Theorem~6.1]{Meyer:Generalized_Fixed} to the setting of locally compact quantum groups. This result describes relatively continuous subspaces via concrete Hilbert modules. First we need a preliminary result. Recall that $\sigma$ denotes the modular automorphism group and $\T_\f$ denotes the Tomita \Star{}algebra of the left Haar weight $\f$ (see Equation~\eqref{501}).

\begin{lemma}\label{287} Let $\E$ be a Hilbert $B,\G$-module. Let $b\in \T_\f$, $\xi\in \E_\si$ and suppose that $a\in \dom(\La)$
is such that $\La(a)\in \dom(\mo^{\frac{1}{2}})$. Define $\omega:=\omega_{\La(b),\La(a)}=a\f b^*\in L^1(\G)$ and $x_\omega:=a\sigma_{-\ii}(b^*)\in
\dom(\La)$.
Then $\omega*\xi\in \E_\si$ and
$$\|\omega*\xi\|_\si\leq c_\omega\|\kket{\xi}\|,$$
where $c_\omega:=\max\{\|\Lambda(x_\omega)\|,\|\rho(\omega)\|\}$.
\end{lemma}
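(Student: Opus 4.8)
The plan is to bound the two constituents of $\|\omega*\xi\|_\si=\|\omega*\xi\|+\|\kket{\omega*\xi}\|$ separately, reducing each to the single adjointable operator $\kket{\xi}$. The estimate for $\kket{\omega*\xi}$ will fall out of Proposition~\ref{264} once one checks that $\omega$ lies in $L^1_{00}(\G)$; the real work is to exhibit $\omega*\xi$ itself as $\kket{\xi}$ applied to the fixed vector $\La(x_\omega)$, and the modular-theory identity $\omega=[y\mapsto\f(y\,x_\omega)]$ is exactly what makes this possible.

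First I would record that $\omega=\omega_{\La(b),\La(a)}\in L^1_{00}(\G)$: since $\La(a)\in\dom(\mo^{\frac12})$, equation~\eqref{524} gives $\mo^{\frac12}\omega=\omega_{\La(b),\mo^{\frac12}\La(a)}\in L^1(\G)$, so in particular $\omega\in L^1_0(\G)$ and $\rho(\omega)$ is defined. Proposition~\ref{264} applies verbatim and yields $\omega*\xi\in\E_\si$ together with $\kket{\omega*\xi}=\kket{\xi}(1_B\otimes\rho_\omega)$; hence $\|\kket{\omega*\xi}\|\le\|\rho(\omega)\|\,\|\kket{\xi}\|\le c_\omega\|\kket{\xi}\|$.

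The heart of the argument is the identity
$$\omega*\xi=\kket{\xi}\circ(1_B\otimes\La(x_\omega))\qquad\text{in }\Ls(B,\E)=\M(\E).$$
To prove it I would pair both sides against an arbitrary $\eta\in\E$. Using $\kket{\xi}=\bbra{\xi}^*$, the defining formula~\eqref{eq:DefBraOperator} for $\bbra{\xi}$, and the standard slice-map identities of Kustermans--Vaes, one gets
$$\big\langle\kket{\xi}\circ(1_B\otimes\La(x_\omega)),\,\eta\big\rangle_B
=(1_B\otimes\La(x_\omega))^*\,\bbra{\xi}\eta
=(\id_B\otimes\theta)\bigl(\co_\E(\xi)^*(\eta\otimes 1)\bigr),$$
where $\theta\in\G^*$ is the functional $y\mapsto\langle\La(x_\omega),\La(y)\rangle=\f(x_\omega^*y)$, while on the other hand $\langle\omega*\xi,\eta\rangle_B=(\id_B\otimes\bar\omega)\bigl(\co_\E(\xi)^*(\eta\otimes 1)\bigr)$ with $\bar\omega=\omega_{\La(a),\La(b)}$. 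Thus it suffices to verify $\theta=\bar\omega$, i.e. $\f(x_\omega^*y)=\f(a^*yb)$. Since $x_\omega^*=\sigma_{-\ii}(b^*)^*a^*=\sigma_{\ii}(b)\,a^*$, the KMS property of $\f$ with respect to the modular group $\sigma$ (legitimate because $b\in\T_\f$ is analytic) gives $\f(\sigma_{\ii}(b)\,a^*y)=\f\bigl(a^*y\,\sigma_{-\ii}(\sigma_{\ii}(b))\bigr)=\f(a^*yb)$, as required. Because $x_\omega\in\dom(\La)$ by hypothesis, the multiplier $1_B\otimes\La(x_\omega)\in\Ls(B,B\otimes H)$ is well defined with norm $\|\La(x_\omega)\|$, so $\|\omega*\xi\|\le\|\kket{\xi}\|\,\|1_B\otimes\La(x_\omega)\|=\|\La(x_\omega)\|\,\|\kket{\xi}\|\le c_\omega\|\kket{\xi}\|$. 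Combining this with the bound on $\|\kket{\omega*\xi}\|$ and the definition of the $\si$-norm yields $\|\omega*\xi\|_\si\le c_\omega\|\kket{\xi}\|$.

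The step I expect to be the main obstacle is the identity in the third paragraph: one must push the adjoint relation $\kket{\xi}=\bbra{\xi}^*$ through to the multiplier level (bookkeeping with $\M(B\otimes H)=\Ls(B,B\otimes H)$ and with slicing a bounded functional off $\co_\E(\xi)^*(\eta\otimes 1)$), and, crucially, identify $y\mapsto\f(x_\omega^*y)$ with $\omega_{\La(a),\La(b)}$ via the modular automorphism group --- this requires getting the analytic continuation $\sigma_{-\ii}(b^*)^*=\sigma_{\ii}(b)$ and the direction of the KMS twist exactly right. Once that identification is in place, both norm estimates follow purely from boundedness of $\bbra{\xi}$ and of $1_B\otimes\La(x_\omega)$.
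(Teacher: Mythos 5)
Your proof is correct and follows the same route as the paper: the bound on $\|\kket{\omega*\xi}\|$ comes from Proposition~\ref{264} (after noting $\omega\in L^1_{00}(\G)$), and the bound on $\|\omega*\xi\|$ comes from the identity $\omega*\xi=\kket{\xi}(1_B\otimes\La(x_\omega))$. The only difference is that the paper obtains this identity by citing Lemma~5.17 of \cite{Buss-Meyer:Square-integrable}, whereas you reprove it directly via the pairing argument and the KMS identification $\omega(y)=\f(yx_\omega)$ --- a correct, self-contained substitute for that citation.
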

\begin{proof}
Lemma~5.17 in \cite{Buss-Meyer:Square-integrable} implies $\omega*\xi=\kket{\xi}\bigl(1_B\otimes \Lambda(x_\omega)\bigr)$,
so that $\|\omega*\xi\|\leq \|\La(x_\omega)\|\|\kket{\xi}\|$. Proposition~\ref{264} says that $\omega*\xi\in \E_{\si}$ and
$$\kket{\omega*\xi}=\kket{\xi}(1_B\otimes\rho_\omega).$$
Hence $\|\kket{\omega*\xi}\|\leq \|\rho_\omega\|\|\kket{\xi}\|$. The desired result now follows.
\end{proof}

We are ready to proof one of our main results.

\begin{theorem}\label{289} Let $\G$ be a locally compact quantum group, and let $\E$ be a Hilbert $B,\G$-module.
Then the map $\R\mapsto \F(\E,\R)$ is a bijection between \slc-complete, relatively continuous subspaces of $\E$ and concrete Hilbert
$B\rtimes_\red\cdualG$-modules $\F\sbe \Ls^\G(B\otimes L^2(\G),\E)$. Its inverse is the map $\F\mapsto \R_\F$.
A concrete Hilbert module $\F$ is essential if and only if $\R_\F$ is dense in $\E$.
\end{theorem}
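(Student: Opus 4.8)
The plan is to prove the bijection by checking that the two maps $\R\mapsto\F(\E,\R)$ and $\F\mapsto\R_\F$ are mutually inverse on the indicated classes, and then handle the essentiality statement separately. First I would verify that the maps land in the right places: Proposition~\ref{079} already gives that $\F(\E,\R)$ is a concrete Hilbert $A$-module (with $A:=B\rtimes_\red\cdualG$), and Propositions~\ref{284} and~\ref{288} already give that $\R_\F$ is a complete and in fact \slc-complete relatively continuous subspace. By Proposition~\ref{284} we also already know $\F(\E,\R_\F)=\F$, so one direction of the inverse identity is free. The entire content therefore lies in showing that $\R_{\F(\E,\R)}=\R$ whenever $\R$ is \slc-complete and relatively continuous. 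The inclusion $\R\sbe\R_{\F(\E,\R)}$ is immediate from Equation~\eqref{095}, which says $\kket{\R}\sbe\F(\E,\R)$, together with the obvious fact that elements of $\R$ are square-integrable. So the heart of the matter is the reverse inclusion $\R_{\F(\E,\R)}\sbe\R$: given $\xi\in\E_\si$ with $\kket{\xi}\in\F(\E,\R)$, I must show $\xi\in\R$.

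To establish this I would argue as follows. Since $\kket{\xi}\in\F(\E,\R)$, in particular $\bbraket{\xi}{\xi}=\kket{\xi}^*\kket{\xi}\in A$, so the hypothesis of \slc-completeness is applicable to $\xi$; by Definition~\ref{281} it suffices to show $\omega*\xi\in\R$ for every $\omega\in L^1_0(\G)$. Now $\kket{\xi}\in\F(\E,\R)=\cspn(\kket{\R}\circ A)$, and by Proposition~\ref{285} (or directly by Cohen factorization applied to the concrete Hilbert $A$-module $\F(\E,\R)$) we may write $\kket{\xi}$ as a norm-limit of elements $\kket{\eta_n}\circ a_n$ with $\eta_n\in\R$ and $a_n\in A$; in fact, using that $\kket{\R}$ already spans $\F(\E,\R)$ densely, $\kket{\xi}$ lies in the closure of $\kket{\R}$ in operator norm. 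The problem is that closeness of $\kket{\xi}$ to $\kket{\eta_n}$ in operator norm controls only the $\kket{\cdot}$-part of the $\si$-norm, not the $\E$-norm part, so one cannot directly conclude $\eta_n\to\xi$ in $\|\cdot\|_\si$. This is exactly where Lemma~\ref{287} is needed: for a suitable class of functionals $\omega=\omega_{\La(b),\La(a)}$ with $b\in\T_\f$ and $a\in\dom(\La)$ with $\La(a)\in\dom(\mo^{1/2})$, one has $\|\omega*\zeta\|_\si\le c_\omega\|\kket{\zeta}\|$ for all $\zeta\in\E_\si$. Applying this to $\zeta=\xi-\eta_n$ (which is square-integrable with $\kket{\xi-\eta_n}=\kket{\xi}-\kket{\eta_n}\to 0$) gives $\omega*\eta_n\to\omega*\xi$ in the $\si$-norm. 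Since each $\omega*\eta_n\in\R$ (as $\R$ is complete, hence $L^1_0(\G)$-invariant) and $\R$ is $\si$-closed, we get $\omega*\xi\in\R$ for all such $\omega$. These special functionals span a $\|\cdot\|_0$-dense subspace of $L^1_0(\G)$ — this is the same density that underlies Proposition~\ref{309}, since $\omega_{\La(b),\La(a)}$ with $\La(a)\in\dom(\mo^{1/2})$ are of the form $\omega_{u,v}$ with $v\in\dom(\mo^{1/2})$ — and the left action of $L^1_0(\G)$ on $\E_\si$ is $\|\cdot\|_\si$-continuous in the $\|\cdot\|_0$-norm (Proposition~\ref{358}), so $\omega*\xi\in\R$ for \emph{all} $\omega\in L^1_0(\G)$. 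By \slc-completeness, $\xi\in\R$, completing the reverse inclusion and hence the bijection.

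The hardest part, and the step I would spend most care on, is precisely this last reduction: verifying that the ``good'' functionals for which Lemma~\ref{287} applies are $\|\cdot\|_0$-dense in $L^1_0(\G)$ and that one can pass from these to arbitrary $\omega\in L^1_0(\G)$ using $\si$-continuity of the $L^1_0(\G)$-action; here I would lean on the approximation technique in the proof of Proposition~\ref{309} (the mollifiers $e_n$ from Equation~\eqref{307}) to simultaneously control $\|\omega_n-\omega\|$ and $\|\mo^{1/2}\omega_n-\mo^{1/2}\omega\|$. Finally, the essentiality statement is comparatively routine: by Proposition~\ref{079}, if $\R_\F$ is dense then $\F=\F(\E,\R_\F)$ is essential; conversely, if $\F$ is essential, then by \cite[Theorem~5.2]{Meyer:Generalized_Fixed} (in its quantum-group form) $\Fix(\E,\R_\F)=\cspn\F\circ\F^*$ acts nondegenerately on $\E$, and since $\Fix(\E,\R_\F)=\cspn\kket{\R_\F}\bbra{\R_\F}$ this forces $\cspn\kket{\R_\F}\bbra{\R_\F}(\E)=\E$, whence $\cspn\kket{\R_\F}(B\otimes H)=\E$ and then, using that $\bbra{\R_\F}$ maps into $B\otimes H$ and Lemmas~5.17 and~6.2 of \cite{Buss-Meyer:Square-integrable}, that $\R_\F$ is dense in $\E$.
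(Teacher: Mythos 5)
Your proposal follows essentially the same route as the paper's proof: the only substantive point is the inclusion $\R_{\F(\E,\R)}\sbe\R$, which you establish exactly as the paper does — write $\kket{\xi}$ as a limit of $\kket{\xi_n}$ with $\xi_n\in\R$ using $\F(\E,\R)=\overline{\kket{\R}}$, apply Lemma~\ref{287} to get $\omega*\xi_n\to\omega*\xi$ in the $\si$-norm for the special functionals $\omega=\omega_{\La(b),\La(a)}$, extend to all of $L^1_0(\G)$ via the core property of $\dom(\mo^{\frac{1}{2}})\cap\ran(\La)$ and the density underlying Proposition~\ref{309}, and conclude by \slc-completeness. The one minor divergence is the converse of the essentiality claim, where the paper simply observes that $\R_\F\supseteq\R_\F^0=\F\bigl(B\odot\hat\La(\T_{\hat\f})\bigr)$ is dense whenever $\cspn\F(B\otimes H)=\E$; your detour through nondegeneracy of $\Fix(\E,\R_\F)$ can be made to work but leaves the final step ($\cspn\kket{\R_\F}(B\otimes H)=\E\Rightarrow\R_\F$ dense) needing an extra factorization argument that the direct route avoids.
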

\begin{proof} By Proposition~\ref{288}, $\R_\F$ is relatively continuous and \slc-complete,
so that the map $\F\mapsto \R_\F$ is well-defined.
By Proposition~\ref{284} we have $\F(\E,\R_\F)=\F$. It remains to show that
$\R=\R_{\F(\E,\R)}$ for every \slc-complete, relatively continuous
subspace $\R$ of $\E$. By Equation~\eqref{095}, we have $\R\sbe
\R_{\F(\E,\R)}$. Let $\xi\in \R_{\F(\E,\R)}$. Then, by definition of $\R_{\F(\E,\R)}$, we have $\kket{\xi}\in \F(\E,\R)=\overline{\kket{\R}}$
(for the last equality we have used Equation~\eqref{277} and the assumption that $\R$ is complete).
Thus there is $\xi_n\in \R$ such that $\kket{\xi_n}\to \kket{\xi}$. Take any $a\in \dom(\La)$ and $b\in \T_\f$
such that $\La(a)\in \dom(\mo^{\frac{1}{2}})$ and define
$\omega:=a\f b^*=\omega_{u,v}\in
L^1_0(\G)$, where $u:=\La(b)$ and $v:=\La(a)$. By Lemma~\ref{287}, we have
$\|\omega*\eta\|_\si\leq c_\omega\|\kket{\eta}\|$ for all
$\eta\in \E_\si$, where $c_\omega$ is a constant depending only on
$\omega$. In particular,
$$\|\omega*\xi-\omega*\xi_n\|_\si\leq c_\omega\|\kket{\xi}-\kket{\xi_n}\|\to 0.$$
Since $\R$ is complete, we get that $\omega*\xi\in \R$.
Thus $\omega_{\La(b),\La(a)}*\xi\in \R$ for all $b\in \T_\f$ and $a\in \dom(\La)$ with $\La(a)\in
\dom(\mo^{\frac{1}{2}})$. Now take any $u\in H$ and $v\in \dom(\mo^{\frac{1}{2}})$ and observe that
$\dom(\mo^{\frac{1}{2}})\cap\ran(\La)$ is a core for $\mo^{\frac{1}{2}}$ (see the proof of \cite[Proposition~1.9.5]{Vaes:Thesis}).
This means that there is a sequence $(a_n)\sbe \dom(\La)$ with $\La(a_n)\in
\dom(\mo^{\frac{1}{2}})$ such that $\La(a_n)\to v$ and
$\mo^{\frac{1}{2}}(\La(a_n))\to \mo^{\frac{1}{2}}v$. By Lemma~5.13 in \cite{Kustermans:KMS}, $\La(\T_\f)$ is dense in $H$, so there is a sequence $(b_n)\sbe\T_\f$ such that $\La(b_n)\to u$. It follows that
$\omega_{\La(b_n),\La(a_n)}\to \omega_{u,v}$ in $L^1(\G)$
and
$$\rho(\omega_{\La(b_n),\La(a_n)})=(\id\otimes\omega_{\La(b_n),\mo^{\frac{1}{2}}\La(a_n)})(V^*)
\to (\id\otimes\omega_{u,\mo^{\frac{1}{2}}v})(V^*)=\rho(\omega_{u,v}).$$
Proposition~\ref{264} implies that $\omega_{\La(b_n),\La(a_n)}*\xi\to
\omega_{u,v}* \xi$ in the $\si$-norm. Thus $\omega*\xi\in
\R$ for all $\omega\in L^1_{00}(\G)$ and hence also for all $\omega\in L^1_0(\G)$ because $L^1_{00}(\G)$ is dense in $L^1_0(\G)$ and $\R$ is a Banach left $L^1_0(\G)$-module. Since $\R$ is \slc-complete we conclude that $\xi\in \R$. Therefore $\R=\R_{\F(\E,\R)}$.

If $\F$ is essential, then by the definition of $\R_\F^0$ (see Proposition~\ref{284}), the linear span of $\R_\F^0$ is dense in $\E$.
Thus $\R_\F\supseteq\R_\F^0$ is dense in $\E$ as well.
Conversely, if $\R_\F$ is dense, then $\F$ is essential by Proposition~\ref{079}.
\end{proof}

\begin{corollary}\label{293.1} Suppose $\G$ is a compact quantum group and $\E$ is a Hilbert $B,\G$-module.
Then the map $\R\mapsto\F(\E,\R)$ is a bijection between $L^1(\G),B$-invariant closed subspaces of $\E$ satisfying
\begin{equation}\label{499}
\xi\in\E\mbox{ and }\omega*\xi\in \R,\,\forall\,\omega\in L^1(\G)\Longrightarrow \xi\in \R,
\end{equation}
and concrete Hilbert $B\rtimes_\red\cdualG$-modules $\F\sbe\Ls^\G(B\otimes L^2(\G),\E)$. The inverse map is
given by $\F\to\R_\F$.
\end{corollary}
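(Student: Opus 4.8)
The plan is to deduce this straightforwardly from Theorem~\ref{289} once all the notions occurring there have been interpreted in the compact case. I would begin by recording three simplifications. First, by Example~\ref{477} every Hilbert $B,\G$-module carrying a coaction of a compact quantum group is square-integrable, so $\E_\si=\E$; and by Proposition~\ref{398} \emph{every} subset of $\E$ is relatively continuous, so in particular $\bbraket{\xi}{\xi}\in B\rtimes_\red\cdualG$ for all $\xi\in\E$ and the relative-continuity hypothesis imposes no restriction at all. Second, a compact quantum group is unimodular, so its modular element is trivial and $L^1_0(\G)=L^1(\G)$ (see Section~\ref{409}); thus the left $L^1_0(\G)$-action appearing in Definitions~\ref{274} and~\ref{281} is simply the left $L^1(\G)$-action. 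Third, by Remark~\ref{100} the $\si$-norm on $\E=\E_\si$ is equivalent to the given norm, so being $\si$-closed is the same as being closed.

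Granting these, I would translate the two sides of the bijection in Theorem~\ref{289}. On the one hand, by Definition~\ref{274} together with the observations above, a complete, relatively continuous subspace $\R\sbe\E_\si$ is exactly an $L^1(\G),B$-invariant closed subspace of $\E$. Imposing $\slc$-completeness adds, by Definition~\ref{281}, the requirement that whenever $\xi\in\E_\si=\E$ satisfies $\bbraket{\xi}{\xi}\in B\rtimes_\red\cdualG$ (now automatic) and $\omega*\xi\in\R$ for all $\omega\in L^1_0(\G)=L^1(\G)$, then $\xi\in\R$; this is precisely condition~\eqref{499}. Hence $\slc$-complete, relatively continuous subspaces of $\E$ coincide with $L^1(\G),B$-invariant closed subspaces of $\E$ satisfying~\eqref{499}. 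On the other hand, concrete Hilbert $B\rtimes_\red\cdualG$-modules $\F\sbe\Ls^\G(B\otimes L^2(\G),\E)$ are the same objects as in Theorem~\ref{289}, and the inverse assignment $\F\mapsto\R_\F$ is unchanged. Applying Theorem~\ref{289} then yields the claimed bijection.

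There is no substantive obstacle here: the corollary is a direct specialization of Theorem~\ref{289}. The only point that needs genuine care is the verification that the $\slc$-completeness clause really collapses to~\eqref{499}, and this rests entirely on Proposition~\ref{398}, which guarantees that the auxiliary condition $\bbraket{\xi}{\xi}\in B\rtimes_\red\cdualG$ in Definition~\ref{281} is vacuous when $\G$ is compact.
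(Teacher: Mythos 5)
Your proof is correct and follows essentially the same route as the paper: the paper's own argument likewise specializes Theorem~\ref{289} by noting that compactness makes every subset relatively continuous, that the $\si$-norm is equivalent to the norm, and that unimodularity gives $L^1_0(\G)=L^1(\G)$, so that completeness and \slc-completeness reduce to $L^1(\G),B$-invariant closedness plus condition~\eqref{499}. Your write-up is simply a more detailed version of the same specialization, with the correct supporting references (Example~\ref{477}, Proposition~\ref{398}, Remark~\ref{100}).
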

\begin{proof} Since $\G$ is compact, any subset of $\E$ is relatively continuous and the $\si$-norm is equivalent to the norm of $\E$.
Thus $\R\sbe\E$ is complete if and only if it is an $L^1(\G),B$-invariant closed subspace of $\E$ (here we are using that $\G$ is unimodular
so that $L^1(\G)=L^1_0(\G)$). Such a subspace is \slc-complete if and only if it satisfies~\eqref{499}. Thus the assertion is a special
case of Theorem~\ref{289}.
\end{proof}

\begin{corollary}\label{291} Let $\E$ be a Hilbert $B,\G$-module, where $\G$ is a locally compact quantum group, and suppose that $\R$ is a
relatively continuous subset of $\E$. Then the \slc-completion of $\R$ is equal to $\R_{\F(\E,\R)}$. In particular, the \slc-completion
of a relatively continuous subset is also relatively continuous.
\end{corollary}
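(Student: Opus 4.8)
The plan is to deduce the identity $\R_\sco=\R_{\F(\E,\R)}$ from the bijection in Theorem~\ref{289}, after checking that $\R_\sco$ belongs to the class of subspaces to which that theorem applies. Write $A:=B\rtimes_\red\cdualG$ and $\F:=\F(\E,\R)$; by Proposition~\ref{079} this is a concrete Hilbert $A$-module sitting inside $\Ls^\G(B\otimes L^2(\G),\E)$, so Propositions~\ref{284} and~\ref{288} apply to it. Proposition~\ref{288} gives that $\R_\F$ is \slc-complete, Proposition~\ref{284} gives that $\R_\F$ is relatively continuous with $\F(\E,\R_\F)=\F$, and Equation~\eqref{095} gives $\R\sbe\R_\F$ (every $\xi\in\R\sbe\E_\si$ has $\kket{\xi}\in\cspn\kket{\R}\sbe\F$). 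Since $\R_\sco$ is by definition the smallest \slc-complete subspace of $\E_\si$ containing $\R$, the first containment $\R_\sco\sbe\R_\F=\R_{\F(\E,\R)}$ follows.

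For the reverse containment I would argue as follows. Being a subset of the relatively continuous set $\R_{\F(\E,\R)}$, the subspace $\R_\sco$ is itself relatively continuous --- this is immediate from Definition~\ref{574}, since the condition $\bbraket{\xi}{\eta}\in A$ is tested only on pairs from the subset. Hence $\F(\E,\R_\sco)$ is defined, and from $\R\sbe\R_\sco\sbe\R_{\F(\E,\R)}$ we obtain, after applying $\kket{\cdot}$ and taking closed linear spans with $A$, the inclusions $\F(\E,\R)\sbe\F(\E,\R_\sco)\sbe\F(\E,\R_{\F(\E,\R)})=\F(\E,\R)$, where the last equality is Proposition~\ref{284}. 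Thus $\F(\E,\R_\sco)=\F(\E,\R)$. Now $\R_\sco$ is an \slc-complete, relatively continuous subspace of $\E$, so Theorem~\ref{289} applies to it directly: the inverse of $\R'\mapsto\F(\E,\R')$ is $\F'\mapsto\R_{\F'}$, whence $\R_\sco=\R_{\F(\E,\R_\sco)}=\R_{\F(\E,\R)}$. This proves the first assertion, and the ``in particular'' clause is already recorded above, since $\R_\sco$ is a subset of the relatively continuous set $\R_{\F(\E,\R)}$ (equivalently, equals it).

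I do not expect a genuine obstacle here: the argument is essentially a rerun of the uniqueness half of Theorem~\ref{289}, and the only points requiring care are that one must know $\R_\sco$ is relatively continuous before $\F(\E,\R_\sco)$ is even meaningful --- handled by monotonicity of relative continuity under passage to subsets --- and that Propositions~\ref{079},~\ref{284},~\ref{288} and Theorem~\ref{289} all genuinely apply to $\F(\E,\R)$, which holds since it is a concrete Hilbert $B\rtimes_\red\cdualG$-module inside $\Ls^\G(B\otimes L^2(\G),\E)$. If one prefers not to invoke the full strength of Theorem~\ref{289}, the reverse containment can instead be obtained directly by the Lemma~\ref{287} approximation argument from its proof, run with $\R$ replaced by its completion $\R_\com$, which is relatively continuous with $\F(\E,\R_\com)=\F(\E,\R)$ by Proposition~\ref{279} and satisfies $\R_\com\sbe\R_\sco$.
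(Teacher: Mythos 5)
Your proof is correct and follows essentially the same route as the paper's: both establish $\R_\sco\sbe\R_{\F(\E,\R)}$ from Propositions~\ref{284} and~\ref{288} together with minimality of the \slc-completion, and both obtain the reverse inclusion by monotonicity of the two maps and the identity $\R_\sco=\R_{\F(\E,\R_\sco)}$ from Theorem~\ref{289}. The only cosmetic difference is that you sharpen the sandwich to the equality $\F(\E,\R_\sco)=\F(\E,\R)$ (which the paper records separately as Corollary~\ref{258}) where the paper makes do with inclusions.
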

\begin{proof} Let $\R_\sco$ be the \slc-completion of $\R$. By Proposition~\ref{288},
$\R_{\F(\E,\R)}$ is relatively continuous and \slc-complete and we have $\R\sbe
\R_{\F(\E,\R)}$. Thus $\R_\sco\sbe \R_{\F(\E,\R)}$. In particular,
$\R_\sco$ is relatively continuous. Now it is easy to see that the maps
$\R\mapsto \F(\E,\R)$ and $\F\mapsto \R_\F$ preserve inclusion.
Thus $\R\sbe \R_\sco$ implies $\F(\E,\R)\sbe \F(\E,\R_\sco)$. Since $\R_\sco$ is relatively continuous and
\slc-complete, Theorem~\ref{289} yields
$\R_{\F(\E,\R)}\sbe\R_{\F(\E,\R_\sco)}=\R_\sco$. Therefore $\R_{\F(\E,\R)}=\R_\sco$.
\end{proof}

\begin{corollary}\label{295} Let $\E$ be a Hilbert $B,\G$-module and suppose that $\G$ is co-amenable.
Let $\R\sbe \E_\si$ be some relatively continuous subset.
Then $\R_{\F(\E,\R)}$ is the completion of $\R$. In particular, $\R$ is complete if and only if $\R$ is equal to $\R_{\F(\E,\R)}$.
\end{corollary}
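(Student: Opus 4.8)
The plan is to read this off directly from Corollary~\ref{291} together with Proposition~\ref{293}, so the proof will be very short. Recall that Corollary~\ref{291} already identifies $\R_{\F(\E,\R)}$ with the \slc-completion $\R_\sco$ of $\R$, for an arbitrary locally compact quantum group $\G$. Hence the whole statement reduces to showing that, when $\G$ is co-amenable, the \slc-completion $\R_\sco$ coincides with the completion $\R_\com$.

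First I would record the inclusion $\R_\com\sbe\R_\sco$, which holds with no hypothesis on $\G$: by Definition~\ref{281} every \slc-complete subspace is in particular complete, so $\R_\sco$ is a complete subspace of $\E_\si$ containing $\R$; since $\R_\com$ is by Definition~\ref{274} the smallest such subspace, we get $\R_\com\sbe\R_\sco$. For the reverse inclusion I would invoke co-amenability: by Proposition~\ref{293} the complete subspace $\R_\com$ is automatically \slc-complete, hence it is an \slc-complete subspace containing $\R$, and since $\R_\sco$ is the smallest such subspace we obtain $\R_\sco\sbe\R_\com$. Therefore $\R_\sco=\R_\com$, and combining with Corollary~\ref{291} gives $\R_{\F(\E,\R)}=\R_\com$; that is, $\R_{\F(\E,\R)}$ is the completion of $\R$.

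For the last assertion, if $\R$ is complete then $\R=\R_\com=\R_{\F(\E,\R)}$; conversely, if $\R=\R_{\F(\E,\R)}$ then $\R=\R_\com$ is complete, because the completion of any subset is by construction a complete subspace. So $\R$ is complete if and only if $\R=\R_{\F(\E,\R)}$.

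There is essentially no obstacle here: the real work was already done in Corollary~\ref{291} (the bijection of Theorem~\ref{289}) and in Proposition~\ref{293} (where co-amenability produces a bounded approximate unit for $L^1_0(\G)$). The only mildly delicate point is keeping the two inclusions between $\R_\sco$ and $\R_\com$ straight: $\R_\com\sbe\R_\sco$ uses only that \slc-completeness implies completeness, whereas $\R_\sco\sbe\R_\com$ is exactly where co-amenability enters.
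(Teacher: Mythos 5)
Your proof is correct and follows exactly the route the paper intends: the paper's own proof is the one-line citation of Proposition~\ref{293} and Corollary~\ref{291}, and your argument simply fills in the details of why co-amenability forces $\R_\sco=\R_\com$. The two inclusions you isolate (one free, one using co-amenability) are precisely the content being invoked.
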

\begin{proof} This follows from Proposition~\ref{293} and Corollary~\ref{291}.
\end{proof}

The result above implies, in particular, that our definition of completeness
of relatively continuous subsets is equivalent to
\cite[Definition~6.2]{Meyer:Generalized_Fixed} in the case of groups (see \cite[Proposition~6.3]{Meyer:Generalized_Fixed}).

The following result gives a description of the \slc-completion of a relatively continuous subset.

\begin{proposition}\label{292} Let $\E$ be a Hilbert $B,\G$-module, where $\G$ is a locally compact quantum group and let $\R$ be a
relatively continuous subset of $\E$. Then the \slc-completion of $\R$ is given by
$$\R_\sco=\{\xi\in\E_\si:\bbraket{\xi}{\xi}\in B\rtimes_\red\cdualG\mbox{ and } \omega*\xi\in \R_\com\mbox{ for all }\omega\in L^1_0(\G)\},$$
where $\R_\com$ denotes the completion of $\R$.
\end{proposition}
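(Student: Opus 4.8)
The plan is to prove the two inclusions between $\R_\sco$ and the set
$$\tilde\R:=\{\xi\in\E_\si:\bbraket{\xi}{\xi}\in B\rtimes_\red\cdualG\mbox{ and }\omega*\xi\in\R_\com\mbox{ for all }\omega\in L^1_0(\G)\}.$$
First I would record that by Corollary~\ref{291} we have $\R_\sco=\R_{\F(\E,\R)}$, and by Proposition~\ref{279} together with Equation~\eqref{277} we have $\F(\E,\R)=\F(\E,\R_\com)=\overline{\kket{\R_\com}}$, where $\R_\com$ is complete and relatively continuous. This reduces the problem to identifying $\R_{\F}$ for $\F:=\overline{\kket{\R_\com}}$.

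For the inclusion $\tilde\R\sbe\R_\sco$: take $\xi\in\tilde\R$. Then $\{\xi\}$ is relatively continuous since $\bbraket{\xi}{\xi}\in B\rtimes_\red\cdualG$, and for every $\omega\in L^1_0(\G)$ we have $\omega*\xi\in\R_\com$, hence by Proposition~\ref{264} $\kket{\omega*\xi}=\kket{\xi}(1_B\otimes\rho_\omega)\in\overline{\kket{\R_\com}}=\F$. Since $\rho(L^1_0(\G))$ is dense in $\cdualG$ (Proposition~\ref{508}), there is a bounded approximate unit $(e_i)$ for $\cdualG$ of the form $e_i=\rho(\omega_i)$ with $\omega_i\in L^1_0(\G)$; then $(1_B\otimes e_i)\to 1$ strictly in $\M(B\rtimes_\red\cdualG)$, and because $\kket{\xi}^*\kket{\xi}=\bbraket{\xi}{\xi}\in B\rtimes_\red\cdualG$ we get $\kket{\omega_i*\xi}=\kket{\xi}(1_B\otimes e_i)\to\kket{\xi}$, so $\kket{\xi}\in\F$, i.e.\ $\xi\in\R_\F=\R_\sco$. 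This is essentially the argument of Proposition~\ref{288}, so it is routine.

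For the reverse inclusion $\R_\sco\sbe\tilde\R$: take $\xi\in\R_\sco=\R_\F$. Then $\kket{\xi}\in\F$, hence $\bbraket{\xi}{\xi}=\kket{\xi}^*\kket{\xi}\in\F^*\F\sbe B\rtimes_\red\cdualG$, giving the first condition. For the second, fix $\omega\in L^1_0(\G)$. Using $\kket{\xi}\in\F=\overline{\kket{\R_\com}}$, approximate $\kket{\xi}$ by $\kket{\xi_n}$ with $\xi_n\in\R_\com$; by Lemma~\ref{287} (extended to general $\omega\in L^1_0(\G)$ by density of $L^1_{00}(\G)$ in $L^1_0(\G)$, exactly as in the proof of Theorem~\ref{289}) the map $\eta\mapsto\omega*\eta$ is $\|\cdot\|_\si$-continuous in $\kket{\eta}$ on the set of relatively continuous elements, so $\omega*\xi_n\to\omega*\xi$ in $\|\cdot\|_\si$; since $\R_\com$ is complete and $L^1_0(\G)$-invariant, $\omega*\xi_n\in\R_\com$ and therefore $\omega*\xi\in\R_\com$. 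Hence $\xi\in\tilde\R$.

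The main obstacle is the second inclusion, specifically justifying the $\|\cdot\|_\si$-continuity estimate $\|\omega*\eta\|_\si\le c_\omega\|\kket{\eta}\|$ for \emph{arbitrary} $\omega\in L^1_0(\G)$ rather than just for the special $\omega$ of the form $a\f b^*$ covered by Lemma~\ref{287}; this requires the approximation argument from the proof of Theorem~\ref{289} (writing a general $\omega_{u,v}$ with $v\in\dom(\mo^{\frac12})$ as a $\|\cdot\|_\si$-limit of such special functionals, using that $\dom(\mo^{\frac12})\cap\ran(\La)$ is a core for $\mo^{\frac12}$ and $\La(\T_\f)$ is dense in $H$), after which one passes from $L^1_{00}(\G)$ to $L^1_0(\G)$ by density and completeness. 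Once that continuity is in hand, both inclusions are short, and the proposition follows.
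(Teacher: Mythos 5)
Your overall structure matches the paper's: both reduce to $\R_\sco=\R_{\F(\E,\R)}$ with $\F(\E,\R)=\overline{\kket{\R_\com}}$ via Corollary~\ref{291} and Equation~\eqref{277}, and then prove two inclusions. The easy inclusion $\tilde\R\sbe\R_\sco$ is correct as you wrote it, though you re-run the approximate-unit argument of Proposition~\ref{288}; the paper gets it in one line from the definitions, since $\R_\com\sbe\R_\sco$ and $\R_\sco$ is \slc-complete, so any $\xi$ with $\bbraket{\xi}{\xi}\in B\rtimes_\red\cdualG$ and $\omega*\xi\in\R_\com\sbe\R_\sco$ for all $\omega\in L^1_0(\G)$ lies in $\R_\sco$ by \slc-completeness.

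The hard inclusion $\R_\sco\sbe\tilde\R$ is where your plan has a genuine gap. You propose to first extend the estimate $\|\omega*\eta\|_\si\leq c_\omega\|\kket{\eta}\|$ of Lemma~\ref{287} to \emph{arbitrary} $\omega\in L^1_0(\G)$ and then let $\kket{\xi_n}\to\kket{\xi}$. That estimate is false for general $\omega\in L^1_0(\G)$: already in the group case $\G=\contz(G)$ one has, roughly, $\|\omega*\eta\|\leq\|\kket{\eta}\|\,\|\omega\|_{L^2}$ only for $\omega\in L^1\cap L^2$, and a general element of $L^1_0(G)$ need not lie in $L^2(G)$; the only estimate available for general $\omega$ is $\|\omega*\eta-\omega'*\eta\|_\si\leq\|\omega-\omega'\|_\rho\,\|\eta\|_\si$ from Proposition~\ref{264}, whose right-hand side involves $\|\eta\|_\si$ rather than $\|\kket{\eta}\|$. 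This matters because the approximating sequence $(\xi_n)\sbe\R_\com$ is only controlled through $\kket{\xi_n}\to\kket{\xi}$, so $\|\xi_n\|_\si$ need not stay bounded (e.g.\ $\xi_n$ may have large components in $\ker\co_\E$), and your interchange of limits breaks down. The proof of Theorem~\ref{289}, to which you appeal, avoids this by reversing the order: first, for the \emph{special} functionals $\omega=a\f b^*$ of Lemma~\ref{287} one passes to the limit in $n$ and concludes $\omega*\xi\in\R_\com$; only then, with $\xi$ \emph{fixed} (so $\|\xi\|_\si<\infty$), one approximates a general $\omega_{u,v}\in L^1_{00}(\G)$ by such special functionals using the core property of $\dom(\mo^{\frac{1}{2}})\cap\ran(\La)$ and density of $\La(\T_\f)$, invoking Proposition~\ref{264} for the convergence $\omega_k*\xi\to\omega*\xi$ in the $\si$-norm, and finally passes from $L^1_{00}(\G)$ to $L^1_0(\G)$ by density. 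With the argument reorganized in that order your proof goes through and coincides with the paper's.
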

\begin{proof} Suppose that
$\xi\in \R_\sco=\R_{\F(\E,\R)}$. By Equation~\eqref{277}, we have
$\F(\E,\R)=\overline{\kket{\R_\com}}$ and hence there is a
sequence $(\xi_n)$ in $\R_\com$ such that $\kket{\xi_n}\to
\kket{\xi}$. As in the proof of Theorem~\ref{289}, this implies that
$\omega*\xi\in \R_\com$ for all $\omega\in L^1_0(\G)$. Thus
$$\R_\sco\sbe\{\xi\in\E_\si:\bbraket{\xi}{\xi}\in B\rtimes_\red\cdualG\mbox{ and } \omega*\xi\in \R_\com\mbox{ for all }\omega\in L^1_0(\G)\}.$$
And the other inclusion is trivial because $\R_\sco$ is
\slc-complete and $\R_\com\sbe \R_\sco$.
\end{proof}

\begin{corollary}\label{258} Let $\E$ be a Hilbert $B,\G$-module and let $\R\sbe\E$ be a relatively continuous subset. Then
$$\F(\E,\R)=\F(\E,\R_\sco).$$
\end{corollary}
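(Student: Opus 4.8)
The plan is to deduce this almost immediately from Corollary~\ref{291} and Proposition~\ref{284}, so that no genuinely new computation is needed. First I would record that $\F:=\F(\E,\R)$ is a concrete Hilbert $B\rtimes_\red\cdualG$-module inside $\Ls^\G(B\otimes L^2(\G),\E)$ by Proposition~\ref{079}; this puts us in position to apply the results of the two preceding sections to $\F$. By Corollary~\ref{291}, the \slc-completion $\R_\sco$ of $\R$ equals $\R_{\F(\E,\R)}=\R_\F$.

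Next I would apply Proposition~\ref{284} to the concrete Hilbert module $\F=\F(\E,\R)$, which yields $\F(\E,\R_\F)=\F$. Combining this with the identity $\R_\sco=\R_\F$ from the previous step gives
$$\F(\E,\R_\sco)=\F(\E,\R_\F)=\F=\F(\E,\R),$$
which is the desired equality.

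I do not expect any real obstacle: the substantive work has already been carried out in Theorem~\ref{289} (on which Corollary~\ref{291} depends) and in Proposition~\ref{284}. The only things to verify are bookkeeping points, namely that $\F(\E,\R)$ makes sense and is concrete for an arbitrary relatively continuous $\R$ (Definition~\ref{574} and Proposition~\ref{079}), and that $\R\sbe\R_\sco$, which together with the evident monotonicity of $\R\mapsto\F(\E,\R)$ already supplies the inclusion $\F(\E,\R)\sbe\F(\E,\R_\sco)$; the reverse inclusion is what the displayed computation provides. Alternatively, one could combine the description of $\R_\sco$ in Proposition~\ref{292} with Proposition~\ref{279} and Equation~\eqref{277}, but the route through Corollary~\ref{291} and Proposition~\ref{284} is the shortest.
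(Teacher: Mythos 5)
Your proof is correct, and it takes a genuinely different route from the one in the paper. The paper proves the nontrivial inclusion $\F(\E,\R_\sco)\sbe\F(\E,\R)$ by hand: given $\xi\in\R_\sco$, it invokes the description of $\R_\sco$ in Proposition~\ref{292} to get $\omega*\xi\in\R_\com$ for all $\omega\in L^1_0(\G)$, deduces $\kket{\xi}(1_B\otimes\rho_\omega)=\kket{\omega*\xi}\in\F(\E,\R_\com)=\F(\E,\R)$, and then passes to the limit along a bounded approximate unit $(\rho(\omega_i))$ of $\cdualG$ to conclude $\kket{\xi}\in\F(\E,\R)$; the easy inclusion comes from $\R\sbe\R_\sco$ and monotonicity. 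You instead observe that $\F:=\F(\E,\R)$ is a concrete Hilbert $B\rtimes_\red\cdualG$-module (Proposition~\ref{079}), that $\R_\sco=\R_{\F}$ by Corollary~\ref{291}, and that $\F(\E,\R_{\F})=\F$ by Proposition~\ref{284}, which yields the equality in one line with no new analysis. Both arguments are sound and rest on the same underlying machinery (Theorem~\ref{289} and the approximate-unit technique are already baked into Corollary~\ref{291} and Proposition~\ref{284}); your composition of the two earlier results is the more economical formal deduction, while the paper's version has the minor virtue of exhibiting explicitly how $\kket{\xi}$ is approximated inside $\F(\E,\R)$. There is no circularity in your route, since Corollary~\ref{291} and Proposition~\ref{284} are established independently of the statement being proved.
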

\begin{proof} The inclusion $\R\sbe \R_\sco$ implies $\F(\E,\R)\sbe\F(\E,\R_\sco)$. On
the other hand, if $\xi\in \R_\sco$, then by the description of
$\R_\sco$ in Proposition~\ref{292}, we have $\omega*\xi\in \R_\com$ for all
$\omega\in L^1_0(\G)$. Thus
$$\kket{\omega*\xi}=\kket{\xi}(1_B\otimes \rho_{\omega})\in \kket{\R_\com}\sbe \F(\E,\R_\com)=\F(\E,\R)$$ for all $\omega\in L^1_{00}(\G)$. Now taking
a bounded approximate unit
$(e_i)$ for $\cdualG$ of the form $e_i=\rho({\omega_i})$, where
$\omega_i\in L^1_0(\G)$ for all $i$, it follows that $\kket{\xi}\in
\F(\E,\R)$. Therefore
\begin{equation*}
\F(\E,\R_\sco)=\overline{\kket{\R_\sco}}\sbe \F(\E,\R).\qedhere
\end{equation*}
\end{proof}

\section{Functoriality and naturality}
\noindent
Throughout this section we fix a locally compact quantum group $\G$ and \cstar{}algebra $B$ with a continuous coaction of $\G$, that is, a
$\G$-\cstar{}algebra.

\begin{definition}\label{297} Let $(\E_1,\R_1)$ and  $(\E_2,\R_2)$ be continuously square-integrable Hilbert $B,\G$-modules.
An operator $T\in \Ls^\G(\E_1,\E_2)$ is called \emph{$\R$-continuous}\index{$\R$-continuous operator}
if $T(\R_1)\sbe \R_2$ and $T^*(\R_2)\sbe \R_1$.
\end{definition}

Given a locally compact quantum group $\G$ and a $\G$-\cstar{}algebra $B$, the continuously square-integrable
Hilbert $B,\G$-modules form a category with $\R$-continuous adjointable operators as morphisms.
The s-continuously square-integrable Hilbert $B,\G$-modules form a full subcategory.
By Proposition~\ref{293}, these categories are identical if $\G$ is co-amenable.

In what follows, we analyze the functoriality of our constructions.

\begin{proposition}\label{497} Let $\G$ be a locally compact quantum group and let $B$ be a $\G$-\cstar{}algebra.
The construction $(\E,\R)\mapsto \F(\E,\R)$ is a functor from the category of continuously square-integrable Hilbert $B,\G$-modules to
the category of Hilbert $B\rtimes_\red\cdualG$-modules.
\end{proposition}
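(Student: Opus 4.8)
The plan is to exhibit the functor explicitly on objects and on morphisms and then verify the two functoriality axioms. On objects it sends $(\E,\R)$ to $\F(\E,\R)$, which is a Hilbert $B\rtimes_\red\cdualG$-module by Propositions~\ref{079} and~\ref{111}. Write $A:=B\rtimes_\red\cdualG$ throughout. Given a morphism in the source category, i.e.\ an $\R$-continuous operator $T\in\Ls^\G(\E_1,\E_2)$ (so $T(\R_1)\sbe\R_2$ and $T^*(\R_2)\sbe\R_1$), the candidate for the image morphism is $\F(T)\colon x\mapsto T\circ x$; this makes sense because composing $T$ with an element of $\Ls^\G(B\otimes L^2(\G),\E_1)$ yields an element of $\Ls^\G(B\otimes L^2(\G),\E_2)$.

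First I would check that $\F(T)$ maps $\F(\E_1,\R_1)$ into $\F(\E_2,\R_2)$. By Proposition~\ref{003}(iii), for every $\xi\in\R_1$ we have $T\xi\in(\E_2)_\si$ and $\kket{T\xi}=T\circ\kket{\xi}$; since $T(\R_1)\sbe\R_2$ this gives $T\circ\kket{\xi}=\kket{T\xi}\in\kket{\R_2}$. As $T$ is bounded and left composition with $T$ commutes with right composition with elements of $A$, we get
$$T\circ\F(\E_1,\R_1)=T\circ\cspn\bigl(\kket{\R_1}\circ A\bigr)\sbe\cspn\bigl(\kket{\R_2}\circ A\bigr)=\F(\E_2,\R_2).$$
The identical argument applied to $T^*$, using $T^*(\R_2)\sbe\R_1$, gives $T^*\circ\F(\E_2,\R_2)\sbe\F(\E_1,\R_1)$. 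Since the $A$-valued inner product on a concrete Hilbert module is $\langle x,y\rangle=x^*\circ y$, for $x\in\F(\E_1,\R_1)$ and $y\in\F(\E_2,\R_2)$ we have $\langle T\circ x,y\rangle=x^*\circ T^*\circ y=\langle x,T^*\circ y\rangle$. Hence $\F(T)$ is an adjointable morphism of Hilbert $A$-modules with $\F(T)^*=\F(T^*)$, and right $A$-linearity is immediate from $T\circ(x\circ a)=(T\circ x)\circ a$.

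Finally I would verify the functor axioms. Clearly $\F(\id_\E)$ is the identity on $\F(\E,\R)$. If $T_1\colon(\E_1,\R_1)\to(\E_2,\R_2)$ and $T_2\colon(\E_2,\R_2)\to(\E_3,\R_3)$ are $\R$-continuous, then $T_2\circ T_1$ is again $\R$-continuous, since $(T_2T_1)(\R_1)\sbe T_2(\R_2)\sbe\R_3$ and $(T_2T_1)^*=T_1^*T_2^*$ carries $\R_3$ into $\R_2$ and hence into $\R_1$; thus it is a morphism in the source category, and $\F(T_2\circ T_1)(x)=(T_2\circ T_1)\circ x=T_2\circ(T_1\circ x)=\F(T_2)\bigl(\F(T_1)(x)\bigr)$, so $\F(T_2\circ T_1)=\F(T_2)\circ\F(T_1)$. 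The argument is essentially formal once Proposition~\ref{003}(iii) is in hand; the only point needing a moment's care — and it is hardly an obstacle — is that the morphism assignment produces an \emph{adjointable} Hilbert-module map whose adjoint is again of the form $\F(T^*)$, which is precisely what places $\F$ in the category of Hilbert $B\rtimes_\red\cdualG$-modules with adjointable operators as morphisms.
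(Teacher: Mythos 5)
Your proof is correct and follows essentially the same route as the paper: both define the image morphism by $x\mapsto T\circ x$, use $\kket{T\xi}=T\circ\kket{\xi}$ from Proposition~\ref{003}(iii) together with $T(\R_1)\sbe\R_2$ and $T^*(\R_2)\sbe\R_1$ to see that this lands in $\F(\E_2,\R_2)$ and is adjointable with adjoint $y\mapsto T^*\circ y$. Your version just spells out the (routine) functor axioms and works directly from the definition $\F(\E,\R)=\cspn(\kket{\R}\circ A)$ rather than from $\F(\E,\R)=\overline{\kket{\R}}$, which is an immaterial difference.
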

\begin{proof}
Given an $\R$-continuous $\G$-equivariant operator $T:(\E_1,\R_1)\to (\E_2,\R_2)$,
the associated adjointable operator $\tilde T:\F(\E_1,\R_1)\to \F(\E_2,\R_2)$ is given by
$\tilde T(x)=T\circ x$ for all $x\in \F(\E_1,\R_1)\sbe\Ls^\G(B\otimes H,\E_1)$.
Here one uses that $\kket{T(\xi)}=T\circ\kket{\xi}$ for all $\xi\in \E_\si$ and the fact that $\F(\E_k,\R_k)$ is the closure of $\kket{\R_k}$,
for $k=1,2$. Since $T(\R_1)\sbe\R_2$, this ensures that $\tilde T$ is a map $\F(\E_1,\R_1)\to\F(\E_2,\R_2)$.
In the same way, since $T^*(\R_2)\sbe\R_1$, the operator $\tilde T$ is, in fact, adjointable and its adjoint is given by $\tilde T^*(y)=T^*\circ y$ for
all $y\in \F(\E_2,\R_2)$.
\end{proof}

Given an abstract Hilbert $B\rtimes_\red\cdualG$-module $\F$, we can, by \cite[Theorem~5.1]{Meyer:Generalized_Fixed},
identify it with the concrete Hilbert $B\rtimes_\red\cdualG$-module
$T(\F)\sbe\Ls^\G(B\otimes H,\E_\F)$, where $\E_\F:=\F\otimes_A(B\otimes H)$, $A:=B\rtimes_\red\cdualG$,
and $T:\F\to\Ls^\G(B\otimes H,\E_\F)$ is the canonical representation defined in \cite[Equation~(32)]{Meyer:Generalized_Fixed}. Recall that $T(x)f=x\otimes_A f$ for all $x\in\F$ and
$f\in B\otimes H$. In this way we get an \slc-complete, relatively continuous subspace $\R_\F\sbe\E_\F$ as in Proposition~\ref{284} by
$$\R_\F:=\{\xi\in \E_\F:\xi \mbox{ is square-integrable and }\kket{\xi}\in T(\F)\}.$$
Since $\F$ is essential, $\R_\F$ is dense $\E_\F$. In fact, note that by Theorem~\ref{289} and Proposition~\ref{284},
$\R_\F$ is the \slc-completion of the linear span of $T(\F)\bigl(B\odot\hat\La(\T_{\hat\f})\bigr)$
and this linear span equals $\F\odot_A\bigl(B\odot\hat\La(\T_{\hat\f})\bigr)$.
Thus the pair $(\E_\F,\R_\F)$ is a \slc\nobreakdash-continuously square-integrable
Hilbert $B,\G$-module.

\begin{lemma}\label{510} With notation as above we have $\F(\E_\F,\R_\F)=T(\F)$.
\end{lemma}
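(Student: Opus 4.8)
The plan is to recognize Lemma~\ref{510} as a direct application of Proposition~\ref{284}, taking the Hilbert $B,\G$-module there to be $\E_\F$ and the concrete Hilbert module there to be $T(\F)$. Recall that Proposition~\ref{284}, with $A:=B\rtimes_\red\cdualG$, asserts that for any Hilbert $B,\G$-module $\E$ and any concrete Hilbert $A$-module $\mathcal D\sbe\Ls^\G(B\otimes H,\E)$, the subspace $\R_{\mathcal D}:=\{\xi\in\E_\si:\kket{\xi}\in\mathcal D\}$ is relatively continuous and complete and satisfies $\F(\E,\R_{\mathcal D})=\mathcal D$. Thus the whole content of the lemma reduces to checking that $T(\F)$ fits the hypotheses of that proposition and that the $\R_\F$ defined in the paragraph preceding the lemma coincides with $\R_{T(\F)}$.

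First I would record that $T(\F)$ is indeed a concrete Hilbert $A$-module inside $\Ls^\G(B\otimes H,\E_\F)$; this is precisely the quantum analogue of \cite[Theorem~5.1]{Meyer:Generalized_Fixed}, already invoked above in the construction of $\E_\F$ and $T$. Concretely, the canonical representation $T\colon\F\to\Ls^\G(B\otimes H,\E_\F)$, $T(x)f=x\otimes_A f$, is an isometric $A$-linear map with $T(x)^*T(y)=\braket{x}{y}_\F$ for all $x,y\in\F$. Hence $T(\F)$ is norm-closed; the closed linear span of $T(\F)\circ A=T(\F\cdot A)$ is $T(\F)$, since $\F\cdot A$ is dense in $\F$; and $T(\F)^*\circ T(\F)=\cspn\{\braket{x}{y}_\F:x,y\in\F\}\sbe A$. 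So the defining conditions $\F\circ A\sbe\F$ and $\F^*\circ\F\sbe A$ of a concrete Hilbert $A$-module hold for $\F=T(\F)$.

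Next I would observe that the subspace $\R_\F=\{\xi\in\E_\F:\xi\text{ is square-integrable and }\kket{\xi}\in T(\F)\}$ defined just before the lemma is, by construction, identical to the subspace $\R_{T(\F)}$ of Proposition~\ref{284} applied with $\E:=\E_\F$ and $\mathcal D:=T(\F)$. Substituting into the conclusion of that proposition then yields $\F(\E_\F,\R_\F)=\F(\E_\F,\R_{T(\F)})=T(\F)$, which is the assertion.

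I do not expect any genuine obstacle here: the only point demanding (routine) care is the verification that $T(\F)$ meets the hypotheses of Proposition~\ref{284}, i.e.\ the quantum-group version of Meyer's representation theorem for Hilbert modules, which has already been used to define $\E_\F$ and $T(\F)$ in the first place. In this sense the lemma is essentially bookkeeping, isolating a fact needed for the functoriality statements that follow.
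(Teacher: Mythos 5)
Your proof is correct and takes essentially the same route as the paper: since $\R_\F$ is by definition the set $\R_{T(\F)}$ of Proposition~\ref{284} applied to the concrete Hilbert module $T(\F)\sbe\Ls^\G(B\otimes H,\E_\F)$, the lemma is indeed an instance of that proposition. The paper merely re-runs the relevant computation inline (using $\kket{T(x)\zeta}=T(x)\circ\kket{\zeta}$ and $\cspn\bigl(\kket{\R_0}\circ A\bigr)=A$) rather than citing it, so your version is an acceptable streamlining.
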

\begin{proof}
Since $T(x)\in \Ls^\G(B\otimes H,\E)$ we have
$x\otimes_A\zeta=T(x)\zeta\in \E_\si$ for all $x\in \F$ and $\zeta\in (B\otimes H)_\si$ and
$\kket{x\otimes_A\zeta}=\kket{T(x)\zeta}=T(x)\circ\kket{\zeta}$. It follows that
\begin{align*}
\F(\E_\F,\R_\F)&=\cspn(\kket{\R}\circ A)
	\\&=\cspn(\kket{\F\odot_A\R_0}\circ A)
	\\&=\cspn(T(\F)\circ\kket{\R_0}\circ A)
	\\&=\cspn(T(\F)\circ A)=T(\F).\qedhere
\end{align*}
\end{proof}

\begin{proposition}\label{498} The construction $\F\mapsto (\E_\F,\R_\F)$ is a functor from the category of Hilbert $A$-modules to
the category of \slc-continuously square-integrable Hilbert $B,\G$-modules.
\end{proposition}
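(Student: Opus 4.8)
The object part of the statement is already in place: the paragraph preceding Lemma~\ref{510} shows that $(\E_\F,\R_\F)$ is an \slc-continuously square-integrable Hilbert $B,\G$-module, and Lemma~\ref{510} identifies $\F(\E_\F,\R_\F)$ with the concrete module $T(\F)$. So the plan is only to supply the action on morphisms and to verify the two functor axioms. Writing $A:=B\rtimes_\red\cdualG$, I would assign to a morphism $S\in\Ls(\F_1,\F_2)$ of Hilbert $A$-modules the operator $\tilde S:=S\otimes_A\id_{B\otimes H}\colon\E_{\F_1}\to\E_{\F_2}$. By the standard functoriality of the balanced tensor product, $\tilde S$ is adjointable with $\tilde S^*=S^*\otimes_A\id_{B\otimes H}$, and one has $\widetilde{\id_{\F}}=\id_{\E_\F}$ and $\widetilde{S\circ S'}=\tilde S\circ\tilde S'$; this takes care of the functor axioms as soon as $\tilde S$ is shown to be a morphism in the target category. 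Denoting by $T_k\colon\F_k\to\Ls^\G(B\otimes H,\E_{\F_k})$ the canonical representations, the key identity I would use throughout is $\tilde S\circ T_1(x)=T_2\bigl(S(x)\bigr)$ for all $x\in\F_1$, which is immediate from $T_k(y)\zeta=y\otimes_A\zeta$ together with the $A$-linearity of $S$.

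First I would check that $\tilde S$ is $\G$-equivariant. The coaction on $\E_{\F_k}$ is, by the quantum-group version of \cite[Theorem~5.1]{Meyer:Generalized_Fixed}, the unique one for which every $T_k(x)$ lies in $\Ls^\G(B\otimes H,\E_{\F_k})$; moreover the vectors $T_k(x)\zeta$ with $x\in\F_k$ and $\zeta\in B\otimes H$ span a dense subspace of $\E_{\F_k}$ (here $\F_k$ is automatically essential, since $A$ acts nondegenerately on $B\otimes H$). On such a vector, combining the equivariance of $T_1(x)$ and of $T_2(S(x))$ with the key identity gives
$$\co_{\E_{\F_2}}\bigl(\tilde S\,T_1(x)\zeta\bigr)=\bigl(T_2(S(x))\otimes 1\bigr)\co_{B\otimes H}(\zeta)=(\tilde S\otimes 1)\co_{\E_{\F_1}}\bigl(T_1(x)\zeta\bigr),$$
and boundedness of $\tilde S$ propagates this identity to all of $\E_{\F_1}$. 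Hence $\tilde S\in\Ls^\G(\E_{\F_1},\E_{\F_2})$.

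Next I would verify $\R$-continuity. If $\xi\in\R_{\F_1}$, then $\xi\in(\E_{\F_1})_\si$ and $\kket{\xi}\in T_1(\F_1)$, say $\kket{\xi}=T_1(x)$. Since $\tilde S$ is adjointable and $\G$-equivariant, Proposition~\ref{003}(iii) gives $\tilde S(\xi)\in(\E_{\F_2})_\si$ and
$$\kket{\tilde S(\xi)}=\tilde S\circ\kket{\xi}=\tilde S\circ T_1(x)=T_2\bigl(S(x)\bigr)\in T_2(\F_2),$$
so $\tilde S(\xi)\in\R_{\F_2}$ by the very definition of $\R_{\F_2}$. Applying the same argument to $S^*$ shows $\tilde S^*(\R_{\F_2})\sbe\R_{\F_1}$. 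Thus $\tilde S$ is an $\R$-continuous $\G$-equivariant operator, and, together with the object part, this shows that $\F\mapsto(\E_\F,\R_\F)$, $S\mapsto\tilde S$, is a functor.

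I do not expect a genuine obstacle: once Lemma~\ref{510}, Proposition~\ref{003}(iii) and the quantum analogue of \cite[Theorem~5.1]{Meyer:Generalized_Fixed} are available, everything is formal. The one point that deserves a little care is pinning down the coaction on $\E_\F=\F\otimes_A(B\otimes H)$ precisely — namely as the one rendering the canonical representation $T$ equivariant — since the equivariance argument for $\tilde S$ rests entirely on that characterisation.
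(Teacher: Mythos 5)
Your proposal is correct and follows essentially the same route as the paper: assign to $S$ the operator $S\otimes_A\id$, use the identity $(S\otimes_A\id)\circ T_1(x)=T_2(S(x))$ together with Proposition~\ref{003}(iii) to get $\R$-continuity, and reduce the adjoint case to $S^*$ via $(S\otimes_A\id)^*=S^*\otimes_A\id$. You merely spell out a few points the paper leaves implicit (the functor axioms and the $\G$-equivariance of $S\otimes_A\id$), which is harmless.
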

\begin{proof} First, observe that the map $\F\mapsto\E_\F$ is functorial: to an adjointable operator $S:\F_1\to\F_2$
we associate the $\G$-equivariant adjointable operator $S\otimes_A\id:\E_1\to\E_2$, where $\E_k:=\F_k\otimes_A(B\otimes H)$, $k=1,2$.
It remains to show that $S\otimes_A\id$ is $\R$-continuous, that is,
$(S\otimes_A\id)(\R_1)\sbe\R_2$ and $(S\otimes_A\id)^*(\R_2)\sbe\R_1$, where $\R_k:=\R_{\F_k}$, $k=1,2$.
Since $(S\otimes_A\id)^*=S^*\otimes_A\id$, it is enough to show that $(S\otimes_A\id)(\R_1)\sbe\R_2$. Let $T_k:\F_k\to\Ls^\G(B\otimes H,\E_k)$ be the
canonical representation of
$\F_k$, that is, $T_k(x)f=x\otimes_A f$ for all $x\in \E_k$ and $f\in B\otimes H$. Note that $(S\otimes_A\id)\circ T_1(x)=T_2\bigl(S(x)\bigr)$
for all $x\in \F_1$. Thus $(S\otimes_A\id)\circ T_1(\F_1)\sbe\ T_2(\F_2)$. Combining this with the relation
$\kket{(S\otimes_A\id)\xi}=(S\otimes_A\id)\circ\kket{\xi}$, for every square-integrable element $\xi\in \E_1$ (see Proposition~\ref{003}(iii)),
the desired result follows.
\end{proof}

\begin{corollary}\label{495} Let $\G$ be a locally compact quantum group and let $B$ be a $\G$-\cstar{}algebra.
Isomorphism classes of Hilbert modules over $A:=B\rtimes_\red\cdualG$ correspond bijectively to
isomorphism classes of \slc-continuously square-integrable Hilbert $B,\G$-modules via the functors
\begin{equation}\label{496}
(\E,\R)\mapsto \F(\E,\R)\quad\mbox{and}\quad \F\mapsto (\E_\F,\R_\F),
\end{equation}
where $\E_\F:=\F\otimes_A(B\otimes H)$ and $\R_\F$ is the \slc-completion of $\F\odot_A\bigl(B\odot\hat\La(\T_{\hat\f})\bigr)$.
\end{corollary}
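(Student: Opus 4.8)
The plan is to combine the functoriality statements (Propositions~\ref{497} and~\ref{498}), Theorem~\ref{289}, and Lemma~\ref{510} with the representation theory of concrete Hilbert modules from \cite[Section~5]{Meyer:Generalized_Fixed}. Since Propositions~\ref{497} and~\ref{498} already give that the two assignments in~\eqref{496} are functors, they descend to well-defined maps between the sets of isomorphism classes; so the only thing left to prove is that these two maps are mutually inverse.

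First I would check the composite $\F\mapsto(\E_\F,\R_\F)\mapsto\F(\E_\F,\R_\F)$. Starting from an abstract Hilbert $A$-module $\F$ (with $A=B\rtimes_\red\cdualG$), one passes to the pair $(\E_\F,\R_\F)$ exactly as in the paragraph preceding Lemma~\ref{510}, and Lemma~\ref{510} then gives $\F(\E_\F,\R_\F)=T(\F)$, where $T\colon\F\to\Ls^\G(B\otimes H,\E_\F)$ is the canonical representation. The quantum version of \cite[Theorem~5.1]{Meyer:Generalized_Fixed} --- already invoked in the discussion before Lemma~\ref{510} --- says that $T$ is an isomorphism of Hilbert $A$-modules onto its image, so $\F(\E_\F,\R_\F)\cong\F$ and this composite is the identity on isomorphism classes of Hilbert $A$-modules.

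For the other composite $(\E,\R)\mapsto\F(\E,\R)\mapsto(\E_{\F(\E,\R)},\R_{\F(\E,\R)})$ I would start from an \slc-continuously square-integrable $(\E,\R)$ and set $\F:=\F(\E,\R)\sbe\Ls^\G(B\otimes H,\E)$, which is an \emph{essential} concrete Hilbert $A$-module by Proposition~\ref{079} (this is where density of $\R$ enters). The crucial input is the quantum analogue of \cite[Theorems~5.2 and~5.3]{Meyer:Generalized_Fixed}: for an essential concrete Hilbert $A$-module $\F$ the map $x\otimes_A f\mapsto x(f)$ is a $\G$-equivariant unitary $\E_\F=\F\otimes_A(B\otimes H)\cong\E$, under which the canonical representation $T(\F)$ is identified with $\F$ itself inside $\Ls^\G(B\otimes H,\E)$. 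Then $\R_\F$ is identified with $\R_{\F(\E,\R)}$, and Theorem~\ref{289} gives $\R_{\F(\E,\R)}=\R$ because $\R$ is \slc-complete and relatively continuous. Since the identification $\E_\F\cong\E$ is $\G$-equivariant and carries $\R_\F$ onto $\R$, it is an $\R$-continuous isomorphism $(\E_\F,\R_\F)\cong(\E,\R)$, so this composite is also the identity on isomorphism classes; hence the two functors induce mutually inverse bijections.

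I expect the main obstacle to be bookkeeping rather than a hard estimate: one must verify carefully that, for $\F=\F(\E,\R)$, the canonical $\G$-equivariant unitary $\E_\F\cong\E$ really does carry the abstract representation $T(\F)$ back to $\F\sbe\Ls^\G(B\otimes H,\E)$, and hence $\R_\F$ to $\R_{\F(\E,\R)}$, so that Theorem~\ref{289} applies verbatim; and one must make sure that the quantum versions of \cite[Theorems~5.1--5.3]{Meyer:Generalized_Fixed} used here have indeed been set up (as the text indicates, these carry over from the group case with no essential change). Once these identifications are in place the corollary follows immediately.
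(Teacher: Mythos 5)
Your proposal is correct and follows essentially the same route as the paper: well-definedness via Propositions~\ref{497} and~\ref{498}, the composite starting from an abstract $\F$ handled by Lemma~\ref{510} together with the quantum version of \cite[Theorem~5.1]{Meyer:Generalized_Fixed}, and the other composite handled by the canonical $\G$-equivariant unitary $U(x\otimes_A\zeta)=x(\zeta)$ combined with Theorem~\ref{289}. The only cosmetic difference is that the paper verifies $U(\R_\F)=\R$ by showing $\F(\E,U(\R_\F))=\F$ and invoking the injectivity of $\R\mapsto\F(\E,\R)$ on \slc-complete subspaces, whereas you identify $\R_\F$ under $U$ with $\R_{\F(\E,\R)}$ and use the identity $\R_{\F(\E,\R)}=\R$; these are two equivalent uses of the same bijection.
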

\begin{proof}
The maps in~\eqref{496} are considered between isomorphism classes
and are well-defined by Propositions~\ref{497} and~\ref{498}.
To prove that they are inverse to each other, let $(\E,\R)$ be an \slc-continuously square-integrable Hilbert $B,\G$-module and define
$\F:=\F(\E,\R)$. We have to prove that $(\E_\F,\R_\F)\cong(\E,\R)$. Define $U:\E_\F\to\E$ by $U(x\otimes_A\zeta):=x(\zeta)$ for all
$x\in \F\sbe\Ls^\G(B\otimes H,\E)$ and $\zeta\in B\otimes H$. The unitary $U$ appears in Theorem~\ref{289} and is $\G$-equivariant. It
remains to show that $U(\R_\F)=\R$. Since $\R_\F,\R\sbe\E$ are relatively continuous and \slc-complete, it is enough to show that
$\F\bigl(\E,U(\R_\F)\bigr)=\F(\E,\R)=\F$. Note that $U(\R_\F)$ is the \slc-completion of
$U(\F\odot_A\R_0)=\spn\F(\R_0)$, where $\R_0:=B\odot\hat\La(\T_{\hat\f})$.
Since $\F\sbe\Ls^\G(B\otimes H,\E)$ and $\overline{\kket{\R_0}}=A$, we have $\F(\E,\R_\F)=\cspn(\F\circ A)=\F$.
Therefore $(\E,\R)\cong(\E_\F,\R_\F)$. Now assume that $\F$ is a Hilbert $A$-module and define $(\E,\R):=(\E_\F,\R_\F)$. We have to show that
$\F\cong\F(\E,\R)$. Let $T:\F\to\Ls^\G(B\otimes H,\E_\F)$ be the canonical representation of $\F$ as defined in \cite[Equation~(32)]{Meyer:Generalized_Fixed}, that is, $T(x)\zeta:=x\otimes_A\zeta$ for all $x\in \F$ and $\zeta\in B\otimes H$.
By \cite[Theorem~5.1]{Meyer:Generalized_Fixed}, $\F\cong T(\F)$ as Hilbert $A$-modules, and by Lemma~\ref{510}, $T(\F)=\F(\E,\R)$, so that $\F\cong T(\F)=\F(\E,\R)$.
\end{proof}

Finally, we prove that our constructions are natural and yield an equivalence
between the respective categories.

\begin{theorem}\label{298} Let $\G$ be a locally compact quantum group, and let $B$ be a $\G$-\cstar{}algebra.
Let $(\E,\R)$ be an \slc-continuously square-integrable Hilbert $B,\G$-module, and let $\F:=\F(\E,\R)$.
Then there is a canonical, injective, strictly continuous $*$-homomorphism $\phi:\Ls(\F)\to \Ls^{\G}(\E)$,
whose range is the space of $\R$-continuous operators. It maps $\K(\F)$ isometrically onto $\Fix(\E,\R)$.

The categories of \slc-continuously square-integrable Hilbert $B,\G$-modules and Hilbert modules over $B\rtimes_\red\cdualG$
are equivalent via the functors $(\E,\R)\mapsto \F(\E,\R)$ and $\F\mapsto (\E_\F,\R_\F)$.

The generalized fixed point algebra $\Fix(\E,\R)$ is the closed linear span of the operators
$(\id_{\K(\E)}\otimes\f)(\ket{\xi}\bra{\eta})\in \Ls^\G(\E)$, $\xi,\eta\in \R$, and it is Morita equivalent to the ideal
$\I(\E,\R)=\cspn\{\bbraket{\xi}{\eta}:\xi,\eta\in \R\}$ of $B\rtimes_\red\cdualG$.
\end{theorem}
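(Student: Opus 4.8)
The plan is to read the three assertions off results already established, the genuinely new point being the construction of $\phi$ and the identification of its image with the $\R$-continuous operators; this parallels \cite[Theorem~6.2]{Meyer:Generalized_Fixed}. Throughout put $A:=B\rtimes_\red\cdualG$ and $\F:=\F(\E,\R)$.

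First I would build $\phi$. Since $(\E,\R)$ is continuously square-integrable, $\R$ is dense, so by Proposition~\ref{079} the concrete Hilbert $A$-module $\F\sbe\Ls^\G(B\otimes L^2(\G),\E)$ is essential. By the evident quantum-group version of \cite[Theorem~5.2]{Meyer:Generalized_Fixed} (already used after Proposition~\ref{111}), the compacts $\K(\F)$ are canonically $\cspn(\F\circ\F^*)=\Fix(\E,\R)$, which acts nondegenerately and hence faithfully on $\E$; consequently $\Ls(\F)\cong\M(\K(\F))\cong\M(\Fix(\E,\R))$ is realized as a $*$-subalgebra of $\Ls(\E)$ by left composition, yielding an injective, strictly continuous $*$-homomorphism $\phi\colon\Ls(\F)\to\Ls(\E)$ that restricts to the isometric isomorphism $\K(\F)\xrightarrow{\ \sim\ }\Fix(\E,\R)$. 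That $\phi$ actually takes values in $\Ls^\G(\E)$ will follow once the image is identified, since $\R$-continuous operators are $\G$-equivariant by definition.

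Next I would show $\phi(\Ls(\F))$ equals the space of $\R$-continuous operators. Two facts drive this: $\kket{T\xi}=T\circ\kket{\xi}$ for $T\in\Ls^\G(\E)$ and $\xi\in\E_\si$ (Proposition~\ref{003}(iii)); and $\R=\R_{\F}=\{\xi\in\E_\si:\kket{\xi}\in\F\}$, which is Theorem~\ref{289} and is exactly where \slc-completeness of $\R$ is used. If $T=\phi(S)$ with $S\in\Ls(\F)$, then $T\circ\F\sbe\F$ and $T^*\circ\F\sbe\F$, so for $\xi\in\R$ we get $\kket{T\xi}=T\circ\kket{\xi}\in\F$, hence $T\xi\in\R_\F=\R$, and symmetrically $T^*\xi\in\R$; thus $T$ is $\R$-continuous. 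Conversely, if $T\in\Ls^\G(\E)$ is $\R$-continuous, then $T\circ\kket{\xi}=\kket{T\xi}\in\kket{\R}$ for $\xi\in\R$, so $T\circ\F\sbe\F$ by the definition of $\F$, and likewise $T^*\circ\F\sbe\F$, whence $\F^*\circ T\sbe\F^*$; combining these gives $T\circ\Fix(\E,\R)\sbe\Fix(\E,\R)$ and $\Fix(\E,\R)\circ T\sbe\Fix(\E,\R)$, so $T$ idealizes $\Fix(\E,\R)=\K(\F)$ and therefore $T\in\phi(\Ls(\F))$.

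For the equivalence of categories, the assignments $(\E,\R)\mapsto\F(\E,\R)$ and $\F\mapsto(\E_\F,\R_\F)$ are functorial by Propositions~\ref{497} and~\ref{498} and induce mutually inverse bijections on isomorphism classes by Corollary~\ref{495}. To promote this to an equivalence I would record the natural isomorphisms: Lemma~\ref{510} gives $\F(\E_\F,\R_\F)=T(\F)$ with $T\colon\F\xrightarrow{\ \sim\ }T(\F)$ the canonical unitary of \cite[Theorem~5.1]{Meyer:Generalized_Fixed}, natural in $\F$ because $\E_\F=\F\otimes_A(B\otimes L^2(\G))$ is; and the $\G$-equivariant unitary $U\colon\E_{\F(\E,\R)}\to\E$, $U(x\otimes_A\zeta)=x(\zeta)$, from Theorem~\ref{289} and the proof of Corollary~\ref{495}, carries $\R_{\F(\E,\R)}$ onto $\R$ and is natural in $(\E,\R)$. (Equivalently, full faithfulness follows from a variant of the argument of the previous paragraph for operators between two objects, and essential surjectivity is Corollary~\ref{495}.) Finally, since $\R$ is complete, $\R_\com=\R$, so Equation~\eqref{275} — together with Proposition~\ref{003}(i), which identifies $(\id_{\K(\E)}\otimes\f)(\co_{\K(\E)}(\ket{\xi}\bra{\eta}))$ with $\kket{\xi}\bbra{\eta}$ — yields the stated description of $\Fix(\E,\R)$, Corollary~\ref{280} gives $\I(\E,\R)=\cspn\bbraket{\R}{\R}$, and Proposition~\ref{111} furnishes the Morita equivalence implemented by $\F(\E,\R)$. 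The main obstacle is the converse inclusion in the image computation, which genuinely needs \slc-completeness through $\R=\R_{\F(\E,\R)}$; the construction of $\phi$ rests on the quantum analogue of \cite[Theorem~5.2]{Meyer:Generalized_Fixed}, and checking the naturality squares is routine bookkeeping.
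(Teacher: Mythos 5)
Your proposal is correct and follows essentially the same route as the paper, which simply delegates the construction of $\phi$ and the identification of its range to the quantum analogue of Meyer's Theorem~5.2, combined with $\R=\R_{\F}$, $\F=\overline{\kket{\R}}$ (Corollary~\ref{291} and Equation~\eqref{277}) and Proposition~\ref{003}(iii) --- exactly the ingredients you spell out, with the categorical and Morita statements read off Propositions~\ref{497}, \ref{498}, Corollary~\ref{495}, Equation~\eqref{275} and Proposition~\ref{111} as you do. One phrase is backwards: ``acts nondegenerately and hence faithfully'' should be that faithfulness comes from the canonical isometric identification $\K(\F)\cong\cspn(\F\circ\F^*)\sbe\Ls(\E)$, while nondegeneracy (from essentiality of $\F$) is what permits the extension to $\Ls(\F)=\M(\K(\F))$ and makes that extension injective.
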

\begin{proof} Since $\R$ is \slc-complete, we have $\R=\R_\F$ and $\F=\overline{\kket{\R}}$ (by Corollary~\ref{291} and Equation~\eqref{277}).
These facts together with Proposition~\ref{003}(iii) imply that the set $M$ in \cite[Theorem~5.2]{Meyer:Generalized_Fixed} equals the set
of $\R$-continuous operators. Therefore, the same $\phi$ of \cite[Theorem~5.2]{Meyer:Generalized_Fixed} yields the first statement. Combining this with Theorems~\ref{289} and~\cite[Theorem~5.1]{Meyer:Generalized_Fixed}, we get the second statement. The last statement follows from Equation~\eqref{275} and Proposition~\ref{111}.
\end{proof}

\begin{corollary} Let $\G$ be a compact quantum group and suppose that $B$ is a $\G$-\cstar{}algebra. Then the functor
$$\F\mapsto \F\rot{B\rtimes_\red\cdualG}\bigl(B\otimes L^2(\G)\bigr)$$
is an equivalence between the categories of Hilbert $B\rtimes_\red\cdualG$-modules and Hilbert $B,\G$-modules. In other words, any Hilbert
$B,\G$-module appears in this way for a unique Hilbert module $\F$ over $B\rtimes_\red\cdualG$ and the map $\Ls(\F)\to\Ls^\G(\E)$ is
an isomorphism.

Given a Hilbert $B,\G$-module $\E$, the generalized fixed point algebra associated to $\E$ is the usual fixed point algebra:
$$\Fix(\E)=\{x\in \K(\E):\co(x)=x\otimes 1\}\cong\K(\F_\E),$$
where $\F_\E=\F(\E,\E)$ is the Hilbert $B\rtimes_\red\cdualG$-module associated to $\E$.
\end{corollary}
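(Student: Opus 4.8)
The plan is to derive the corollary from Theorem~\ref{298} (equivalently Corollary~\ref{495}) together with Proposition~\ref{398}, the whole point being that for compact $\G$ a Hilbert $B,\G$-module $\E$ admits exactly one dense, \slc-complete, relatively continuous subspace, namely $\E$ itself, so that the "extra datum" $\R$ in the general theory becomes redundant.

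First I would record the special features of the compact case. By Example~\ref{477} every Hilbert $B,\G$-module $\E$ is square-integrable, i.e.\ $\E_\si=\E$; by Proposition~\ref{398} every subset of $\E$ is relatively continuous; and by Remark~\ref{100} the $\si$-norm on $\E$ is equivalent to the module norm. Taking $\R:=\E$, this is a dense, complete (it is $\si$-closed, $L^1_0(\G)$-invariant and $B$-invariant) and relatively continuous subspace, and it is trivially \slc-complete since it already contains every $\xi\in\E_\si$ with $\bbraket{\xi}{\xi}\in B\rtimes_\red\cdualG$; hence $(\E,\E)$ is an \slc-continuously square-integrable Hilbert $B,\G$-module. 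Conversely, if $\R\sbe\E$ is \emph{any} dense, complete, relatively continuous subspace, then $\R$ is closed in the $\si$-norm, hence norm-closed by Remark~\ref{100}; a norm-closed dense subspace of $\E$ equals $\E$, so $\R=\E$. Therefore $\E\mapsto(\E,\E)$ is an isomorphism from the category of Hilbert $B,\G$-modules onto the category of \slc-continuously square-integrable Hilbert $B,\G$-modules: it is bijective on objects, and on morphisms every $T\in\Ls^\G(\E_1,\E_2)$ is automatically $\R$-continuous for $\R_k=\E_k$, because $T(\E_1)\sbe\E_2$ and $T^*(\E_2)\sbe\E_1$, so the Hom-sets coincide.

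Next I would compose this isomorphism with the equivalence of Corollary~\ref{495}: the mutually inverse functors $(\E,\R)\mapsto\F(\E,\R)$ and $\F\mapsto(\E_\F,\R_\F)$, where $\E_\F=\F\rot{B\rtimes_\red\cdualG}\bigl(B\otimes L^2(\G)\bigr)$, connect \slc-continuously square-integrable Hilbert $B,\G$-modules with Hilbert $B\rtimes_\red\cdualG$-modules. Transporting along the previous step, $\E\mapsto\F_\E:=\F(\E,\E)$ and $\F\mapsto\F\rot{B\rtimes_\red\cdualG}\bigl(B\otimes L^2(\G)\bigr)$ become mutually inverse equivalences between Hilbert $B,\G$-modules and Hilbert $B\rtimes_\red\cdualG$-modules; in particular every Hilbert $B,\G$-module is of the form $\F\rot{B\rtimes_\red\cdualG}\bigl(B\otimes L^2(\G)\bigr)$ for a Hilbert $B\rtimes_\red\cdualG$-module $\F$ that is unique up to isomorphism. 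The statement about operators follows from Theorem~\ref{298} applied to $(\E,\E)$: the injective $*$-homomorphism $\phi\colon\Ls(\F_\E)\to\Ls^\G(\E)$ there has range the space of $\R$-continuous operators, which by the first step is all of $\Ls^\G(\E)$, so $\phi$ is an isomorphism and restricts to an isometric isomorphism $\K(\F_\E)\cong\Fix(\E,\E)=\Fix(\E)$. Finally, Proposition~\ref{398} identifies $\Fix(\E)$ with the usual fixed point algebra $\{x\in\K(\E):\co_{\K(\E)}(x)=x\otimes 1_\G\}$, which finishes the proof.

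I do not expect a serious obstacle here, since essentially everything is quoted from Theorems~\ref{289}, \ref{298}, Corollary~\ref{495} and Proposition~\ref{398}. The one point that genuinely needs checking, and hence the crux of the argument, is the rigidity in the first step: that equivalence of the $\si$-norm with the module norm (Remark~\ref{100}) forces any admissible $\R$ to be all of $\E$, so that the categories of Hilbert $B,\G$-modules and \slc-continuously square-integrable Hilbert $B,\G$-modules coincide. This is short, but it is what makes the clean reformulation $\F\mapsto\F\rot{B\rtimes_\red\cdualG}\bigl(B\otimes L^2(\G)\bigr)$ possible in the compact case.
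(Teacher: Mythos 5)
Your proposal is correct and follows essentially the same route as the paper: identify the category of Hilbert $B,\G$-modules with that of \slc-continuously square-integrable ones via the uniqueness of $\R=\E$ (forced by the equivalence of the $\si$-norm with the module norm in the compact case), and then quote Theorem~\ref{298} and Proposition~\ref{398}. The paper's proof is terser but rests on exactly the same observations, so no further comment is needed.
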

\begin{proof} If $\G$ is compact, then any Hilbert $B,\G$-module is continuously square-integra\-ble and $\R=\E$ is the unique dense, complete,
relatively continuous subspace. Therefore there is no difference between the categories of continuously (and hence also \slc-continuously)
square-integrable Hilbert $B,\G$-modules and arbitrary Hilbert $B,\G$-modules. The assertions now follow from Theorem~\ref{298}.
\end{proof}

In particular, for compact quantum groups every Hilbert $B,\G$-module is ``proper" in the following sense:

\begin{definition}
We say that a Hilbert $B,\G$-module $\E$ is \emph{$\R$-proper}\index{$\R$-proper}
if there is a unique dense, \slc-complete, relatively continuous subspace of $\E$.
\end{definition}

By Theorem~\ref{289}, $\E$ is $\R$-proper if and only if there is a unique concrete, essential Hilbert $B\rtimes_\red\cdualG$-module
$\F\sbe\Ls^\G(B\otimes L^2(\G),\E)$.

Recall that in the group case $\G=\cont_0(G)$, a $G$-\cstar{}algebra $A$ is called \emph{spectrally proper}\index{spectrally proper}
if the canonical induced action of $G$ on the primitive ideal space $\Prim(A)$ is proper (see \cite[Definition~9.2]{Meyer:Generalized_Fixed}). This class includes all the proper $G$-\cstar{}algebras in the sense of Kasparov~\cite{Kasparov:Novikov}. By Theorem~9.1 in \cite{Meyer:Generalized_Fixed}, every Hilbert module over a spectrally proper $G$-\cstar{}algebra is $\R$-proper. In particular, a commutative
$G$-\cstar{}algebra $\cont_0(X)$ (where $X$ is a locally compact $G$-space) is $\R$-proper if $X$ is a proper $G$-space.
Conversely, if $\cont_0(X)$ is $\R$-proper, then it is, in particular, integrable and therefore, by Rieffel's Theorem~4.7 in \cite{Rieffel:Integrable_proper}, $X$ is a proper $G$-space.

In the general quantum setting, unless $\G$ is compact, it is not easy to find non-trivial examples of
$\R$-proper Hilbert modules. In this direction, we have the following result:

\begin{proposition}\label{537} Let $\G$ be a locally compact quantum group and let $\G$ coact on itself by the comultiplication.
Then $\G$ is an $\R$-proper $\G$-\cstar{}algebra
if and only if $\G$ is semi-regular, that is, $\K\bigl(L^2(\G)\bigr)$ is contained in $C:=\cspn(\G\cdualG)\cong\G\rtimes_\red\cdualG$.
In this case, $\R=\G_\si$ is the unique dense, \slc-complete, relatively continuous subspace of $\G$.
The Hilbert $\G\rtimes_\red\cdualG$-module $\F(\G,\R)$ is isomorphic to
the dual $L^2(\G)^*$ of $L^2(\G)$ considered as a Hilbert $\C$-module in the canonical way.
In particular, $\Fix(\G,\R)\cong\C$ and $\I(\G,\R)\cong\K\bigl(L^2(\G)\bigr)$.
The quantum group $\G$ is regular if and only if $\R$ is saturated.
\end{proposition}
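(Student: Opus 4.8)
The plan is to obtain Proposition~\ref{537} by combining Proposition~\ref{067}, the correspondence in Theorem~\ref{289}, and the description of \slc-completions in Corollary~\ref{291}. For the ``only if'' direction of the first assertion: if $\G$ is $\R$-proper there is, in particular, a dense --- hence non-zero --- relatively continuous subspace of $\G$, so $\G$ is semi-regular by Proposition~\ref{067}. Conversely, suppose $\G$ is semi-regular. From the proof of Proposition~\ref{067} we have $\G_\si=\dom(\La)^*$, which is norm-dense in $\G$ and, by that proposition, relatively continuous; moreover $\G_\si=\E_\si$ is automatically complete and \slc-complete. Thus $\G_\si$ is a dense, \slc-complete, relatively continuous subspace, and it remains to see it is the \emph{only} one --- this will simultaneously give $\R$-properness.

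For the uniqueness, let $\R\sbe\G_\si$ be any dense relatively continuous subspace. The claim is that $\F(\G,\R)=(1_\G\otimes L^2(\G)^*)W$. By Proposition~\ref{067} we know $\F(\G,\R)=(1_\G\otimes H_0^*)W$ with $H_0=\cspn\bigl(\cdualG\La(\G\R^*)\bigr)$, so it is enough to prove $H_0=H$. Picking a non-zero $r\in\R$ and using $\La(ar^*)=a\La(r^*)$ for $a\in\G$ together with the fact that $\cdualG$ commutes with $\G$ (being its \cstar{}commutant), the closed span of the products $xa\La(r^*)$ with $x\in\cdualG$, $a\in\G$ equals $\cspn\bigl(C\La(r^*)\bigr)$, where $C:=\cspn(\G\cdualG)$. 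Since $\G$ is semi-regular, $\K(H)\sbe C$, and $\cspn\bigl(\K(H)\La(r^*)\bigr)=H$ because $\La(r^*)\neq 0$ (the weight $\f$ is faithful, so $\La$ is injective); hence $H_0=H$. Consequently every dense relatively continuous subspace of $\G$ yields the same concrete Hilbert module $\F:=(1_\G\otimes L^2(\G)^*)W\sbe\Ls^\G\bigl(\G\otimes L^2(\G),\G\bigr)$, so by Corollary~\ref{291} each has the same \slc-completion, namely $\R_\F$. Since $\G_\si$ is \slc-complete with $\F(\G,\G_\si)=\F$, Theorem~\ref{289} gives $\G_\si=\R_\F$; therefore $\R_\sco=\G_\si$ for every dense relatively continuous $\R$, and in particular $\G_\si$ is the unique dense, \slc-complete, relatively continuous subspace of $\G$.

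It then remains to identify the invariants. With $\R=\G_\si$ and $H_0=H$, Proposition~\ref{067} gives $\Fix(\G,\R)=\C 1_\G\cong\C$ and $\I(\G,\R)=W^*\bigl(1_\G\otimes\K(H)\bigr)W\cong\K\bigl(L^2(\G)\bigr)$. By Proposition~\ref{111}, $\F(\G,\R)$ is a Morita equivalence between $\Fix(\G,\R)\cong\C$ and $\I(\G,\R)\cong\K\bigl(L^2(\G)\bigr)$; up to isomorphism the only such imprimitivity bimodule is $L^2(\G)^*$ with its canonical structure, which can also be read off the explicit formula, since $\bar v\mapsto(1_\G\otimes\bar v)W$ is an isometric module isomorphism $L^2(\G)^*\to\F(\G,\R)$ with $\bigl((1_\G\otimes\bar v)W\bigr)^*\bigl((1_\G\otimes\bar w)W\bigr)=W^*\bigl(1_\G\otimes\ket{v}\bra{w}\bigr)W$. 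Finally, $\R$ is saturated exactly when $\I(\G,\R)=\G\rtimes_\red\cdualG$, i.e.\ (recalling $\G\rtimes_\red\cdualG=W^*(1_\G\otimes C)W$ by~\eqref{506}) when $W^*\bigl(1_\G\otimes\K(H)\bigr)W=W^*\bigl(1_\G\otimes C\bigr)W$, i.e.\ when $C=\K(H)$ --- which is precisely regularity of $\G$.

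The step I expect to be the crux is the uniqueness argument: it rests on the identity $H_0=H$ for an \emph{arbitrary} dense relatively continuous $\R$ (one must check carefully that the span of the products $xa\La(r^*)$ reconstructs all of $C\La(r^*)$, invoking semi-regularity only at the end) and then on threading this through Theorem~\ref{289} and Corollary~\ref{291} so as to conclude uniqueness of the \slc-complete subspace $\R$ itself, not merely of the module $\F(\G,\R)$.
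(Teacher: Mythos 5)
Your proof is correct in substance and reaches all the assertions, but the key uniqueness step is organized differently from the paper's. The paper first reduces to a \emph{complete} subspace $\R_0$, writes $\F(\G,\R_0)=\overline{\kket{\R_0}}=(1_\G\otimes H_0^*)W$ with $H_0=\overline{\La(\R_0^*)}$, and then deduces $H_0=H$ indirectly: $\I(\G,\R_0)=W^*\bigl(1_\G\otimes\K(H_0)\bigr)W$ is an ideal of $\G\rtimes_\red\cdualG$, hence of $W^*\bigl(1_\G\otimes\K(H)\bigr)W$, so $\K(H_0)$ is an ideal of the simple algebra $\K(H)$ and must equal it. You instead work with an arbitrary dense relatively continuous $\R$, use the formula $H_0=\cspn\bigl(\cdualG\La(\G\R^*)\bigr)$ from Proposition~\ref{067}, and prove $H_0=H$ directly from semi-regularity and the fact that any nonzero vector is cyclic for $\K(H)$; threading the conclusion through Corollary~\ref{291} and Theorem~\ref{289} as you do is then exactly right. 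Your route is slightly more direct (no appeal to simplicity of $\K(H)$) and shows a bit more, namely that every dense relatively continuous subspace already yields the module $(1_\G\otimes H^*)W$.

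One justification in your argument is wrong, although the step it supports survives: $\cdualG=\dualJ\dualG\dualJ$ does \emph{not} commute with $\G$. (It generates the von Neumann commutant of $\dualG''$, not of $\G''$; in the group case $\G=\contz(G)$ one has $\cdualG=C^*_\rho(G)$, and right translations do not commute with multiplication operators.) So you cannot identify $\cspn\bigl(\cdualG\G\La(r^*)\bigr)$ with $\cspn\bigl(C\La(r^*)\bigr)$ by elementwise commutation. The repair is immediate: what you actually need is $\K(H)\La(r^*)\sbe\cspn\bigl(\cdualG\G\bigr)\La(r^*)$, and $\cspn(\cdualG\G)=C^*$, so $\K(H)=\K(H)^*\sbe C^*$ follows from semi-regularity alone; alternatively, $C$ is self-adjoint because $W(\G\rtimes_\red\cdualG)W^*=1_\G\otimes C$ by Equation~\eqref{506} and the crossed product is a \cstar{}algebra, whence $\cspn(\cdualG\G)=C^*=C$. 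With that correction the argument is complete. The remaining identifications (the explicit unitary $\bar v\mapsto(1_\G\otimes\bar v)W$ onto $\F(\G,\R)$, the computation of $\Fix$ and $\I$, and the equivalence of saturation with regularity) agree with the paper's.
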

\begin{proof} By Equation~\eqref{eq:FormulaBraOperatorA=G}, we have
$$\kket{\xi}=\bigl(1_\G\otimes\La(\xi^*)^*\bigr)W,\quad \bbra{\xi}=W^*\bigl(1_\G\otimes\La(\xi^*)\bigr),\quad\mbox{for all }\xi\in \G_\si=\dom(\La)^*.$$
Since $\G$ is semi-regular, $\K(H)\sbe C\defeq \cspn \G\cdualG$ and hence $W^*(1\otimes\K(H))W\sbe W^*(1\otimes C)W=\G\rtimes\cdualG$ (see Equation~\eqref{506}). Thus, if $\G$ is semi-regular, any subset of $\G_\si$ is relatively continuous.
Now note that if $\R_0\sbe\G_\si$ is a complete subspace, then
$$\F(\G,\R_0)=\overline{\kket{\R_0}}=\overline{\bigl(1_\G\otimes\La(\R_0^*)^*\bigr)W}=(1_\G\otimes H_0^*)W,$$
where $H_0:=\overline{\La(\R_0^*)}$ (which is a closed subspace of $H=L^2(\G)$). Equivalently,
$$\F(\G,\R_0)^*=\overline{\bbra{\R_0}}=W^*(1_\G\otimes H_0).$$
In particular, $\F(\G,\R)^*=W^*(1_\G\otimes H)$.
Define the following linear map
$$T:\ran(\La)\sbe L^2(\G)\to \bbra{\R}\sbe \F(\G,\R)^*,\quad T\bigl(\La(\xi)\bigr):=W^*\bigl(1_\G\otimes\La(\xi)\bigr).$$
Then (identifying $\C\cong\C1_\G\sbe\M(\G)$)
\begin{align*}
\bigl\<T(\La(\xi))|T(\La(\eta))\bigr\>&=\bigl(W^*(1_\G\otimes\La(\xi))\bigr)^*W^*\bigl(1_\G\otimes\La(\eta)\bigr)
\\&=\bigl(1_\G\otimes\La(\xi)^*\bigr)WW^*\bigl(1_\G\otimes\La(\eta)\bigr)
\\&=1_\G\f(\xi^*\eta)=\<\La(\xi)|\La(\eta)\>_{L^2(\G)}.
\end{align*}
It follows that $T$ extends to an isomorphism
$L^2(\G)\cong\F(\G,\R)^*$ (as Hilbert spaces). Thus $\F(\G,\R)\cong L^2(\G)^*$ as Hilbert modules over
$\K\bigl(L^2(\G)\bigr)$ and hence also as Hilbert modules over $C$.

Finally, suppose that $\R_0\sbe\G_\si$ is dense and \slc-complete. Then
$$\I(\G,\R_0)=\cspn\bigl\{W^*(1_\G\otimes\ket{\xi}\bra{\eta})W:\xi,\eta\in\La(\R_0^*)\bigr\}=W^*\bigl(1_\G\otimes\K(H_0)\bigr)W.$$
The subset $\I(\G,\R_0)\sbe
W^*\bigl(1_\G\otimes\K(H)\bigr)W\sbe \G\rtimes_\red\cdualG$ is an ideal of
$\G\rtimes_\red\cdualG$ and hence also of $W^*\bigl(1_\G\otimes\K(H)\bigr)W$. It
follows that $\K(H_0)$ is an ideal of $\K(H)$. Since $\K(H)$ is simple, we get $\K(H_0)=\K(H)$ ($H_0$ is not
zero because $\R_0$ is dense in $\G$). Cohen's Factorization Theorem yields $H=\K(H)H=\K(H_0)H=H_0$. Hence
$$\F(\G,\R_0)=(1_\G\otimes H_0^*)W=(1_\G\otimes H^*)W=\F(\G,\R).$$
Therefore, $\R_0=\R$ because both $\R_0$ and $\R$ are \slc-complete. The last assertion was already proved in Proposition~\ref{067}.
\end{proof}

\begin{remark} Examples of non-semi-regular quantum groups have been constructed in \cite{Baaj-Skandalis-Vaes:Non-semi-regular}. It has been observed there that for such examples the coaction of $\G$ on itself via the comultiplication is in some sense not ``proper". We can now give this statement a precise meaning if we agree that ``proper" means $\R$-proper.

Moreover, if we agree that a proper (that is, $\R$-proper) coaction is ``free''
if the corresponding dense, \slc-complete, relatively continuous subspace is, in addition, saturated,
then we can also say that the comultiplication of a locally compact quantum group is proper and free if and only if it is regular.
\end{remark}

\begin{bibdiv}
  \begin{biblist}

\bib{Huef-Raeburn-Williams:FunctorialityGFPA}{incollection}{
    AUTHOR = {an Huef, Astrid},
     author= {Raeburn, Iain},
     author= {Williams, Dana P.},
     TITLE = {Functoriality of {R}ieffel's generalised fixed-point algebras
              for proper actions},
 BOOKTITLE = {Superstrings, geometry, topology, and {$C\sp \ast$}-algebras},
    SERIES = {Proc. Sympos. Pure Math.},
    VOLUME = {81},
     PAGES = {9--25},
 PUBLISHER = {Amer. Math. Soc.},
   ADDRESS = {Providence, RI},
      YEAR = {2010},
  review={\MRref{2681756}{2012b:46149}},
}

\bib{Huef-Kaliszewski-Raeburn-Williams:Naturality_Rieffel}{article}{
  author={an Huef, Astrid},
  author={Kaliszewski, S.},
  author={Raeburn, Iain},
  author={Williams, Dana P.},
  title={Naturality of Rieffel's Morita equivalence for proper actions},
  journal={Algebr. Represent. Theory},
  volume={14},
  date={2011},
  number={3},
  pages={515--543},
  issn={1386-923X},
  review={\MRref{2785921}{}},
  doi={10.1007/s10468-009-9201-2},
}

\bib{Huef-Kaliszewski-Raeburn-Williams:Naturality_Symmetric}{article}{
  author={an Huef, Astrid},
  author={Kaliszewski, S.},
  author={Raeburn, Iain},
  author={Williams, Dana P.},
  title={Naturality of Symmetric Imprimitivity Theorems},
  status={eprint},
  note={\arxiv{1103.4111}},
  date={2011},
}

\bib{Baaj-Skandalis-Vaes:Non-semi-regular}{article}{
  author={Baaj, Saad},
  author={Skandalis, Georges},
  author={Vaes, Stefaan},
  title={Non-semi-regular quantum groups coming from number theory},
  journal={Comm. Math. Phys.},
  volume={235},
  date={2003},
  number={1},
  pages={139--167},
  review={\MRref{1969723}{2004g:46083}},
  doi={10.1007/s00220-002-0780-6},
}

\bib{Baaj-Skandalis:Hopf_KK}{article}{
  author={Baaj, Saad},
  author={Skandalis, Georges},
  title={\(C^*\)\nobreakdash-alg\`ebres de Hopf et th\'eorie de Kasparov \'equivariante},
  language={French, with English summary},
  journal={\(K\)-Theory},
  volume={2},
  date={1989},
  number={6},
  pages={683--721},
  issn={0920-3036},
  review={\MRref{1010978}{90j:46061}},
  doi={10.1007/BF00538428},
}

\bib{Bedos-Tuset:Amenability_co-amenability}{article}{
    AUTHOR = {B{\'e}dos, E.}
    AUTHOR = {Tuset, L.},
     TITLE = {Amenability and co-amenability for locally compact quantum
              groups},
   JOURNAL = {Internat. J. Math.},
    VOLUME = {14},
      YEAR = {2003},
    NUMBER = {8},
     PAGES = {865--884},
      ISSN = {0129-167X},
       DOI = {10.1142/S0129167X03002046},
  review={\MRref{2013149}{2004k:46129}},
}

\bib{Buss-Meyer:Continuous}{article}{
  author={Buss, Alcides},
  author={Meyer, Ralf},
  title={Continuous spectral decompositions of Abelian group actions on \(C^*\)\nobreakdash-algebras},
  journal={J. Funct. Anal.},
  volume={253},
  date={2007},
  number={2},
  pages={482--514},
  review={\MRref{2370086}{2009f:46091}},
  doi={10.1016/j.jfa.2007.04.009},
}

\bib{Buss-Meyer:Square-integrable}{article}{
  author={Buss, Alcides},
  author={Meyer, Ralf},
  title={Square-integrable coactions of locally compact quantum groups},
  journal={Reports on Mathematical Physics},
  volume={63},
  date={2009},
  number={1},
  pages={191--224},
}

\bib{Exel:Unconditional}{article}{
  author={Exel, Ruy},
  title={Unconditional integrability for dual actions},
  journal={Bol. Soc. Brasil. Mat. (N.S.)},
  volume={30},
  number={1},
  date={1999},
  pages={99--124},
}

\bib{Exel:SpectralTheory}{article}{
    AUTHOR = {Exel, Ruy},
     TITLE = {Morita-{R}ieffel equivalence and spectral theory for
              integrable automorphism groups of {$C\sp *$}-algebras},
   JOURNAL = {J. Funct. Anal.},
  FJOURNAL = {Journal of Functional Analysis},
    VOLUME = {172},
      YEAR = {2000},
    NUMBER = {2},
     PAGES = {404--465},
       DOI = {10.1006/jfan.1999.3537},
  review={\MRref{1753180}{2001h:46104}},
}

\bib{Doran-Fell:Representations_2}{book}{
  author={Fell, James M. G.},
  author={Doran, Robert S.},
  title={Representations of $^*$\nobreakdash-algebras, locally compact groups, and Banach $^*$\nobreakdash-algebraic bundles. Vol. 2},
  series={Pure and Applied Mathematics},
  volume={126},
  note={Banach $^*$\nobreakdash-algebraic bundles, induced representations, and the generalized Mackey analysis},
  publisher={Academic Press Inc.},
  place={Boston, MA},
  date={1988},
  pages={i--viii and 747--1486},
  isbn={0-12-252722-4},
  review={\MRref{936629}{90c:46002}},
}

\bib{Echterhoff-Emerson:Structure_proper}{article}{
  author={Echterhoff, Siegfried},
  author={Emerson, Heath},
  title={Structure and $K$-theory of crossed products by proper actions},
  journal={Expo. Math.},
  volume={29},
  date={2011},
  number={3},
  pages={300--344},
  issn={0723-0869},
  review={\MRref{2820377}{}},
  doi={10.1016/j.exmath.2011.05.001},
}

\bib{Echterhoff.Williams:StructureCrossedProducts}{article}{
  author={Echterhoff, Siegfried},
  author={Williams, Dana P.},
  title={Structure of crossed products by strictly proper actions on continuous-trace algebras},
  note={\arxiv{1208.4488}},
  status={preprint},
  date={2012},
}

\bib{Lance:Hilbert_modules}{book}{
  author={Lance, E. Christopher},
  title={Hilbert $C^*$\nobreakdash-modules},
  series={London Mathematical Society Lecture Note Series},
  volume={210},
  note={A toolkit for operator algebraists},
  publisher={Cambridge University Press},
  place={Cambridge},
  date={1995},
  pages={x+130},
  isbn={0-521-47910-X},
  review={\MRref{1325694}{96k:46100}},
}

\bib{Hewitt-Ross:Abstract_harmonic_analysisI}{book}{
    AUTHOR = {Hewitt, Edwin},
    AUTHOR = {Ross, Kenneth A.},
     TITLE = {Abstract harmonic analysis. {V}ol. {I}},
    SERIES = {Grundlehren der Mathematischen Wissenschaften [Fundamental
              Principles of Mathematical Sciences]},
    VOLUME = {115},
   EDITION = {Second},
      NOTE = {Structure of topological groups, integration theory, group
              representations},
 PUBLISHER = {Springer-Verlag},
   ADDRESS = {Berlin},
      YEAR = {1979},
     PAGES = {ix+519},
      ISBN = {3-540-09434-2},
  review={\MRref{551496}{81k:43001}},
}

\bib{Hu-Neufang-Ruan:Multipliers}{article}{
    AUTHOR = {Hu, Zhiguo},
    AUTHOR = {Neufang, Matthias},
    AUTHOR = {Ruan, Zhong-Jin},
     TITLE = {Multipliers on a new class of {B}anach algebras, locally
              compact quantum groups, and topological centres},
   JOURNAL = {Proc. Lond. Math. Soc. (3)},
    VOLUME = {100},
      YEAR = {2010},
    NUMBER = {2},
     PAGES = {429--458},
      ISSN = {0024-6115},
       DOI = {10.1112/plms/pdp026},
  review={\MRref{2595745}{2011c:46104}},
}

\bib{Kaliszewski-Muhly-Quigg-Williams:Fell_bundles_and_imprimitivity_theoremsII}{article}{
  author={Kaliszewski, S.},
  author={Muhly, Paul S.},
  author={Quigg, John},
  author={Williams, Dana P.},
  title={Fell bundles and imprimitivity theorems: towards a universal generalized fixed point algebra},
  status={preprint},
  date={2012},
  note={\arxiv{1206.6739}},
}

\bib{Kasparov:Novikov}{article}{
  author={Kasparov, Gennadi G.},
  title={Equivariant \(KK\)-theory and the Novikov conjecture},
  journal={Invent. Math.},
  volume={91},
  date={1988},
  number={1},
  pages={147--201},
  review={\MRref{918241}{88j:58123}},
  doi={10.1007/BF01404917},
}

\bib{Kustermans:KMS}{article}{
  author={Kustermans, Johan},
  title={KMS weights on \(C^*\)\nobreakdash -algebras},
  status={eprint},
  note={\arxiv {math/9704008}},
  date={1997},
}

\bib{Kustermans-Vaes:Weight}{article}{
  author={Kustermans, Johan},
  author={Vaes, Stefaan},
  title={Weight theory for \(C^*\)\nobreakdash -algebraic quantum groups},
  status={eprint},
  note={\arxiv {math/9901063}},
  date={1999},
}

\bib{Kustermans-Vaes:LCQG}{article}{
  author={Kustermans, Johan},
  author={Vaes, Stefaan},
  title={Locally compact quantum groups},
  language={English, with English and French summaries},
  journal={Ann. Sci. \'Ecole Norm. Sup. (4)},
  volume={33},
  date={2000},
  number={6},
  pages={837--934},
  review={\MRref{1832993}{2002f:46108}},
}

\bib{Meyer:Equivariant}{article}{
  author={Meyer, Ralf},
  title={Equivariant Kasparov theory and generalized homomorphisms},
  journal={\(K\)\nobreakdash-Theory},
  volume={21},
  date={2000},
  number={3},
  pages={201--228},
  review={\MRref{1803228}{2001m:19013}},
  doi={10.1023/A:1026536332122},
}

\bib{Meyer:Generalized_Fixed}{article}{
  author={Meyer, Ralf},
  title={Generalized fixed point algebras and square-integrable groups actions},
  journal={J. Funct. Anal.},
  volume={186},
  date={2001},
  number={1},
  pages={167--195},
  review={\MRref{1863296}{2002j:46086}},
  doi={10.1006/jfan.2001.3795},
}

\bib{Ng:MoritaEquivalences}{article}{
    AUTHOR = {Ng, Chi-Keung},
     TITLE = {Morita equivalences between fixed point algebras and crossed
              products},
   JOURNAL = {Math. Proc. Cambridge Philos. Soc.},
    VOLUME = {125},
      YEAR = {1999},
    NUMBER = {1},
     PAGES = {43--52},
       DOI = {10.1017/S0305004198002941},
  review={\MRref{1645513}{2000i:46053}},
}

\bib{Phillips:Freeness_actions_finite_groups}{article}{
  author={Phillips, N. Christopher},
  title={Freeness of actions of finite groups on $C^*$\nobreakdash-algebras},
  book={
    title={Operator structures and dynamical systems},
    series={Contemp. Math.},
    volume={503},
    publisher={Amer. Math. Soc.},
    address={Providence, RI},
    year={2009},
  },
  pages={217--257},
  doi={10.1090/conm/503/09902},
  review={\MRref{2590625}{2012f:46143}},
}

\bib{Rieffel:Proper}{article}{
  author={Rieffel, Marc A.},
  title={Proper actions of groups on $C^*$\nobreakdash-algebras},
  conference={
    title={Mappings of operator algebras},
    address={Philadelphia, PA},
    date={1988},
  },
  book={
    series={Progr. Math.},
    volume={84},
    publisher={Birkh\"auser Boston},
    place={Boston, MA},
  },
  date={1990},
  pages={141--182},
  review={\MRref{1103376}{92i:46079}},
}

\bib{Rieffel:Integrable_proper}{article}{
  author={Rieffel, Marc A.},
  title={Integrable and proper actions on $C^*$\nobreakdash-algebras, and square-integrable representations of groups},
  journal={Expo. Math.},
  volume={22},
  date={2004},
  number={1},
  pages={1--53},
  issn={0723-0869},
  review={\MRref{2166968}{2006g:46108}},
}

\bib{Stratila-Zsido:von_Neumann}{book}{
  author={Str\u{a}til\u{a}, \c{S}erban},
  author={Zsid\'o, L{\'a}szl\'o},
  title={Lectures on von Neumann algebras},
  note={Revision of the 1975 original; translated from the Romanian by Silviu Teleman},
  publisher={Editura Academiei},
  place={Bucharest},
  date={1979},
  pages={478},
  isbn={0-85626-109-2},
  review={\MRref{526399}{81j:46089}},
}

\bib{Takesaki:Theory_1}{book}{
  author={Takesaki, Masamichi},
  title={Theory of operator algebras. I},
  series={Encyclopaedia of Mathematical Sciences},
  volume={124},
  note={Reprint of the first (1979) edition; Operator Algebras and Non-commutative Geometry, 5},
  publisher={Springer-Verlag},
  place={Berlin},
  date={2002},
  pages={xx+415},
  isbn={3-540-42248-X},
  review={\MRref{1873025}{2002m:46083}},
}

\bib{Vaes:Thesis}{thesis}{
  author={Vaes, Stefaan},
  title={Locally compact quantum groups},
  type={phdthesis},
  institution={University of Leuven},
  date={2001},
  eprint={https://perswww.kuleuven.be/~u0018768/artikels/phd.pdf},
}

\bib{Woronowicz:Unbounded_affiliated}{article}{
  author={Woronowicz, Stanis{\l}aw L.},
  title={Unbounded elements affiliated with $C^*$\nobreakdash-algebras and noncompact quantum groups},
  journal={Comm. Math. Phys.},
  volume={136},
  date={1991},
  number={2},
  pages={399--432},
  issn={0010-3616},
  review={\MRref{1096123}{92b:46117}},
}
  \end{biblist}
\end{bibdiv}

\vskip 0,5pc

\end{document}